\providecommand{\U}[1]{\protect\rule{.1in}{.1in}}
\newtheorem{theorem}{Theorem}
\newtheorem{conjecture}[theorem]{Conjecture}
\newtheorem{corollary}[theorem]{Corollary}
\newtheorem{definition}[theorem]{Definition}
\newtheorem{lemma}[theorem]{Lemma}
\newtheorem{notation}[theorem]{Notation}
\newtheorem{proposition}[theorem]{Proposition}
\newtheorem{remark}[theorem]{Remark}
\numberwithin{theorem}{section}
\numberwithin{equation}{section}
\begin{document}

\title{The Brown measure of the free multiplicative Brownian motion}
\author{Bruce K.\ Driver}
\address{Department of Mathematics, University of California San
Diego, La Jolla, CA 92093, USA}
\email{bdriver@ucsd.edu}
\author{ Brian C.\ Hall}
\address{Department of Mathematics, University of Notre Dame, Notre Dame,
IN 46556, USA}
\email{bhall@nd.edu}
\author{Todd Kemp}
\thanks{Kemp's research supported in part by NSF CAREER Award DMS-1254807 and NSF Award DMS-1800733}
\address{Department of Mathematics, University of California San Diego, La Jolla, CA 92093, USA}
\email{tkemp@math.ucsd.edu}

\begin{abstract}
The free multiplicative Brownian motion $b_{t}$ is the large-$N$ limit of the
Brownian motion on $\mathsf{GL}(N;\mathbb{C}),$ in the sense of $\ast
$-distributions. The natural candidate for the large-$N$ limit of the
empirical distribution of eigenvalues is thus the Brown measure of $b_{t}$. In
previous work, the second and third authors showed that this Brown measure is
supported in the closure of a region $\Sigma_{t}$ that appeared in work of Biane.
In the present paper, we compute the Brown measure completely. It has a
continuous density $W_{t}$ on $\overline{\Sigma}_{t},$ which is strictly
positive and real analytic on $\Sigma_{t}$. This density has a simple form in
polar coordinates:
\[
W_{t}(r,\theta)=\frac{1}{r^{2}}w_{t}(\theta),
\]
where $w_{t}$ is an analytic function determined by the geometry of the region
$\Sigma_{t}$.

We show also that the spectral measure of free unitary Brownian motion $u_{t}$
is a \textquotedblleft shadow\textquotedblright\ of the Brown measure of
$b_{t}$, precisely mirroring the relationship between Wigner's semicircle law
and Ginibre's circular law. We develop several new methods, based on
stochastic differential equations and PDE, to prove these results.

\end{abstract}

\maketitle

\tableofcontents

\section{Introduction}

\subsection{$\ast$-Distribution and Brown measure\label{starAndBrown.sec}}

Two of the most fundamental models in random matrix theory are the
\emph{Gaussian unitary ensemble} and the \emph{Ginibre ensemble}. Each is a
Gaussian random matrix; the Ginibre ensemble has i.i.d.\ complex Gaussian
entries, with variance $1/N$, while the Gaussian unitary ensemble is the
Hermitian part of the Ginibre ensemble. For our purposes, it is natural to
think of these two ensembles as endpoints of \emph{Brownian motion on Lie
algebras}. Indeed, the Lie algebra $\mathrm{gl}(N;\mathbb{C})$ of the general
linear group consists of all $N\times N$ complex matrices. For each fixed $t,$
the Brownian motion $Z_{t}$ on this space (with an appropriately scaled time
parameter) is distributed as the Ginibre ensemble, scaled by $\sqrt{t}.$
Similarly, the space of Hermitian matrices is equal to $i\mathrm{u}(N)$, where
$\mathrm{u}(N)$ is the Lie algebra of the unitary group, namely the space of
skew-Hermitian matrices. For each fixed $t,$ the Brownian motion $X_{t}$ on
this space is distributed as the Gaussian unitary ensemble, scaled by
$\sqrt{t}.$

Among the earliest results in random matrix theory are the discovery of the
large-$N$ limits of the empirical eigenvalue distributions of these ensembles.
That is to say, the (random) counting measure $\frac{1}{N}\sum_{j=1}^{N}%
\delta_{\lambda_{j}}$ of the eigenvalues $\{\lambda_{j}\}$ of each ensemble
has an almost-sure limit which is a deterministic measure. The eigenvalues of
$X_{t}$ are real, and their limit empirical eigenvalue distribution is the
semicircle law on the interval $[-2\sqrt{t},2\sqrt{t}]$ (cf.\ \cite{Wigner});
the eigenvalues of $Z_{t}$ are complex, and their limit empirical eigenvalue
distribution is the uniform probability measure on the disk of radius
$\sqrt{t}$ (cf.\ \cite{Gin}). We note for later reference a simple but
intriguing link between these two limiting distributions: the push-forward
under the \textquotedblleft real part\textquotedblright\ map of the uniform
measure on the disk is the semicircular distribution on an interval.

It is convenient to recast the empirical eigenvalue distribution in an
analytic form. For an $N\times N$ matrix $A=A_{N}$, the function%
\begin{align}
L_{A}(\lambda) &  =\frac{1}{2N}\log(\det[(A-\lambda)^{\ast}(A-\lambda
)])\nonumber\\
&  =\frac{1}{N}\sum_{j=1}^{N}\log(\left\vert \lambda-\lambda_{j}\right\vert
)\label{LaN}%
\end{align}
is subharmonic on $\mathbb{C}$. Actually, $L_{A}$ is harmonic on the
complement of the spectrum, and $-\infty$ at the eigenvalues. The
(distributional) Laplacian of $L$ is therefore a positive measure; in fact it
is (up to a factor of $2\pi$) equal to the empirical eigenvalue distribution
of the matrix $A$. Hence, if one can compute an appropriate \textquotedblleft
large-$N$ limit\textquotedblright\ of the functions $L_{A_{N}}$, the Laplacian
of the limiting function will provide a natural candidate for the limiting
empirical eigenvalue distribution.

Free probability theory affords a medium in which to identify abstract limits
of random matrix ensembles themselves. The limits are constructed as operators
in a tracial von Neumann algebra $(\mathcal{A},\tau)$, and the limit is with
respect to \emph{$\ast$-distribution}. If $A_{N}$ is a sequence of $N\times N$
random matrices, an operator $a\in\mathcal{A}$ is said to be a \textbf{limit
in $\ast$-distribution} of $A_{N}$ if, for each polynomial $p$ in two
noncommuting variables, we have
\[
\lim_{N\rightarrow\infty}\frac{1}{N}\mathrm{trace}[p(A_{N},A_{N}^{\ast}%
)]=\tau\lbrack p(a,a^{\ast})]
\]
almost surely. For $X_{t}$ and $Z_{t}$, Voiculescu \cite{Voiculescu} showed
that the large-$N$ limits can be identified as certain \emph{free stochastic
processes}, namely the \textbf{free additive Brownian motion} $x_{t}$ and the
\textbf{free circular Brownian motion} $c_{t}.$ These limiting process are no
longer random: they are one-parameter families of operators with freely
independent increments, and $\ast$-distributions that can be described
elegantly in the combinatorial framework of free probability.

In particular, $x_{t}$ is a self-adjoint operator, and it therefore has a
spectral resolution $E^{x_{t}}$: a projection-value measure so that
$x_{t}=\int_{\mathbb{R}}\lambda\,E^{x_{t}}(d\lambda)$. The composition
$\mu_{x_{t}}:=\tau\circ E^{x_{t}}$ is a probability measure on $\mathbb{R}$
called the \emph{spectral measure} of $x_{t}$ (in the state $\tau$). The
statement that $X_{t}$ has the semicircle law as its limit empirical
eigenvalue distribution is equivalent to the statement that $\mu_{x_{t}}$ is semicircular.

On the other hand, since the operator $c_{t}$ is not normal, there is no
spectral theorem to yield a spectral measure. There is, however, a substitute:
the Brown measure, introduced in \cite{Br}. For any operator $a\in
\mathcal{A},$ define a function $L_{a}$ on $\mathbb{C}$ by%
\begin{equation}
L_{a}(\lambda)=\tau\lbrack\log(\left\vert a-\lambda\right\vert
)],\label{e.FKdet}%
\end{equation}
where $|a-\lambda|$ is the self-adjoint operator $((a-\lambda)^{\ast
}(a-\lambda))^{1/2}$. The quantity $L_{a}(\lambda)$ is the logarithm of the
Fuglede--Kadison determinant \cite{FK1,FK2} of $a-\lambda.$ It is finite
outside the spectrum of $a$ but may become $-\infty$ at points in the
spectrum. If $\mathcal{A}$ is the space of all $N\times N$ matrices and
$\tau=\frac{1}{N}\mathrm{trace},$ then $L_{a}$ agrees with the function in
(\ref{LaN}). In a general tracial von Neumann algebra $(\mathcal{A},\tau)$,
the function $L_{a}$ is subharmonic.

The \textbf{Brown measure} of $a$ is then defined in terms of the
distributional Laplacian of $L_{a}$:
\[
\mu_{a}:=\frac{1}{2\pi}\Delta L_{a}.
\]
If $a$ is self-adjoint, the Brown measure of $a$ coincides with the spectral
measure. If $a=c_{t}$ is the free circular Brownian motion at time $t$, its
Brown measure $\mu_{c_{t}}$ is equal to the uniform probability measure on the
disk of radius $\sqrt{t}$.

By regularizing the right-hand side of (\ref{e.FKdet}), one can construct the
Brown measure $\mu_{a}$ as a weak limit,
\begin{equation}
d\mu_{a}(\lambda)=\frac{1}{4\pi}%
\Delta_{\lambda}\lim_{\varepsilon\rightarrow0^{+}}\tau\lbrack\log[(a-\lambda)^{\ast}(a-\lambda)+\varepsilon
)]~d\lambda, \label{weakLim}%
\end{equation}
where $\Delta_{\lambda}$ is the Laplacian with respect to $\lambda$ and
$d\lambda$ is the Lebesgue measure on the plane. (See \cite[Section 11.5]{MS}
and \cite[Eq.\ (2.11)]{HK}.) It is not hard to see that the Brown measure of
$a$ is determined by the $\ast$-moments of $a,$ but the dependence is
singular: if a sequence of operators $a_{n}$ converges in $\ast$-distribution
to $a$, the Brown measures $\mu_{a_{n}}$ need not converge to $\mu_{a}$.

\subsection{Brownian motions on $\mathsf{U}(N)$, $\mathsf{GL}(N;\mathbb{C})$,
and their large-$N$ limits}

As we have noted, the Gaussian unitary ensemble and the Ginibre ensemble can
be described in terms of Brownian motions on the Lie algebras $\mathrm{u}(N)$
and $\mathrm{gl}(N;\mathbb{C})$. Specifically, the Brownian motions are
induced by a choice of inner product on these finite-dimensional vector
spaces; in both cases, we use the inner product
\[
\left\langle X,Y\right\rangle _{N}=N\operatorname{Re}\mathrm{trace}(X^{\ast
}Y).
\]
(The factor of $N$ in the definition produces the scaling of $1/N$ in the
variances of the two ensembles.) It is natural to consider the counterpart
\emph{Brownian motions on the Lie groups} $\mathsf{U}(N)$ and $\mathsf{GL}%
(N;\mathbb{C})$. In general, if $G\subset\mathsf{M}_{N}(\mathbb{C})$ is a
matrix Lie group with Lie algebra $\mathfrak{g}\subset\mathsf{M}%
_{N}(\mathbb{C})$, there is a simple relationship between the Brownian motion
$B_{t}$ on $G$ and the Brownian motion $A_{t}$ on $\mathfrak{g}$ (the latter
being the standard Brownian motion determined by an inner product on
$\mathfrak{g}$). It is known as the \emph{rolling map}, and it can be written
as a Stratonovitch stochastic differential equation (SDE):
\[
dB_{t}=B_{t}\circ dA_{t},\qquad B_{0}=I.
\]

The solution of this SDE is a diffusion process on $G$ whose increments
(computed in the left multiplicative sense) are independent and whose
generator is half the Laplacian on $G$ determined by the left-invariant
Riemannian metric induced by the given inner product on $\mathfrak{g}$. Thus,
its distribution at each time is the heat kernel on the group. For
computational purposes, it is useful to write the SDE in It\^{o} form; the
result depends on the structure of the group. Letting $U_{t}=U_{t}^{N}$ denote
the Brownian motion on $\mathsf{U}(N)$ and $B_{t}=B_{t}^{N}$ the Brownian
motion on $\mathsf{GL}(N;\mathbb{C})$, the corresponding It\^{o} SDEs are
\begin{align*}
dU_{t}  &  =iU_{t}\,dX_{t}-\textstyle{\frac{1}{2}}U_{t}\,dt\\
dB_{t}  &  =B_{t}\,dZ_{t}.
\end{align*}

It is then natural to investigate the large-$N$ limits of these random matrix
processes. The candidate large-$N$ limits are the \emph{free} stochastic
processes generated by the analogous \emph{free} SDEs:
\begin{align}
du_{t}  &  =iu_{t}\,ds_{t}-\textstyle{\frac{1}{2}}u_{t}\,dt\label{SDE.f.bt}\\
db_{t}  &  =b_{t}\,dc_{t}.
\end{align}
(For the theory of free stochastic calculus, see \cite{BS1,BS2, KNPS}.) In
1997, Biane \cite{BianeFields,BianeJFA} introduced these processes, the
\textbf{free unitary Brownian motion} $u_{t}$, and the \textbf{free
multiplicative Brownian motion} $b_{t}$. (He called $b_{t}=\Lambda_{t}$, and
wrote a slightly different but equivalent free SDE for it.) The main result of
\cite{BianeFields} was the theorem that the process $u_{t}$ is indeed the
large-$N$ limit in $\ast$-distribution of the unitary Brownian motion
$U_{t}=U_{t}^{N}$. Since $U_{t}$ and $u_{t}$ are unitary (hence normal)
operators, this also means that the empirical eigenvalue distribution of
$U_{t}^{N}$ converges to the spectral measure of $u_{t}$.

Biane also computed the spectral measure $\nu_{t}$ of $u_{t}.$ We now record
this result, since it relates closely to the results of the present paper. Let
$f_{t}$ denote the holomorphic function on $\mathbb{C}\setminus\{1\}$ defined
by
\begin{equation}
f_{t}(\lambda)=\lambda e^{\frac{t}{2}\frac{1+\lambda}{1-\lambda}}.
\label{ftDef}%
\end{equation}
Then $f_{t}$ has a holomorphic inverse $\chi_{t}$ in the open unit disk, and
$\chi_{t}$ extends continuously to the closed unit disk. Biane showed that
\[
\chi_{t}=\frac{\psi_{u_{t}}}{1+\psi_{u_{t}}}%
\]
where $\psi_{u_{t}}(z)=\tau\lbrack(1-zu_{t})^{-1}]-1$ is the (recentered)
moment-generating function of $u_{t}$. From this (and other SDE computations)
he determined the following result.

\begin{theorem}
[Biane, 1997 \cite{BianeFields,BianeJFA}]\label{t.Biane.ut} The spectral
measure $\nu_{t}$ of the free unitary Brownian motion $u_{t}$ is supported in
the arc
\[
\left\{  \left.  e^{i\phi}\right\vert ~|\phi|<\phi_{\mathrm{max}}(t):=\frac
{1}{2}\sqrt{(4-t)t}+\cos^{-1}(1-t/2)\right\}
\]
for $t<4$, and is fully supported on the circle for $t\geq4$. The measure
$\nu_{t}$ has a continuous density $\kappa_{t}$, which is real analytic on the
interior of its support arc, given by
\[
\kappa_{t}(e^{i\phi})=\frac{1}{2\pi}\frac{1-\left\vert \chi_{t}(e^{i\phi
})\right\vert ^{2}}{\left\vert 1-\chi_{t}(e^{i\phi})\right\vert ^{2}}.
\]

\end{theorem}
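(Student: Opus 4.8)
The plan is to translate every assertion about $\nu_{t}$ into boundary data for $\chi_{t}$ on the unit circle, using the identity $\chi_{t}=\psi_{u_{t}}/(1+\psi_{u_{t}})$ and the continuity of $\chi_{t}$ on $\overline{\mathbb{D}}$ recorded above, both of which I take as given. (They are the free-stochastic input: because $b_{t}$ has freely independent multiplicative increments, $t\mapsto\nu_{t}$ is a free multiplicative convolution semigroup, so $\log S_{\nu_{t}}$ is linear in $t$ with slope read off from the free It\^{o} equation~(\ref{SDE.f.bt}) applied to $\tau[(1-zu_{t})^{-1}]$; unwinding the $S$- and $\Sigma$-transforms in terms of $\psi_{u_{t}}$ shows $\chi_{t}$ inverts $f_{t}$, with the branch fixed by the normalization $u_{0}=I$, $\psi_{u_{0}}(z)=z/(1-z)$, $\chi_{0}=\mathrm{id}=f_{0}^{-1}$.)

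For the density I would use the Herglotz representation. The moments $\tau[u_{t}^{n}]$ are real (by Biane's moment formula, or by conjugation symmetry of $\nu_{t}$), so $z\mapsto1+2\psi_{u_{t}}(z)=\int_{\mathbb{T}}\frac{\xi+z}{\xi-z}\,d\nu_{t}(\xi)$ is the Herglotz transform of $\nu_{t}$; rearranging $\chi_{t}=\psi_{u_{t}}/(1+\psi_{u_{t}})$ gives $1+2\psi_{u_{t}}=(1+\chi_{t})/(1-\chi_{t})$, so $\operatorname{Re}\left(1+2\psi_{u_{t}}\right)=(1-|\chi_{t}|^{2})/|1-\chi_{t}|^{2}$. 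Since $\operatorname{Re}\left(1+2\psi_{u_{t}}(z)\right)$ is the Poisson integral of $\nu_{t}$, taking radial limits $z=re^{i\theta}\to e^{i\theta}$ and using Fatou's theorem / Poisson inversion (legitimate because $\chi_{t}$, hence $1+2\psi_{u_{t}}$, is continuous on $\overline{\mathbb{D}}\setminus\{1\}$) identifies the absolutely continuous part of $\nu_{t}$ with the stated density $\kappa_{t}(e^{i\theta})=\frac{1}{2\pi}\frac{1-|\chi_{t}(e^{i\theta})|^{2}}{|1-\chi_{t}(e^{i\theta})|^{2}}$. A short argument that the boundary values of $1+2\psi_{u_{t}}$ are finite, and that the lone possible atom (at $1$) carries no mass, then shows this density is all of $\nu_{t}$.

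For the support, note that $\kappa_{t}$ vanishes exactly where $|\chi_{t}(e^{i\theta})|=1$. Put $\Omega_{t}=\chi_{t}(\mathbb{D})$. Since $1+2\psi_{u_{t}}$ has positive real part on $\mathbb{D}$, $\psi_{u_{t}}(\mathbb{D})\subseteq\{\operatorname{Re}w>-\tfrac12\}$, and since $w\mapsto w/(1+w)$ maps that half-plane conformally onto $\mathbb{D}$ we get $\Omega_{t}\subseteq\mathbb{D}$; an open-mapping argument then shows $\Omega_{t}$ is the component of $\{\lambda:|f_{t}(\lambda)|<1\}$ containing $0$. Using $\chi_{t}(\mathbb{T})=\partial\Omega_{t}$ and the fact that $f_{t}(\chi_{t}(e^{i\theta}))=e^{i\theta}$ whenever $\chi_{t}(e^{i\theta})\ne1$ (by continuity), one gets $\{e^{i\theta}:|\chi_{t}(e^{i\theta})|=1\}=f_{t}(\partial\Omega_{t}\cap\mathbb{T})$. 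Two elementary computations then finish the job. First, for $\lambda=e^{i\phi}$ we have $\frac{1+\lambda}{1-\lambda}=i\cot(\phi/2)$, so $|f_{t}(e^{i\phi})|=1$ and $f_{t}(e^{i\phi})=e^{ig_{t}(\phi)}$ with $g_{t}(\phi)=\phi+\tfrac{t}{2}\cot(\phi/2)$. Second, the outward radial derivative of $\log|f_{t}|$ at $e^{i\phi}$ equals $g_{t}'(\phi)=1-\tfrac{t}{4\sin^{2}(\phi/2)}$, which is positive iff $4\sin^{2}(\phi/2)>t$; hence for $t<4$ we have $\partial\Omega_{t}\cap\mathbb{T}=\{e^{i\phi}:\phi_{0}(t)\le|\phi|\le\pi\}$ with $\phi_{0}(t)=\arccos(1-t/2)$, while for $t\ge4$ it is at most the single point $-1$. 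On $[\phi_{0}(t),\pi]$, $g_{t}$ is increasing with $g_{t}(\pi)=\pi$ and $g_{t}(\phi_{0}(t))=\arccos(1-t/2)+\tfrac12\sqrt{t(4-t)}=\phi_{\mathrm{max}}(t)$, so $f_{t}$ carries $\partial\Omega_{t}\cap\mathbb{T}$ onto $\{e^{i\alpha}:\phi_{\mathrm{max}}(t)\le|\alpha|\le\pi\}$. Therefore $\{|\chi_{t}|=1\}\cap\mathbb{T}$ is the complementary arc, giving $\operatorname{supp}\nu_{t}=\{e^{i\alpha}:|\alpha|<\phi_{\mathrm{max}}(t)\}$ for $t<4$ and $\operatorname{supp}\nu_{t}=\mathbb{T}$ for $t\ge4$, using that $0<\phi_{\mathrm{max}}(t)<\pi$ on $(0,4)$ with $\phi_{\mathrm{max}}(4)=\pi$.

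Real analyticity of $\kappa_{t}$ on the interior of its support will follow because the corresponding part of $\partial\Omega_{t}$ lies inside $\mathbb{D}$, where $f_{t}$ is holomorphic with $f_{t}'\ne0$: by the inverse function theorem $\chi_{t}$ extends real-analytically across that open arc, and $\kappa_{t}$ is then a real-analytic function of $\chi_{t}$ there (the denominator $|1-\chi_{t}|^{2}$ being nonzero). The step I expect to be the main obstacle is exactly the one quoted from the excerpt: rigorously proving the identity $\chi_{t}=\psi_{u_{t}}/(1+\psi_{u_{t}})$ from the free SDE, together with the univalence and continuous boundary extension of $\chi_{t}$ — and within that, controlling the geometry of $\partial\Omega_{t}$ near the essential singularity of $f_{t}$ at $\lambda=1$. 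Once those are in hand, the density formula is bookkeeping with the Herglotz transform and the support is plane trigonometry.
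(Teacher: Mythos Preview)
The paper does not prove this theorem: it is quoted as a result of Biane, with the attribution ``See, for example, p.~275 in \cite{BianeJFA}.'' The two inputs you take as given --- the identity $\chi_{t}=\psi_{u_{t}}/(1+\psi_{u_{t}})$ and the continuous extension of $\chi_{t}$ to the closed disk --- are exactly what the paper records from Biane's work in the paragraph preceding the statement; nothing further is argued here.

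Your sketch of how to extract the density and support from those inputs is correct and is essentially Biane's own route. The Herglotz identity $1+2\psi_{u_{t}}=(1+\chi_{t})/(1-\chi_{t})$ together with Poisson inversion gives the density formula immediately. For the support, your computation is right: by Cauchy--Riemann for $\log f_{t}$, the outward normal derivative of $\log|f_{t}|$ on the circle equals $g_{t}'(\phi)=1-t/(4\sin^{2}(\phi/2))$, so $\partial\Omega_{t}\cap\mathbb{T}$ is the arc $\{|\phi|\ge\phi_{0}(t)\}$ for $t<4$, and $g_{t}$ carries $[\phi_{0}(t),\pi]$ monotonically onto $[\phi_{\max}(t),\pi]$. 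You have also correctly flagged the genuinely delicate steps --- univalence of $\chi_{t}$, its continuous boundary extension, the absence of a singular part, and control of $\partial\Omega_{t}$ near the essential singularity of $f_{t}$ at $\lambda=1$ --- as the places requiring real work; those are carried out in \cite{BianeJFA} (and, for related analytic subordination arguments, \cite{BianeSubordination}) rather than in the present paper.
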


See, for example, p. 275 in \cite{BianeJFA}. In the same papers
\cite{BianeFields,BianeJFA} in which he introduced $u_{t}$, Biane considered
the free multiplicative Brownian motion $b_{t}$ as well (for example,
computing its norm). He conjectured that it should be the large-$N$ limit of
the Brownian motion $B_{t}=B_{t}^{N}$ on $\mathsf{GL}(N;\mathbb{C})$, in
$\ast$-distribution. This was proved by Kemp in \cite{KempLargeN}, with
complementary estimates of moments given in \cite{KempHeatKernel}. The goal of
the present paper is to fully determine the Brown measure of the free
multiplicative Brownian motion $b_{t}$, giving the full complex analog of
Theorem \ref{t.Biane.ut}.

\subsection{The Brown measure of $b_{t}$\label{BrownOfbt}}

The main result of this paper is a formula for the Brown measure $\mu_{b_{t}}$
of the free multiplicative Brownian motion $b_{t}.$ We \emph{expect} that
$\mu_{b_{t}}$ coincides with the large-$N$ limit of the empirical eigenvalue
distribution of the Brownian motion $B_{t}^{N}$ on $\mathsf{GL}(N;\mathbb{C}%
)$. Techniques that can be used to prove results of this sort---that a
limiting eigenvalue distribution agrees with a Brown measure---have been
developed in the context of the general circular law, in which the entries are
independent and identically distributed but not necessarily Gaussian. Analysis
of this model began with the work of Girko \cite{Girko} and continued with
results of Bai \cite{Bai}, G\"{o}tze and Tikhomirov \cite{GT}, and ending with
the definitive version of the circular law established by Tao and Vu
\cite{TV}. All of these works compute the empirical eigenvalue distribution by
taking the Laplacian of the quantity $L_{A}$ in (\ref{LaN}). They then
consider the  limiting eigenvalue distribution of $(A-\lambda)^{\ast
}(A-\lambda)$ for each $\lambda,$ from which they compute (essentially) the
Brown measure of the limiting object, which in this case is uniform on a disk.
Then to prove convergence of the eigenvalue distribution of the random matrices
to this uniform measure, they develop techniques for controlling the
singularities of the logarithm function in (\ref{LaN}) at infinity and (especially) at zero. %

\begin{figure}[ptb]%
\centering
\includegraphics[
height=2.5365in,
width=4.8282in
]%
{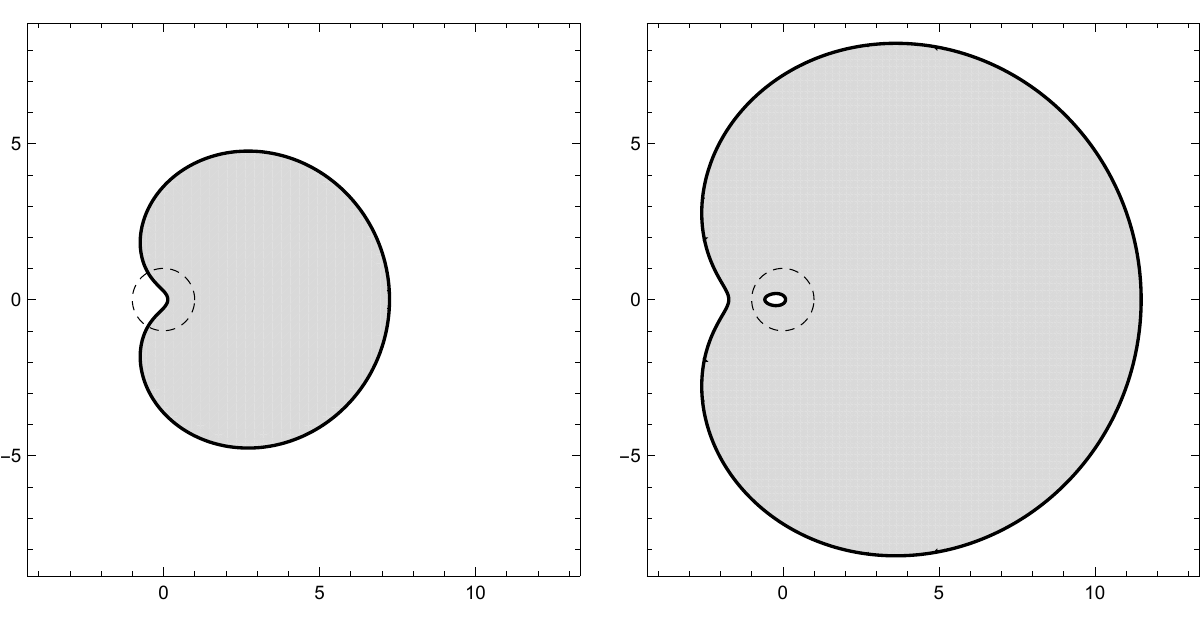}%
\caption{The regions $\Sigma_{t}$ with $t=3$ and $t=4.1$ with the unit circle
(dashed) indicated for comparison}%
\label{regions.fig}%
\end{figure}
%

\begin{figure}[ptb]%
\centering
\includegraphics[
height=2.6195in,
width=4.9882in
]%
{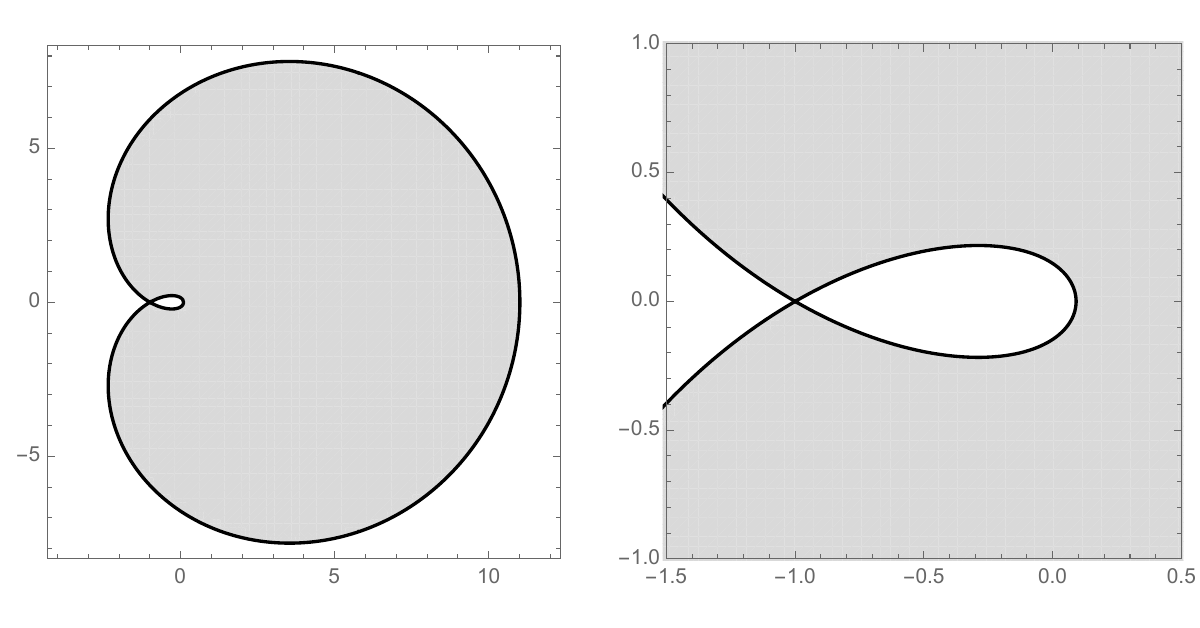}%
\caption{The region $\Sigma_{t}$ with $t=4$ (left) and a detail thereof
(right)}%
\label{t4region.fig}%
\end{figure}

A previous result \cite{HK} of the second and third authors showed that
$\mu_{b_{t}}$ is supported on the closure of a certain region $\Sigma_{t}$
introduced by Biane in \cite{BianeJFA}; see Figures \ref{regions.fig} and
\ref{t4region.fig}. (We reprove that result in the present paper by a
different method; see Theorem \ref{outside.thm} in Section \ref{outside.sec}.)
The proof in \cite{HK} is based on Biane's \textquotedblleft free Hall
transform\textquotedblright\ $\mathscr{G}_{t},$ introduced in\ \cite{BianeJFA}%
. This transform was conjectured by Biane to be the the large-$N$ limit of the
generalized Segal--Bargmann transform of Hall \cite{Ha1994,Hall2001}, a
conjecture that was verified independently by C\'{e}bron \cite{Ceb} and the
authors of the present paper \cite{DHKLargeN}. A key idea in Biane's work is
Gross and Malliavin's probabilistic interpretation \cite{GM} of the transform
in \cite{Ha1994}.

For each $\lambda$ outside $\overline{\Sigma}_{t},$ \cite{HK} uses
$\mathscr{G}_{t}$ to construct an \textquotedblleft inverse\textquotedblright%
\ of $b_{t}-\lambda$ and $(b_{t}-\lambda)^{2}.$ These inverses are not
necessarily bounded operators, but live in the noncommutative $L^{2}$ space,
that is, the completion of $\mathcal{A}$ with respect to the inner product
$\left\langle a,b\right\rangle :=\tau(a^{\ast}b)$. We then strengthen the
standard result that the Brown measure is supported on the spectrum of the
operator to show that existence of an $L^{2}$ inverse of $(b_{t}-\lambda)^{2}$
guarantees that $\lambda$ is outside the support of $\mu_{b_{t}}.$ We note,
however, that the methods of \cite{HK} do not give any information about the
distribution of the Brown measure $\mu_{b_{t}}$ inside the region $\Sigma
_{t}.$

\subsection{Connection to Physics}

The eigenvalue distribution of Brownian motion in $\mathsf{GL}(N;\mathbb{C})$,
in the large-$N$ limit, has been studied in the physics literature, first by
Gudowska-Nowak, Janik, Jurkiewicz, and Nowak \cite{Nowak} and then by
Lohmayer, Neuberger, and Wettig \cite{Lohmayer}. At least in the case of
\cite{Lohmayer}, the motivation for considering this model is a connection to
two-dimensional Yang--Mills theory. Yang--Mills quantum field theory is a key
part of the standard model in particle physics and the two-dimensional case
can be treated in a mathematically rigorous fashion. Two-dimensional
Yang--Mills theory is a much-studied model, in part as a toy model of the
four-dimensional theory and in part because of its connections to string
theory \cite{DGross,GrossTaylor}.

Yang--Mills theory with structure group $G$ describes a random connection on a
principal $G$-bundle. One typically studies the theory through the associated
\textit{Wilson loop functionals}, given by the expectation value of the trace
of the holonomy around a loop. Assume at first that $G$ is compact, that the
spacetime manifold is the plane, and that the loop is a simple closed curve in
the plane. Then the distribution of the holonomy is described by Brownian
motion in $G,$ with time-parameter proportional to the area enclosed by the
curve. (See works of Driver \cite{Driver89} and Gross--King--Sengupta
\cite{GKS} and the references therein.)

Of particular importance is the case $G=\mathsf{U}(N),$ with $N$ tending to
infinity; the resulting theory is called the \textquotedblleft master
field.\textquotedblright\ The master field in the plane is therefore built
around the large-$N$ limit of Brownian motion in $\mathsf{U}(N),$ i.e., the
free unitary Brownian motion $u_{t}.$ (See works of Singer \cite{Singer},
Anshelevich--Sengupta \cite{AS}, and L\'{e}vy \cite{Levy}.) In this context,
the change in behavior of $u_{t}$ at $t=4$---in which the support of the
spectral measure wraps all the way around the circle---is called a
\textquotedblleft topological phase transition.\textquotedblright\ (See also
\cite{DHK-MM1}, \cite{DGHK-MM2}, \cite{DN}, and \cite{HallS2} for recent
progress constructing a rigorous large-$N$ Yang--Mills theory on surfaces
other than the plane.)

Although Yang--Mills theory is typically constructed when the structure group
$G$ is compact, it requires only a small step of imagination to consider also
the case $G=\mathsf{GL}(N;\mathbb{C}).$ Thus, in \cite{Lohmayer}, Brownian
motion in $\mathsf{GL}(N;\mathbb{C})$ is considered as a sort of
\textquotedblleft complex Wilson loop\textquotedblright\ computation. It is of
interest to determine whether the large-$N$ limit---namely the free
multiplicative Brownian motion $b_{t}$---still has a topological phase
transition at $t=4.$

The papers \cite{Nowak} and \cite{Lohmayer} both derive, using nonrigorous
methods, the region into which the eigenvalues of $B_{t}^{N}$ cluster in the
large-$N$ limit. Both papers find this domain to be precisely the region
$\Sigma_{t}$ considered in the present work. Since $\Sigma_{t}$ wraps around
the origin precisely at time $t=4,$ the authors conclude that indeed the
topological phase transition persists after the change from $\mathsf{U}(N)$ to
$\mathsf{GL}(N;\mathbb{C}).$ The paper \cite{Lohmayer} also considers a
two-parameter extension of the Brownian motion of the sort considered in
\cite{DHKLargeN,Ho,KempLargeN}, and finds that the eigenvalues cluster into
the domain denoted $\Sigma_{s,t}$ in \cite{Ho}. A rigorous version of these
results---specifically, that the Brown measure of the relevant free Brownian
motion is supported in $\overline{\Sigma}_{t}$ or $\overline{\Sigma}_{s,t}%
$---was then obtained by the second and third authors in \cite{HK}.

We emphasize that the papers \cite{Nowak}, \cite{Lohmayer}, and \cite{HK} are
concerned only with the \textit{region} into which the eigenvalues cluster.
Nothing is said there about how the eigenvalues are \textit{distributed} in
the region. By contrast, in the present work, we not only prove (again) that
the Brown measure of $b_{t}$ is supported in $\overline{\Sigma}_{t},$ we
actually \textit{compute} the Brown measure (Theorem \ref{main.thm}).
Furthermore, we not only see the same transition at $t=4$ for the
$\mathsf{GL}(N;\mathbb{C})$ case as for the $\mathsf{U}(N)$ case, we actually
find a \textit{direct connection} (Proposition \ref{connectToBiane2.prop})
between the Brown measure of $b_{t}$ and the spectral measure of $u_{t}.$

\subsection{Subsequent work}

Since the first version of this paper appeared on the arXiv, three subsequent
works have appeared that use the techniques developed here to analyze Brown
measures of other operators. First, work of Ho and Zhong \cite{HZ} has
extended the results of the present paper to the case of a free multiplicative
Brownian motion with an arbitrary unitary initial condition. This means that
they compute the Brown measure of $ub_{t},$ where $u$ is a unitary element
freely independent of $b_{t}.$ Ho and Zhong also compute the Brown measure of
$x_{0}+c_{t},$ where $c_{t}$ is a free circular Brownian motion and $x_{0}$ is
a self-adjoint element freely independent of $c_{t}.$ Second, Demni and Hamdi
\cite{DH} have analyzed the support of the Brown measure of $u_{t}P,$ where
$u_{t}$ is the free unitary Brownian motion and $P$ is a projection freely
independent of $u_{t}.$ Last, Hall and Ho \cite{HH} have computed the Brown
measure of $x_{0}+ix_{t},$ where $x_{t}$ is the free additive Brownian motion
and $x_{0}$ is a self-adjoint element freely independent of $x_{t}.$

The reader may also consult the expository article \cite{PDEmethods} by the
second author, which provides a nontechnical introduction to the techniques
used in the present paper.

\section{Statement of main results}

\subsection{A formula for the Brown measure}

In this paper, we compute the Brown measure $\mu_{b_{t}}$ of the free
multiplicative Brownian motion $b_{t},$ using completely different methods
from those in \cite{HK}. To state our main result, we need to briefly describe
the regions $\Sigma_{t}.$ For each $t>0,$ consider the holomorphic function
$f_{t}$ on $\mathbb{C}\setminus\{1\}$ defined by (\ref{ftDef}). It is easily
verified that if $\left\vert \lambda\right\vert =1$ then $\left\vert
f_{t}(\lambda)\right\vert =1.$ There are, however, \textit{other} points where
$\left\vert f_{t}(\lambda)\right\vert =1.$ We then define%
\begin{equation}
F_{t}=\left\{  \lambda\in\mathbb{C}\left\vert \left\vert \lambda\right\vert
\neq1,~\left\vert f_{t}(\lambda)\right\vert =1\right.  \right\}  \label{FtDef}%
\end{equation}
and%
\begin{equation}
E_{t}=\overline{F}_{t}. \label{EtDef}%
\end{equation}

\begin{definition}
\label{SigmaT.def}For each $t>0,$ we define $\Sigma_{t}$ to be the connected
component of the complement of $E_{t}$ containing 1.
\end{definition}

We will show (Theorem \ref{domainGobbles.thm}) that $\Sigma_{t}$ may also be
characterized as%
\begin{equation}
\Sigma_{t}=\left\{  \left.  \lambda\in\mathbb{C}\right\vert T(\lambda
)<t\right\}  , \label{SigmaTwithT}%
\end{equation}
where the function $T$ is defined in (\ref{Tlambda}). Each region $\Sigma_{t}$
is invariant under the maps $\lambda\mapsto1/\lambda$ and $\lambda\mapsto
\bar{\lambda}.$ If we consider a ray from the origin with angle $\theta,$ if
this ray intersects $\Sigma_{t}$ at all, it does so in an interval of the form
$1/r_{t}(\theta)<r<r_{t}(\theta)$ for some $r_{t}(\theta)>1.$ (See Figures
\ref{r1r2.fig} and \ref{rplots.fig}.) See Section \ref{regionPropeties.sec}
for more information.%

\begin{figure}[ptb]%
\centering
\includegraphics[
height=2.5979in,
width=2.5278in
]%
{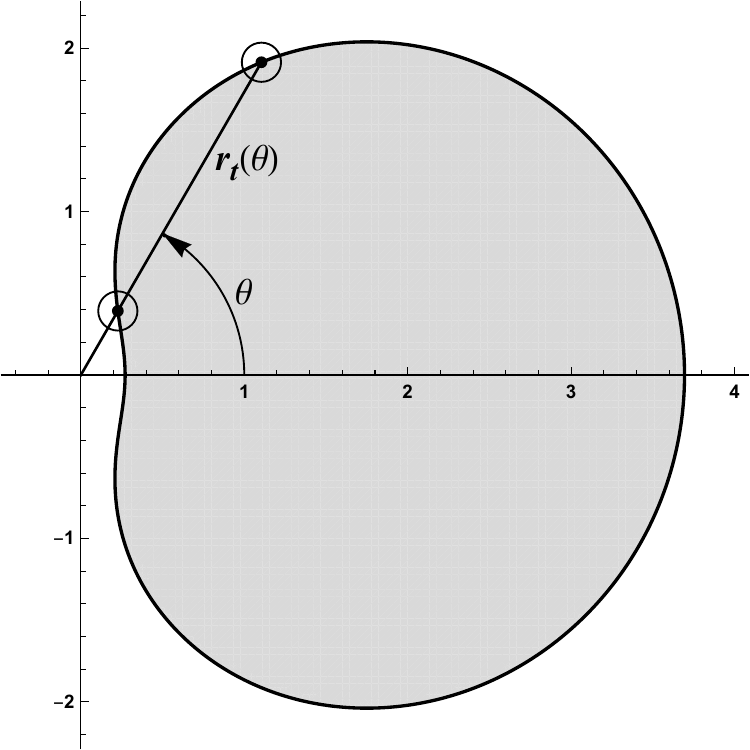}%
\caption{We let $r_{t}(\theta)$ denote the larger of the two radii where the
ray with angle $\theta$ intersects $\partial\Sigma_{t}.$ Shown for $t=1.5$}%
\label{r1r2.fig}%
\end{figure}

We are now ready to state our main result.

\begin{theorem}
\label{main.thm}For all $t>0,$ the Brown measure $\mu_{b_{t}}$ of $b_{t}$ is
absolutely continuous with respect to the Lebesgue measure on the plane and
supported in the domain $\overline{\Sigma}_{t}$. In $\Sigma_{t},$ the density
$W_{t}$ of $\mu_{b_{t}}$ with respect to the Lebesgue measure is strictly
positive and real analytic, with the following form in polar coordinates:%
\begin{equation}
W_{t}(r,\theta)=\frac{1}{r^{2}}w_{t}(\theta) \label{WtFormula}%
\end{equation}
for a certain even function $w_{t}.$ This function may be computed as%
\begin{equation}
w_{t}(\theta)=\frac{1}{4\pi}\left(  \frac{2}{t}+\frac{\partial}{\partial
\theta}\frac{2r_{t}(\theta)\sin\theta}{r_{t}(\theta)^{2}+1-2r_{t}(\theta
)\cos\theta}\right)  \label{wtAnswer}%
\end{equation}
where $r_{t}(\theta)$ is the larger of the two radii where the ray with angle
$\theta$ intersects the boundary of $\Sigma_{t}.$
\end{theorem}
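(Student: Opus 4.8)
The plan is to compute the Brown measure via the regularized formula \eqref{weakLim}, reducing the problem to a PDE analysis of the Hermitization. Specifically, I would introduce the $2\times 2$ operator-valued function
\[
G(\lambda,x)=\tau\!\left[\log\!\bigl((b_t-\lambda)^\ast(b_t-\lambda)+x\bigr)\right],
\]
so that $\mu_{b_t}=\tfrac{1}{4\pi}\lim_{x\to 0^+}\Delta_\lambda G(\lambda,x)\,d\lambda$. The key structural input is that $b_t$ solves the free SDE $db_t=b_t\,dc_t$, which should translate — via free stochastic calculus and the subordination/flow techniques of Biane — into a first-order PDE in $t$ for $G$ (or for a related resolvent-type quantity), coupled with the elliptic regularization parameter $x$. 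The first step is therefore to derive this PDE: differentiate $\tau[\log(\cdots)]$ in $t$ using the It\^o formula for $b_t$, express the result in terms of the $\mathbb{C}$-valued function
\[
S_t(\lambda,x)=\tau\!\left[\bigl((b_t-\lambda)^\ast(b_t-\lambda)+x\bigr)^{-1}\right]
\]
and its companions, and obtain a closed evolution equation. I expect this to take the form of a complex Hamilton--Jacobi / transport equation whose characteristics can be solved explicitly.

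Second, I would solve the PDE by the method of characteristics. The characteristic curves should move $\lambda$ along trajectories governed by $f_t$ (the function in \eqref{ftDef}), consistent with the fact that $\partial\Sigma_t$ is defined through $|f_t(\lambda)|=1$; along these curves the relevant quantities are conserved or satisfy simple ODEs, and the initial condition at $t=0$ is explicit since $b_0=1$ (so $G(\lambda,x)|_{t=0}=\log(|1-\lambda|^2+x)$). Tracking where characteristics originating from $|\lambda|\ne 1$ reach a given $(\lambda,t)$ should reproduce precisely the dichotomy $\lambda\in\Sigma_t$ versus $\lambda\notin\overline{\Sigma}_t$, and in particular re-prove Theorem~\ref{outside.thm}. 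Inside $\Sigma_t$, the characteristic computation yields an explicit formula for $\lim_{x\to 0^+}S_t(\lambda,x)$, hence for $\lim_{x\to 0^+}G(\lambda,x)$ up to a harmonic ambiguity.

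Third, I would extract the density by taking the Laplacian. Here the polar structure $W_t(r,\theta)=\tfrac{1}{r^2}w_t(\theta)$ should emerge because the limiting $G$ turns out to depend on $r=|\lambda|$ only through $\log r$ in a controlled way — more precisely, because the region $\Sigma_t$ is invariant under $\lambda\mapsto 1/\lambda$ and the characteristic flow respects a certain scaling, the limiting potential is of the form $a_t(\theta)\log r + (\text{terms quadratic in }\log r)+\cdots$, and applying $\Delta=\partial_r^2+\tfrac1r\partial_r+\tfrac1{r^2}\partial_\theta^2$ gives a $1/r^2$ prefactor times a function of $\theta$ alone. Matching the boundary data at $r=r_t(\theta)$ — where the density must vanish or match continuously onto the exterior harmonic function, and where $r_t(\theta)$ is determined by $|f_t(r_t(\theta)e^{i\theta})|=1$ — pins down the constant $2/t$ and the $\theta$-derivative term in \eqref{wtAnswer}. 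Strict positivity and real analyticity on $\Sigma_t$ follow from the explicit formula together with geometric properties of $r_t(\theta)$ (smoothness and the sign of the bracket), to be established in Section~\ref{regionPropeties.sec}.

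The main obstacle, I expect, is rigorously justifying the PDE derivation and the interchange of limits: the free It\^o computation for $\tau[\log(\cdots)]$ requires care because $\log$ is unbounded near $0$, so one must work at fixed $x>0$ where everything is smooth, derive the PDE there, solve it, and only then analyze the $x\to 0^+$ limit — controlling the limit uniformly enough to conclude both the absolute continuity of $\mu_{b_t}$ and the convergence of Laplacians in the distributional sense. A secondary difficulty is showing that the characteristic solution is \emph{global} and single-valued on all of $\Sigma_t$ (no shocks form before the relevant time), which is where the specific geometry of $\Sigma_t$ and the function $T(\lambda)$ in \eqref{SigmaTwithT} must be used; equivalently, one needs that $T(\lambda)<t$ is exactly the condition for the backward characteristic from $(\lambda,t)$ to reach the initial circle $|\lambda|=1$ without singularity.
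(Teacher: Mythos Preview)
Your proposal is essentially the paper's own approach: derive a first-order nonlinear PDE for $S(t,\lambda,x)=\tau[\log((b_t-\lambda)^\ast(b_t-\lambda)+x)]$ via free It\^o calculus, analyze it by Hamilton--Jacobi characteristics, and then pass to the limit $x\to 0^+$ separately inside and outside $\Sigma_t$, using boundary matching of $s_t$ and its first derivatives to pin down the $\theta$-dependence. Two details come out slightly differently than you anticipate: the characteristic curves preserve $\arg\lambda$ and move purely radially (the link to $f_t$ enters only through the blow-up time $t_\ast(\lambda_0,x_0)$ and the boundary condition $T(\lambda)=t$, not through the flow itself), and the density $W_t$ does \emph{not} vanish on $\partial\Sigma_t$---rather, it is $s_t$ and $\nabla s_t$ that match continuously across the boundary, which is what makes the distributional Laplacian carry no singular part there.
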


Since $\Sigma_{t}$ is invariant under $\lambda\mapsto\bar{\lambda},$ the
function $r_{t}(\theta)$ is an even function of $\theta$, from which it is
easy to check that the second term on the right-hand side of of
(\ref{wtAnswer}) is also an even function of $\theta.$ Although we will
customarily let $r_{t}(\theta)$ denote the \textit{larger} of the the two
radii, we note that
\begin{equation}
r\mapsto\frac{2r\sin\theta}{r^{2}+1-2r\cos\theta} \label{dsdtheta}%
\end{equation}
is invariant under $r\mapsto1/r.$ Thus, the value of $w_{t}$ does not actually
depend on which radius is used. It is noteworthy that the one nonexplicit part
of the formula for $w_{t},$ namely the second term on the right-hand side of
(\ref{wtAnswer}), is computable entirely in terms of the geometry of the
region $\Sigma_{t}.$ According to Proposition \ref{dPhiDtheta.prop}, $w_{t}$
can also be computed as a logarithmic derivative along the boundary of
$\Sigma_{t}$ of the function $f_{t}$ in (\ref{ftDef}).%

\begin{figure}[ptb]%
\centering
\includegraphics[
height=2.9948in,
width=4.8282in
]%
{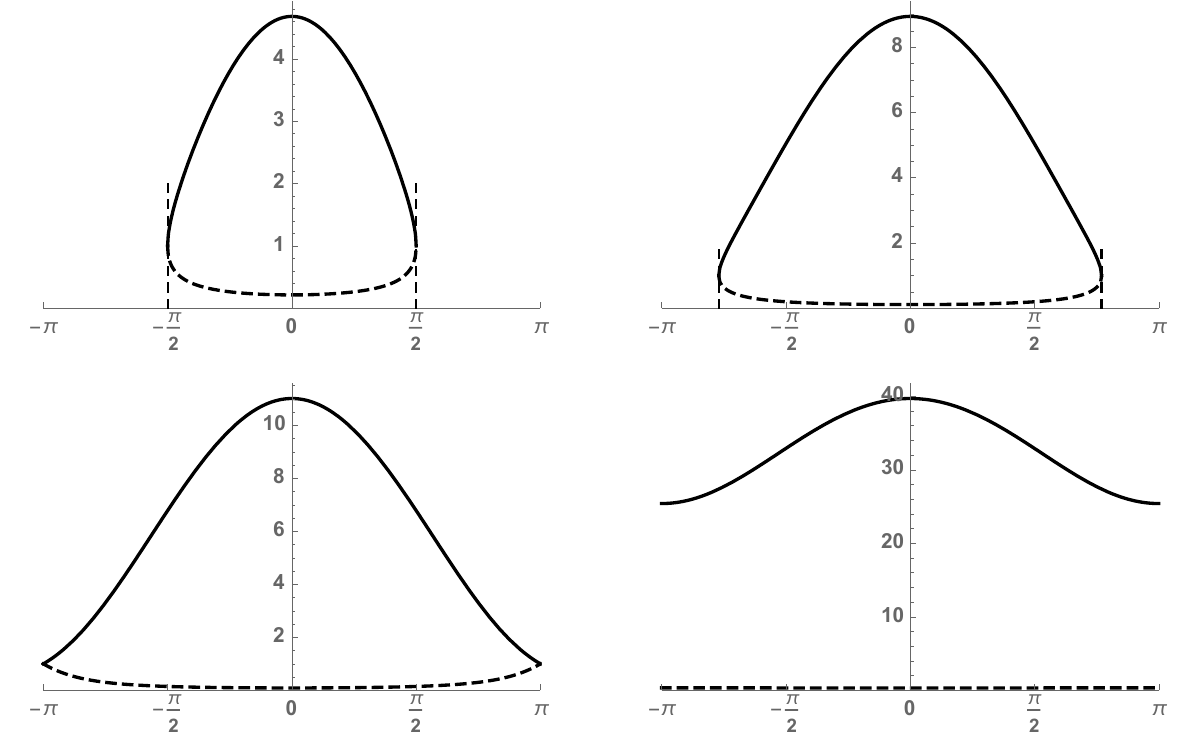}%
\caption{Graphs of $r_{t}(\theta)$ (black) and $1/r_{t}(\theta)$ (dashed) for
$t=2,$ 3.5, 4, and 7}%
\label{rplots.fig}%
\end{figure}

It follows from (\ref{SigmaTwithT}) that the function $T$ equals $t$ on the
boundary of $\Sigma_{t}.$ It is then possible to use implicit differentiation
in the equation $T(\lambda)=t$ to compute $dr_{t}(\theta)/d\theta$ as a
function of $r_{t}(\theta)$ and $\theta.$ We may then use this computation to
rewrite (\ref{wtAnswer}) in a form that no longer involves a derivative with
respect to $\theta,$ as follows.

\begin{proposition}
\label{omega.prop}The function $w_{t}$ in Theorem \ref{main.thm} may also be
computed in the form%
\[
w_{t}(\theta)=\frac{1}{2\pi t}\omega(r_{t}(\theta),\theta).
\]
Here%
\begin{equation}
\omega(r,\theta)=1+h(r)\frac{\alpha(r)\cos\theta+\beta(r)}{\beta(r)\cos
\theta+\alpha(r)}, \label{omegaFormula}%
\end{equation}
where%
\[
h(r)=r\frac{\log(r^{2})}{r^{2}-1};\quad\alpha(r)=r^{2}+1-2rh(r);\quad
\beta(r)=(r^{2}+1)h(r)-2r.
\]

\end{proposition}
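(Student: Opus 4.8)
The plan is to derive Proposition \ref{omega.prop} directly from formula (\ref{wtAnswer}) in Theorem \ref{main.thm} by carrying out the implicit differentiation that is already advertised in the surrounding text. Write $r=r_t(\theta)$ and let $s(r,\theta)=\frac{2r\sin\theta}{r^2+1-2r\cos\theta}$, so that (\ref{wtAnswer}) reads $w_t(\theta)=\frac{1}{4\pi}\left(\frac{2}{t}+\frac{d}{d\theta}s(r_t(\theta),\theta)\right)$. By the chain rule, $\frac{d}{d\theta}s(r_t(\theta),\theta)=\partial_\theta s+ (\partial_r s)\, r_t'(\theta)$, so the whole problem reduces to two computations: (i) the partial derivatives $\partial_\theta s$ and $\partial_r s$, which are routine rational-function differentiations; and (ii) an expression for $r_t'(\theta)$ obtained by implicitly differentiating the defining relation of $\partial\Sigma_t$. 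For the latter I would use the characterization (\ref{SigmaTwithT})–(\ref{SigmaTwithT}) that $T(\lambda)=t$ on $\partial\Sigma_t$, or equivalently the fact from (\ref{FtDef})–(\ref{EtDef}) that $|f_t(\lambda)|=1$ on the relevant boundary arc, where $f_t$ is given by (\ref{ftDef}); in polar coordinates $\lambda=re^{i\theta}$ this is a single real equation $G(r,\theta)=0$, and $r_t'(\theta)=-G_\theta/G_r$.

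The key intermediate step is to produce $r_t'(\theta)$ in a usable closed form. Taking $\log|f_t(re^{i\theta})|=0$ and writing out $\log|f_t(\lambda)|=\log|\lambda|+\frac{t}{2}\operatorname{Re}\frac{1+\lambda}{1-\lambda}$, one computes $\operatorname{Re}\frac{1+\lambda}{1-\lambda}=\frac{1-|\lambda|^2}{|1-\lambda|^2}=\frac{1-r^2}{r^2+1-2r\cos\theta}$. So the boundary equation becomes
\[
\log r+\frac{t}{2}\,\frac{1-r^2}{r^2+1-2r\cos\theta}=0,
\]
which in particular lets us solve for $t$ in terms of $(r,\theta)$ on the boundary — this is exactly where the factor $\frac{1}{2\pi t}$ in Proposition \ref{omega.prop} comes from, and it explains the appearance of $h(r)=r\frac{\log(r^2)}{r^2-1}$, since $\frac{t}{2}=\frac{\log r}{r^2-1}(r^2+1-2r\cos\theta)$ rearranges to $\frac{2}{t}=\frac{2\log r}{(r^2-1)}\cdot\frac{1}{r^2+1-2r\cos\theta}$, i.e. $\frac{2}{t}(r^2+1-2r\cos\theta)=\frac{(r^2+1-2r\cos\theta)\cdot 2\log r}{r^2-1}$ and one recognizes $h(r)/r$ inside. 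Differentiating the boxed relation implicitly with respect to $\theta$ (holding $t$ fixed) gives a linear equation for $r_t'(\theta)$ whose solution, after clearing denominators, will be a rational function of $r$ and $\cos\theta$, $\sin\theta$ with the quantities $\alpha(r)=r^2+1-2rh(r)$ and $\beta(r)=(r^2+1)h(r)-2r$ emerging as the natural groupings of coefficients.

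The final step is purely algebraic: substitute the expressions for $\partial_\theta s$, $\partial_r s$, and $r_t'(\theta)$ into $w_t(\theta)=\frac{1}{4\pi}(\frac{2}{t}+\partial_\theta s+(\partial_r s)r_t'(\theta))$, use the boxed boundary relation to eliminate $t$ in favor of $(r,\theta)$, and simplify. The target is to show everything collapses to $\frac{1}{2\pi t}\bigl(1+h(r)\frac{\alpha(r)\cos\theta+\beta(r)}{\beta(r)\cos\theta+\alpha(r)}\bigr)$ with $r=r_t(\theta)$. I expect the main obstacle to be precisely this simplification: the raw expression will have denominators of the form $(r^2+1-2r\cos\theta)^2$ and the $G_r$ factor from implicit differentiation, and it is not a priori obvious that the $\sin^2\theta$ terms reorganize into the clean $\cos\theta$-only rational function displayed in (\ref{omegaFormula}). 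The right bookkeeping trick is likely to introduce $h(r)$ early (via the boundary equation, which says $\log(r^2)=\frac{t(r^2-1)}{r^2+1-2r\cos\theta}$, hence $\cos\theta=\frac{r^2+1}{2r}-\frac{\log(r^2)(r^2+1-2r\cos\theta)}{2rt}$ type substitutions) so that $\cos\theta$ and $h(r)$ can be traded for one another, and to verify the $r\mapsto 1/r$ symmetry noted after Theorem \ref{main.thm} as a consistency check at each stage — both $h(r)$ and the combination $\frac{\alpha(r)\cos\theta+\beta(r)}{\beta(r)\cos\theta+\alpha(r)}$ should transform correctly, since $s(r,\theta)$ is $r\mapsto1/r$ invariant by (\ref{dsdtheta}). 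Once the rational identity in $\cos\theta$ is confirmed (it suffices to check equality of two low-degree polynomials in $\cos\theta$ with coefficients in $r$ and $\log r$), the proposition follows.
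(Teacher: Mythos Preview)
Your plan is correct and matches the paper's approach essentially exactly: implicit differentiation of the boundary relation $T(r_t(\theta),\theta)=t$ to obtain $r_t'(\theta)=-T_\theta/T_r$, followed by the chain rule and algebraic simplification. The paper's one efficiency, which you yourself anticipate as ``the right bookkeeping trick,'' is to use the boundary relation \emph{before} differentiating rather than after: since $T(r_t(\theta),\theta)=t$ gives $\frac{1}{r_t^2+1-2r_t\cos\theta}=\frac{1}{t}\frac{\log(r_t^2)}{r_t^2-1}$, one may rewrite $s(r_t(\theta),\theta)=\frac{2}{t}h(r_t(\theta))\sin\theta$ first, so that (\ref{wtAnswer}) becomes $w_t(\theta)=\frac{1}{2\pi t}\bigl(1+\tfrac{d}{d\theta}[h(r_t(\theta))\sin\theta]\bigr)$; this makes the subsequent differentiation and the emergence of $\alpha,\beta$ considerably cleaner than differentiating $s$ directly and eliminating $t$ afterward.
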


Thus, to compute $w_{t}(\theta),$ we evaluate $\omega/(2\pi t)$ on the
boundary of $\Sigma_{t}$ and then parametrize the boundary by the angle
$\theta$; see Figure \ref{omegaplot.fig}. Using Proposition \ref{omega.prop},
we can derive small- and large-$t$ asymptotics of $w_{t}(\theta)$ as follows:%
\begin{align*}
w_{t}(\theta)  &  \sim\frac{1}{\pi t},\quad t\text{ small;}\\
w_{t}(\theta)  &  \sim\frac{1}{2\pi t},\quad t\text{ large.}%
\end{align*}
See Section \ref{omega.sec} for details.%

\begin{figure}[ptb]%
\centering
\includegraphics[
height=2.015in,
width=3.7905in
]%
{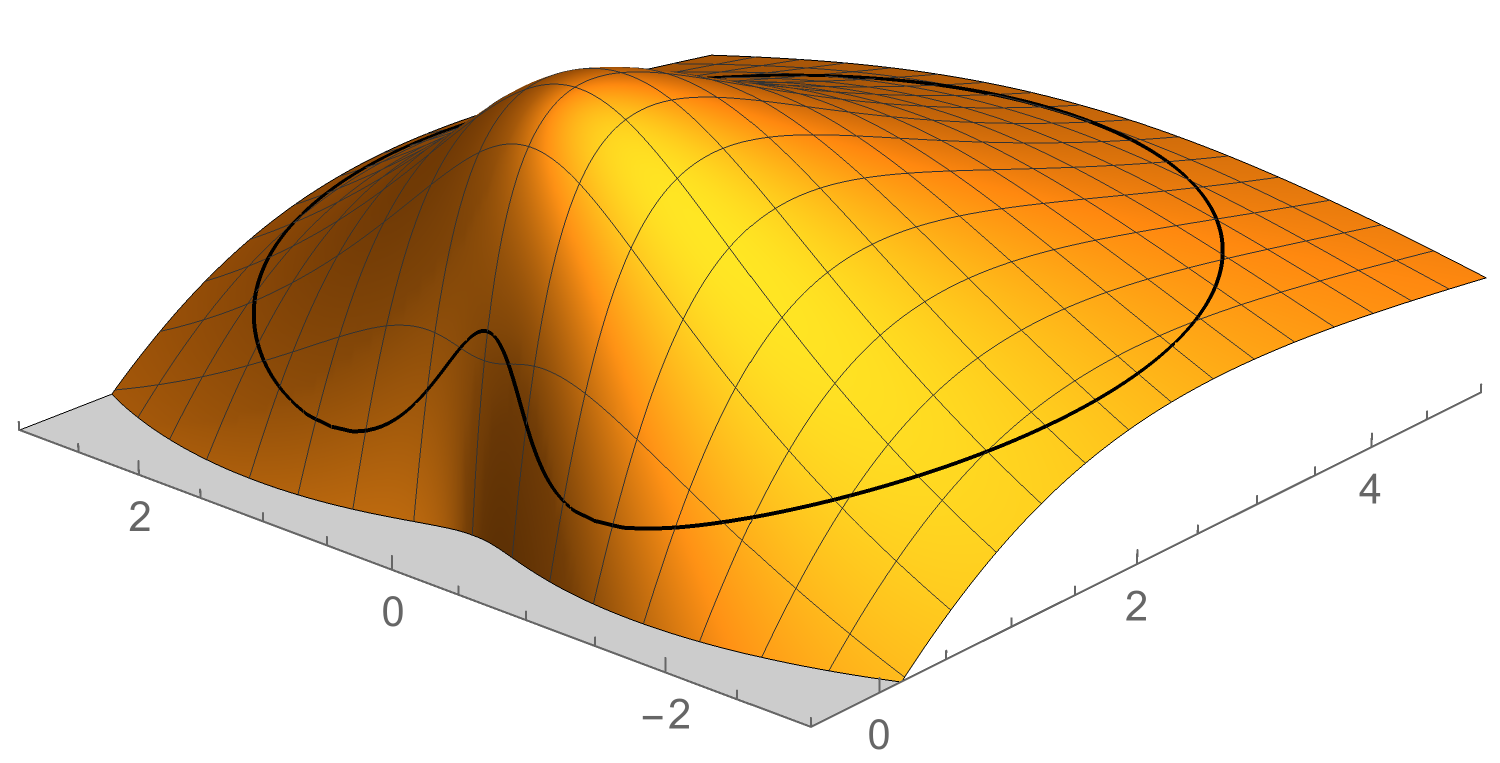}%
\caption{The function $w_{t}(\theta)$ is computed by evaluating $\omega$ on
the boundary of $\Sigma_{t}$ and parametrizing the boundary by the angle
$\theta.$ Shown for $t=2$}%
\label{omegaplot.fig}%
\end{figure}

The following simple consequences of Theorem \ref{main.thm} helps explain the
significance of the factor of $1/r^{2}$ in the formula (\ref{WtFormula}) for
$W_{t}$.

%

\begin{figure}[ptb]%
\centering
\includegraphics[
height=2.9948in,
width=4.8282in
]%
{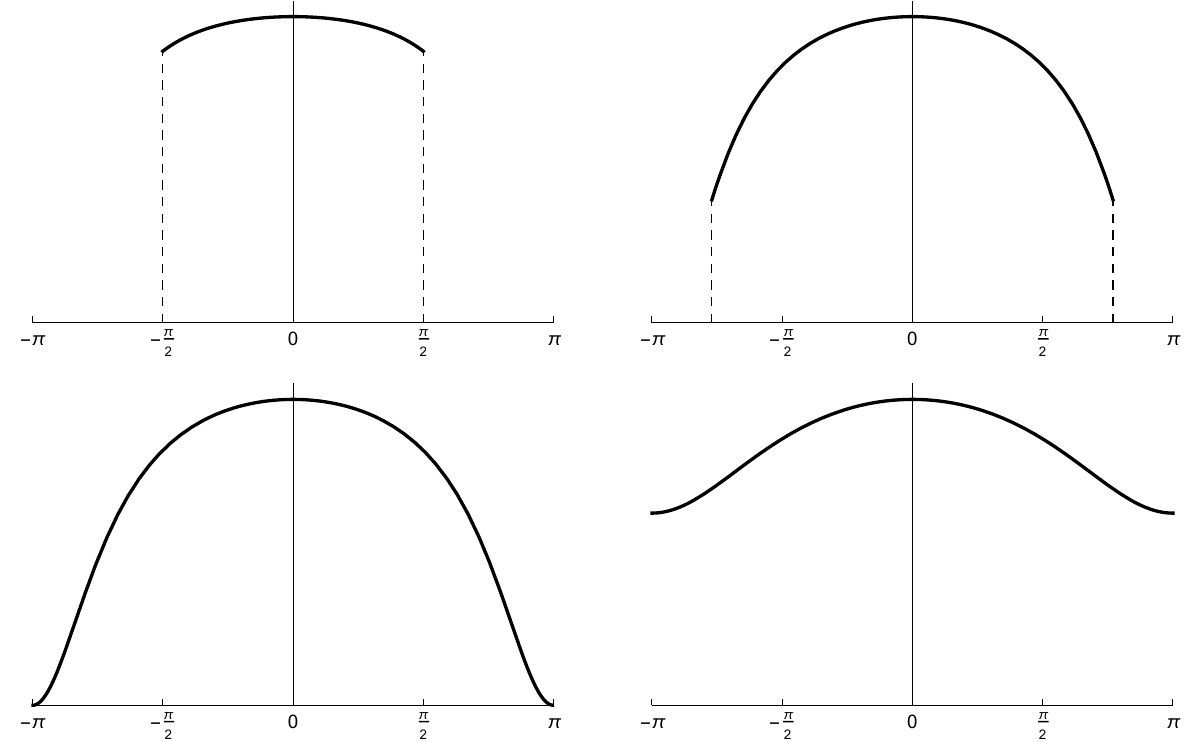}%
\caption{Plots of $w_{t}(\theta)$ for $t=2,$ $3.5,$ $4,$ and $7$}%
\label{vplots.fig}%
\end{figure}

\begin{corollary}
\label{logIndependent.cor}The Brown measure $\mu_{b_{t}}$ of $b_{t}$ has the
following properties.

\begin{enumerate}
\item \label{invariance.point}$\mu_{b_{t}}$ is invariant under the maps
$\lambda\mapsto1/\lambda$ and $\lambda\mapsto\bar{\lambda}.$

\item \label{logInd.point}Let $\Xi_{t}$ denote the image of $\Sigma
_{t}\setminus(-\infty,0)$ under the complex logarithm map, using the standard
branch cut along the negative real axis. We write points $z\in\Xi_{t}$ as
$(\rho,\theta).$ Then for points in $\Xi_{t},$ the pushforward of $\mu_{b_{t}%
}$ by the logarithm map has density $\omega_{t}(\rho,\theta)$ given by%
\[
\omega_{t}(\rho,\theta)=w_{t}(\theta),
\]
independent of $\rho.$
\end{enumerate}
\end{corollary}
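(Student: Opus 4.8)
The plan is to deduce both statements directly from Theorem \ref{main.thm}, together with the symmetry properties of the region $\Sigma_t$ recorded after Definition \ref{SigmaT.def} (namely that $\Sigma_t$, and hence $\overline{\Sigma}_t$, is invariant under $\lambda\mapsto1/\lambda$ and $\lambda\mapsto\bar\lambda$). The conceptual point is that the factor $1/r^2$ in (\ref{WtFormula}) is exactly what converts the planar Lebesgue measure $dA=r\,dr\,d\theta$ into
\[
d\mu_{b_t}(re^{i\theta})=\frac{w_t(\theta)}{r^2}\,r\,dr\,d\theta=w_t(\theta)\,\frac{dr}{r}\,d\theta ,
\]
and the measure $\frac{dr}{r}\,d\theta=d(\log r)\,d\theta$ is invariant under $r\mapsto1/r$ and under $\theta\mapsto-\theta$ (which, combined with $w_t$ being even, is all that is needed), while being transported to planar Lebesgue measure by the complex logarithm.

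For part (\ref{logInd.point}): the principal branch of $\log$ is a diffeomorphism of $\mathbb{C}\setminus(-\infty,0]$ onto the strip $\mathbb{R}\times(-\pi,\pi)$ carrying $\Sigma_t\setminus(-\infty,0)$ onto $\Xi_t$, with $z=(\rho,\theta)$ and $\rho=\log r$; since $\mu_{b_t}$ is absolutely continuous by Theorem \ref{main.thm}, it gives no mass to the removed half-line, so the pushforward is well defined. Because $d\rho=dr/r$, the displayed formula becomes $d\mu_{b_t}=w_t(\theta)\,d\rho\,d\theta$ in these coordinates; hence the pushforward has density $\omega_t(\rho,\theta)=w_t(\theta)$ with respect to Lebesgue measure $d\rho\,d\theta$ on $\Xi_t$, manifestly independent of $\rho$.

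For part (\ref{invariance.point}): conjugation $\lambda\mapsto\bar\lambda$ preserves $dA$ and maps $\overline{\Sigma}_t$ to itself; in polar coordinates it is $(r,\theta)\mapsto(r,-\theta)$, and since $w_t$ is even we have $W_t(r,-\theta)=W_t(r,\theta)$, so the pushforward of $\mu_{b_t}$ coincides with $\mu_{b_t}$. For $\lambda\mapsto1/\lambda$, which also preserves $\overline{\Sigma}_t$: working in the $z=(\rho,\theta)$ picture of part (\ref{logInd.point}), this map is the reflection $(\rho,\theta)\mapsto(-\rho,-\theta)$, under which $w_t(\theta)\,d\rho\,d\theta$ is obviously invariant because $w_t$ is even; pushing back by $\log$ gives the claim on $\Sigma_t\setminus(-\infty,0)$, and the exceptional half-line is $\mu_{b_t}$-null. (Alternatively, one checks $\int g(1/\lambda)\,d\mu_{b_t}(\lambda)=\int g(\lambda)\,d\mu_{b_t}(\lambda)$ for continuous compactly supported $g$ by the substitution $u=1/r$, $\phi=-\theta$ directly in the displayed integral.)

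I expect no genuine obstacle here beyond careful bookkeeping of Jacobian factors and the harmless measure-zero issues at the branch cut and at $\partial\Sigma_t$; the one substantive observation is that the weight $1/r^2$ furnished by Theorem \ref{main.thm} is precisely the one making $d\mu_{b_t}$ proportional in the radial variable to $\frac{dr}{r}\,d\theta$, the unique (up to a $\theta$-dependent factor) measure that is simultaneously $1/\lambda$-invariant, conjugation-invariant once $w_t$ is even, and pushed forward to Lebesgue measure by $\log$. For this reason I would prove part (\ref{logInd.point}) first and then read off part (\ref{invariance.point}) from it.
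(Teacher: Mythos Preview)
Your proposal is correct and follows essentially the same route as the paper: both arguments pivot on rewriting $d\mu_{b_t}=w_t(\theta)\,\frac{dr}{r}\,d\theta$ and then doing the obvious changes of variable. The only cosmetic difference is that you establish part~(\ref{logInd.point}) first and read off the $\lambda\mapsto1/\lambda$ invariance in the $(\rho,\theta)$ coordinates, whereas the paper handles the inversion directly via the substitution $s=1/r$, $\phi=-\theta$ (which you also mention as an alternative); neither reorganization changes the substance.
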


\begin{proof}
As we have stated above, the region $\Sigma_{t}$ is invariant under the maps
$\lambda\mapsto1/\lambda$ and $\lambda\mapsto\bar{\lambda}.$ The invariance of
$\mu_{b_{t}}$ under $\lambda\mapsto\bar{\lambda}$ follows from the fact that
$w_{t}$ is even. Then in polar coordinates, the map $\lambda\mapsto1/\lambda$
is $(r,\theta)\mapsto(1/r,-\theta)$. Now, we may compute $\mu_{b_{t}}$ in
polar coordinates as%
\[
d\mu_{b_{t}}=\left(  \frac{1}{r^{2}}w_{t}(\theta)\right)  r~dr~d\theta
=w_{t}(\theta)~\frac{1}{r}dr~d\theta.
\]
If $s=1/r$ and $\phi=-\theta,$ then $r=1/s$ and
\[
\frac{1}{r}dr=s\left(  -\frac{1}{s^{2}}~\right)  ds=-\frac{1}{s}~ds,
\]
so that $d\mu_{b_{t}}=w_{t}(\phi)~\frac{1}{s}ds~d\phi.$ Similarly, if
$\rho=\log r,$ then $d\rho=(1/r)~dr,$ so that $d\mu_{b_{t}}=w_{t}%
(\theta)~d\rho~d\theta.$
\end{proof}

Plots of $w_{t}(\theta)$ are shown in Figure \ref{vplots.fig}. Note that for
$t<4,$ not all angles $\theta$ actually occur in the domain $\Sigma_{t}$.
Thus, for $t<4,$ the function $w_{t}(\theta)$ is only defined for $\theta$ in
a certain interval $(-\theta_{\max}(t),\theta_{\max}(t))$---where, as shown in
Section \ref{regionPropeties.sec}, $\theta_{\max}(t)=\cos^{-1}(1-t/2).$ Plots
of $W_{t}$ for $t=1$ and $t=4$ are then shown in Figures \ref{t1density.fig},
\ref{t4density.fig}, and \ref{t4densitydetail.fig}. Actually, when $t=1,$ the
function $w_{t}$ is almost constant (see Figure \ref{asymptotics1.fig}). Thus,
the variation in $W_{t}$ in Figure \ref{t1density.fig} comes almost entirely
from the variation in the factor of $1/r^{2}$ in (\ref{WtFormula}).

We also observe that by Point \ref{invariance.point} of Corollary
\ref{logIndependent.cor}, half the mass of $\mu_{b_{t}}$ is contained in the
unit disk and half in the complement of the unit disk. Thus, although the
density $W_{t}$ becomes large near the origin in, say, Figures
\ref{t4density.fig} and \ref{t4densitydetail.fig}, it is \textit{not} correct
to say that most of the mass of $\mu_{b_{t}}$ is near the origin.%

\begin{figure}[ptb]%
\centering
\includegraphics[
height=2.5771in,
width=4.5282in
]%
{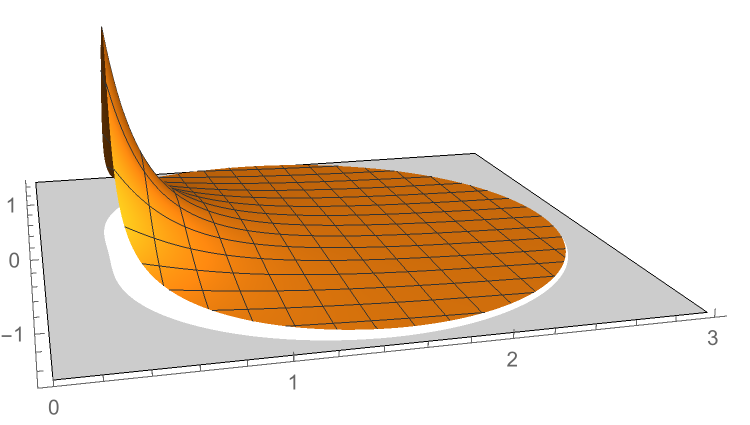}%
\caption{The density $W_{t}$ with $t=1$}%
\label{t1density.fig}%
\end{figure}
%

\begin{figure}[ptb]%
\centering
\includegraphics[
height=2.853in,
width=4.5282in
]%
{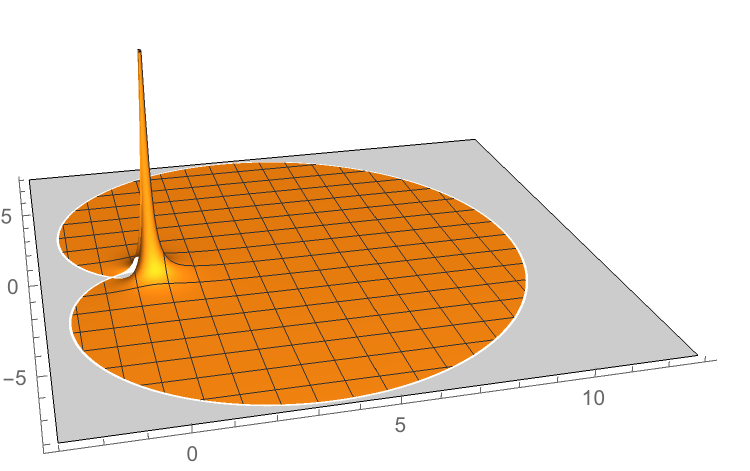}%
\caption{The density $W_{t}$ for $t=4$}%
\label{t4density.fig}%
\end{figure}
%

\begin{figure}[ptb]%
\centering
\includegraphics[
height=2.9776in,
width=4.5282in
]%
{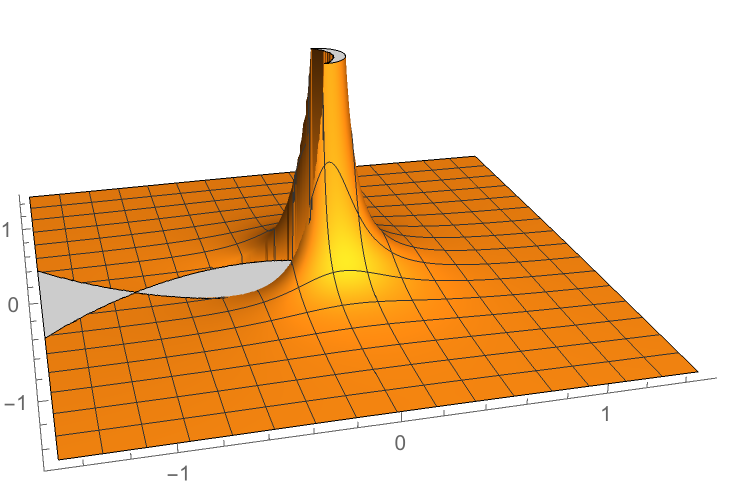}%
\caption{A detail of the density $W_{t}$ for $t=4$}%
\label{t4densitydetail.fig}%
\end{figure}

\subsection{A connection to free unitary Brownian motion}

It follows easily from Theorem \ref{main.thm} that the distribution of the
argument of $\lambda$ with respect to $\mu_{b_{t}}$ has a density given by
\begin{equation}
a_{t}(\theta)=2\log[r_{t}(\theta)]w_{t}(\theta), \label{aTtheta}%
\end{equation}
where, as in Theorem \ref{main.thm}, we take $r_{t}(\theta)$ to be the outer
radius of the domain (with $r_{t}(\theta)>1$). After all, the Brown measure in
the domain is computed in polar coordinates as $(1/r^{2})w_{t}(\theta
)r~dr~d\theta$. Integrating with respect to $r$ from $1/r_{t}(\theta)$ to
$r_{t}(\theta)$ then gives the claimed density for $\theta$.

Recall from Theorem \ref{t.Biane.ut} that the limiting eigenvalue distribution
$\nu_{t}$ for Brownian motion in the unitary group was determined by Biane. We
now claim that the distribution in (\ref{aTtheta}) is related to Biane's
measure $\nu_{t}$ by a natural change of variable. To each angle $\theta$
arising in the region $\Sigma_{t},$ we associate another angle $\phi$ by the
formula%
\begin{equation}
f_{t}(r_{t}(\theta)e^{i\theta})=e^{i\phi}, \label{thetaAndPhi}%
\end{equation}
where $f_{t}$ is as in (\ref{ftDef}). (Recall that, by Definition
\ref{SigmaT.def}, the boundary of $\Sigma_{t}$ maps into the unit circle under
$f_{t}.$) We then have the following remarkable direct connection between the
Brown measure of $b_{t}$ and Biane's measure $\nu_{t}.$

\begin{proposition}
\label{connectToBiane.prop}If $\theta$ is distributed according to the density
in (\ref{aTtheta}) and $\phi$ is defined by (\ref{thetaAndPhi}), then $\phi$
is distributed as Biane's measure $\nu_{t}.$
\end{proposition}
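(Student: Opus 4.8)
The plan is to compute the pushforward of the density $a_t(\theta)\,d\theta$ from \eqref{aTtheta} under the change of variable $\theta\mapsto\phi$ defined by \eqref{thetaAndPhi}, and to check that the result is exactly the density $\kappa_t(e^{i\phi})\,d\phi$ from Theorem \ref{t.Biane.ut}. Concretely, parametrize the boundary curve $\partial\Sigma_t$ by $\lambda(\theta)=r_t(\theta)e^{i\theta}$; then $f_t\circ\lambda$ is a map from the $\theta$-interval onto (an arc of) the unit circle, so we may write $f_t(\lambda(\theta))=e^{i\phi(\theta)}$ and the change-of-variables factor is $d\phi/d\theta$. The claim is equivalent to the identity
\[
a_t(\theta)=\kappa_t\!\left(e^{i\phi(\theta)}\right)\,\frac{d\phi}{d\theta},
\]
together with the matching of supports (the boundary arc of $\Sigma_t$ should map onto the support arc of $\nu_t$, both controlled by $\phi_{\max}(t)$ / $\theta_{\max}(t)=\cos^{-1}(1-t/2)$, which is consistent with the formula for $\phi_{\max}(t)$ in Theorem \ref{t.Biane.ut}).

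First I would express $d\phi/d\theta$ as a logarithmic derivative: since $e^{i\phi}=f_t(\lambda)$ along the curve, we get $i\,d\phi = d(\log f_t(\lambda)) = \frac{f_t'(\lambda)}{f_t(\lambda)}\,d\lambda$ along $\partial\Sigma_t$, so
\[
\frac{d\phi}{d\theta}=\operatorname{Im}\!\left(\frac{f_t'(\lambda)}{f_t(\lambda)}\,\frac{d\lambda}{d\theta}\right),
\]
with $\lambda=r_t(\theta)e^{i\theta}$ and $d\lambda/d\theta=(r_t'(\theta)+ir_t(\theta))e^{i\theta}$. Here I would invoke Proposition \ref{dPhiDtheta.prop} (already announced in the excerpt as expressing $w_t$ as a logarithmic derivative of $f_t$ along $\partial\Sigma_t$) to connect this to $w_t(\theta)$; the expected statement is something like $\frac{d\phi}{d\theta}=t\,w_t(\theta)$ up to an explicit factor, or more precisely a relation tying $\frac{f_t'}{f_t}\frac{d\lambda}{d\theta}$ to the quantity $\frac{2}{t}+\partial_\theta(\cdots)$ appearing in \eqref{wtAnswer}. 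Next I would compute $\kappa_t(e^{i\phi})$ using Biane's formula and the fact that, by Definition \ref{SigmaT.def}, $|f_t(\lambda)|=1$ on $\partial\Sigma_t$; I would use the relation $\chi_t=\psi_{u_t}/(1+\psi_{u_t})$ and $\chi_t=f_t^{-1}$ to rewrite $\kappa_t(e^{i\phi})=\frac{1}{2\pi}\frac{1-|\chi_t(e^{i\phi})|^2}{|1-\chi_t(e^{i\phi})|^2}$ in terms of $\lambda=\chi_t(e^{i\phi})=r_t(\theta)e^{i\theta}$. With $r=r_t(\theta)$, this is exactly $\frac{1}{2\pi}\frac{1-r^2}{|1-re^{i\theta}|^2}=\frac{1}{2\pi}\frac{1-r^2}{r^2+1-2r\cos\theta}$ — note the denominator $r^2+1-2r\cos\theta$ is precisely the one appearing in \eqref{wtAnswer}, which is the structural coincidence that makes the proposition work.

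Assembling: $a_t(\theta)=2\log[r_t(\theta)]\,w_t(\theta)$, and I want this to equal $\kappa_t(e^{i\phi})\,\frac{d\phi}{d\theta}$. Since $\kappa_t(e^{i\phi})$ contributes the factor $\frac{1}{2\pi}\frac{1-r_t^2}{r_t^2+1-2r_t\cos\theta}$ and $\frac{d\phi}{d\theta}$ contributes (by the logarithmic-derivative computation / Proposition \ref{dPhiDtheta.prop}) a factor built from $f_t'/f_t$ along the boundary, the whole identity reduces to a single explicit ODE identity relating $r_t'(\theta)$, $r_t(\theta)$, $\theta$, and $t$. That identity in turn should follow from differentiating the defining equation of the boundary, $|f_t(r_t(\theta)e^{i\theta})|=1$, i.e. $\log|f_t(\lambda)|=0$ on $\partial\Sigma_t$, which gives an expression for $r_t'(\theta)/r_t(\theta)$ by implicit differentiation — precisely the computation alluded to in the paragraph preceding Proposition \ref{omega.prop}. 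So the logical skeleton is: (i) support match via $\theta_{\max}$ vs.\ $\phi_{\max}$; (ii) write $d\phi/d\theta$ as an imaginary part of a logarithmic derivative of $f_t$; (iii) plug in Biane's $\kappa_t$ and simplify using $\chi_t=f_t^{-1}$ and $|f_t|=1$ on the boundary; (iv) reduce everything to the implicit-differentiation identity for $r_t'/r_t$ coming from $\log|f_t(r_t(\theta)e^{i\theta})|\equiv 0$, and verify the algebra.

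The main obstacle I expect is bookkeeping in step (ii)–(iv): correctly splitting $\log f_t(\lambda)=\log|f_t(\lambda)|+i\arg f_t(\lambda)$ along the moving boundary curve and tracking which part is the constraint ($\log|f_t|\equiv0$) versus the quantity of interest ($\arg f_t = \phi$), then matching the resulting real-variable identity against the formula \eqref{wtAnswer} — in particular getting the constant $2/t$ and the even function $\frac{2r\sin\theta}{r^2+1-2r\cos\theta}$ to emerge with the right coefficients. A secondary subtlety is orientation/sign: ensuring $d\phi/d\theta>0$ (so that it really is a density, not its negative) on the relevant arc, which presumably follows from monotonicity of $\phi$ along $\partial\Sigma_t$; and, for $t\ge 4$, checking that $\theta$ ranges over all of $(-\pi,\pi]$ and $\phi$ wraps fully around the circle, consistent with $\nu_t$ being fully supported. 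None of this should require new ideas beyond the region geometry of Section \ref{regionPropeties.sec} and Proposition \ref{dPhiDtheta.prop}, but the computation needs care.
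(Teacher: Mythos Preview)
Your approach is essentially the paper's own: use Proposition~\ref{dPhiDtheta.prop} to get $\dfrac{d\phi}{d\theta}=2\pi t\,w_t(\theta)$, rewrite Biane's density $\kappa_t(e^{i\phi})$ in terms of the boundary point of $\Sigma_t$, and check that $a_t(\theta)=\kappa_t(e^{i\phi})\,\dfrac{d\phi}{d\theta}$. Two small corrections will streamline your execution considerably.

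First, a sign slip: $\chi_t$ is the inverse of $f_t$ on the \emph{closed unit disk}, so $\chi_t(e^{i\phi})$ lies inside the disk. Thus $\chi_t(e^{i\phi})=(1/r_t(\theta))e^{i\theta}$, not $r_t(\theta)e^{i\theta}$, and Biane's density becomes
\[
\kappa_t(e^{i\phi})=\frac{1}{2\pi}\,\frac{r_t(\theta)^2-1}{r_t(\theta)^2+1-2r_t(\theta)\cos\theta},
\]
which is positive (your version with $1-r^2$ in the numerator would be negative since $r_t(\theta)>1$).

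Second, you are over-anticipating the difficulty of the last step. Once you have $\dfrac{d\phi}{d\theta}=2\pi t\,w_t(\theta)$ and the expression above for $\kappa_t$, the identity $a_t(\theta)=\kappa_t(e^{i\phi})\,\dfrac{d\phi}{d\theta}$ reduces (after cancelling $w_t(\theta)$) to
\[
2\log r_t(\theta)=t\,\frac{r_t(\theta)^2-1}{r_t(\theta)^2+1-2r_t(\theta)\cos\theta},
\]
which is \emph{exactly} the boundary equation $T(r_t(\theta)e^{i\theta})=t$ from Theorem~\ref{domainGobbles.thm}. No implicit differentiation of the boundary relation is needed; the algebraic relation itself does the job. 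This is how the paper packages it as Proposition~\ref{bianeMeasure.prop}, equation~\eqref{nutRho2}.
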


%

\begin{figure}[ptb]%
\centering
\includegraphics[
height=2.1179in,
width=4.0283in
]%
{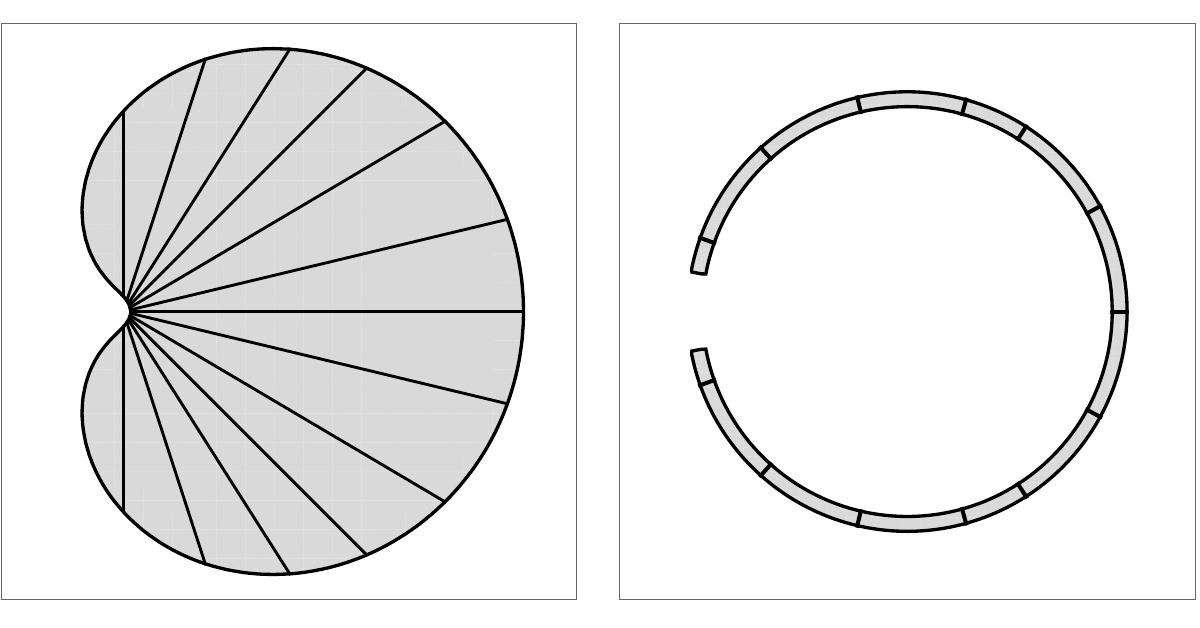}%
\caption{The map $\Phi_{t}:\overline{\Sigma}_{t}\rightarrow S^{1}$ coincides
with $f_{t}$ on $\partial\Sigma_{t}$ and is constant along each radial segment
}%
\label{biane.fig}%
\end{figure}

We may think of this result in a more geometric way, as follows. Define a map
\[
\Phi_{t}:\overline{\Sigma}_{t}\rightarrow S^{1}%
\]
by requiring (a) that $\Phi_{t}$ should agree with $f_{t}$ on the boundary of
$\Sigma_{t},$ and (b) that $\Phi_{t}$ should be constant along each radial
segment inside $\overline{\Sigma}_{t},$ as in Figure \ref{biane.fig}. (This
specification makes sense because $f_{t}$ has the same value at the two
boundary points on each radial segment.) Explicitly, $\Phi_{t}$ may computed
as%
\[
\Phi_{t}(\lambda)=f_{t}(r_{t}(\arg\lambda)e^{i\arg\lambda}).
\]
Then Proposition \ref{connectToBiane.prop} gives the following result, which
may be summarized by saying that \emph{the distribution $\nu_{t}$ of free
unitary Brownian motion is a \textquotedblleft shadow\textquotedblright\ of
the Brown measure of $b_{t}$}.

\begin{proposition}
\label{connectToBiane2.prop}The push-forward of the Brown measure of $b_{t}$
under the map $\Phi_{t}$ is Biane's measure $\nu_{t}$ on $S^{1}.$ Indeed, the
Brown measure of $b_{t}$ is the \emph{unique} measure $\mu$ on $\overline
{\Sigma}_{t}$ with the following two properties: (1) the push-forward of $\mu$
by $\Phi_{t}$ is $\nu_{t}$ and (2) $\mu$ is absolutely continuous with respect
to Lebesgue measure with a density $W$ having the form%
\[
W(r,\theta)=\frac{1}{r^{2}}g(\theta)
\]
in polar coordinates, for some continuous function $g.$
\end{proposition}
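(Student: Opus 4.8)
The plan is to deduce Proposition~\ref{connectToBiane2.prop} from Theorem~\ref{main.thm} and Proposition~\ref{connectToBiane.prop}, treating the push-forward statement and the uniqueness statement separately. For the push-forward claim, I would first record that $\Phi_t$ collapses each radial segment inside $\overline{\Sigma}_t$ to a single point of $S^1$, so that the push-forward of $\mu_{b_t}$ by $\Phi_t$ depends only on the ``angular marginal'' of $\mu_{b_t}$, i.e.\ on the distribution of $\arg\lambda$ under $\mu_{b_t}$. By the discussion preceding Proposition~\ref{connectToBiane.prop}, that angular marginal has density $a_t(\theta)=2\log[r_t(\theta)]\,w_t(\theta)$, and $\Phi_t$ sends the angle $\theta$ to the angle $\phi$ determined by $f_t(r_t(\theta)e^{i\theta})=e^{i\phi}$. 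Thus the push-forward of $\mu_{b_t}$ by $\Phi_t$ is exactly the law of $\phi$ when $\theta\sim a_t$, which is $\nu_t$ by Proposition~\ref{connectToBiane.prop}. This is essentially a repackaging of what has already been proved, so I do not expect difficulty here.

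The substantive part is uniqueness. Suppose $\mu$ is any measure on $\overline{\Sigma}_t$ satisfying (1) and (2), with angular density $g$. I would compute the push-forward $(\Phi_t)_*\mu$ in terms of $g$: because the radial factor of the density is forced to be $1/r^2$, integrating $\tfrac{1}{r^2}g(\theta)\,r\,dr\,d\theta = g(\theta)\,\tfrac{1}{r}\,dr\,d\theta$ over the radial segment from $1/r_t(\theta)$ to $r_t(\theta)$ yields an angular marginal with density $2\log[r_t(\theta)]\,g(\theta)$—the same normalizing constant as for $\mu_{b_t}$, but with $w_t$ replaced by $g$. Hence $(\Phi_t)_*\mu$ is the law of $\phi$ when $\theta$ has density proportional to $\log[r_t(\theta)]\,g(\theta)$. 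The key point is that the change of variables $\theta\mapsto\phi$ given by $\phi = \arg f_t(r_t(\theta)e^{i\theta})$ is a bijection (indeed a diffeomorphism) from the interval of admissible angles onto the support arc of $\nu_t$; granting this, $(\Phi_t)_*\mu = \nu_t$ forces the density of $\theta$ to equal the density of $\theta$ under $\mu_{b_t}$, i.e.\ $\log[r_t(\theta)]\,g(\theta)$ is proportional to $\log[r_t(\theta)]\,w_t(\theta)$, and since $\log r_t(\theta)>0$ on the interior we conclude $g=w_t$, whence $\mu=\mu_{b_t}$ by condition (2). (One should check the case $t\ge 4$, where $\theta$ ranges over the full circle and $\Phi_t$ wraps around it once, and the degenerate behavior at the endpoints $\pm\theta_{\max}(t)$ when $t<4$, where $r_t(\theta)\to 1$ and the radial segment degenerates to a point on the unit circle.)

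The main obstacle I anticipate is justifying that $\theta\mapsto\phi$ is a genuine bijection onto the support of $\nu_t$—equivalently, that $\phi_{\max}(t)$ in Theorem~\ref{t.Biane.ut} is exactly the image of $\theta_{\max}(t)$, and that the map is monotone in between. This is where one must invoke the detailed geometry of $\partial\Sigma_t$: since $\partial\Sigma_t\subset f_t^{-1}(S^1)$ and $f_t$ restricted to the unit circle is the identity in argument up to the $u_t$-flow, the relation $f_t(r_t(\theta)e^{i\theta})=e^{i\phi}$ is precisely the statement that $e^{i\phi}$ lies on the image under $f_t$ of the outer boundary curve, and Biane's description of $\chi_t$ and the support arc is built from exactly this curve. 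I would therefore extract monotonicity from the fact that $\chi_t$ maps the closed unit disk homeomorphically onto $\overline{\Sigma_t}$-adjacent data, or more directly from an explicit computation of $d\phi/d\theta$ along $\partial\Sigma_t$ using the equation $T(\lambda)=t$; Proposition~\ref{dPhiDtheta.prop} (referenced in the excerpt as identifying $w_t$ with a logarithmic derivative of $f_t$ along $\partial\Sigma_t$) is presumably exactly the ingredient that makes this derivative explicit and sign-definite. Once bijectivity is in hand, the rest of the uniqueness argument is the elementary push-forward computation above.
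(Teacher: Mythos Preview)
Your proposal is correct and follows essentially the same route as the paper. The paper's proof reduces the push-forward claim to Proposition~\ref{connectToBiane.prop} exactly as you do, and for uniqueness it integrates out the radial variable to get the angular density $2\log(r_t(\theta))g(\theta)$, applies the change of variable via Proposition~\ref{dPhiDtheta.prop} (which gives $d\theta/d\phi = 1/(2\pi t\,w_t(\theta))$ and, together with the positivity of $w_t$ from Theorem~\ref{densityWithoutDeriv.thm}, supplies the monotonicity you flagged), and compares with the formula $d\nu_t(\phi) = \log(r_t(\theta))/(\pi t)\,d\phi$ from Proposition~\ref{bianeMeasure.prop} to force $g=w_t$; your anticipated obstacle is thus resolved precisely by the ingredient you identified.
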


Now, the results of \cite{BianeJFA} and \cite{HK} already indicate a
relationship between the free unitary Brownian motion $u_{t}$ (whose spectral
measure is $\nu_{t}$) and the free multiplicative Brownian motion $b_{t}$
(whose Brown measure we are studying in this paper). It is nevertheless
striking to see such a direct relationship between $\mu_{b_{t}}$ and $\nu_{t}%
$. Indeed, Proposition \ref{connectToBiane2.prop} precisely mirrors the
relationship between the semicircle law and the circular law. If $c_{t}$ is a
circular random variable of variance $t$, and $x_{t}$ is semicircular of
variance $t$, then the distribution of $x_{t}$ (the semicircle law on the
interval $[-2\sqrt{t},2\sqrt{t}]$) is the push-forward of the Brown measure of
$c_{t}$ (the uniform probability measure on the disk $\overline{\mathbb{D}%
}(\sqrt{t})$ of radius $\sqrt{t}$) under a similar \textquotedblleft shadow
map\textquotedblright: first project the disk onto its upper boundary circle
via $(x,y)\mapsto(x,\sqrt{t-x^{2}})$, and then use the conformal map $z\mapsto
z+\frac{t}{z}$ from $\mathbb{C}\setminus\overline{\mathbb{D}}(\sqrt{t})$ onto
$\mathbb{C}\setminus\lbrack-2\sqrt{t},2\sqrt{t}]$. (The net result of these
two operations is $(x,y)\mapsto2x.$) Since, as described in the introduction,
$u_{t}$ and $b_{t}$ are the \textquotedblleft Lie group\textquotedblright%
\ versions of the \textquotedblleft Lie algebra\textquotedblright\ operators
$x_{t}$ and $c_{t},$ it is pleasing that this shadow relationship between
their Brown measures persists.

\subsection{The structure of the formula}

We now explain the significance of the two terms on the right-hand side of the
formula (\ref{wtAnswer}) for $w_{t}$. Following the general construction of
Brown measures in (\ref{weakLim}), the density of the Brown measure is
computed as $\frac{1}{4\pi}\Delta s_{t}(\lambda),$ where $\Delta$ is the
Laplacian with respect to $\lambda$ and where
\[
s_{t}(\lambda)=\lim_{\varepsilon\rightarrow0^{+}}\tau\lbrack\log
((b_{t}-\lambda)^{\ast}(b_{t}-\lambda)+\varepsilon)].
\]
It is then convenient to work in polar coordinates $(r,\theta).$ In these
coordinates, we may write $\Delta$ as%
\begin{equation}
\Delta=\frac{1}{r^{2}}\left(  \left(  r\frac{\partial}{\partial r}\right)
^{2}+\frac{\partial^{2}}{\partial\theta^{2}}\right)  . \label{lapPolar}%
\end{equation}

\begin{theorem}
\label{theAnswerExplained.thm}For each $t>0,$ the function $s_{t}(\lambda)$ is
real analytic for $\lambda\in\Sigma_{t}$ and also for $\lambda\in
(\overline{\Sigma}_{t})^{c}.$ At each boundary point, $s_{t}(\lambda)$ and its
first derivatives with respect to $\lambda$ approach the same value from the
inside of $\Sigma_{t}$ as from the outside of $\overline{\Sigma}_{t}.$ For
$\lambda$ inside $\Sigma_{t},$ we have%
\begin{equation}
\left(  r\frac{\partial}{\partial r}\right)  ^{2}s_{t}=\frac{2}{t}.
\label{secondDeriv}%
\end{equation}
For $\lambda$ inside $\Sigma_{t},$ we also have that $\partial s_{t}%
/\partial\theta$ \emph{is independent of} $r$ with $t$ and $\theta$ fixed.
Indeed, $\partial s_{t}/\partial\theta(\lambda)$ is the \emph{unique} function
on $\Sigma_{t}$ that is independent of $r$ and agrees with the angular
derivative of $\log(\left\vert \lambda-1\right\vert ^{2})$ as we approach
$\partial\Sigma_{t}.$
\end{theorem}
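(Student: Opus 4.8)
The plan is to analyze the regularized function
\[
s_{t,x}(\lambda)=\tau\bigl[\log((b_{t}-\lambda)^{\ast}(b_{t}-\lambda)+x)\bigr]
\]
via free stochastic calculus, exploiting the fact that $b_{t}$ satisfies the free SDE $db_{t}=b_{t}\,dc_{t}$. The standard technique (in the spirit of the PDE method for the free unitary Brownian motion and the Hall--Ho analysis) is to introduce an auxiliary two-variable function and derive a first-order PDE in $t$ and $x$ for the relevant moment-type quantity. Concretely, I would study the operator-valued resolvent or the scalar
\[
G(t,x,\lambda)=\tau\Bigl[\bigl((b_{t}-\lambda)^{\ast}(b_{t}-\lambda)+x\bigr)^{-1}\Bigr]
\]
and its companion $\tau[(\cdots)^{-1}(b_t-\lambda)^\ast(b_t-\lambda)]$; then $\partial s_{t,x}/\partial x = G$, and It\^o's formula for free Brownian motion applied to $b_t$ yields an evolution equation $\partial_t$ of $G$ in terms of $G$, $\lambda\partial_\lambda G$, and $x\partial_x G$. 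The Laplacian identity $\Delta = r^{-2}((r\partial_r)^2+\partial_\theta^2)$ suggests working in the variables $(\log r,\theta)$ and in $\log x$, where the PDE should become a transport (first-order, constant-coefficient-in-the-good-variables) equation whose characteristics can be solved explicitly.

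The first concrete step is to set up the free It\^o calculus: compute $d$ of $(b_t-\lambda)^\ast(b_t-\lambda)$ using $db_t=b_t\,dc_t$ and the free It\^o table for the circular process $c_t$ (so $dc_t\,dc_t^\ast$ contributes a trace term, $dc_t\,dc_t=0$, etc.), then differentiate the resolvent. Taking the trace $\tau$ and using traciality to collapse the quadratic-variation terms into products of simpler traces, one obtains a closed PDE for $G(t,x,\lambda)$ — this is where the special multiplicative structure $b_t\,dc_t$ (as opposed to an additive model) enters and produces the factors of $\lambda$ and $|\lambda|^2$ that ultimately give the $1/r^2$ shape. The second step is to integrate this PDE in $x$ to get a PDE for $s_{t,x}$, then take $x\to 0^+$; the regularity claim (real-analyticity inside $\Sigma_t$ and outside $\overline\Sigma_t$) follows because away from $\partial\Sigma_t$ the limiting PDE has smooth characteristics and the boundary data (coming from the known behavior of $L_{b_t-\lambda}$ outside the support, where $s_t(\lambda)=\log(|\text{det}(b_t-\lambda)|^{2})$-type expressions are harmonic and explicit) is real-analytic. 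The third step is the explicit solution: along characteristics $\partial_t s_t$ is controlled, and the transport structure forces $(r\partial_r)^2 s_t$ to be \emph{constant} in the region; matching at the boundary with the outside solution, where $s_t(\lambda)=\log|\lambda-1|^2 + (\text{harmonic, hence }(r\partial_r)^2\text{-annihilated, correction})$... no — rather, one computes the constant by evaluating $(r\partial_r)^2 s_t$ on the boundary using the explicit outside formula and finds it equals $2/t$. Similarly $\partial_\theta s_t$ satisfies a transport equation with no $\partial_r$ source inside $\Sigma_t$ once the PDE is put in the right form, so it is $r$-independent; its value is pinned down by continuity with $\partial_\theta\log|\lambda-1|^2$ on $\partial\Sigma_t$, and uniqueness of an $r$-independent function with prescribed boundary values along radial segments (each radial segment in $\overline\Sigma_t$ hits $\partial\Sigma_t$ in two points, and by the $\lambda\mapsto1/\lambda$ symmetry the boundary datum is consistent) gives the stated characterization.

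The main obstacle I anticipate is the passage to the limit $x\to 0^+$ together with the boundary regularity: one must show that $s_{t,x}$ and its first $\lambda$-derivatives converge as $x\to0^+$ to a function that is $C^1$ across $\partial\Sigma_t$ (so that the Laplacian, i.e.\ the Brown measure, has no singular part on the boundary), and that the convergence is strong enough to justify differentiating under the limit. This requires either precise control of the resolvent $G(t,x,\lambda)$ as $x\to0$ near the boundary — likely via an implicit-function/subharmonicity argument showing the characteristics do not degenerate except exactly on $\partial\Sigma_t$ — or an independent argument (perhaps the $L^2$-inverse construction of \cite{HK}, or the explicit formula for $f_t$ on $\partial\Sigma_t$) identifying the boundary values directly. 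A secondary technical point is verifying that the free It\^o computation genuinely closes, i.e.\ that the quadratic-variation terms all reduce to products of the \emph{same} quantities $G$ and $\tau[(\cdots)^{-1}(b_t-\lambda)^\ast(b_t-\lambda)]$ rather than generating new independent moments; traciality and the fact that $b_t-\lambda$ appears only through $(b_t-\lambda)^\ast(b_t-\lambda)$ should make this work, but it must be checked carefully, and keeping track of the $\lambda$-dependence (since $\lambda$ is a scalar, $\partial_\lambda$ acting through $b_t\,dc_t$ brings down the $b_t$ factor, hence a $\lambda\partial_\lambda$ after re-expressing in terms of the resolvent) is the delicate bookkeeping that yields the clean polar-coordinate answer.
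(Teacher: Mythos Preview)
Your overall architecture is close to the paper's: derive a PDE for $S(t,\lambda,x)$ via free It\^o calculus, solve by characteristics, and let $x\to 0^{+}$. But there is a genuine gap at the heart of your third step, and it is exactly the step that produces the constant $2/t$.

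First, the PDE one obtains is not a linear transport equation but a genuinely nonlinear first-order (Hamilton--Jacobi) equation,
\[
\partial_t S = x\,\partial_x S\Bigl(1+(|\lambda|^2-x)\,\partial_x S - a\,\partial_a S - b\,\partial_b S\Bigr),
\]
so the characteristics are a coupled Hamiltonian system in $(a,b,x,p_a,p_b,p_x)$, not straight lines in $(\log r,\theta,\log x)$. The fact that $(r\partial_r)^2 s_t$ is constant inside $\Sigma_t$ does \emph{not} drop out of a ``transport structure''; in the paper it comes from discovering a nontrivial constant of motion $\Psi = xp_x+\tfrac12(ap_a+bp_b)$ (associated with a scaling symmetry of the Hamiltonian), which together with $xp_x\to 0$ along characteristics as $t\to t_\ast$ forces $r\partial_r s_t = ap_a+bp_b \to 2\Psi = 2\log|\lambda|/t+1$. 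Differentiating this in $\rho=\log r$ gives the $2/t$. Without identifying this (or an equivalent) conservation law, you have no mechanism that makes $(r\partial_r)^2 s_t$ constant.

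Second, your proposed way of pinning down the constant --- evaluating $(r\partial_r)^2 s_t$ on $\partial\Sigma_t$ using the outside formula $s_t=\log|\lambda-1|^2$ --- cannot work. The paper shows that $s_t$ and its \emph{first} derivatives match across $\partial\Sigma_t$, but the second derivatives do not; indeed $(r\partial_r)^2\log|\lambda-1|^2$ is not constant and is not equal to $2/t$ on $\partial\Sigma_t$. (If the second derivatives matched, $\Delta s_t$ would be continuous across the boundary and the Brown density would vanish there, contradicting the positivity result.) The $r$-independence of $\partial_\theta s_t$ then follows not from a separate transport argument but from the equality of mixed partials once you know $\partial_\rho s_t=2\rho/t+1$ is $\theta$-independent; and its boundary value is fixed by matching \emph{first} derivatives, which \emph{do} agree with those of $\log|\lambda-1|^2$.
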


The formula (\ref{secondDeriv})---along with (\ref{lapPolar})---accounts for
the first term on the right-hand side of (\ref{wtAnswer}). Then since the
angular derivative of $\log(\left\vert \lambda-1\right\vert ^{2})$ is
computable as%
\[
\frac{\partial}{\partial\theta}\log(\left\vert \lambda-1\right\vert
^{2})=\frac{2\operatorname{Im}\lambda}{\left\vert \lambda-1\right\vert ^{2}%
}=\frac{2r\sin\theta}{r^{2}+1-2\cos\theta}%
\]
as in (\ref{dsdtheta}), we can recognize the second term on the right-hand
side of (\ref{wtAnswer}) as the $\theta$-derivative of $\partial
s_{t}/\partial\theta.$ Thus, Theorem \ref{theAnswerExplained.thm}, together
with the formula (\ref{lapPolar}) for the Laplacian in polar coordinates,
accounts for the formula (\ref{wtAnswer}) for $w_{t}.$

\subsection{Deriving the formula\label{brown.sec}}

We now briefly indicate the method we will use to compute the Brown measure
$\mu_{b_{t}}$. Following the general construction of the Brown measure in
(\ref{weakLim}), we consider the function $S$ defined by%
\begin{equation}
S(t,\lambda,\varepsilon)=\tau\lbrack\log((b_{t}-\lambda)^{\ast}(b_{t}%
-\lambda)+\varepsilon)] \label{Sdefinition}%
\end{equation}
for $\lambda\in\mathbb{C}$ and $\varepsilon>0,$ where $b_{t}$ is the free
multiplicative Brownian motion and $\tau$ is the trace in the von Neumann
algebra in which $b_{t}$ lives. It is easily verified that as $\varepsilon$
decreases with $t$ and $\lambda$ fixed, $S(t,\lambda,\varepsilon)$ also
decreases. Hence, the limit%
\[
s_{t}(\lambda)=\lim_{\varepsilon\rightarrow0^{+}}S(t,\lambda,\varepsilon)
\]
exists, possibly with the value $-\infty.$

The general theory developed by Brown \cite{Br} shows that $s_{t}(\lambda)$ is
a subharmonic function of $\lambda$ for each fixed $t,$ so that the Laplacian
(in the distribution sense) of $s_{t}(\lambda)$ with respect to $\lambda$ is a
positive measure. If this measure happens to be absolutely continuous with
respect to the Lebesgue measure, then the density $W(t,\lambda)$ of the Brown
measure is computed in terms of the value of $s_{t}(\lambda),$ as follows:
\begin{equation}
W(t,\lambda)=\frac{1}{4\pi}\Delta s_{t}(\lambda). \label{WtDef}%
\end{equation}
See also Chapter 11 in \cite{MS} and Section 2.3 in \cite{HK} for general
information on Brown measures.

The first major step toward proving Theorem \ref{main.thm} is the following result.

\begin{theorem}
\label{thePDE.thm}The function $S$ in (\ref{Sdefinition}) satisfies the
following PDE:%
\begin{equation}
\frac{\partial S}{\partial t}=\varepsilon\frac{\partial S}{\partial
\varepsilon}\left(  1+(\left\vert \lambda\right\vert ^{2}-\varepsilon
)\frac{\partial S}{\partial\varepsilon}-a\frac{\partial S}{\partial a}%
-b\frac{\partial S}{\partial b}\right)  ,\quad\lambda=a+ib, \label{thePDE}%
\end{equation}
with the initial condition%
\begin{equation}
S(0,\lambda,\varepsilon)=\log(\left\vert \lambda-1\right\vert ^{2}%
+\varepsilon). \label{SinitialCond}%
\end{equation}

\end{theorem}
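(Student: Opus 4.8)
The plan is to derive the PDE \eqref{thePDE} for $S(t,\lambda,x)$ directly from the defining free SDE $db_t = b_t\,dc_t$ using free stochastic (It\^o) calculus, mirroring the strategy Biane used for the free unitary Brownian motion. The first step is to introduce the resolvent-type object
\[
G(t,\lambda,x) = \tau\!\left[\bigl((b_t-\lambda)^\ast(b_t-\lambda)+x\bigr)^{-1}\right],
\]
so that (formally) $\partial S/\partial x = G$. More useful is to work with the full operator-valued resolvent of the $2\times2$ Hermitization: let
\[
H_\lambda = \begin{pmatrix} 0 & b_t-\lambda \\ (b_t-\lambda)^\ast & 0\end{pmatrix},
\]
and study $\tau\otimes\mathrm{tr}_2$ applied to $(H_\lambda^2 + x)^{-1}$ and to $H_\lambda(H_\lambda^2+x)^{-1}$. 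The point is that $\partial S/\partial t$ will, upon differentiating $\log$ and using $\frac{d}{dt}b_t = $ (the free It\^o differential), produce a trace involving $b_t$, $(b_t-\lambda)^{\ast}$, and the resolvent; the free It\^o formula converts the quadratic variation term $dc_t\,dc_t^\ast$ into a trace-of-resolvent factor via the defining property of the free (semi)circular process, namely that $\tau$ acts like a conditional expectation killing the free increment and replacing $a\,dc_t\,b\,dc_t^\ast$-type terms by $\tau(b)\,a\,dt$ (and $a\,dc_t\,b\,dc_t$ by $0$). This is exactly what makes the right-hand side of \eqref{thePDE} a product of first-order derivatives of $S$.

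Concretely, I would proceed as follows. Write $S = \tau[\log(Y_t)]$ with $Y_t = (b_t-\lambda)^\ast(b_t-\lambda)+x$. Apply the free It\^o formula to $\log Y_t$: we get $dS = \tau[Y_t^{-1}\,dY_t] + \tfrac12(\text{second-order correction from }\langle dY_t,dY_t\rangle)$, where the second-order term is where the subtlety lives because $\log$ is operator-convex/concave and one must use the integral representation $\log Y = \int_0^\infty(\tfrac1{1+s}-\tfrac1{Y+s})\,ds$ to handle the non-commutativity, or equivalently differentiate $\tau[\log(Y+s)]$ in $s$ and integrate back. Then compute $dY_t$ from $db_t = b_t\,dc_t$: $d[(b_t-\lambda)^\ast(b_t-\lambda)] = (db_t)^\ast(b_t-\lambda) + (b_t-\lambda)^\ast(db_t) + (db_t)^\ast(db_t)$. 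Substituting $db_t = b_t\,dc_t$ and taking $\tau$, the first-order (martingale) terms vanish under $\tau$, and the quadratic-variation terms $\tau[\cdots\,dc_t\,\cdots\,dc_t^\ast\,\cdots]$ collapse via the free It\^o rule to traces of products of $b_t$, $(b_t-\lambda)^\ast$, $b_t^\ast$, $Y_t^{-1}$. The resulting expression should, after algebraic manipulation using the identities $b_t^\ast(b_t-\lambda) = Y_t - x + \bar\lambda(b_t-\lambda) - \lambda b_t^\ast + |\lambda|^2$ (and similar), reorganize precisely into
\[
x\,G\Bigl(1 + (|\lambda|^2-x)G - a\,\partial_a S - b\,\partial_b S\Bigr),
\]
recognizing $G = \partial_x S$ and identifying the $\partial_a S,\partial_b S$ terms as $\tau[\operatorname{Re}((b_t-\lambda)Y_t^{-1}\cdot(\cdots))]$ from differentiating the $\lambda$-dependence. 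The initial condition \eqref{SinitialCond} is immediate since $b_0 = I$, so $Y_0 = |1-\lambda|^2\cdot I + x$ and $\tau[\log Y_0] = \log(|\lambda-1|^2+x)$.

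I expect the main obstacle to be the rigorous treatment of the It\^o correction term for $\log$, i.e.\ making precise the second-order term in the free It\^o expansion of $\tau[\log Y_t]$ when $Y_t$ does not commute with its own differential. The clean way around this is to first prove the PDE for $G = \partial_x S = \tau[Y_t^{-1}]$ (or rather for a one-parameter family $\tau[(Y_t+s)^{-1}]$), where the It\^o formula only involves the smooth function $y\mapsto 1/(y+s)$ and the free It\^o rules apply straightforwardly, obtain a PDE for that family, and then integrate in $s$ (and in $x$) to recover the equation for $S$ itself, checking that the constant of integration is pinned down by the behavior as $x\to\infty$ (where $S\sim\log x$ and the PDE degenerates correctly). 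A secondary technical point is justifying that $S$ is smooth enough in $(t,x)$ with $x>0$ to differentiate under $\tau$ and to manipulate the resolvents freely; this follows from analyticity of $x\mapsto\tau[(Y_t+x)^{-1}]$ for $x>0$ together with standard regularity of solutions of the free SDE, and from the fact that $Y_t\ge x>0$ is boundedly invertible. Once the identity is established for, say, $t$ in a neighborhood of $0$, it extends to all $t>0$ by uniqueness of the free multiplicative Brownian motion and the flow property, or simply because every step is valid for all $t$.
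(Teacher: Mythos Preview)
Your strategy is essentially the paper's: compute $\partial S/\partial t$ from the free SDE $db_t=b_t\,dc_t$ via the free It\^o rules (in particular $dc_t\,g_t\,dc_t^\ast=\tau(g_t)\,dt$, $dc_t\,g_t\,dc_t=0$, and vanishing of martingale terms under $\tau$), then match the resulting trace expressions against $\partial_x S$, $\partial_a S$, $\partial_b S$ using the expansion $b_tb_t^\ast=(b_{t,\lambda}+\lambda)(b_{t,\lambda}+\lambda)^\ast$. The one genuine difference is how the non-commutative It\^o correction for $\log$ is handled. The paper avoids both of your proposed workarounds (the integral representation of $\log$, and deriving a PDE for the resolvent $G$ then integrating in $x$) by a cleaner device: for $|x|$ large it expands $\tau[\log(b_{t,\lambda}^\ast b_{t,\lambda}+x)]=\log x+\sum_{n\ge1}\frac{(-1)^{n-1}}{n x^n}\tau[(b_{t,\lambda}^\ast b_{t,\lambda})^n]$, applies the It\^o product rule termwise to the \emph{polynomials} $(b_{t,\lambda}^\ast b_{t,\lambda})^n$ (where no functional-calculus subtlety arises), resums the resulting double series into the closed form $x\,\tau[(b_{t,\lambda}^\ast b_{t,\lambda}+x)^{-1}]\,\tau[b_tb_t^\ast(b_{t,\lambda}b_{t,\lambda}^\ast+x)^{-1}]$, and finally extends to all $x>0$ by analyticity in $x$. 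This buys you an entirely elementary computation with no second-order operator-log corrections to justify; your routes would work but carry more analytic overhead. The Hermitization matrix $H_\lambda$ you introduce is not needed and the paper does not use it.
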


We emphasize that $S(t,\lambda,\varepsilon)$ is only defined for
$\varepsilon>0.$ Although, as we will see, $\lim_{\varepsilon\rightarrow
0^{+}}S(t,\lambda,\varepsilon)$ is finite, $\partial S/\partial\varepsilon$
develops singularities in this limit. Thus, it is \textit{not} correct to
formally set $\varepsilon=0$ in (\ref{thePDE}) to obtain $\partial
s_{t}/\partial t=0.$ (Actually, it will turn out that $s_{t}(\lambda)$
\textit{is} independent of $t$ for as long as $\lambda$ remains outside
$\overline{\Sigma}_{t},$ but not after this time; see Section
\ref{outside.sec}.)

After verifying this equation (Section \ref{BrownAndPDE.sec}), we will use the
Hamilton--Jacobi formalism to analyze the solution (Section \ref{HJ.sec}). In
the remaining sections, we will then analyze the limit of the solution as
$\varepsilon$ tends to zero and compute the Laplacian in (\ref{WtDef}). The
expository article \cite{PDEmethods} of the second author provides an
introduction to the methods used in the present paper.

By way of comparison, we mention that a similar PDE was used in Biane's paper
\cite{BianeConvolution}. There he studies the spectral measure $\mu_{t}$ of
$x_{0}+x_{t}$, the free additive Brownian motion with a nonconstant initial
distribution $x_{0}$ freely independent from $x_{t}$. Biane studies the Cauchy
transform $G$ of $\mu_{t}$:
\[
G(t,z)=\int_{\mathbb{R}}\frac{\mu_{t}(dx)}{z-x},\quad\operatorname{Im}(z)>0,
\]
and shows that $G$ satisfies the complex inviscid Burger's equation%
\begin{equation}
\frac{\partial G(t,z)}{\partial t}=-G(t,z)\frac{\partial G}{\partial z}.
\label{Burgers}%
\end{equation}
The measure $\mu_{t}$ may then be recovered, up to a constant, as
$\lim_{\varepsilon\rightarrow0^{+}}\operatorname{Im}G(t,x+i\varepsilon)$.

In our paper, we similarly use a first-order, nonlinear PDE whose solution in
a certain limit gives the desired measure. We note, however, that the PDE
(\ref{Burgers}) is not actually the main source of information about $\mu_{t}$
in \cite{BianeConvolution}. By contrast, our analysis of the Brown measure of
the free multiplicative Brownian motion $b_{t}$ is based entirely on the
PDE\ in Theorem \ref{thePDE.thm}.

Finally, we mention that for the case of the circular Brownian motion $c_{t},$
a PDE similar to the one in Theorem \ref{thePDE} appeared in work of Burda,
Grela, Nowak, Tarnowski, and Warcho\l \ \cite[Equation (9)]{BGNTW}.

\section{Comparison with the eigenvalue distribution of $B_{t}^{N}$}%

\begin{figure}[ptb]%
\centering
\includegraphics[
height=2.0686in,
width=4.7504in
]%
{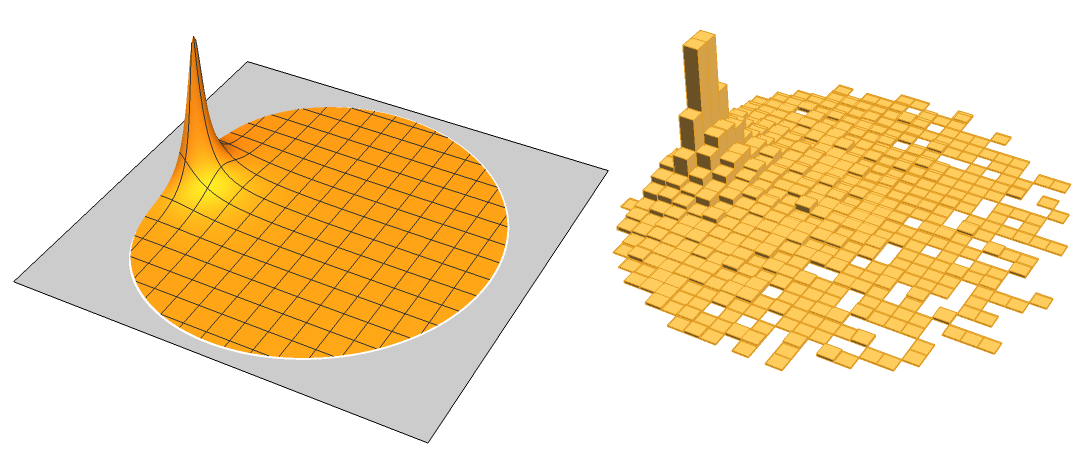}%
\caption{A plot of $W_{t}$ with $t=2$ (left). A histogram of the eigenvalues
of $B_{t}^{N}$ with $N=2,000$ and $t=2$ (right) is shown for comparison}%
\label{3dplotwithhisthor.fig}%
\end{figure}
%

\begin{figure}[ptb]%
\centering
\includegraphics[
height=4.4278in,
width=4.4278in
]%
{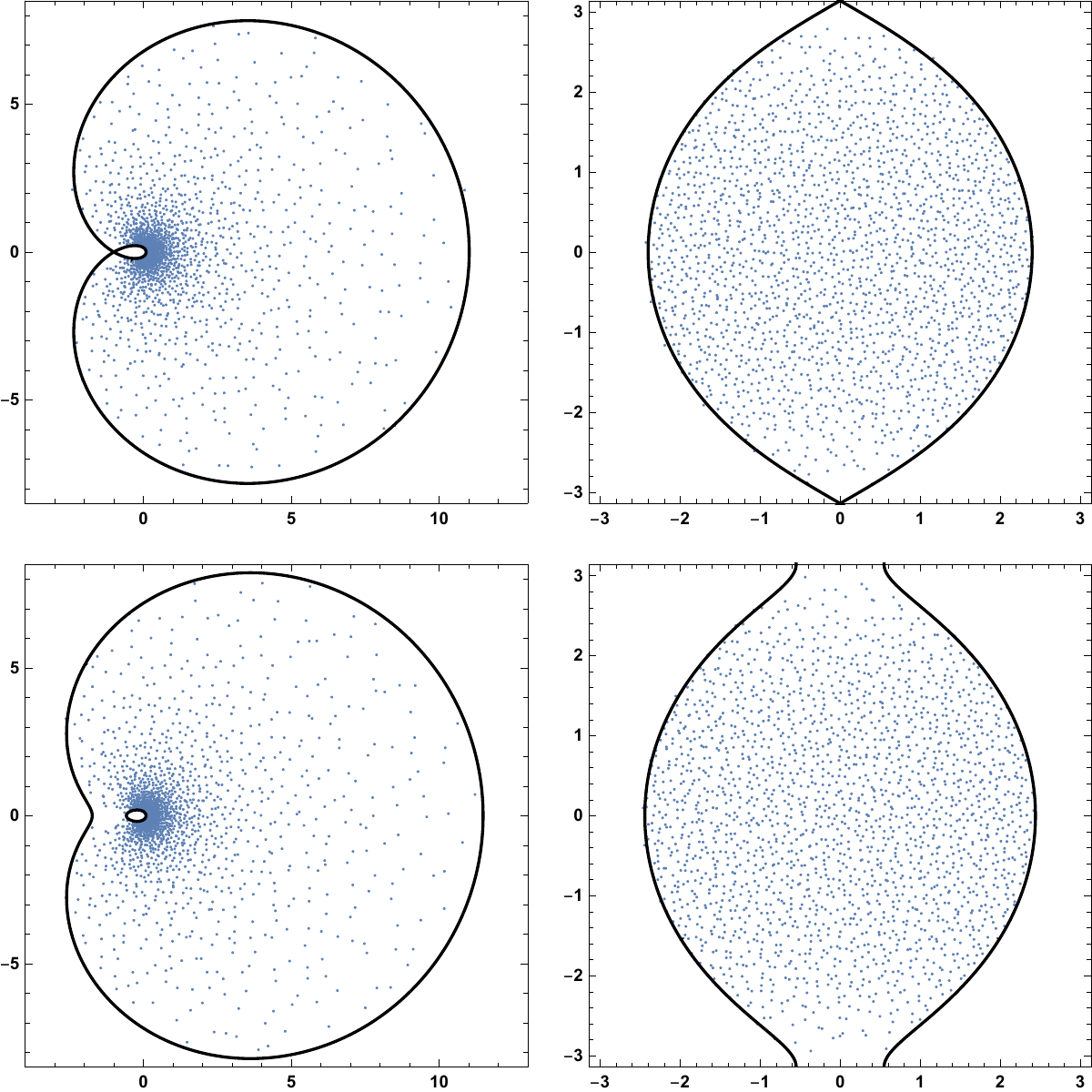}%
\caption{Simulation of the eigenvalues of $B_{t}^{N}$ (left) and their
logarithms (right) with $N=2,000$ for $t=4.0$ and $t=$ $4.1.$ The distribution
of points on the right-hand side of the figure is approximately constant in
the horizontal direction}%
\label{evals41.fig}%
\end{figure}
%

\begin{figure}[ptb]%
\centering
\includegraphics[
height=2.7466in,
width=4.4278in
]%
{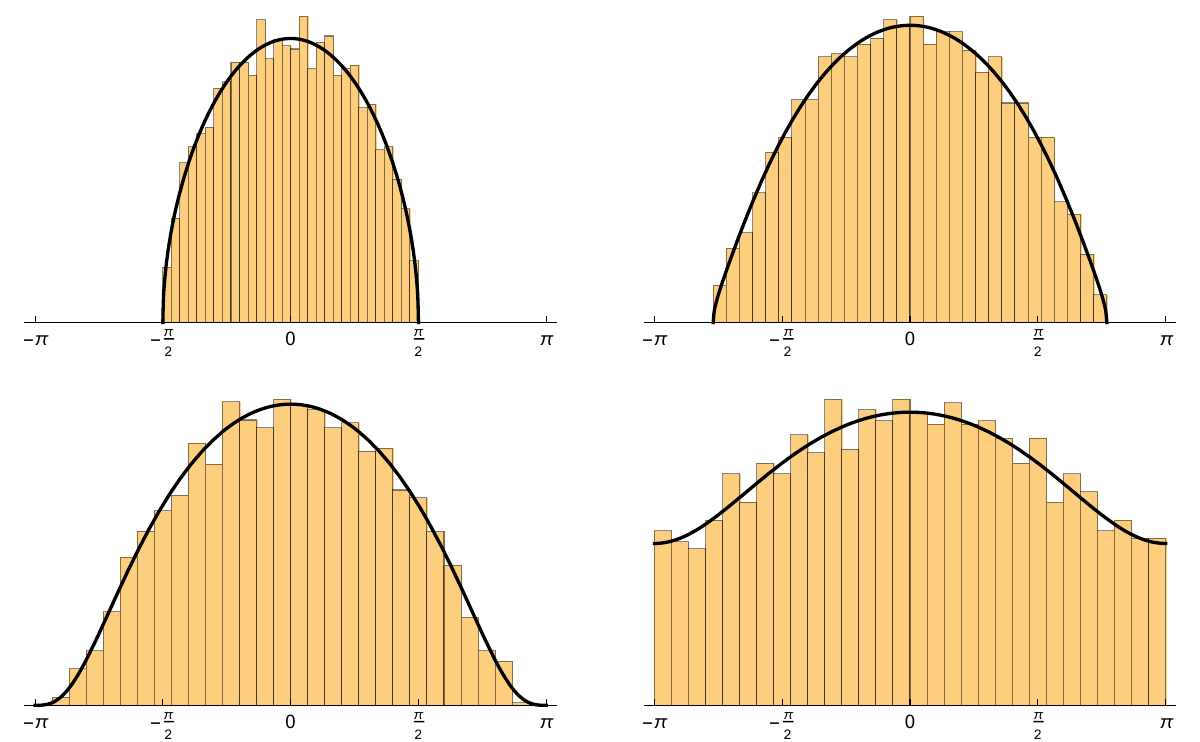}%
\caption{The density $a_{t}(\theta)$ in (\ref{aTtheta}) plotted against a
histogram of the arguments of the eigenvalues of $B_{t}^{N}$, for
\thinspace$N=2,000$ and $t=2,$ $3.5,$ $4,$ and $7$}%
\label{histograms.fig}%
\end{figure}
%

\begin{figure}[ptb]%
\centering
\includegraphics[
height=2.7475in,
width=4.4278in
]%
{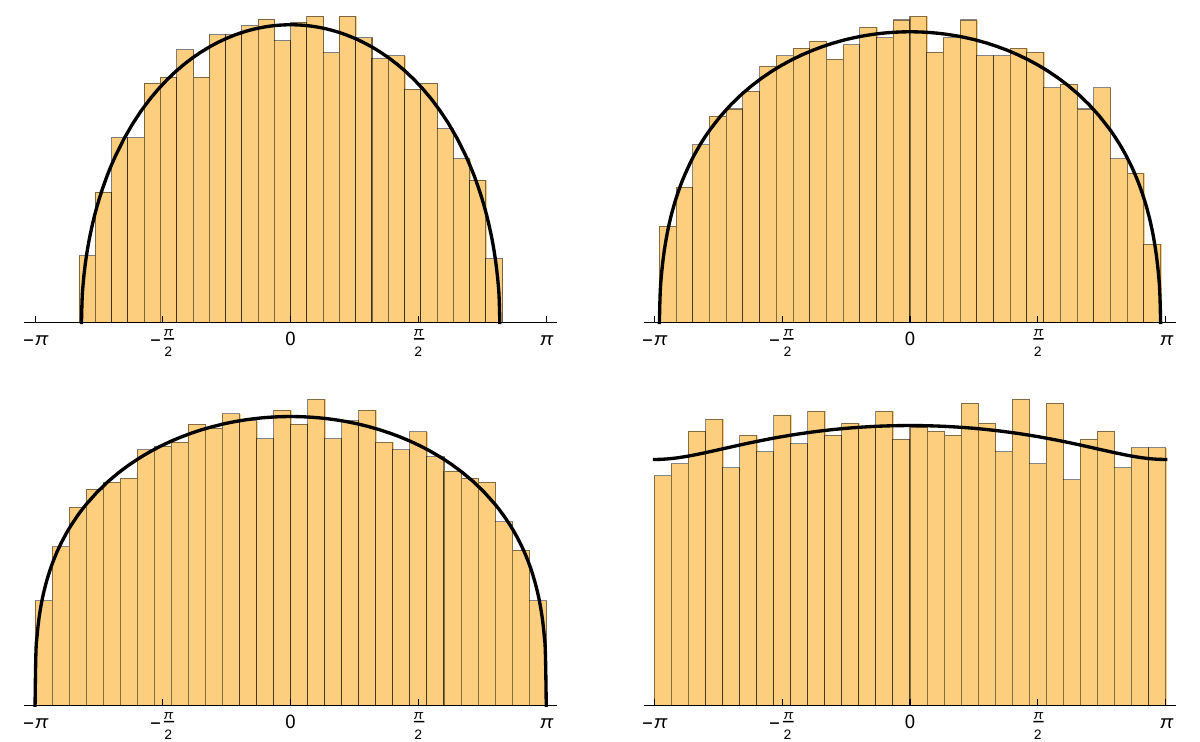}%
\caption{The density of Biane's measure $\nu_{t}(\phi)$ plotted against a
histogram of $\{\Phi_{t}(\lambda_{j})\}_{j=1}^{N}$, for \thinspace$N=2,000$
and $t=2,$ $3.5,$ $4,$ and $7$}%
\label{bianeplots.fig}%
\end{figure}

As mentioned in Section \ref{BrownOfbt}, the Brown measure of the free
multiplicative Brownian motion $b_{t}$ is a natural candidate for the limiting
empirical eigenvalue distribution of the Brownian motion $B_{t}^{N}$ in
$\mathsf{GL}(N;\mathbb{C}).$ We may express this idea formally as a conjecture.

\begin{conjecture}
\label{limitDistribution.conj}For all $t>0,$ the empirical eigenvalue
distribution of $B_{t}^{N}$ converges almost surely as $N\rightarrow\infty$ to
the Brown measure $\mu_{b_{t}}$ of $b_{t}.$
\end{conjecture}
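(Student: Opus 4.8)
The natural route is the \emph{Hermitization} (Girko) method that underlies all rigorous proofs of the circular law \cite{Girko,TV}, adapted to this strongly non-i.i.d.\ model. Fix $\lambda\in\mathbb{C}$ and set $H^N_\lambda=(B^N_t-\lambda)^\ast(B^N_t-\lambda)$, with limiting operator $h_\lambda=(b_t-\lambda)^\ast(b_t-\lambda)$. The logarithmic potential of the empirical eigenvalue measure is
\[
L_{B^N_t}(\lambda)=\tfrac1N\log\lvert\det(B^N_t-\lambda)\rvert=\tfrac12\int_0^\infty\log u\;d\mu_{H^N_\lambda}(u),
\]
and $\tfrac1{2\pi}\Delta L_{B^N_t}=\mu_{B^N_t}$; the analogous identities hold in the limit, with $L_{b_t}(\lambda)=\tfrac12 s_t(\lambda)$ and $\tfrac1{2\pi}\Delta L_{b_t}=\mu_{b_t}$ by Theorem \ref{main.thm}. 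By the replacement principle of \cite{TV} it then suffices to show that, almost surely, $L_{B^N_t}(\lambda)\to L_{b_t}(\lambda)$ for Lebesgue-a.e.\ $\lambda$, together with tightness of $\{\mu_{B^N_t}\}$ — the latter being immediate from the operator-norm bound $\lVert B^N_t\rVert\le C_t$ with overwhelming probability, a consequence of the moment estimates of \cite{KempHeatKernel}.

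Convergence of the logarithmic potentials splits, as always, into two parts. (i) \emph{Convergence of the symmetrized singular-value distributions}: $\mu_{H^N_\lambda}\to\mu_{h_\lambda}$ weakly, almost surely. Convergence in expectation is exactly the statement that the $\ast$-moments of $B^N_t$ converge to those of $b_t$, proved in \cite{KempLargeN}; to upgrade it one uses concentration. Since $B^N_t$ is a real-analytic functional of the driving matrix Brownian motion $(Z_s)_{s\le t}$, the maps $Z\mapsto\tfrac1N\mathrm{tr}\,\varphi(H^N_\lambda)$ are, on the event $\{\lVert B^N_t\rVert\le C_t\}$, Lipschitz with constant $O(1/\sqrt N)$ for bounded Lipschitz $\varphi$, so Gaussian concentration on path space gives $e^{-cN}$ deviations and Borel--Cantelli yields the almost-sure statement. (ii) \emph{Uniform integrability of $\log u$ near $u=0$}: one needs (a) a polynomial lower bound $\sigma_{\min}(B^N_t-\lambda)\ge N^{-A}$ with probability $1-O(N^{-2})$, and (b) a bound $\#\{j:\sigma_j(B^N_t-\lambda)\le\delta\}\le N\eta(\delta)$ with $\eta(\delta)\to0$ as $\delta\to0$, holding on a high-probability event uniformly in small $\delta$.

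Part (b) is, in the spirit of the present paper, accessible via a quantitative local law: promote the PDE of Theorem \ref{thePDE.thm} to a finite-$N$ statement for $S_N(t,\lambda,x)=\tfrac1N\mathbb{E}\,\mathrm{tr}\log(H^N_\lambda+x)$, which should satisfy the same PDE up to an $O(N^{-2})$ error plus fluctuations controlled again by concentration, and propagate the estimate from $x=O(1)$ down to $x\gtrsim N^{-c}$; since $\partial_x S_N=\tfrac1N\mathbb{E}\,\mathrm{tr}(H^N_\lambda+x)^{-1}$ controls $\mu_{H^N_\lambda}([0,x])$, this bounds the number of small singular values and, as a byproduct, reproduces the edge exponent implicit in (\ref{secondDeriv}). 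The real obstacle is part (a), the least-singular-value bound for $B^N_t-\lambda$: because $B^N_t$ is a highly nonlinear functional of a Gaussian matrix — the time-$t$ value of $dB=B\,dZ$ — the standard anti-concentration machinery for matrices with independent entries does not apply directly. The plan is to exploit the multiplicative Markov structure: write $B^N_t=B^N_{t-\varepsilon}\,G^N_\varepsilon$ with $G^N_\varepsilon$ the heat-kernel increment on $\mathsf{GL}(N;\mathbb{C})$, independent of $B^N_{t-\varepsilon}$, and expand $G^N_\varepsilon=I+\sqrt\varepsilon\,Z^N+R^N_\varepsilon$ with $Z^N$ Ginibre and $R^N_\varepsilon$ of operator norm $O(\varepsilon\log N)$ with high probability. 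Then
\[
B^N_t-\lambda=\bigl(B^N_{t-\varepsilon}-\lambda\bigr)+\sqrt\varepsilon\,B^N_{t-\varepsilon}Z^N+B^N_{t-\varepsilon}R^N_\varepsilon,
\]
which, conditionally on $B^N_{t-\varepsilon}$, is an additive Gaussian perturbation of a fixed matrix by $\sqrt\varepsilon\,B^N_{t-\varepsilon}Z^N$ — a Gaussian matrix whose covariance is nondegenerate because $B^N_{t-\varepsilon}$ is well-conditioned with high probability (again \cite{KempHeatKernel}). Smoothed-analysis least-singular-value bounds in the style of \cite{TV} then give $\sigma_{\min}(B^N_t-\lambda)\gtrsim\sqrt\varepsilon\,N^{-A}$ with the required probability, conditionally and hence unconditionally; choosing $\varepsilon=N^{-\kappa}$ for small $\kappa>0$ produces a polynomial bound.

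I expect the hard part to be exactly this last step: making the expansion of the heat-kernel increment $G^N_\varepsilon$ rigorous with operator-norm control on $R^N_\varepsilon$, ensuring the conditional Gaussian perturbation argument survives the $B^N_{t-\varepsilon}R^N_\varepsilon$ term, and doing all of it uniformly for $\lambda$ in compact subsets of $\mathbb{C}$, so that a net argument — using continuity of $L_{b_t}$ off the support of $\mu_{b_t}$ and the $L^1_{\mathrm{loc}}$ tolerance built into the replacement principle — converts the pointwise statement into the almost-sure weak convergence $\mu_{B^N_t}\to\mu_{b_t}$ claimed in the conjecture. A fallback, worth pursuing if the increment expansion proves delicate, is to settle for a crude a.s.\ bound such as $\sigma_{\min}(B^N_t-\lambda)\ge e^{-N^{C}}$, which suffices once (b) is quantitatively strong enough, obtained by brute-force estimation of $\det(H^N_\lambda)$ together with the high-probability control of $\lVert B^N_t\rVert$ and of the number of small eigenvalues.
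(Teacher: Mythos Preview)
The statement you are attempting to prove is labeled a \emph{Conjecture} in the paper, and the paper does not prove it. Indeed, the paper explicitly says ``Conjecture \ref{limitDistribution.conj} is technically difficult to approach'' and then explains why: the logarithmic singularity in the Brown-measure construction means convergence in $\ast$-distribution does not automatically imply convergence of eigenvalue distributions to the Brown measure (the paper gives the nilpotent-shift/Haar-unitary counterexample). The paper offers only numerical evidence (Figures \ref{3dplotwithhisthor.fig}--\ref{bianeplots.fig}) and a heuristic link to \'{S}niady's result \cite{Sniady} that small Gaussian perturbations generically force convergence to the Brown measure. There is therefore nothing in the paper to compare your argument against.

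That said, your outline is a sensible research program and correctly identifies the architecture (Hermitization, replacement principle, concentration for the symmetrized singular-value law, uniform integrability of $\log$ near zero) and, crucially, the genuine bottleneck: a least-singular-value bound for $B^N_t-\lambda$. Your idea of writing $B^N_t=B^N_{t-\varepsilon}G^N_\varepsilon$ and treating the increment as an approximate additive Ginibre perturbation is exactly in the spirit of \cite{Sniady}, and is the natural thing to try. But you should be aware that several of the ingredients you invoke as if off-the-shelf are not: the Lipschitz/concentration estimate for $Z\mapsto\tfrac1N\mathrm{tr}\,\varphi(H^N_\lambda)$ on path space requires care because $B^N_t$ is a nonlinear (stochastic-exponential) functional of $Z$; the ``finite-$N$ PDE up to $O(N^{-2})$'' for $S_N$ is plausible but not established anywhere in the paper; and the operator-norm control of the remainder $R^N_\varepsilon$ in the heat-kernel expansion, uniformly with high probability, is itself a nontrivial estimate. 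Your own closing paragraph already flags these as the hard parts, which is accurate --- so what you have written is a credible proof \emph{strategy}, not a proof, and the paper agrees that the conjecture remains open.
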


While natural, Conjecture \ref{limitDistribution.conj} is technically
difficult to approach. It is by now well known that the logarithmic
singularity in the definition of the Brown measure can result in failure of
convergence of the empirical distribution of eigenvalues to the Brown measure
of the limit (in $\ast$-distribution) of the random matrix ensembles. Suppose,
for example, $T_{N}$ is an $N\times N$ matrix with all $0$ entries except for
1's just below the diagonal. Then all of $T_{N}$'s eigenvalues are $0$, and
hence the empirical eigenvalue distribution is a point-mass at $0$ for each
$N$. However, in $\ast$-distribution, $T_{N}$ converges to a Haar unitary $u$,
whose Brown measure is the uniform probability measure on the unit circle.
(See, for example, Section 2.6 of \cite{Sniady}.)

In \cite{Sniady}, \'{S}niady proved that convergence to the Brown measure is
\textquotedblleft generic\textquotedblright, in the sense that a small
(vanishing in the limit) independent Gaussian perturbation of the original
ensemble will always yield convergence to the Brown measure. (The required
size of the perturbation was more recently explored in \cite{FPZ}.) A main
step in \'{S}niady's proof is to show (in our language) that the empirical
eigenvalue distribution of a complex Brownian motion $Z_{t}$ on $\mathrm{gl}%
(N;\mathbb{C})$, with any deterministic initial condition, converges to the
appropriate Brown measure. That is to say, there is enough regular noise in
such matrix diffusions to kill any pseudo-spectral discontinuities. It is
natural to expect that the same should hold true for our geometric matrix
diffusion $B_{t}.$

In support of Conjecture \ref{limitDistribution.conj}, we first note that for
small $t,$ the distribution of $B_{t}^{N}$---namely, the time-$t$ heat kernel
measure on $\mathrm{GL}(N;\mathbb{C}),$ based at the identity---is
approximately Gaussian. Thus, $B_{t}^{N}$ is distributed, for small $t,$
similarly to the Ginibre ensemble, shifted by the identity and scaled by
$\sqrt{t}.$ Thus, Conjecture \ref{limitDistribution.conj} leads us to expect
that that $\mu_{b_{t}}$ will be close, for small $t,$ to the uniform
probability measure on a disk of radius $\sqrt{t}.$ This expectation is
confirmed by the asymptotics in Section \ref{OmegaFormula.sec}.

We now offer several numerical tests of the conjecture. First, Figure
\ref{3dplotwithhisthor.fig} directly compares the density $W_{t}$ with $t=2$
to the distribution of eigenvalues of $B_{t}^{N}$ with $t=2$ and $N=2,000.$
Next, in light of Conjecture \ref{limitDistribution.conj} and Point
\ref{logInd.point} of Corollary \ref{logIndependent.cor}, we expect that the
limiting distribution of the \textit{logarithms} of the eigenvalues of
$B_{t}^{N}$ will be constant in the horizontal direction. This expectation is
confirmed by simulations, as in the right-hand side of Figure
\ref{evals41.fig}.

Furthermore, Conjecture \ref{limitDistribution.conj} predicts that the
large-$N$ distribution of the arguments of the eigenvalues will be given by
the density $a_{t}$ in (\ref{aTtheta}). Furthermore, Conjecture
\ref{limitDistribution.conj} and Proposition \ref{connectToBiane2.prop}
predict that, if $\{\lambda_{j}\}_{j=1}^{N}$ are the eigenvalues of $B_{t}%
^{N},$ then for large $N,$ the empirical distribution of $\{\Phi_{t}%
(\lambda_{j})\}_{j=1}^{N}$ will be given by Biane's measure $\nu_{t}.$ Both of
these predictions are confirmed by simulations; see Figures
\ref{histograms.fig} and \ref{bianeplots.fig}.

\section{Properties of $\Sigma_{t}$\label{regionPropeties.sec}}

We now verify some important properties of the regions $\Sigma_{t}$ in
Definition \ref{SigmaT.def}. Define%
\begin{equation}
T(\lambda)=\left\{
\begin{array}
[c]{ll}%
\left\vert \lambda-1\right\vert ^{2}\frac{\log(\left\vert \lambda\right\vert
^{2})}{\left\vert \lambda\right\vert ^{2}-1}, & \left\vert \lambda\right\vert
\neq1\\
\left\vert \lambda-1\right\vert ^{2}, & \left\vert \lambda\right\vert =1
\end{array}
\right.  . \label{Tlambda}%
\end{equation}
Note that the function%
\[
x\mapsto\frac{\log(x)}{x-1}%
\]
has a removable singularity at $x=1,$ with a limiting value of $1$ at $x=1$.
Thus, $T(\lambda)$ is a real analytic function on all of $\mathbb{C}%
\setminus\{0\}.$ Since, also,%
\[
\lim_{x\rightarrow0}\frac{\log(x)}{x-1}=+\infty,
\]
we see that $T(\lambda)\rightarrow+\infty$ as $\lambda\rightarrow0.$ By
checking the three cases $\left\vert \lambda\right\vert >1,$ $\left\vert
\lambda\right\vert =1,$ and $\left\vert \lambda\right\vert <1,$ we may verify
that $T(\lambda)\geq0$ for all $\lambda,$ with equality only if $\lambda=1.$

\begin{theorem}
\label{domainGobbles.thm}For all $t>0,$ the region $\Sigma_{t}$ may be
expressed as%
\[
\Sigma_{t}=\left\{  \left.  \lambda\in\mathbb{C}\right\vert T(\lambda
)<t\right\}
\]
and the boundary of $\Sigma_{t}$ may be expressed as
\[
\partial\Sigma_{t}=\left\{  \left.  \lambda\in\mathbb{C}\right\vert
T(\lambda)=t\right\}  .
\]

\end{theorem}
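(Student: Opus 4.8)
The plan is to first establish the key algebraic identity relating the curve $|f_t(\lambda)| = 1$ to the equation $T(\lambda) = t$, and then to promote that local statement to the global claim about connected components. Write $\lambda = re^{i\theta}$ with $r \neq 1$. A direct computation from the definition $f_t(\lambda) = \lambda e^{\frac{t}{2}\frac{1+\lambda}{1-\lambda}}$ gives
\[
\log|f_t(\lambda)| = \log r + \frac{t}{2}\operatorname{Re}\!\left(\frac{1+\lambda}{1-\lambda}\right).
\]
A short manipulation shows $\operatorname{Re}\frac{1+\lambda}{1-\lambda} = \frac{1-|\lambda|^2}{|1-\lambda|^2} = \frac{1-r^2}{|1-\lambda|^2}$, so
\[
\log|f_t(\lambda)| = \log r + \frac{t}{2}\cdot\frac{1-r^2}{|\lambda-1|^2}.
\]
Hence $|f_t(\lambda)| = 1$ (for $r \neq 1$) is equivalent to $\log(r^2) = t\cdot\frac{r^2-1}{|\lambda-1|^2}$, i.e., since $r \neq 1$ so that $r^2 - 1 \neq 0$, to $|\lambda-1|^2\frac{\log(r^2)}{r^2-1} = t$, which is exactly $T(\lambda) = t$. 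Thus $F_t = \{\lambda : |\lambda|\neq 1,\ T(\lambda) = t\}$. Since $T$ is real analytic on $\mathbb{C}\setminus\{0\}$, extends continuously (with value $|\lambda-1|^2$) across $|\lambda|=1$, blows up at $0$, and $T>0$ with equality only at $\lambda=1$, one checks that the only points of $\{T = t\}$ with $|\lambda|=1$ are already limit points of $F_t$ (using that on $|\lambda|=1$ the level set $T=t$ is the circle arc $|\lambda-1|^2 = t$, which is approached by nearby points with $|\lambda|\neq 1$ on the same level set whenever it is nonempty and $t < 4$; for $t\ge 4$ there are no such boundary points and $E_t = \overline{F_t}$ is handled directly). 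Therefore $E_t = \overline{F_t} = \{\lambda\in\mathbb{C}\setminus\{0\} : T(\lambda) = t\}$.

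Next I would identify the connected component. Let $\Omega_t := \{\lambda : T(\lambda) < t\}$. This is an open set, and it contains $\lambda = 1$ since $T(1) = 0 < t$. I claim $\Omega_t$ is connected and is precisely the component of $\mathbb{C}\setminus E_t$ containing $1$. For connectedness of $\Omega_t$: since $T$ is invariant under $\lambda\mapsto 1/\lambda$ and $\lambda\mapsto\bar\lambda$, it suffices to show that along each ray from the origin the set $\{r : T(re^{i\theta}) < t\}$ is an interval (or empty), and that $1 \in \Omega_t$ lets us connect every such ray-segment back to $1$. Fix $\theta$ and study $g(r) := T(re^{i\theta})$ as a function of $r > 0$. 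One has $g(r)\to+\infty$ as $r\to 0^+$ and as $r\to\infty$ (the latter from $T(\lambda)\sim |\lambda|^2\cdot\frac{\log|\lambda|^2}{|\lambda|^2}\to\infty$ once $|\lambda|^2\log(|\lambda|^2)$ dominates), so $g$ attains a minimum; the symmetry $g(r) = g(1/r)$ forces the minimum to be at $r=1$ (at least generically — more carefully, one shows $g$ is strictly decreasing on $(0,1]$ and strictly increasing on $[1,\infty)$ by differentiating, or by a convexity-in-$\log r$ argument), and $g(1) = |e^{i\theta}-1|^2 = 2-2\cos\theta$. Consequently $\{r : g(r) < t\}$ is either empty (when $2-2\cos\theta \ge t$, i.e. $|\theta|\ge\theta_{\max}(t)$) or an open interval $(1/r_t(\theta), r_t(\theta))$ containing $1$. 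Taking the union over $\theta$ shows $\Omega_t$ is path-connected through the point $1$.

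Finally, to conclude $\Sigma_t = \Omega_t$: $\Omega_t$ is a connected subset of $\mathbb{C}\setminus E_t$ (since $E_t = \{T = t\}$ and $\Omega_t = \{T < t\}$) containing $1$, so $\Omega_t$ is contained in the component $\Sigma_t$. Conversely, $\partial\Omega_t \subset \{T = t\} = E_t$ by continuity of $T$ and the fact that $\{T<t\}$ is open with $\{T=t\}$ as its topological boundary (here I use that $T$ has no interior local maxima at value $t$, which follows from real-analyticity plus the ray analysis — $t$ is a regular value away from $\lambda=1$ except possibly at the finitely many critical points of $T$, and one checks directly these do not obstruct the boundary identification). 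Since $\Sigma_t$ is connected, contains $1 \in \Omega_t$, and cannot cross $E_t \supseteq \partial\Omega_t$, it cannot escape $\Omega_t$; hence $\Sigma_t \subseteq \Omega_t$. Combining the inclusions gives $\Sigma_t = \Omega_t = \{T < t\}$, and then $\partial\Sigma_t = \overline{\Omega_t}\setminus\Omega_t = \{T = t\}$ (one should note $\overline{\Omega_t} = \{T \le t\}$, again using the ray picture to rule out isolated boundary behavior). The main obstacle I anticipate is the careful handling of the boundary circle $|\lambda| = 1$ and the transition at $t = 4$ — one must verify that no spurious components of $\mathbb{C}\setminus E_t$ get mistakenly included, and that the closure $\overline{F_t}$ really does pick up exactly the arc $\{|\lambda-1|^2 = t,\ |\lambda|=1\}$ for $t < 4$ and nothing extra; this is where the monotonicity analysis of $g(r) = T(re^{i\theta})$ does the real work, and getting that monotonicity cleanly (rather than just "generically") requires a genuine computation with $\frac{d}{dr}\left[\frac{(r^2 - 2r\cos\theta + 1)\log(r^2)}{r^2-1}\right]$.
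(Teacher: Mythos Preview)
Your proposal is correct and follows essentially the same route as the paper. Both arguments hinge on the identification $F_t=\{|\lambda|\neq 1:\ T(\lambda)=t\}$ via the computation of $\log|f_t(\lambda)|$, the passage to $E_t=\{T=t\}$, and---crucially---the strict monotonicity of $r\mapsto T(re^{i\theta})$ on $(0,1]$ and $[1,\infty)$ with minimum $2-2\cos\theta$ at $r=1$; the paper isolates this as a separate lemma (Lemma~\ref{Tmin.lem}) and proves it by reducing the sign of $\partial T/\partial r$ to the extreme cases $\cos\theta=\pm 1$ via linearity in $\cos\theta$, which is exactly the ``genuine computation'' you flag as the main obstacle.

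The only substantive difference is in the final identification step: the paper explicitly counts the connected components of $\mathbb{C}\setminus E_t$ (two for $t<4$, three for $t\ge 4$) using the ray picture and then observes that $T<t$ on the component containing $1$ and $T>t$ on the others (since $T\to+\infty$ at $0$ and at $\infty$), whereas you run a cleaner topological containment argument ($\Omega_t\subset\Sigma_t$ by connectedness of $\Omega_t$; $\Sigma_t\subset\Omega_t$ since $\Sigma_t$ is connected, disjoint from $E_t\supset\partial\Omega_t$, and meets $\Omega_t$). Your version avoids the case split on $t\lessgtr 4$ at this stage, while the paper's version makes the global geometry more transparent; both are equally valid and rest on the same monotonicity lemma.
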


Thus, each fixed $\lambda\in\mathbb{C}$ will be outside $\overline{\Sigma}%
_{t}$ until $t=T(\lambda)$ and will be inside $\Sigma_{t}$ for all
$t>T(\lambda).$ We may therefore say that $T(\lambda)$ is \textit{the time
that the domain }$\Sigma_{t}$\textit{ gobbles up }$\lambda.$ See Figures
\ref{tplot3d.fig} and \ref{levelsets.fig}.

\begin{theorem}
\label{regionProperties.thm}For each $t>0,$ the region $\Sigma_{t}$ has the
following properties.

\begin{enumerate}
\item \label{thetamax.point}For $t\leq4,$ we have $\left\vert \arg
\lambda\right\vert \leq\cos^{-1}(1-t/2)$ for all $\lambda\in\overline{\Sigma
}_{t},$ with equality precisely for the points on the unit circle with
$\cos\theta=1-t/2.$

\item \label{radialInterval.point}Consider the ray from the origin with angle
$\theta$; if $t\leq4,$ assume $\left\vert \theta\right\vert <\cos^{-1}%
(1-t/2)$. Then this ray intersects $\Sigma_{t}$ precisely in an open interval
of the form $1/r_{t}(\theta)<r<r_{t}(\theta)$ for some $r_{t}(\theta)>1.$

\item \label{domainSmooth.point}The boundary of $\Sigma_{t}$ is smooth for all
$t>0$ with $t\neq4.$ When $t=4,$ the boundary of $\Sigma_{t}$ is smooth except
at $\lambda=-1,$ near which it looks like the transverse intersection of two
smooth curves.

\item The region $\Sigma_{t}$ is invariant under $\lambda\mapsto1/\lambda$ and
under $\lambda\mapsto\bar{\lambda}.$

\item The region $\Sigma_{t}$ coincides with the one defined by Biane in
\cite{BianeJFA}.
\end{enumerate}
\end{theorem}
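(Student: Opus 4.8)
Throughout I work in polar coordinates $\lambda = re^{i\theta}$ and reduce every assertion to the behavior of the one–variable functions
\[
\phi_{\theta}(r):=T(re^{i\theta})=(r^{2}+1-2r\cos\theta)\,\frac{\log(r^{2})}{r^{2}-1},
\]
using Theorem~\ref{domainGobbles.thm} to pass freely between $\Sigma_{t}$ and the sublevel set $\{T<t\}$. Two symmetries are immediate: $T(\bar\lambda)=T(\lambda)$ by inspection, and $T(1/\lambda)=T(\lambda)$ because $|1/\lambda-1|=|\lambda-1|/|\lambda|$ and $\frac{\log(1/r^{2})}{1/r^{2}-1}=r^{2}\,\frac{\log(r^{2})}{r^{2}-1}$. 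Combined with Theorem~\ref{domainGobbles.thm} this proves part~(4). In particular $\phi_{\theta}(1/r)=\phi_{\theta}(r)$, so it suffices to understand $\phi_{\theta}$ on $[1,\infty)$.

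The crux is the monotonicity claim: \emph{for every $\theta$, $\phi_{\theta}$ is strictly increasing on $(1,\infty)$} (hence, by the $r\mapsto1/r$ symmetry, strictly decreasing on $(0,1)$), so $\phi_{\theta}$ attains its global minimum only at $r=1$, with value $\phi_{\theta}(1)=|e^{i\theta}-1|^{2}=2(1-\cos\theta)$. Granting this, part~(1) follows: if $\lambda=re^{i\theta}\in\overline{\Sigma}_{t}$ then $2(1-\cos\theta)\le T(\lambda)\le t$, so $\cos\theta\ge1-t/2$, i.e. $|\arg\lambda|\le\cos^{-1}(1-t/2)$ for $t\le4$, and equality forces both $T(\lambda)=t$ and, by strictness, $r=1$, leaving exactly the unit–circle points with $\cos\theta=1-t/2$. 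Part~(2) also follows: under the stated hypothesis on $\theta$ we have $\phi_{\theta}(1)=2(1-\cos\theta)<t$, and since $\phi_{\theta}$ increases strictly from this value to $+\infty$ on $[1,\infty)$, the set $\{r:\phi_{\theta}(r)<t\}$ is precisely the interval $(1/r_{t}(\theta),r_{t}(\theta))$ with $r_{t}(\theta)>1$ the unique solution of $\phi_{\theta}(r)=t$, $r>1$.

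To prove the monotonicity claim I would differentiate $\phi_{\theta}$, factor out the positive quantity $\tfrac{2}{r(r^{2}-1)^{2}}$, and reduce ``$\phi_{\theta}'(r)>0$ for $r>1$'' to
\[
G(r,c):=2r\log r\,\bigl(c(r^{2}+1)-2r\bigr)+(r^{2}+1-2cr)(r^{2}-1)>0,\qquad r>1,\ c:=\cos\theta\in[-1,1].
\]
One checks $G(1,c)=0$ (so $r=1$ is always a critical point of $\phi_{\theta}$), and then disposes of the $c$–dependence by the observation $\partial G/\partial c=2r\bigl[(r^{2}+1)\log r-(r^{2}-1)\bigr]>0$ for $r>1$ — the bracket vanishes to second order at $r=1$ and has positive third derivative. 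Hence $G(r,c)\ge G(r,-1)=(r+1)^{2}\bigl[(r^{2}-1)-2r\log r\bigr]$, and $(r^{2}-1)-2r\log r>0$ for $r>1$ since its derivative is $2(r-1-\log r)>0$. This is the heart of the matter; the rest is bookkeeping. For part~(3): $\partial\Sigma_{t}=\{T=t\}$ is a smooth curve near any point with $\nabla T\neq0$ by the implicit function theorem, and since $\partial_{\theta}T=2r\sin\theta\,\tfrac{\log(r^{2})}{r^{2}-1}$ vanishes only when $\sin\theta=0$ while $\partial_{r}T=\phi_{\theta}'(r)$ vanishes only at $r=1$, the only critical points of $T$ on $\mathbb{C}\setminus\{0\}$ are $\lambda=1$ (with $T=0$) and $\lambda=-1$ (with $T=4$); thus $\partial\Sigma_{t}$ is smooth for all $t>0$ with $t\neq4$, and smooth away from $-1$ when $t=4$. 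At $\lambda=-1$ I would compute the Hessian of $T$: in polar coordinates it is $\operatorname{diag}\bigl(\phi_{\pi}''(1),\,2r\cos\theta\,\tfrac{\log(r^{2})}{r^{2}-1}\big|_{(1,\pi)}\bigr)=\operatorname{diag}(\tfrac23,-2)$ (the mixed partial $\partial_{r}\partial_{\theta}T$ vanishes at $(1,\pi)$ since $\sin\pi=0$), and the polar–to–Cartesian change of variables has differential $-I$ at $(1,\pi)$, so the Cartesian Hessian is again $\operatorname{diag}(\tfrac23,-2)$: nondegenerate of signature $(1,1)$. The Morse lemma then produces coordinates with $T-4=u^{2}-v^{2}$ near $-1$, i.e. $\{T=4\}$ is the transverse intersection of two smooth curves.

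Finally, part~(5) is a comparison of definitions. Using $\operatorname{Re}\tfrac{1+\lambda}{1-\lambda}=\tfrac{1-|\lambda|^{2}}{|1-\lambda|^{2}}$ one has, for $|\lambda|=r\neq1$,
\[
\log|f_{t}(\lambda)|=\log r+\frac{t}{2}\,\frac{1-r^{2}}{|1-\lambda|^{2}},
\]
which vanishes exactly when $|1-\lambda|^{2}\tfrac{\log(r^{2})}{r^{2}-1}=t$, i.e. when $T(\lambda)=t$. Hence $F_{t}=\{T=t\}\setminus S^{1}$, so $E_{t}=\overline{F}_{t}=\{T=t\}$ and the component of its complement containing $1$ is $\{T<t\}$; since Biane's region in \cite{BianeJFA} is built from the same function $f_{t}$ in \eqref{ftDef}, recalling his description and matching it against this computation completes the identification. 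I expect the uniform–in–$\theta$ monotonicity of $\phi_{\theta}$ on $(1,\infty)$ to be the main obstacle — specifically, recognizing that $G(r,c)$ is monotone in $c$ so that only the extreme case $c=-1$ must be checked; without this, the very definition of $r_{t}(\theta)$ as a single-valued function and hence parts~(1) and~(2) do not get off the ground. The Hessian computation underlying the $t=4$ corner is the second delicate point, mainly in carefully transporting the Hessian between polar and Cartesian coordinates before invoking the Morse lemma.
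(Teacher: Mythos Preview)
Your argument is essentially the paper's own: the same monotonicity of $r\mapsto T(re^{i\theta})$ (the paper's Lemma~\ref{Tmin.lem}), the same critical-point analysis of $T$, and the same Morse-lemma reduction at $\lambda=-1$. Your treatment of the monotonicity is in fact slightly sharper than the paper's outlined direct argument: the paper uses linearity in $\cos\theta$ to reduce to checking \emph{both} endpoints $\cos\theta=\pm1$, whereas you determine the sign of $\partial G/\partial c$ and thereby reduce to the single worst case $c=-1$.

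Two small points deserve tightening. First, the step ``$E_t=\overline{F_t}=\{T=t\}$'' asserts more than $F_t\subset\{T=t\}$ and closedness give you: you must check that each unit-circle point with $T=t$ is actually a limit of points in $F_t$, i.e.\ that $r_t(\theta)\to1$ as $\theta\to\pm\cos^{-1}(1-t/2)$ when $t\le4$. This follows from your monotonicity (an implicit-function/continuity argument, as in the paper's Proposition~\ref{setFt.prop}), but it is not automatic. Second, for part~(5) the paper does more than note that both constructions use $f_t$: it unwinds Biane's boundary description via the inverse $\chi(t,\cdot)$ of $f_t$ on the closed disk and matches his two analytic arcs against the curves $r=r_t(\theta)$ and $r=1/r_t(\theta)$; your ``recalling his description and matching'' should be fleshed out along these lines.
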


For Point \ref{thetamax.point}, see Figure \ref{thetamax.fig}.%

\begin{figure}[ptb]%
\centering
\includegraphics[
height=3.5449in,
width=3.0286in
]%
{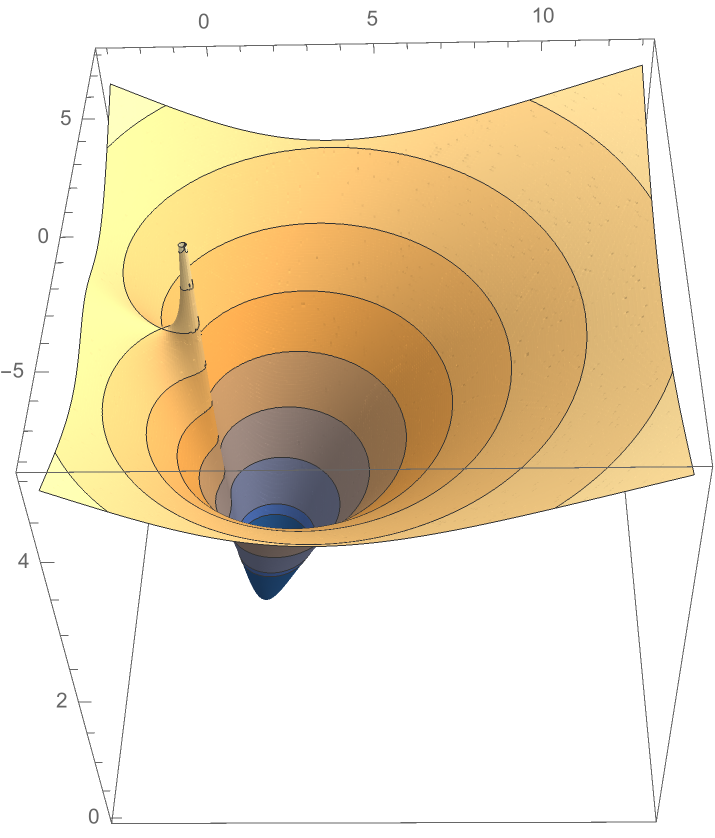}%
\caption{Plot of the function $T(\lambda),$ showing values between $0$ and
$5.$ The function has a global minimum at $\lambda=1,$ a saddle point at
$\lambda=-1,$ and a pole at $\lambda=0$}%
\label{tplot3d.fig}%
\end{figure}
%

\begin{figure}[ptb]%
\centering
\includegraphics[
height=2.5365in,
width=4.8274in
]%
{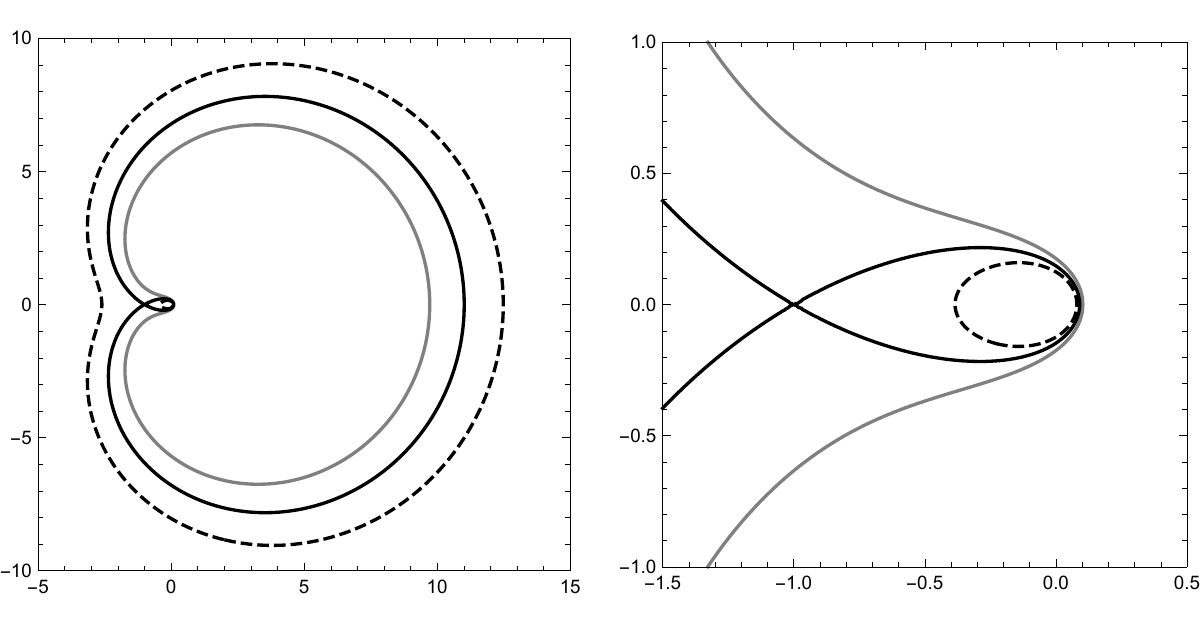}%
\caption{Level sets of the function $T(\lambda)$ form the boundaries of the
regions $\Sigma_{t}.$ Shown for $t=3.7$ (gray), $t=4$ (black), and $t=4.3$
(dashed). The right-hand side of the figure gives a close-up view near
$\lambda=0.$}%
\label{levelsets.fig}%
\end{figure}

We now begin working toward the proofs of Theorems \ref{domainGobbles.thm} and
\ref{regionProperties.thm}.

\begin{lemma}
\label{ftOne.lem}For $\lambda\in\mathbb{C}$ with $\left\vert \lambda
\right\vert \neq1,$ we have $\left\vert f_{t}(\lambda)\right\vert =1$ if and
only if $T(\lambda)=t.$
\end{lemma}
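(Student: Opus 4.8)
The plan is to reduce the identity $\left\vert f_{t}(\lambda)\right\vert =1$ to an explicit algebraic equation by taking logarithms, and then to recognize that equation as $T(\lambda)=t$. First I would write, for any $\lambda$ with $\lambda\neq1$,
\[
\log\left\vert f_{t}(\lambda)\right\vert =\log\left\vert \lambda\right\vert +\frac{t}{2}\operatorname{Re}\!\left(\frac{1+\lambda}{1-\lambda}\right),
\]
using the explicit formula (\ref{ftDef}) for $f_{t}$ together with $\left\vert e^{z}\right\vert =e^{\operatorname{Re}z}$. Since the hypothesis $\left\vert \lambda\right\vert \neq1$ forces $\lambda\neq1$, this expression is well defined on the region of interest, apart from $\lambda=0$ which I handle separately at the end.

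Next I would compute the real part explicitly: multiplying the numerator and denominator of $(1+\lambda)/(1-\lambda)$ by $\overline{1-\lambda}$ gives
\[
\frac{1+\lambda}{1-\lambda}=\frac{1-\left\vert \lambda\right\vert ^{2}+2i\operatorname{Im}\lambda}{\left\vert 1-\lambda\right\vert ^{2}},
\]
so that $\operatorname{Re}\!\left(\frac{1+\lambda}{1-\lambda}\right)=\dfrac{1-\left\vert \lambda\right\vert ^{2}}{\left\vert 1-\lambda\right\vert ^{2}}$. Hence $\left\vert f_{t}(\lambda)\right\vert =1$ is equivalent to
\[
\log\left\vert \lambda\right\vert =\frac{t}{2}\cdot\frac{\left\vert \lambda\right\vert ^{2}-1}{\left\vert 1-\lambda\right\vert ^{2}}.
\]
Writing $\log\left\vert \lambda\right\vert =\tfrac{1}{2}\log(\left\vert \lambda\right\vert ^{2})$ and using $\left\vert \lambda\right\vert \neq1$ to divide through by $\left\vert \lambda\right\vert ^{2}-1\neq0$, this rearranges to
\[
t=\left\vert 1-\lambda\right\vert ^{2}\,\frac{\log(\left\vert \lambda\right\vert ^{2})}{\left\vert \lambda\right\vert ^{2}-1}=T(\lambda),
\]
which is exactly the claimed equivalence, by the definition (\ref{Tlambda}) of $T$ in the case $\left\vert \lambda\right\vert \neq1$.

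Finally I would dispose of the degenerate point $\lambda=0$, which is permitted by the hypothesis: there $f_{t}(0)=0$, so $\left\vert f_{t}(0)\right\vert =1$ fails, while $T(\lambda)\to+\infty$ as $\lambda\to0$, so $T(0)=t$ also fails; thus both sides of the stated equivalence are false and the lemma holds vacuously at $\lambda=0$. I do not anticipate any genuine obstacle here — the whole argument is a short direct computation, and the only points needing care are that $\left\vert \lambda\right\vert \neq1$ both legitimizes the division by $\left\vert \lambda\right\vert ^{2}-1$ and guarantees $\lambda\neq1$ so that $f_{t}(\lambda)$ is defined.
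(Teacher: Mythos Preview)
Your proof is correct and follows essentially the same route as the paper: compute $\log|f_t(\lambda)|$ using the explicit form of $f_t$, evaluate $\operatorname{Re}\!\left(\frac{1+\lambda}{1-\lambda}\right)=\frac{1-|\lambda|^2}{|\lambda-1|^2}$, and rearrange the resulting equation using $|\lambda|\neq1$ to obtain $t=T(\lambda)$. Your treatment of the degenerate point $\lambda=0$ is slightly more explicit than the paper's (which simply notes $f_t(0)=0$ at the outset), but the arguments are otherwise identical.
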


\begin{proof}
Since $f_{t}(0)=0,$ we must have $\lambda\neq0$ if $\left\vert f_{t}%
(\lambda)\right\vert $ is going to equal 1. For nonzero $\lambda,$ we compute
that%
\begin{align}
\log(\left\vert f_{t}(\lambda)\right\vert )  &  =\log\left\vert \lambda
\right\vert +\operatorname{Re}\left(  \frac{t}{2}\frac{1+\lambda}{1-\lambda
}\right) \nonumber\\
&  =\log\left\vert \lambda\right\vert +\frac{t}{2}\frac{1-\left\vert
\lambda\right\vert ^{2}}{\left\vert \lambda-1\right\vert ^{2}}. \label{logFt}%
\end{align}
Thus, for nonzero $\lambda,$ the condition $\left\vert f_{t}(\lambda
)\right\vert =1$ is equivalent to
\[
0=\log\left\vert \lambda\right\vert +\frac{t}{2}\frac{1-\left\vert
\lambda\right\vert ^{2}}{\left\vert \lambda-1\right\vert ^{2}}.
\]
When $\left\vert \lambda\right\vert \neq1,$ this condition simplifies to%
\[
t=\left\vert \lambda-1\right\vert ^{2}\frac{\log(\left\vert \lambda\right\vert
^{2})}{\left\vert \lambda\right\vert ^{2}-1}=T(\lambda),
\]
as claimed.
\end{proof}

We now state some important properties of the function $r_{t}$ occurring in
the statement of Theorem \ref{main.thm}; the proof is given on p.
\pageref{setFtproof}.

\begin{proposition}
\label{setFt.prop}Consider a real number $t>0$ and an angle $\theta\in
(-\pi,\pi],$ where if $t\leq4,$ we require $\left\vert \theta\right\vert
<\cos^{-1}(1-t/2).$ Then there exist exactly two radii $r\neq1$ for which
$\left\vert f_{t}(re^{i\theta})\right\vert =1,$ and these radii have the form
$r=r_{t}(\theta)$ and $r=1/r_{t}(\theta)$ with $r_{t}(\theta)>1.$ Furthermore,
$r_{t}(\theta)$ depends analytically on $\theta$ and if $t\leq4,$ then
$r_{t}(\theta)\rightarrow1$ as $\theta\rightarrow\pm\cos^{-1}(1-t/2).$

If $t\leq4$ and $\theta\in(-\pi,\pi]$ satisfies $\left\vert \theta\right\vert
\geq\cos^{-1}(1-t/2),$ then there are no radii $r\neq1$ with $\left\vert
f_{t}(r)\right\vert =1.$
\end{proposition}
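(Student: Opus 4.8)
The plan is to reduce the whole statement to the behaviour of a single real function of one real variable, and then to a convexity-type estimate. By Lemma~\ref{ftOne.lem}, for $r\neq1$ the equation $\left\vert f_{t}(re^{i\theta})\right\vert =1$ is equivalent to $T(re^{i\theta})=t$, so I would fix $\theta$, put $c=\cos\theta\in[-1,1]$, and study
\[
g_{\theta}(r):=T(re^{i\theta})=(r^{2}-2cr+1)\,\frac{2\log r}{r^{2}-1}\quad(r\neq1),\qquad g_{\theta}(1)=2(1-c).
\]
Substituting $r\mapsto1/r$ into this formula gives $g_{\theta}(1/r)=g_{\theta}(r)$ at once, so the solution set of $g_{\theta}(r)=t$ in $(0,\infty)$ is invariant under $r\mapsto1/r$ with $r=1$ the only fixed point; hence it is enough to count solutions in $(1,\infty)$ and to keep track of the value $g_{\theta}(1)=2(1-c)$.

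The heart of the argument, and the step I expect to be the main obstacle, is to show that $g_{\theta}$ is \emph{strictly increasing} on $(1,\infty)$ for every $\theta$. I would do this by clearing denominators: writing $g_{\theta}=2(\log r)\phi$ with $\phi(r)=(r^{2}-2cr+1)/(r^{2}-1)$, one has $\phi'(r)=2(cr^{2}-2r+c)/(r^{2}-1)^{2}$, hence
\[
\tfrac12(r^{2}-1)^{2}\,g_{\theta}'(r)=P(r):=r^{3}-2cr^{2}+2c-\tfrac1r+2(cr^{2}-2r+c)\log r .
\]
I would then verify $P(1)=P'(1)=P''(1)=0$ and compute
\[
P'''(r)=6+6r^{-4}+4r^{-2}+4c\bigl(r^{-1}+r^{-3}\bigr),
\]
which is positive on $(1,\infty)$ whenever $c\ge-1$: immediate when $c\ge0$, and when $-1\le c<0$ because $4c(r^{-1}+r^{-3})\ge-4(r^{-1}+r^{-3})$ while $(6-4r^{-1})+6r^{-4}+(4r^{-2}-4r^{-3})>2$ for $r>1$. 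Integrating up from $r=1$ three times then yields $P''>0$, then $P'>0$, then $P>0$ on $(1,\infty)$, so $g_{\theta}'>0$ there. The delicate point is the triple vanishing of $P$ at $r=1$, so I would keep the expansion $g_{\theta}(1+\varepsilon)=2(1-c)+\tfrac13(c+2)\varepsilon^{2}+\cdots$ (equivalently $P'''(1)=8(c+2)>0$) as an independent check.

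Granting the monotonicity, the remainder is bookkeeping. Since $g_{\theta}(r)\sim2\log r\to\infty$ as $r\to\infty$, and $g_{\theta}$ decreases from $\infty$ to $g_{\theta}(1)$ on $(0,1)$ by the symmetry, the set $\{g_{\theta}=t\}\cap\bigl((0,\infty)\setminus\{1\}\bigr)$ is exactly one reciprocal pair $\{r_{t}(\theta),1/r_{t}(\theta)\}$ with $r_{t}(\theta)>1$ when $t>2(1-c)$, i.e.\ $\cos\theta>1-t/2$ — which is precisely the hypothesis ($\left\vert\theta\right\vert<\cos^{-1}(1-t/2)$ for $t\le4$, automatic for $t>4$) — and is empty when $t\le2(1-c)$, since then $g_{\theta}\ge g_{\theta}(1)\ge t$ with equality only at $r=1$; the latter case also covers the final paragraph of the proposition. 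For the analyticity of $r_{t}(\theta)$ I would apply the analytic implicit function theorem to $(r,\theta)\mapsto T(re^{i\theta})-t$, which is real analytic on $(0,\infty)\times\mathbb{R}$ because $T$ is real analytic on $\mathbb{C}\setminus\{0\}$, and whose $r$-derivative at $r_{t}(\theta)$ equals $g_{\theta}'(r_{t}(\theta))>0$. Finally, for $t\le4$ and $\theta_{\ast}=\cos^{-1}(1-t/2)$: given $\varepsilon>0$, the monotonicity gives $g_{\theta_{\ast}}(1+\varepsilon)>g_{\theta_{\ast}}(1)=t$, hence $g_{\theta}(1+\varepsilon)>t$ for $\theta$ near $\theta_{\ast}$ by continuity, and combined with $g_{\theta}(1)<t$ this forces $r_{t}(\theta)\in(1,1+\varepsilon)$; therefore $r_{t}(\theta)\to1$ as $\theta\to\pm\theta_{\ast}$.
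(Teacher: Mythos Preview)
Your proposal is correct and follows the same overall architecture as the paper: reduce via Lemma~\ref{ftOne.lem} to the equation $T(re^{i\theta})=t$, prove that $r\mapsto T(r e^{i\theta})$ is strictly increasing on $(1,\infty)$ (and, by the symmetry $T(1/r,\theta)=T(r,\theta)$, strictly decreasing on $(0,1)$), then read off the solution count, the analytic dependence via the implicit function theorem, and the limit $r_t(\theta)\to1$.

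The difference is in how the monotonicity is proved. The paper computes $\partial T/\partial r$ and observes that, for each fixed $r$, it is an affine function of $\cos\theta$; hence it suffices to check positivity at the two extreme values $\theta=0$ and $\theta=\pi$, which is done by direct (if slightly messy) computation. You instead clear denominators to obtain $P(r)=\tfrac12(r^2-1)^2 g_\theta'(r)$, verify the triple vanishing $P(1)=P'(1)=P''(1)=0$, and show $P'''(r)>0$ on $(1,\infty)$ uniformly in $c\in[-1,1]$; three integrations then give $P>0$. Both arguments work; the paper's linearity-in-$\cos\theta$ trick trades one harder inequality for two easier endpoint checks, while your third-derivative route treats all $\theta$ at once and makes the behaviour near $r=1$ transparent (the expansion $g_\theta(1+\varepsilon)=2(1-c)+\tfrac13(c+2)\varepsilon^2+\cdots$ drops out). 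Your proof that $r_t(\theta)\to1$ is also somewhat cleaner than the paper's, which introduces an auxiliary integral function $G$ and its inverse; your direct continuity argument (pick $\varepsilon$, note $g_{\theta_\ast}(1+\varepsilon)>t$, push to nearby $\theta$) is entirely adequate here.
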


Using the proposition, we can now compute the sets $F_{t}$ and $E_{t}%
=\overline{F}_{t}$ that enter into the definition of $\Sigma_{t}$. (Recall
(\ref{FtDef}) and (\ref{EtDef}).)%

\begin{figure}[ptb]%
\centering
\includegraphics[
height=2.5278in,
width=2.5278in
]%
{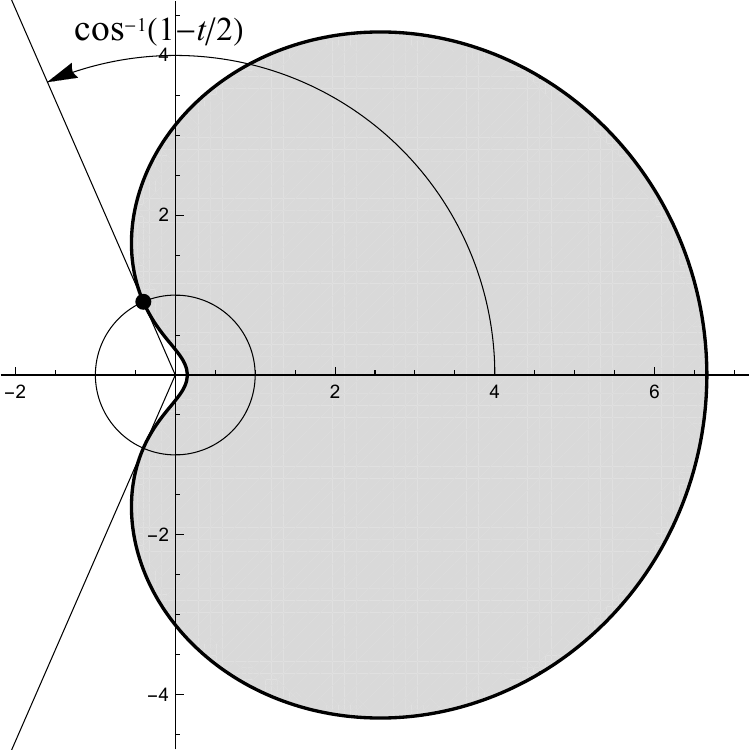}%
\caption{Illustration of Point \ref{thetamax.point} of Proposition
\ref{regionProperties.thm}, for $t=2.8$}%
\label{thetamax.fig}%
\end{figure}

\begin{corollary}
\label{setFt.cor}For $t\leq4,$ the set $F_{t}$ consists of points of the form
$r_{t}(\theta)e^{i\theta}$ and $(1/r_{t}(\theta))e^{i\theta}$ for $-\cos
^{-1}(1-t/2)<\theta<\cos^{-1}(1-t/2).$ In this case, the closure of $F_{t}$
consists of $F_{t}$ together with the points $e^{i\theta}$ on the unit circle
with $\cos\theta=1-t/2.$ There are two such points when $t<4$ and one such
point when $t=4,$ namely $-1.$

For $t>4,$ the set $F_{t}$ consists of points of the form $r_{t}%
(\theta)e^{i\theta}$ and $(1/r_{t}(\theta))e^{i\theta},$ where $\theta$ ranges
over all possible angles, and this set is closed.
\end{corollary}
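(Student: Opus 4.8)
The plan is to assemble the corollary from two ingredients already in hand: the explicit description of $F_t$ provided by Proposition \ref{setFt.prop}, and the identity of Lemma \ref{ftOne.lem} that for $|\lambda|\neq 1$ the condition $|f_t(\lambda)|=1$ is equivalent to $T(\lambda)=t$. Writing $\lambda=re^{i\theta}$ with $r>0$ and $\theta\in(-\pi,\pi]$, the definition $F_t=\{\lambda:\ |\lambda|\neq1,\ |f_t(\lambda)|=1\}$ together with Proposition \ref{setFt.prop} immediately gives the parametrization statements: for $t\leq 4$ the point $re^{i\theta}$ lies in $F_t$ exactly when $|\theta|<\cos^{-1}(1-t/2)$ and $r\in\{r_t(\theta),\,1/r_t(\theta)\}$, while for $t>4$ the angular restriction disappears and $r\in\{r_t(\theta),\,1/r_t(\theta)\}$ for every $\theta$. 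So the only real work is to identify the closure $\overline{F}_t=E_t$.

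The key structural point is that Lemma \ref{ftOne.lem} lets us rewrite $F_t=\{\lambda:\ |\lambda|\neq 1,\ T(\lambda)=t\}$, so $F_t\subseteq\{T=t\}$. Since $T$ is real analytic (hence continuous) on $\mathbb{C}\setminus\{0\}$ and $T(\lambda)\to+\infty$ as $\lambda\to 0$, the level set $\{T=t\}$ is a closed subset of $\mathbb{C}$, and therefore $\overline{F}_t\subseteq\{T=t\}$. Splitting $\{T=t\}$ according to whether $|\lambda|=1$, and using that on the unit circle $T(e^{i\theta})=|e^{i\theta}-1|^2=2-2\cos\theta$, we get the disjoint decomposition
\[
\{T=t\}=F_t\ \cup\ \{e^{i\theta}:\cos\theta=1-t/2\}.
\]
When $t>4$ the second set is empty (since $2-2\cos\theta\leq 4$), so $F_t=\{T=t\}$ is closed, which is the final assertion. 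When $t<4$ the second set is the two distinct points $e^{\pm i\cos^{-1}(1-t/2)}$, and when $t=4$ it degenerates to the single point $-1$.

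It remains to verify, for $t\leq 4$, that these unit-circle points genuinely lie in $\overline{F}_t$, so that the inclusion $\overline{F}_t\subseteq\{T=t\}$ is in fact an equality. This is exactly the content of the last clause of Proposition \ref{setFt.prop}: as $\theta\to\pm\cos^{-1}(1-t/2)$ one has $r_t(\theta)\to 1$, hence also $1/r_t(\theta)\to 1$, so the points $r_t(\theta)e^{i\theta}\in F_t$ converge to $e^{\pm i\cos^{-1}(1-t/2)}$. Combining the two inclusions gives $\overline{F}_t=F_t\cup\{e^{i\theta}:\cos\theta=1-t/2\}$. The only mildly delicate issue is making sure the closure does not pick up extra points; this is precisely why it is convenient to route the argument through the clean global level-set description $\{T=t\}$ rather than work directly with the parametrizing function $r_t$, whose domain is a priori only an open interval. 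Everything else is bookkeeping.
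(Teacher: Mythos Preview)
Your proof is correct and follows the same route as the paper, which treats the corollary as an immediate consequence of Proposition~\ref{setFt.prop} and Lemma~\ref{ftOne.lem}. Your explicit passage through the closed level set $\{T=t\}$ to pin down the closure is exactly the identification the paper makes (and later uses in the proof of Theorem~\ref{domainGobbles.thm}); the paper simply leaves these details implicit at this point.
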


See Figure \ref{setft.fig}.%

\begin{figure}[ptb]%
\centering
\includegraphics[
height=2.4163in,
width=2.5278in
]%
{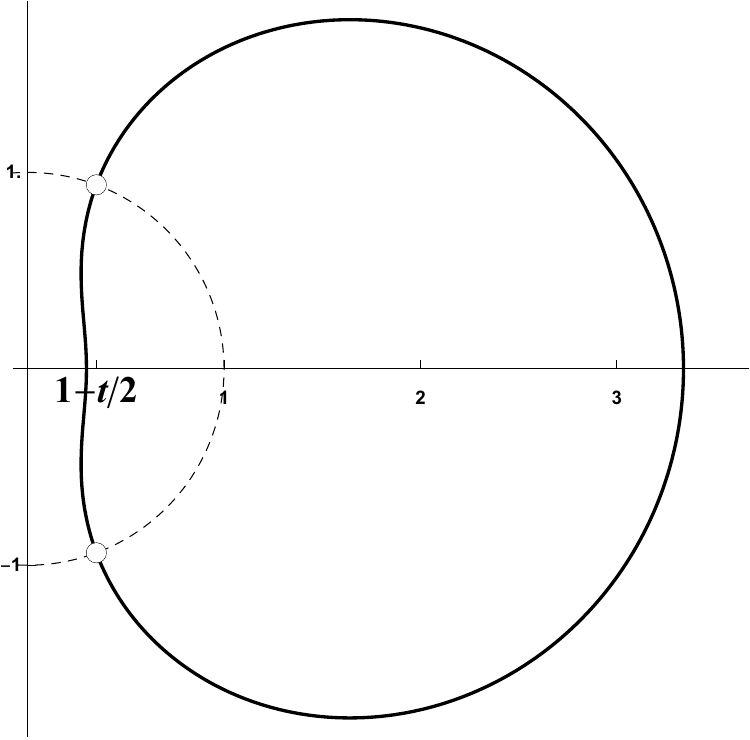}%
\caption{The set $F_{t}$ with $t=1.3,$ with the unit circle (dashed) shown for
comparison.}%
\label{setft.fig}%
\end{figure}

We now set out to prove Proposition \ref{setFt.prop}. In the proof, we will
always rewrite the equation $\left\vert f_{t}(\lambda)\right\vert =1$, for
$\left\vert \lambda\right\vert \neq1$, as $T(\lambda)=t$ (Lemma
\ref{ftOne.lem}).

\begin{lemma}
\label{Tmin.lem}Let us write the function $T$ in (\ref{Tlambda}) in polar
coordinates. Then for each $\theta,$ the function $r\mapsto T(r,\theta)$ is
strictly decreasing for $0<r<1$ and strictly increasing for $r>1.$ For each
$\theta,$ the minimum value of $T(r,\theta)$, achieved at $r=1$, is
$2(1-\cos\theta),$ and we have%
\[
\lim_{r\rightarrow0}T(r,\theta)=\lim_{r\rightarrow+\infty}T(r,\theta
)=+\infty.
\]

\end{lemma}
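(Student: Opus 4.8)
The plan is to reduce everything to a single-variable statement about the function $g(x) = \frac{\log x}{x-1}$ on $(0,\infty)$, since $T(r,\theta) = |\lambda-1|^2 \, g(|\lambda|^2)$ with $|\lambda|^2 = r^2$. Writing $\lambda = re^{i\theta}$, we have $|\lambda - 1|^2 = r^2 - 2r\cos\theta + 1$, so in polar coordinates
\begin{equation}
T(r,\theta) = (r^2 - 2r\cos\theta + 1)\, g(r^2). \label{Tpolar.eq}
\end{equation}
First I would record the elementary properties of $g$: it extends real-analytically across $x=1$ with $g(1)=1$, it is strictly positive on $(0,\infty)$, it tends to $+\infty$ as $x\to 0^+$ and to $0^+$ as $x\to\infty$, and — the key fact — $g$ is \emph{strictly decreasing} on all of $(0,\infty)$. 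The last claim follows by differentiating: $g'(x) = \frac{(x-1)/x - \log x}{(x-1)^2} = \frac{1 - 1/x - \log x}{(x-1)^2}$, and the numerator $h(x) = 1 - 1/x - \log x$ satisfies $h(1)=0$, $h'(x) = 1/x^2 - 1/x = (1-x)/x^2$, so $h$ increases on $(0,1)$ and decreases on $(1,\infty)$, hence $h(x) \le 0$ with equality only at $x=1$. Thus $g'(x) < 0$ for $x \neq 1$, and $g' (1)<0$ as well by continuity (one computes $g'(1) = -1/2$).

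Next, from \eqref{Tpolar.eq} I would compute $\frac{\partial T}{\partial r}$. Using $\frac{d}{dr} g(r^2) = 2r\, g'(r^2)$ and $\frac{d}{dr}(r^2 - 2r\cos\theta+1) = 2(r-\cos\theta)$, we get
\begin{equation}
\frac{\partial T}{\partial r} = 2(r-\cos\theta)\, g(r^2) + 2r(r^2 - 2r\cos\theta+1)\, g'(r^2). \label{dTdr.eq}
\end{equation}
At $r = 1$ this gives $\frac{\partial T}{\partial r}\big|_{r=1} = 2(1-\cos\theta)\cdot 1 + 2\cdot 2(1-\cos\theta)\cdot(-1/2) = 0$, so $r=1$ is always a critical point, and $T(1,\theta) = 2(1-\cos\theta)$ as claimed. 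The behavior as $r\to 0^+$ and $r\to\infty$ is immediate from \eqref{Tpolar.eq}: the quadratic factor is bounded away from $0$ near $r=0$ and grows like $r^2$ at infinity, while $g(r^2)\to\infty$ as $r\to 0$ and $g(r^2)\to 0$ at rate $\sim \frac{2\log r}{r^2}$ at infinity, so the product $\sim 2\log r \to \infty$; hence both limits are $+\infty$.

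The remaining — and only slightly delicate — point is the strict monotonicity on $(0,1)$ and $(1,\infty)$, i.e.\ that $r=1$ is the \emph{only} critical point. Here I expect the cleanest route is not to wrestle with \eqref{dTdr.eq} directly but to use the invariance $T(1/r,\theta) = T(r,\theta)$, which is visible from Lemma \ref{ftOne.lem} / the formula \eqref{Tlambda} (both $|\lambda-1|^2$ and $\frac{\log|\lambda|^2}{|\lambda|^2-1}$ transform in a way whose product is invariant under $\lambda\mapsto 1/\lambda$ — concretely, $|1/\lambda - 1|^2 = |\lambda-1|^2/|\lambda|^2$ and $g(1/|\lambda|^2) = |\lambda|^2 g(|\lambda|^2)$ since $g(1/x) = x\,g(x)$). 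This symmetry means it suffices to prove strict monotonicity on $(1,\infty)$. On that interval I would argue as follows: rewrite \eqref{dTdr.eq} as $\frac{\partial T}{\partial r} = 2g(r^2)\big[(r-\cos\theta) + r(r^2-2r\cos\theta+1)\,\tfrac{g'(r^2)}{g(r^2)}\big]$ and substitute $\frac{g'(x)}{g(x)} = \frac{1-1/x-\log x}{(x-1)\log x}$; after clearing the common factor one is left with showing a one-variable expression in $r$ (with parameter $c=\cos\theta\in[-1,1]$) has no zero for $r>1$. Since the worst case is $c = 1$, where $|\lambda - 1|^2 = (r-1)^2$ and $T$ reduces to $(r-1)^2 g(r^2) = (r-1)\log(r^2)/(r+1)$ — manifestly strictly increasing for $r>1$ — and larger $c$ only makes the quadratic factor and its derivative larger in the relevant range, a monotonicity/comparison argument in $c$ closes the gap. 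The main obstacle is precisely this last step: turning the heuristic "$c=1$ is extremal" into a rigorous sign analysis; I would handle it either by treating $\frac{\partial T}{\partial r}$ as an affine function of $c$ and checking the sign at the two endpoints $c=\pm 1$ for each fixed $r>1$, or by computing $\frac{\partial^2 T}{\partial r\,\partial c}$ to verify the extremal direction. Everything else is routine calculus.
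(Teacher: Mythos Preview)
Your proposal is correct and lands on exactly the same key idea as the paper: since $\partial T/\partial r$ is affine in $c=\cos\theta$ for each fixed $r$, it suffices to check positivity at the two endpoints $c=\pm 1$ for $r>1$, which reduces to elementary one-variable inequalities. The only minor difference is that you invoke the inversion symmetry $T(1/r,\theta)=T(r,\theta)$ to reduce to the range $r>1$, whereas the paper simply repeats the endpoint argument for $0<r<1$; your reduction is slightly cleaner, and your explicit treatment of $g(x)=\log x/(x-1)$ makes the limiting behavior at $r\to 0,\infty$ more transparent.
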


\begin{proof}
We will show in Proposition \ref{smallx0.prop} that the function $T(\lambda)$
is the limit of another function $t_{\ast}(\lambda_{0},\varepsilon_{0})$ as
$\varepsilon_{0}$ goes to zero. Explicitly, this amounts to saying that
$T(r,\theta)=g_{\theta}(\delta),$ where $g$ is defined in (\ref{gDelta}) and
$\delta=r+1/r.$ Now, $\delta$ is decreasing for $0<r<1$ and increasing for
$r>1.$ Thus, the claimed monotonicity of $T$ follows if $g_{\,\theta}(\delta)$
an increasing function $\delta$ for each $\theta,$ which we will show in the
proof of Proposition \ref{monotoneTstar.prop}.

For the convenience of the reader, we briefly outline how the argument goes in
the context of the function $T(r,\theta).$ We note that%
\[
T(r,\theta)=(r^{2}+1-2r\cos\theta)\frac{\log(r^{2})}{r^{2}-1},
\]
where if we assign $\log(r^{2})/(r^{2}-1)$ the value $1$ at $r=1,$ then $T$ is
analytic except at $r=0.$ We then compute that, after simplification,%
\begin{equation}
\frac{\partial T}{\partial r}=2\left[  \frac{-2r+(1+r^{2})\cos\theta}%
{(r^{2}-1)^{2}}\right]  \log(r^{2})+\frac{r^{2}+1-2r\cos\theta}{r^{2}-1}%
\frac{2}{r}. \label{dtdr}%
\end{equation}
We then claim that for all $\theta,$ we have $\partial T/\partial r>0$ for
$r>1$ and $\partial T/\partial r<0$ for $r<1.$ Note that for each fixed $r,$
the right-hand side of (\ref{dtdr}) depends linearly on $\cos\theta.$ Thus,
if, for a fixed $r,$ if $\partial T/\partial r$ is positive when $\cos
\theta=1$ and when $\cos\theta=-1,$ it will be positive for all $\theta$.
Specifically, we may say that%
\begin{equation}
\frac{\partial T}{\partial r}(r,\theta)\geq\min\left(  \frac{\partial
T}{\partial r}(r,0),\frac{\partial T}{\partial r}(r,\pi)\right)  .
\label{dtdrMin}%
\end{equation}
It is now an elementary (if slightly messy) computation to check that the
right-hand side of (\ref{dtdrMin}) is strictly positive for all $r>1.$ A
similar argument then shows that $\partial T/\partial r$ is negative for all
$\theta$ and all $0<r<1.$

We conclude that for each $\theta,$ the function $r\mapsto T(r,\theta)$ is
decreasing for $0<r<1$ and increasing for $r>1.$ The minimum value therefore
occurs at $r=1,$ and this value is the value of $r^{2}+1-2\cos\theta$ at
$r=1,$ namely $2(1-\cos\theta).$ Finally, we can easily see that for $r$
approaching zero, we have%
\[
T(r,\theta)\sim-\log(r^{2})\rightarrow+\infty
\]
and for $r$ approaching infinity, we have%
\[
T(r,\theta)\sim\log(r^{2})\rightarrow+\infty,
\]
as claimed.
\end{proof}

\begin{proof}
[Proof of Proposition \ref{setFt.prop}]\label{setFtproof}The minimum value of
$T(r,\theta),$ achieved at $r=1,$ is $2-2\cos\theta.$ This value is always
less than $t,$ as can be verified separately in the cases $t>4$ (all $\theta$)
and $t\leq4$ ($\left\vert \theta\right\vert <\cos^{-1}(1-t/2)$). Thus, Lemma
\ref{Tmin.lem} tells us that the equation $T(r,\theta)=t$ has exactly one
solution for $r$ with $0<r<1$ and exactly one solution for $r>1.$ Since, as is
easily verified, $T(1/r,\theta)=T(r,\theta),$ the two solutions are
reciprocals of each other, and we let $r_{t}(\theta)$ denote the solution with
$r>1.$ Since $\partial T/\partial r$ is nonzero for all $r\neq1,$ the implicit
function theorem tells us that $r_{t}(\theta)$ depend analytically on
$\theta.$

Now, if $t\leq4$ and $\theta$ approaches $\pm\cos^{-1}(1-t/2),$ the minimum
value of $2-2\cos\theta$---achieved at $r=1$---approaches $2-2(1-t/2)=t.$ It
should then be plausible that $r_{t}(\theta)$ will approach $r=1.$ To make
this claim rigorous, we need to show that $T(r,\theta)$ increases rapidly
enough as $r$ increases from $1$ that the $T(r,\theta)=t$ is achieved close to
$r=1.$ To that end, let $g(r)$ denote the function on the right-hand side of
(\ref{dtdrMin}), which is continuous everywhere and strictly positive for
$r>1.$ Then for $r>1,$ we have%
\begin{equation}
T(r,\theta)-(2-2\cos\theta)\geq G(r):=\int_{1}^{r}g(s)~ds. \label{TminusT}%
\end{equation}
Now, $G(r)$ is continuous and strictly increasing for $r\geq0,$ with $G(1)=0.$
Thus, $G$ it has a continuous inverse function satisfying $G^{-1}(0)=1.$

For $\varepsilon>0,$ choose $\delta>0$ so that $G^{-1}(R)<1+\varepsilon$ when
$0<R<\delta.$ Then take $\theta$ sufficiently close to $\pm\cos^{-1}(1-t/2)$
that $2-2\cos\theta$ is within $\delta$ of $t.$ Then%
\[
G^{-1}(t-(2-2\cos\theta))<1+\varepsilon,
\]
which is to say that there is an $R$ with $1<R<1+\varepsilon$ such that%
\[
\int_{1}^{R}g(s)~ds=t-(2-2\cos\theta).
\]
From (\ref{TminusT}) we can then see that $T(R,\theta)>t.$ Thus, $r_{t}%
(\theta)$ will satisfy%
\[
1<r_{t}(\theta)<R<1+\varepsilon.
\]
We have therefore shown that $r_{t}(\theta)\rightarrow1$ as $\theta
\rightarrow\pm\cos^{-1}(1-t/2).$

Finally, if $t\leq4$ and $\theta\in(-\pi,\pi]$ satisfies $\left\vert
\theta\right\vert \geq\cos^{-1}(1-t/2),$ the minimum value of $T(r,\theta),$
achieved at $r=1,$ is $2-2\cos\theta\geq t.$ Thus, there are no values of
$r\neq1$ where $T(r,\theta)=t.$
\end{proof}

We are now ready for the proofs of our main results about $\Sigma_{t}.$

\begin{proof}
[Proof of Theorem \ref{domainGobbles.thm}]We first claim that the set
$E_{t}=\overline{F_{t}}$ is precisely the set where $T(\lambda)=t.$ To see
this, first note that $F_{t}$ is, by Lemma \ref{ftOne.lem}, the set of
$\lambda$ with $\left\vert \lambda\right\vert \neq1$ where $T(\lambda)=t.$
Then by Corollary \ref{setFt.cor}, the closure of $F_{t}$ is obtained by
adding in the points on the unit circle (zero, one, or two such points,
depending on $t$) where $\cos\theta=1-t/2.$ But these points are easily seen
to be the points on the unit circle where $T(\lambda)=t.$

Using Corollary \ref{setFt.cor}, we see that the complement of the set
$E_{t}=\left\{  \lambda|T(\lambda)=t\right\}  $ has two connected components
when $t<4$ and three connected components when $t\geq4.$ Since $T(1)=0<t,$ we
have $T(\lambda)<t$ on the entire connected component of $E_{t}^{c}$
containing 1, which is, by definition, the region $\Sigma_{t}.$ The remaining
components of $E_{t}^{c}$ are the unbounded component and (for $t\geq4$) the
component containing 0. Since $T(\lambda)$ tends to $+\infty$ at zero and at
infinity, we see that $T(\lambda)>t$ on these regions, so that $T(\lambda)<t$
precisely on $\Sigma_{t}.$

It is also clear from Corollary \ref{setFt.cor} that the boundary of the
region $\Sigma_{t}$ (i.e., the connected component of $E_{t}^{c}$ containing
1) contains the entire set $E_{t}=$ $\left\{  \lambda|T(\lambda)=t\right\}  .$
\end{proof}

\begin{proof}
[Proof of Theorem \ref{regionProperties.thm}]Point \ref{thetamax.point}
follows easily from Corollary \ref{setFt.cor}. For Point
\ref{radialInterval.point}, we note that by Proposition \ref{setFt.prop}, we
have $T(r,\theta)<t$ for $1/r_{t}(\theta)<r<r_{t}(\theta),$ and $T(r,\theta
)\geq t$ for $0<r\leq1/r_{t}(\theta)$ and for $r\geq r_{t}(\theta).$ Thus, by
Theorem \ref{domainGobbles.thm}, the ray with angle $\theta$ intersects
$\Sigma_{t}$ precisely in the claimed interval.

For Point \ref{domainSmooth.point}, we have already shown that $\partial
T/\partial r$ is nonzero except when $r=1.$ When $r=1,$ we know from
(\ref{Tlambda}) that
\[
T(r,\theta)=\left\vert \lambda-1\right\vert ^{2}=2-2\cos\theta.
\]
Thus, when $r=1,$ we have $\partial T/\partial\theta=2\sin\theta,$ which is
nonzero except when $\theta=0$ or $\theta=\pi.$ Thus, the gradient of
$T(\lambda)$ is nonzero except when $\lambda=0$ (where $T(\lambda)$ is
undefined), when $\lambda=1$, and when $\lambda=-1.$ Since 0 is never in
$\Sigma_{t}$ and 1 is always in $\Sigma_{t},$ the only possible singular point
in the boundary of $\Sigma_{t}$ is at $\lambda=-1.$ Since $T(r,\theta
)=2-2\cos\pi=4$ when $r=1$ and $\theta=\pi,$ the point $\lambda=-1$ belongs to
the boundary of $\Sigma_{4}.$

Meanwhile, the Taylor expansion of $T$ to second order at $\lambda=-1$ is
easily found to be $T(\lambda)\approx4+(\operatorname{Re}\lambda
+1)^{2}/3-(\operatorname{Im}\lambda)^{2}.$ By the Morse lemma, we can then
make a smooth change of variables so that in the new coordinate system,
\[
T(u,v)=4+u^{2}-v^{2}=4+(u+v)(u-v).
\]
Thus, near $\lambda=-1,$ the set $T(\lambda)=4$ is the union of the curves
$u+v=0$ and $u-v=0.$

The invariance of $\Sigma_{t}$ under $\lambda\mapsto1/\lambda$ and under
$\lambda\mapsto\bar{\lambda}$ follows from the easily verified invariance of
$T(\lambda)$ under these transformations. Finally, we verify that the domain
$\Sigma_{t},$ as we have defined it, coincides with the one originally
introduced by Biane in \cite{BianeJFA}. Let us start with the case $t<4.$
According to the discussion at the bottom of p. 273 in \cite{BianeJFA}, the
boundary of Biane's domain $\Sigma_{t}$ consists in this case of two analytic
arcs. The interior of one arc lies in the open unit disk and the interior of
the other arc lies in the complement of the closed unit disk, while the
endpoints of both arcs lie on the unit circle. The first arc is then computed
by applying a certain holomorphic function $\chi(t,\cdot)$ to the support of
Biane's measure $\nu_{t}$ in the unit circle. Now, $\chi(t,\cdot)$ satisfies
$f_{t}(\chi(t,z))=z$ on the closed unit disk. (Combine the identity involving
$\kappa$ on p. 266 of \cite{BianeJFA} with the definition of $\chi$ on p.
273.) We see that the interior of the first arc consists of points with
$\left\vert \lambda\right\vert \neq1$ but $\left\vert f_{t}(\lambda
)\right\vert =1.$ This arc must, therefore, coincide with the arc of points
with radius $1/r_{t}(\theta).$ The second arc is obtained from the first by
the map $\lambda\mapsto1/\lambda$ and therefore coincides with the points of
radius $r_{t}(\theta).$ We can now see that the boundary of Biane's domain
coincides with the boundary of the domain we have defined. A similar analysis
applies to the cases $t>4$ and $t=4,$ using the description of the boundary of
$\Sigma_{t}$ in those cases at the top of p. 274 in \cite{BianeJFA}.
\end{proof}

\section{The PDE for $S$\label{BrownAndPDE.sec}}

In this section, we will verify the PDE for $S$ in Theorem \ref{thePDE.thm}.
The claimed initial condition (\ref{SinitialCond}) holds because $b_{0}=1.$ We
now proceed to verify the equation (\ref{thePDE}) itself.

Let $(c_{t})_{t\geq0}$ denote a free circular Brownian motion. The rules of
free stochastic calculus, in \textquotedblleft stochastic
differential\textquotedblright\ form, are as follows; see \cite[Lemma 2.5,
Lemma 4.3]{KempLargeN}. If $g_{t}$ and $h_{t}$ are processes adapted to
$c_{t}$, then
\begin{align}
dc_{t}\,g_{t}\,dc_{t}^{\ast}  &  =dc_{t}^{\ast}\,g_{t}\,dc_{t}=\tau
(g_{t})\,dt\label{Ito1}\\
dc_{t}\,g_{t}\,dc_{t}  &  =dc_{t}^{\ast}\,g_{t}\,dc_{t}^{\ast}=0\label{Ito2}\\
dc_{t}\,dt  &  =dc_{t}^{\ast}\,dt=0\label{Ito3}\\
\tau(g_{t}\,dc_{t}\,h_{t})  &  =\tau(g_{t}\,dc_{t}^{\ast}\,h_{t})=0.
\label{Ito4}%
\end{align}
In addition, we have the following It\^{o} product rule: if $a_{t}^{1}%
,\ldots,a_{t}^{n}$ are processes adapted to $c_{t}$, then
\begin{align}
d(a_{t}^{1}\cdots a_{t}^{n})  &  =\sum_{j=1}^{n}(a_{t}^{1}\cdots a_{t}%
^{j-1})\,da_{t}^{j}\,(a_{t}^{j+1}\cdots a_{t}^{n})\label{productRule1}\\
&  +\sum_{1\leq j<k\leq n}(a_{t}^{1}\cdots a_{t}^{j-1})\,da_{t}^{j}%
\,(a_{t}^{j+1}\cdots a_{t}^{k-1})\,da_{t}^{k}\,(a_{t}^{k+1}\cdots a_{t}^{n}).
\label{productRule2}%
\end{align}

We let $b_{t}$ be the free multiplicative Brownian motion, which satisfies the
free stochastic differential equation%
\[
db_{t}=b_{t}~dc_{t},\quad b_{0}=1.
\]
Throughout the rest of this section, we will use the notation%
\[
b_{t,\lambda}:=b_{t}-\lambda.
\]

\begin{lemma}
\label{nthPower.lem}We have%
\begin{equation}
\frac{\partial}{\partial t}\tau\lbrack(b_{t,\lambda}^{\ast}b_{t,\lambda}%
)^{n}]=n\sum_{j=0}^{n-1}\tau\lbrack(b_{t,\lambda}^{\ast}b_{t,\lambda}%
)^{j}]\tau\lbrack b_{t}b_{t}^{\ast}(b_{t,\lambda}b_{t,\lambda}^{\ast}%
)^{n-j-1}]. \label{ddtNthPower}%
\end{equation}

\end{lemma}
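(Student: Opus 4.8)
The plan is to write $b_{t,\lambda}=b_t-\lambda$ and $P_t=b_{t,\lambda}^{\ast}b_{t,\lambda}$, so that the claim becomes a formula for $\frac{\partial}{\partial t}\tau[P_t^n]$. Since $\lambda$ is constant and $db_t=b_t\,dc_t$, we have $db_{t,\lambda}=b_t\,dc_t$ and hence $db_{t,\lambda}^{\ast}=dc_t^{\ast}\,b_t^{\ast}$. I would compute $d(P_t^n)$ by applying the It\^o product rule (\ref{productRule1})--(\ref{productRule2}) to the product of $n$ copies of $P_t$, simplify each resulting term using the free It\^o rules (\ref{Ito1})--(\ref{Ito4}) together with traciality of $\tau$, and read off the drift.

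First I would compute $dP_t$: the product rule gives $dP_t=(db_{t,\lambda}^{\ast})b_{t,\lambda}+b_{t,\lambda}^{\ast}(db_{t,\lambda})+(db_{t,\lambda}^{\ast})(db_{t,\lambda})$, and the quadratic term is $(dc_t^{\ast}b_t^{\ast})(b_t\,dc_t)=dc_t^{\ast}(b_t^{\ast}b_t)\,dc_t=\tau(b_tb_t^{\ast})\,dt$ by (\ref{Ito1}). Thus $dP_t=dM_t+\tau(b_tb_t^{\ast})\,dt$, where $dM_t:=(db_{t,\lambda}^{\ast})b_{t,\lambda}+b_{t,\lambda}^{\ast}(db_{t,\lambda})$ is the \textquotedblleft martingale part\textquotedblright. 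Expanding
\[
d(P_t^n)=\sum_{i=1}^{n}P_t^{i-1}(dP_t)P_t^{n-i}+\sum_{1\le i<l\le n}P_t^{i-1}(dP_t)P_t^{l-i-1}(dP_t)P_t^{n-l},
\]
the double sum only needs $dP_t\rightsquigarrow dM_t$ (the remaining contributions involve $dt\,dt$ or $dt$ times a differential, hence vanish by (\ref{Ito3}) and $dt\,dt=0$). In the single sum, the $dM_t$ part vanishes after applying $\tau$: by traciality each term equals $\tau[P_t^{n-1}\,dM_t]$, and both pieces of $dM_t$ place a single $dc_t$ or $dc_t^{\ast}$ between adapted operators, so these vanish by (\ref{Ito4}). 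The drift part of the single sum contributes $n\,\tau(b_tb_t^{\ast})\,\tau[P_t^{n-1}]\,dt$, which is the $j=n-1$ term of (\ref{ddtNthPower}).

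For the double sum I would first record the identity, valid for any adapted $g$,
\[
dM_t\,g\,dM_t=\Bigl(\tau[b_tb_t^{\ast}\,b_{t,\lambda}\,g\,b_{t,\lambda}^{\ast}]+\tau[g]\,b_{t,\lambda}^{\ast}b_tb_t^{\ast}b_{t,\lambda}\Bigr)\,dt,
\]
obtained by expanding $dM_t\,g\,dM_t$ into four products: the two with a $dc_t\cdots dc_t$ or $dc_t^{\ast}\cdots dc_t^{\ast}$ pattern vanish by (\ref{Ito2}), and the two mixed ones collapse via (\ref{Ito1}). Applying this with $g=P_t^{l-i-1}$ inside $\tau[\,(dM_t\,g\,dM_t)\,P_t^{n-l+i-1}\,]$ (after a cyclic rotation), and using $b_{t,\lambda}P_t^{m}b_{t,\lambda}^{\ast}=(b_{t,\lambda}b_{t,\lambda}^{\ast})^{m+1}$ together with traciality, the $(i,l)$ term becomes
\[
\Bigl(\tau[b_tb_t^{\ast}(b_{t,\lambda}b_{t,\lambda}^{\ast})^{l-i}]\,\tau[P_t^{n-l+i-1}]+\tau[P_t^{l-i-1}]\,\tau[b_tb_t^{\ast}(b_{t,\lambda}b_{t,\lambda}^{\ast})^{n-l+i}]\Bigr)\,dt.
\]
Summing over $1\le i<l\le n$ and re-indexing by $r=l-i\in\{1,\dots,n-1\}$ (so $i$ runs over $n-r$ values and $n-l+i-1=n-r-1$ is constant in $i$), the first family contributes $\sum_{r=1}^{n-1}(n-r)\,\tau[b_tb_t^{\ast}(b_{t,\lambda}b_{t,\lambda}^{\ast})^{r}]\,\tau[P_t^{n-r-1}]$ and the second contributes $\sum_{r=1}^{n-1}(n-r)\,\tau[P_t^{r-1}]\,\tau[b_tb_t^{\ast}(b_{t,\lambda}b_{t,\lambda}^{\ast})^{n-r}]$. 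Substituting $j=n-r-1$ in the first and $j=r-1$ in the second turns both into sums over $0\le j\le n-2$ of the \emph{same} summand $\tau[P_t^{j}]\,\tau[b_tb_t^{\ast}(b_{t,\lambda}b_{t,\lambda}^{\ast})^{n-1-j}]$, with coefficients $j+1$ and $n-1-j$ respectively; since $(j+1)+(n-1-j)=n$, the double sum equals $n\sum_{j=0}^{n-2}\tau[P_t^{j}]\,\tau[b_tb_t^{\ast}(b_{t,\lambda}b_{t,\lambda}^{\ast})^{n-1-j}]\,dt$. Adding the $j=n-1$ drift term from the single sum yields $d\tau[P_t^n]=n\sum_{j=0}^{n-1}\tau[P_t^{j}]\,\tau[b_tb_t^{\ast}(b_{t,\lambda}b_{t,\lambda}^{\ast})^{n-1-j}]\,dt$, which is exactly (\ref{ddtNthPower}). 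The main obstacle is purely combinatorial bookkeeping: tracking which of the two terms in the $dM_t\,g\,dM_t$ identity feeds which family, and checking that the two re-indexings make the coefficients sum to $n$; the stochastic-calculus content is just repeated application of (\ref{Ito1})--(\ref{Ito4}) and cyclicity of the trace.
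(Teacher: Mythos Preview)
Your proof is correct. You and the paper both apply the free It\^o product rule and the rules (\ref{Ito1})--(\ref{Ito4}), and the stochastic-calculus content is identical; the difference is purely in how the combinatorics are organized. The paper expands $(b_{t,\lambda}^{\ast}b_{t,\lambda})^{n}$ directly as a product of $2n$ alternating factors and observes that the surviving second-order terms are the $n^{2}$ pairs where one $d$ lands on a starred factor and one on an unstarred one; then, by cyclically rotating inside the trace so that the $db_{t,\lambda}$ sits at the end, it sees at once that there are only $n$ distinct patterns, each occurring exactly $n$ times, which immediately gives the factor $n$ in (\ref{ddtNthPower}). Your route---first collapsing $b_{t,\lambda}^{\ast}b_{t,\lambda}$ to $P_t$, then splitting $dP_t$ into a drift and a martingale part, computing the quadratic variation $dM_t\,g\,dM_t$, and finally reindexing two families of sums to see that the coefficients $(j+1)+(n-1-j)$ add to $n$---reaches the same destination but trades the single symmetry observation for a longer bookkeeping argument. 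Both are fine; the paper's organization is shorter because the cyclic symmetry of the trace does all the counting in one stroke, whereas your decomposition hides that symmetry and recovers it only after the reindexing.
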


Note that second factor on the right-hand side of (\ref{ddtNthPower}) has
$(b_{t,\lambda}b_{t,\lambda}^{\ast})^{n-j},$ with the adjoint on the
\textit{second} factor.

\begin{proof}
Note that $db_{t,\lambda}=db_{t}=b_{t}~dc_{t}$ and $db_{t,\lambda}^{\ast
}=db_{t}^{\ast}=dc_{t}^{\ast}~b_{t}^{\ast}.$ We compute $d\tau\lbrack
(b_{t,\lambda}^{\ast}b_{t,\lambda})^{n}]$ by moving the $d$ inside the trace
and then applying the product rule in (\ref{productRule1}) and
(\ref{productRule2}). By (\ref{Ito4}), the terms arising from
(\ref{productRule1}) will not contribute. Furthermore, by (\ref{Ito2}), the
only terms from (\ref{productRule2}) that contribute are those where one $d$
goes on a factor of $b_{t,\lambda}$ and one goes on a factor of $b_{t,\lambda
}^{\ast}.$

By choosing all possible factors of $b_{t,\lambda}$ and all possible factors
of $b_{t,\lambda}^{\ast},$ we get $n^{2}$ terms. In each term, after putting
the $d$ inside the trace, we can cyclically permute the factors until, say,
the $db_{t,\lambda}$ factor is at the end. There are then only $n$
\textit{distinct} terms that occur, each of which occurs $n$ times. By
(\ref{Ito1}), each distinct term is computed as%
\begin{align*}
&  \tau\lbrack(b_{t,\lambda}^{\ast}b_{t,\lambda})^{j}dc_{t}^{\ast}~b_{t}%
^{\ast}b_{t,\lambda}(b_{t,\lambda}^{\ast}b_{t,\lambda})^{n-j-2}b_{t,\lambda
}^{\ast}b_{t}~dc_{t}]\\
&  =\tau\lbrack b_{t}^{\ast}b_{t,\lambda}(b_{t,\lambda}^{\ast}b_{t,\lambda
})^{n-j-2}b_{t,\lambda}^{\ast}b_{t}]\tau\lbrack(b_{t,\lambda}^{\ast
}b_{t,\lambda})^{j}]~dt\\
&  =\tau\lbrack(b_{t,\lambda}^{\ast}b_{t,\lambda})^{j}]\tau\lbrack b_{t}%
b_{t}^{\ast}(b_{t,\lambda}b_{t,\lambda}^{\ast})^{n-j-1}]~dt.
\end{align*}
(The reader who doubts the validity of using the cyclic invariance of the
trace when some factors are differentials may compute each term by
\textit{first} using (\ref{Ito1}) and \textit{then} using the cyclic
invariance of the trace, with the same result.) Since each distinct term
occurs $n$ times, we obtain%
\[
d\tau\lbrack(b_{t,\lambda}^{\ast}b_{t,\lambda})^{n}]=n\sum_{j=0}^{n-1}%
\tau\lbrack(b_{t,\lambda}^{\ast}b_{t,\lambda})^{j}]\tau\lbrack b_{t}%
b_{t}^{\ast}(b_{t,\lambda}b_{t,\lambda}^{\ast})^{n-j-1}]~dt
\]
as claimed.
\end{proof}

\begin{lemma}
\label{tDerivative.lem}The function $S$ in (\ref{Sdefinition}) satisfies%
\begin{equation}
\frac{\partial S}{\partial t}=\varepsilon\tau\lbrack(b_{t,\lambda}^{\ast
}b_{t,\lambda}+\varepsilon)^{-1}]\tau\lbrack b_{t}b_{t}^{\ast}(b_{t,\lambda
}b_{t,\lambda}^{\ast}+\varepsilon)^{-1}]. \label{dsdt0}%
\end{equation}

\end{lemma}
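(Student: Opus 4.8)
The plan is to reduce everything to Lemma~\ref{nthPower.lem} by expanding the logarithm as a power series in the bounded operator $b_{t,\lambda}^{\ast}b_{t,\lambda}$. Throughout, abbreviate $A_{t}=b_{t,\lambda}^{\ast}b_{t,\lambda}$ and $\widetilde{A}_{t}=b_{t,\lambda}b_{t,\lambda}^{\ast}$, and set $m_{n}(t)=\tau[A_{t}^{n}]$ and $p_{n}(t)=\tau[b_{t}b_{t}^{\ast}\widetilde{A}_{t}^{n}]$. Since $b_{t}$ is a bounded operator whose norm is bounded on each interval $[0,t_{0}]$ (Biane computed $\|b_{t}\|$ explicitly), for each $t_{0}$ there is a constant $C=C(t_{0},\lambda)$ with $\|A_{t}\|,\|\widetilde{A}_{t}\|,\|b_{t}b_{t}^{\ast}\|\le C$ for $t\le t_{0}$. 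Then for $x>C$ the expansion $\log(A_{t}+x)=(\log x)I+\sum_{n\ge1}\frac{(-1)^{n-1}}{n\,x^{n}}A_{t}^{n}$ converges in operator norm, and applying the (norm-continuous) trace gives
\[
S(t,\lambda,x)=\log x+\sum_{n\ge1}\frac{(-1)^{n-1}}{n\,x^{n}}\,m_{n}(t).
\]

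Next I would differentiate term by term in $t$. Lemma~\ref{nthPower.lem} says $m_{n}'(t)=n\sum_{j=0}^{n-1}m_{j}(t)\,p_{n-1-j}(t)$, and the crude bound $|m_{n}'(t)|\le n^{2}C^{n}$ (valid for $t\le t_{0}$) shows, via the Weierstrass $M$-test, that $\sum_{n\ge1}\frac{(-1)^{n-1}}{n\,x^{n}}m_{n}'(t)$ converges uniformly for $t\in[0,t_{0}]$ when $x>C$; hence the interchange of $\partial/\partial t$ with the sum is justified and
\[
\frac{\partial S}{\partial t}=\sum_{n\ge1}\frac{(-1)^{n-1}}{x^{n}}\sum_{j=0}^{n-1}m_{j}(t)\,p_{n-1-j}(t).
\]
The key observation is that this is a Cauchy product: after the index shift $k=n-1$ the right-hand side equals $\sum_{k\ge0}\frac{(-1)^{k}}{x^{k+1}}\sum_{j=0}^{k}m_{j}p_{k-j}$, which is exactly $x$ times the product of the two series $\sum_{i\ge0}\frac{(-1)^{i}m_{i}}{x^{i+1}}$ and $\sum_{j\ge0}\frac{(-1)^{j}p_{j}}{x^{j+1}}$. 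But those two series are precisely the Neumann expansions of $\tau[(A_{t}+x)^{-1}]$ and of $\tau[b_{t}b_{t}^{\ast}(\widetilde{A}_{t}+x)^{-1}]$, convergent for $x>C$. Therefore
\[
\frac{\partial S}{\partial t}=x\,\tau[(b_{t,\lambda}^{\ast}b_{t,\lambda}+x)^{-1}]\,\tau[b_{t}b_{t}^{\ast}(b_{t,\lambda}b_{t,\lambda}^{\ast}+x)^{-1}]
\]
for all sufficiently large $x$, which is the claimed identity in that range.

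Finally I would remove the restriction $x>C$ by analytic continuation: both sides of the last display, viewed as functions of $x$, extend holomorphically to $\{\operatorname{Re}x>0\}$ — the left side because $x\mapsto\log(A_{t}+x)$ does (the spectrum of $A_{t}+x$ stays in $\mathbb{C}\setminus(-\infty,0]$), the right side because the resolvents $(A_{t}+x)^{-1}$ and $(\widetilde{A}_{t}+x)^{-1}$ do — so, agreeing on the interval $(C,\infty)$, they agree throughout $\{\operatorname{Re}x>0\}$, in particular on all of $(0,\infty)$. The step I expect to require the most care is exactly this last one: one must verify that $\partial S/\partial t$, as a function of $x$, genuinely extends holomorphically to $\{\operatorname{Re}x>0\}$ (not just that it is defined there), so that analytic continuation is legitimate. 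A clean way is to note that $x\mapsto S(t,\lambda,x)$ is holomorphic on $\{\operatorname{Re}x>0\}$, that the family $\{S(t,\lambda,\cdot)\}_{t\in[0,t_{0}]}$ is locally uniformly bounded there, and that the time-difference quotients converge locally uniformly (again using Lemma~\ref{nthPower.lem}), so the limit is holomorphic in $x$. An alternative that bypasses this point entirely is to start from the integral representation $\log y=\int_{0}^{\infty}\big(\frac{1}{1+s}-\frac{1}{y+s}\big)\,ds$, write $S=\int_{0}^{\infty}\big(\frac{1}{1+s}+G(-x-s)\big)\,ds$ with $G(z)=\tau[(z-A_{t})^{-1}]$, differentiate under the integral using the generating-function form of $\partial_{t}G$ obtained from the Cauchy-product computation above, and evaluate the resulting exact integral, the boundary term at $-\infty$ vanishing because $G(z)=O(1/z)$ and $\tau[b_{t}b_{t}^{\ast}(z-\widetilde{A}_{t})^{-1}]=O(1/z)$. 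I do not anticipate needing to revisit any of the free It\^{o} calculus underlying Lemma~\ref{nthPower.lem}.
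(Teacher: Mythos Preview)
Your proposal is correct and follows essentially the same route as the paper: power-series expansion of the logarithm for large $x$, term-by-term differentiation via Lemma~\ref{nthPower.lem}, recognition of the Cauchy product as the product of two Neumann series, and then analytic continuation in $x$ to reach all of $(0,\infty)$. The one place where the paper's execution is slightly cleaner than yours is the continuation step: rather than arguing directly that $\partial S/\partial t$ is holomorphic in $x$ (your ``step requiring the most care''), the paper first integrates in $t$ to obtain an identity $S(t,\lambda,x)=\log(|\lambda-1|^{2}+x)+\int_{0}^{t}(\text{RHS of \eqref{dsdt0}})\,ds$ valid for large $|x|$, observes that both sides are manifestly holomorphic on $\{\operatorname{Re}x>0\}$ (only resolvents and logarithms of operators with spectrum in the right half-plane appear), and then differentiates back in $t$. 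This sidesteps any need to control difference quotients in $t$ uniformly in $x$.
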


Of course, since $b_{t}=b_{t,\lambda}+\lambda,$ we can rewrite (\ref{dsdt0})
in a way that involves only $b_{t,\lambda}$ and not $b_{t}.$

\begin{proof}
We note that the definition of $S$ actually makes sense for all $\varepsilon
\in\mathbb{C}$ with $\operatorname{Re}(\varepsilon)>0,$ using the standard
branch of the logarithm function. We note that for $\left\vert \varepsilon
\right\vert >\left\vert z\right\vert ,$ we have%
\begin{align}
\frac{1}{z+\varepsilon}  &  =\frac{1}{\varepsilon\left(  1-\left(  -\frac
{z}{\varepsilon}\right)  \right)  }\nonumber\\
&  =\frac{1}{\varepsilon}\left[  1-\frac{z}{\varepsilon}+\frac{z^{2}%
}{\varepsilon^{2}}-\frac{z^{3}}{\varepsilon^{3}}+\cdots\right]  . \label{1zx}%
\end{align}
Integrating with respect to $z$ gives%
\[
\log(z+\varepsilon)=\log\varepsilon+\sum_{n=1}^{\infty}\frac{(-1)^{n-1}}%
{n}\left(  \frac{z}{\varepsilon}\right)  ^{n}.
\]
Thus, for $\left\vert \varepsilon\right\vert >\left\Vert b_{t}^{\ast}%
b_{t}\right\Vert ,$ we have%
\begin{equation}
\tau\lbrack\log(b_{t,\lambda}^{\ast}b_{t,\lambda}+\varepsilon)]=\log
\varepsilon+\,\sum_{n=1}^{\infty}\frac{(-1)^{n-1}}{n\varepsilon^{n}}%
\tau\lbrack(b_{t,\lambda}^{\ast}b_{t,\lambda})^{n}]. \label{tauLog}%
\end{equation}

Assume for the moment that it is permissible to differentiate (\ref{tauLog})
term by term with respect to $t.$ Then by Lemma \ref{nthPower.lem}, we have%
\begin{equation}
\frac{\partial S}{\partial t}=\sum_{n=1}^{\infty}\frac{(-1)^{n-1}}%
{\varepsilon^{n}}\sum_{j=0}^{n-1}\tau\lbrack(b_{t,\lambda}^{\ast}b_{t,\lambda
})^{j}]\tau\lbrack b_{t}b_{t}^{\ast}(b_{t,\lambda}b_{t,\lambda}^{\ast
})^{n-j-1}]. \label{dsdt1}%
\end{equation}
Now, by \cite[Proposition 3.2.3]{BS1}, the map $t\mapsto b_{t}$ is continuous
in the operator norm topology; in particular, $\left\Vert b_{t}\right\Vert $
is a locally bounded function of $t.$ From this observation, it is easy to see
that the right-hand side of (\ref{dsdt1}) converges locally uniformly in $t.$
Thus, a standard result about interchange of limit and derivative (e.g.,
Theorem 7.17 in \cite{blueRudin}) shows that the term-by-term differentiation
is valid.

Now, in (\ref{dsdt1}), we let $k=j$ and $l=n-j-1,$ so that $n=k+l+1.$ Then $k$
and $l$ go from 0 to $\infty,$ and we get%
\[
\frac{\partial S}{\partial t}=\varepsilon\left(  \frac{1}{\varepsilon}%
\sum_{k=0}^{\infty}\frac{(-1)^{k}}{\varepsilon^{k}}\tau\lbrack(b_{t,\lambda
}^{\ast}b_{t,\lambda})^{k}]\right)  \left(  \frac{1}{\varepsilon}\sum
_{l=0}^{\infty}\frac{(-1)^{l}}{\varepsilon^{l}}\tau\lbrack b_{t}b_{t}^{\ast
}(b_{t,\lambda}b_{t,\lambda}^{\ast})^{l}]\right)  .
\]
(We may check that the power of $\varepsilon$ in the denominator is $k+l+1=n$
and that the power of $-1$ is $k+l=n-1.$) Thus, moving the sums inside the
traces and using (\ref{1zx}), we obtain the claimed form of $\partial
S/\partial t.$

We have now established the claimed formula for $\partial S/\partial t$ for
$\varepsilon$ in the right half-plane, provided $\left\vert \varepsilon
\right\vert $ is sufficiently large, depending on $t$ and $\lambda.$ Since,
also, $S(0,\lambda,\varepsilon)=\log(\left\vert \lambda-1\right\vert
^{2}+\varepsilon),$ we have, for sufficiently large $\left\vert \varepsilon
\right\vert ,$%
\begin{equation}
S(t,\lambda,\varepsilon)=\log(\left\vert \lambda-1\right\vert ^{2}%
+\varepsilon)+\int_{0}^{t}\varepsilon\tau\lbrack(b_{s,\lambda}^{\ast
}b_{s,\lambda}+\varepsilon)^{-1}]\tau\lbrack b_{s}b_{s}^{\ast}(b_{s,\lambda
}b_{s,\lambda}^{\ast}+\varepsilon)^{-1}]~ds. \label{Sintegrated}%
\end{equation}
We now claim that both sides of (\ref{Sintegrated}) are well-defined,
holomorphic functions of $\varepsilon,$ for $\varepsilon$ in the right
half-plane. This claim is easily established from the standard power-series
representation of the inverse:%
\begin{align*}
(A+\varepsilon+h)^{-1}  &  =(A+\varepsilon)^{-1}(1+h(A+\varepsilon)^{-1}%
)^{-1}\\
&  =(A+\varepsilon)^{-1}\sum_{n=0}^{\infty}(-1)^{n}h^{n}(A+\varepsilon)^{-n},
\end{align*}
and a similar power-series representation of the logarithm. Thus,
(\ref{Sintegrated}) actually holds for all $\varepsilon$ in the right
half-plane. Differentiating with respect to $t$ then establishes the claimed
formula (\ref{dsdt0}) for $dS/dt$ for all $\varepsilon$ in the right half-plane.
\end{proof}

\begin{lemma}
\label{otherDerivatives.lem}We have the following formulas for the derivatives
of $S$ with respect to $\varepsilon$ and $\lambda$:%
\begin{align*}
\frac{\partial S}{\partial\varepsilon}  &  =\tau\lbrack(b_{t,\lambda}^{\ast
}b_{t,\lambda}+\varepsilon)^{-1}]\\
\frac{\partial S}{\partial\lambda}  &  =-\tau\lbrack b_{t,\lambda}^{\ast
}(b_{t,\lambda}^{\ast}b_{t,\,\lambda}+\varepsilon)^{-1}]\\
\frac{\partial S}{\partial\bar{\lambda}}  &  =-\tau\lbrack b_{t,\lambda
}(b_{t,\lambda}^{\ast}b_{t,\lambda}+\varepsilon)^{-1}].
\end{align*}

\end{lemma}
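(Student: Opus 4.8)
The plan is to differentiate under the trace, exploiting that the operator
\[
A=A(\lambda,\bar\lambda,x):=b_{t,\lambda}^{\ast}b_{t,\lambda}+x
=b_{t}^{\ast}b_{t}-\bar\lambda\,b_{t}-\lambda\,b_{t}^{\ast}+|\lambda|^{2}+x
\]
depends affinely on the variables $x$, $\lambda$, and $\bar\lambda$ (the latter two in the Wirtinger sense, $\lambda=a+ib$), so that
\[
\frac{\partial A}{\partial x}=1,\qquad
\frac{\partial A}{\partial\lambda}=-b_{t,\lambda}^{\ast},\qquad
\frac{\partial A}{\partial\bar\lambda}=-b_{t,\lambda}.
\]
Granting the identity $\partial_{z}\,\tau[\log A]=\tau[A^{-1}\,\partial_{z}A]$ for $z\in\{x,\lambda,\bar\lambda\}$, all three formulas follow at once: for $\partial S/\partial x$ it is literal, while for $\partial S/\partial\lambda$ and $\partial S/\partial\bar\lambda$ one uses cyclicity of $\tau$ to move the resolvent $A^{-1}=(b_{t,\lambda}^{\ast}b_{t,\lambda}+x)^{-1}$ past $\partial_{z}A$, which collapses the expression onto a single one-sided resolvent.

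To justify the differentiation I would reuse the device from the proof of Lemma \ref{tDerivative.lem}. For $|x|>\Vert b_{t}^{\ast}b_{t}\Vert$---a quantity locally bounded in $t$ by \cite[Proposition 3.2.3]{BS1}---one has the expansion (\ref{tauLog}),
\[
S(t,\lambda,x)=\log x+\sum_{n=1}^{\infty}\frac{(-1)^{n-1}}{nx^{n}}\tau[(b_{t,\lambda}^{\ast}b_{t,\lambda})^{n}],
\]
whose terms are polynomials in $\lambda$, $\bar\lambda$, and $1/x$ and which converges locally uniformly in all three variables; hence term-by-term differentiation in $x$, $\lambda$, or $\bar\lambda$ is legitimate (Theorem 7.17 in \cite{blueRudin}). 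Differentiating $\tau[(b_{t,\lambda}^{\ast}b_{t,\lambda})^{n}]$ by the ordinary Leibniz rule, using cyclicity of $\tau$ to group the resulting terms, and resumming over $n$ reproduces $\tau[A^{-1}\,\partial_{z}A]$ in the stated form for $|x|$ large. Finally, exactly as in Lemma \ref{tDerivative.lem}, the Neumann-series expansions of $(A+h)^{-1}$ and $\log(A+h)$ show that both sides of each identity are holomorphic in $x$ on the right half-plane $\{\operatorname{Re}x>0\}$; since they agree for $|x|$ large, they agree for all $x$ with $\operatorname{Re}x>0$, in particular for all $x>0$.

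The only genuinely delicate point is the bookkeeping in the resummation: one must check that $\partial_{\lambda}$ acts only on the factors $b_{t,\lambda}$ and $\partial_{\bar\lambda}$ only on the factors $b_{t,\lambda}^{\ast}$, and that cyclicity of $\tau$ is precisely what makes the two $\lambda$-derivatives collapse to a single one-sided resolvent rather than a symmetric sandwich. An alternative that sidesteps the power series altogether is to invoke the Duhamel formula $\frac{d}{ds}\log A(s)=\int_{0}^{\infty}(A(s)+u)^{-1}A'(s)(A(s)+u)^{-1}\,du$, apply $\tau$, and use cyclicity together with $\int_{0}^{\infty}(A+u)^{-2}\,du=A^{-1}$ to obtain $\frac{d}{ds}\tau[\log A(s)]=\tau[A^{-1}A'(s)]$ directly; I would mention this as the cleaner route for readers who prefer to avoid series manipulations.
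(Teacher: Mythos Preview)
Your proposal is correct and follows essentially the same approach as the paper: both reduce to the identity $\frac{d}{du}\tau[\log f(u)]=\tau[f(u)^{-1}\,df/du]$ and then read off the three formulas from $\partial A/\partial x=1$, $\partial A/\partial\lambda=-b_{t,\lambda}^{\ast}$, $\partial A/\partial\bar\lambda=-b_{t,\lambda}$. The only difference is that the paper simply cites this trace--logarithm derivative identity as Lemma~1.1 of \cite{Br}, whereas you supply two self-contained justifications (power series plus analytic continuation, or the Duhamel integral); either is fine, and your Duhamel route is indeed the cleaner of the two.
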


\begin{proof}
We use the formula for the derivative of the trace of a logarithm (Lemma 1.1
in \cite{Br}):%
\[
\frac{d}{du}\tau\lbrack\log(f(u))]=\tau\left[  f(u)^{-1}\frac{df}{du}\right]
.
\]
(We emphasize that there is no such simple formula for the derivative of
$\log(f(u))$ without the trace, unless $df/du$ commutes with $f(u).$) The
lemma easily follows from this formula.
\end{proof}

We are now ready for the verification of the differential equation for $S$.

\begin{proof}
[Proof of Theorem \ref{thePDE.thm}]We note that
\[
b_{t,\lambda}(b_{t,\lambda}^{\ast}b_{t,\lambda}+\varepsilon)=(b_{t,\lambda
}b_{t,\lambda}^{\ast}+\varepsilon)b_{t,\lambda}.
\]
Multiplying by $(b_{t,\lambda}^{\ast}b_{t,\lambda}+\varepsilon)^{-1}$ on the
right and $(b_{t,\lambda}b_{t,\lambda}^{\ast}+\varepsilon)^{-1}$ on the left
gives a useful identity:%
\begin{equation}
(b_{t,\lambda}b_{t,\lambda}^{\ast}+\varepsilon)^{-1}b_{t,\lambda}%
=b_{t,\lambda}(b_{t,\lambda}^{\ast}b_{t,\lambda}+\varepsilon)^{-1}.
\label{basic1}%
\end{equation}
Replacing $b_{t,\lambda}$ by its adjoint gives another version of the
identity:%
\begin{equation}
b_{t,\lambda}^{\ast}(b_{t,\lambda}b_{t,\lambda}^{\ast}+\varepsilon
)^{-1}=(b_{t,\lambda}^{\ast}b_{t,\lambda}+\varepsilon)^{-1}b_{t,\lambda}%
^{\ast}. \label{basic2}%
\end{equation}
Note that in both (\ref{basic1}) and (\ref{basic2}), both side have the same
pattern of starred and unstarred variables, always with the two outer
variables being the same and the middle one being different.

We also claim that%
\begin{equation}
\tau\lbrack(b_{t,\lambda}^{\ast}b_{t,\lambda}+\varepsilon)^{-1}]=\tau
\lbrack(b_{t,\lambda}b_{t,\lambda}^{\ast}+\varepsilon)^{-1}].
\label{resolventSwitch}%
\end{equation}
To verify this identity for large $\varepsilon,$ we replace $z$ by
$b_{t,\lambda}^{\ast}b_{t,\lambda}$ in the series (\ref{1zx}) and note that by
the cyclic invariance of the trace,%
\[
\tau\lbrack(b_{t,\lambda}^{\ast}b_{t,\lambda})^{n}]=\tau\lbrack(b_{t,\lambda
}b_{t,\lambda}^{\ast})^{n}].
\]
The result for general $\varepsilon$ follows by an analyticity argument as in
the proof of Lemma \ref{tDerivative.lem}.

We start from the formula for $\partial S/\partial t$ in Lemma
\ref{tDerivative.lem}. Noting that%
\begin{align*}
b_{t}b_{t}^{\ast}  &  =(b_{t,\lambda}+\lambda)^{\ast}(b_{t,\lambda}+\lambda)\\
&  =b_{t,\lambda}b_{t,\lambda}^{\ast}+\lambda b_{t,\lambda}^{\ast}%
+\bar{\lambda}b_{t,\lambda}+\left\vert \lambda\right\vert ^{2},
\end{align*}
we expand the second factor on the right-hand side of (\ref{dsdt0}) as%
\begin{align*}
\tau\lbrack b_{t}b_{t}^{\ast}(b_{t,\lambda}b_{\lambda}^{\ast}+\varepsilon
)^{-1}]  &  =\tau\lbrack b_{t,\lambda}b_{t,\lambda}^{\ast}(b_{t,\lambda
}b_{t,\lambda}^{\ast}+\varepsilon)^{-1}]\\
&  +\lambda\tau\lbrack b_{t,\lambda}^{\ast}(b_{t,\lambda}b_{t,\lambda}^{\ast
}+\varepsilon)^{-1}]\\
&  +\bar{\lambda}\tau\lbrack b_{t,\lambda}(b_{t,\lambda}b_{t,\lambda}^{\ast
}+\varepsilon)^{-1}]\\
&  +\left\vert \lambda\right\vert ^{2}\tau\lbrack(b_{t,\lambda}b_{t,\lambda
}^{\ast}+\varepsilon)^{-1}].
\end{align*}
We then simplify the first term by writing $b_{t,\lambda}b_{t,\lambda}^{\ast
}=b_{t,\lambda}b_{t,\lambda}^{\ast}+\varepsilon-\varepsilon.$ In the middle
two terms, we use (\ref{basic1}), (\ref{basic2}), and cyclic invariance of the
trace. Using also (\ref{resolventSwitch}), we get
\begin{align}
\tau\lbrack b_{t}b_{t}^{\ast}(b_{t,\lambda}b_{\lambda}^{\ast}+\varepsilon
)^{-1}]  &  =1+(\left\vert \lambda\right\vert ^{2}-\varepsilon)\tau
\lbrack(b_{t,\lambda}^{\ast}b_{t,\lambda}+\varepsilon)^{-1}]\nonumber\\
&  +\lambda\tau\lbrack b_{t,\lambda}^{\ast}(b_{t,\lambda}^{\ast}b_{t,\lambda
}+\varepsilon)^{-1}]\nonumber\\
&  +\bar{\lambda}\tau\lbrack b_{t,\lambda}(b_{t,\lambda}^{\ast}b_{t,\lambda
}+\varepsilon)^{-1}]. \label{secondFactor}%
\end{align}

Thus,%
\begin{equation}
\frac{\partial S}{\partial t}=\varepsilon\tau\lbrack(b_{t,\lambda}^{\ast
}b_{t,\lambda}+\varepsilon)^{-1}](\text{all the terms in (\ref{secondFactor}%
)}). \label{dsdt2}%
\end{equation}
All terms on the right-hand side of (\ref{dsdt2}) are expressible using Lemma
\ref{otherDerivatives.lem} in terms of derivatives of $S,$ and the claimed
differential equation follows.
\end{proof}

\section{The Hamilton--Jacobi method\label{HJ.sec}}

\subsection{Setting up the method}

The equation (\ref{thePDE}) is a first-order, nonlinear PDE of
Hamilton--Jacobi type. (The reader may consult, for example, Section 3.3 in
the book of Evans \cite{Evans}, but we will give a brief self-contained
account of the theory in the proof of Proposition \ref{HJgeneral.prop}.) We
consider a Hamiltonian function obtained from the right-hand side of
(\ref{thePDE}) by replacing each partial derivative with momentum variable,
with an overall minus sign. Thus, we define%
\begin{equation}
H(a,b,\varepsilon,p_{a},p_{b},p_{\varepsilon})=-\varepsilon p_{\varepsilon
}(1+(a^{2}+b^{2})p_{\varepsilon}-\varepsilon p_{\varepsilon}-ap_{a}-bp_{b}).
\label{theHamiltonian}%
\end{equation}
We then consider Hamilton's equations for this Hamiltonian. That is to say, we
consider this system of six coupled ODEs:%
\begin{align}
\frac{da}{dt}  &  =\frac{\partial H}{\partial p_{a}};\quad~~\frac{db}%
{dt}=\frac{\partial H}{\partial p_{b}};\quad~~~\frac{d\varepsilon}{dt}%
=\frac{\partial H}{\partial p_{\varepsilon}};\nonumber\\
\frac{dp_{a}}{dt}  &  =-\frac{\partial H}{\partial a};\quad\frac{dp_{b}}%
{dt}=-\frac{\partial H}{\partial b};\quad\frac{dp_{\varepsilon}}{dt}%
=-\frac{\partial H}{\partial\varepsilon}. \label{theODEs}%
\end{align}
As convenient, we will let%
\[
\lambda(t)=a(t)+ib(t).
\]

The initial conditions for $a,$ $b,$ and $\varepsilon$ are arbitrary:
\begin{equation}
a(0)=a_{0};\quad b(0)=b_{0};\quad\varepsilon(0)=\varepsilon_{0},
\label{initialConditions1}%
\end{equation}
while those for $p_{a},$ $p_{b},$ and $p_{\varepsilon}$ are determined by
those for $a,$ $b,$ and $\varepsilon$ as follows:%
\begin{equation}
p_{a}(0)=2(a_{0}-1)p_{0};\quad p_{b}(0)=2b_{0}p_{0};\quad p_{\varepsilon
}(0)=p_{0}, \label{initialConditions2}%
\end{equation}
where%
\begin{equation}
p_{0}=\frac{1}{\left\vert \lambda_{0}-1\right\vert ^{2}+\varepsilon_{0}}%
=\frac{1}{(a_{0}-1)^{2}+b_{0}^{2}+\varepsilon_{0}}. \label{p0definition}%
\end{equation}
The motivation for (\ref{initialConditions2}) is that the momentum variables
$p_{a},$ $p_{b},$ and $p_{\varepsilon}$ will correspond to the derivatives of
$S$ along the curves $(a(t),b(t),\varepsilon(t))$; see (\ref{Sderivatives}).
Thus, the initial momenta are simply the derivatives of the initial value
(\ref{SinitialCond}) of $S,$ evaluated at $(a_{0},b_{0},\varepsilon_{0}).$

For future reference, we record the value $H_{0}$ of the Hamiltonian at time
$t=0$.

\begin{lemma}
\label{H0.lem}The value of the Hamiltonian at $t=0$ is%
\begin{equation}
H_{0}=-\varepsilon_{0}p_{0}^{2}. \label{H0Formula}%
\end{equation}

\end{lemma}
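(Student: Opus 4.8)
The plan is to simply substitute the initial conditions (\ref{initialConditions1}), (\ref{initialConditions2}), (\ref{p0definition}) into the formula (\ref{theHamiltonian}) for $H$ and simplify. Write $\lambda_0 = a_0 + ib_0$ and recall $p_0 = 1/(|\lambda_0-1|^2 + x_0) = 1/((a_0-1)^2 + b_0^2 + x_0)$, so that by definition
\[
p_0\left((a_0-1)^2 + b_0^2 + x_0\right) = 1.
\]
At $t=0$ the factor multiplying $-x_0 p_0$ in $H$ is
\[
1 + (a_0^2+b_0^2)p_0 - x_0 p_0 - a_0\bigl(2(a_0-1)p_0\bigr) - b_0(2b_0 p_0) = 1 + p_0\bigl(2a_0 - a_0^2 - b_0^2 - x_0\bigr),
\]
where I have collected the coefficient of $p_0$: $a_0^2 + b_0^2 - x_0 - 2a_0^2 + 2a_0 - 2b_0^2 = 2a_0 - a_0^2 - b_0^2 - x_0$.

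Next I would use the defining identity for $p_0$ to rewrite the constant $1$ as $p_0\left((a_0-1)^2 + b_0^2 + x_0\right) = p_0(a_0^2 - 2a_0 + 1 + b_0^2 + x_0)$. Adding this to $p_0(2a_0 - a_0^2 - b_0^2 - x_0)$ causes every term except the one coming from the ``$1$'' inside the square to cancel, leaving exactly $p_0$. Hence the bracketed factor in $H$ equals $p_0$ at $t=0$, and therefore
\[
H_0 = -x_0 p_0 \cdot p_0 = -x_0 p_0^2,
\]
which is (\ref{H0Formula}).

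There is no real obstacle here: the statement is a one-line algebraic identity, and the only thing to be careful about is bookkeeping the signs when expanding $-a_0 p_a(0) - b_0 p_b(0)$ and then recognizing $(a_0-1)^2 + b_0^2 + x_0 = 1/p_0$. I would present it exactly as above, perhaps in a single displayed computation chain once the cancellation is identified.
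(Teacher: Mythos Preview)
Your proof is correct and follows essentially the same approach as the paper: substitute the initial conditions into the Hamiltonian, collect the coefficient of $p_0$ inside the bracket, and use the identity $(a_0-1)^2+b_0^2+x_0=1/p_0$ to see that the bracket reduces to $p_0$. The paper organizes the final simplification slightly differently (writing the bracket as $1-p_0(a_0^2-2a_0+b_0^2+x_0)$ and noting this equals $1-(1-p_0)=p_0$), but the computation is the same.
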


\begin{proof}
Plugging $t=0$ into (\ref{theHamiltonian}) and using (\ref{initialConditions2}%
) gives%
\[
H_{0}=-\varepsilon_{0}p_{0}(1+(a_{0}^{2}+b_{0}^{2})p_{0}-\varepsilon_{0}%
p_{0}-2a_{0}(a_{0}-1)p_{0}-2b_{0}^{2}p_{0}),
\]
which simplifies to%
\[
H_{0}=-\varepsilon_{0}p_{0}(1-p_{0}(a_{0}^{2}-2a_{0}+b_{0}^{2}+\varepsilon
_{0})).
\]
But using the formula (\ref{p0definition}) for $p_{0},$ we see that $a_{0}%
^{2}-2a_{0}+b_{0}^{2}+\varepsilon_{0}$ equals $1/p_{0}-1,$ from which
(\ref{H0Formula}) follows.
\end{proof}

The main result of this section is the following; the proof is given on p.
\pageref{HJproof}.

\begin{theorem}
\label{HJ.thm}Assume $\lambda_{0}\neq0$ and $\varepsilon_{0}>0.$ Suppose a
solution to the system (\ref{theODEs}) with initial conditions
(\ref{initialConditions1}) and (\ref{initialConditions2}) exists with
$\varepsilon(t)>0$ for $0\leq t<T.$ Then we have%
\begin{align}
S(t,\lambda(t),\varepsilon(t))  &  =\log(\left\vert \lambda_{0}-1\right\vert
^{2}+\varepsilon_{0})-\frac{\varepsilon_{0}t}{(\left\vert \lambda
_{0}-1\right\vert ^{2}+\varepsilon_{0})^{2}}\nonumber\\
&  +\log\left\vert \lambda(t)\right\vert -\log\left\vert \lambda
_{0}\right\vert \label{Sformula}%
\end{align}
for all $t\in\lbrack0,T).$ Furthermore, the derivatives of $S$ with respect to
$a,$ $b,$ and $\varepsilon$ satisfy%
\begin{align}
\frac{\partial S}{\partial\varepsilon}(t,\lambda(t),\varepsilon(t))  &
=p_{\varepsilon}(t);\nonumber\\
\frac{\partial S}{\partial a}(t,\lambda(t),\varepsilon(t))  &  =p_{a}%
(t);\nonumber\\
\frac{\partial S}{\partial b}(t,\lambda(t),\varepsilon(t))  &  =p_{b}(t).
\label{Sderivatives}%
\end{align}

\end{theorem}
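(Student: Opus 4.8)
The plan is to treat Theorem \ref{HJ.thm} as a verification of the standard Hamilton--Jacobi correspondence: along the characteristic curves given by Hamilton's equations (\ref{theODEs}), the solution $S$ of the PDE (\ref{thePDE}) evolves in a controlled way, and the momenta track the spatial derivatives of $S$. Concretely, I would introduce the three functions
\[
P_a(t) = \frac{\partial S}{\partial a}(t,\lambda(t),x(t)),\quad
P_b(t) = \frac{\partial S}{\partial b}(t,\lambda(t),x(t)),\quad
P_x(t) = \frac{\partial S}{\partial x}(t,\lambda(t),x(t)),
\]
where $(\lambda(t),x(t))$ is the projected characteristic, and show that $(P_a,P_b,P_x)$ satisfies the same ODE system as $(p_a,p_b,p_x)$ with the same initial data, whence $P_a=p_a$, etc., by uniqueness of ODE solutions. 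The initial conditions match by construction: (\ref{initialConditions2}) was defined precisely so that $(p_a(0),p_b(0),p_x(0))$ equals the gradient of the initial value (\ref{SinitialCond}) at $(a_0,b_0,x_0)$.

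First I would record, for a classical solution $S$ of a Hamilton--Jacobi equation $\partial S/\partial t = -H(a,b,x,\partial_a S,\partial_b S,\partial_x S)$, the well-known fact that if $(a,b,x)(t)$ solves $\dot a = \partial H/\partial p_a$, $\dot b=\partial H/\partial p_b$, $\dot x=\partial H/\partial p_x$ with $p_\bullet$ replaced by $\partial_\bullet S$ evaluated along the curve, then the functions $P_\bullet(t)$ above satisfy $\dot P_a = -\partial H/\partial a$, $\dot P_b=-\partial H/\partial b$, $\dot P_x=-\partial H/\partial x$. The derivation is a short computation: differentiate, say, $P_x(t)=\partial_x S(t,\lambda(t),x(t))$ in $t$, using the chain rule and then the $x$-derivative of the PDE $\partial_t S = -H(\cdots,\partial S)$; the terms involving $\partial_x \partial_a S$, $\partial_x\partial_b S$, $\partial_x^2 S$ cancel precisely against the $\dot a,\dot b,\dot x$ terms coming from Hamilton's equations, leaving $\dot P_x = -\partial H/\partial x$. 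The one subtlety is that we only know $S$ is smooth for $x>0$ (it is defined by (\ref{Sdefinition}) as a trace of a logarithm of a positive operator bounded below by $x$), and we only know the PDE (\ref{thePDE}) holds for $x>0$ (Theorem \ref{thePDE.thm}); both are fine on the interval $[0,T)$ by hypothesis, and $S$ is real-analytic in all variables there by the resolvent expansions already used in the proof of Lemma \ref{tDerivative.lem}. So the argument applies on $[0,T)$ and gives (\ref{Sderivatives}).

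Next I would establish the value formula (\ref{Sformula}). The standard Hamilton--Jacobi identity is $\frac{d}{dt} S(t,\lambda(t),x(t)) = -H + p_a \dot a + p_b \dot b + p_x \dot x = \sum_\bullet p_\bullet \partial_\bullet H - H$, i.e.\ the Lagrangian along the characteristic. One then uses two facts: $H$ is conserved along its own flow, so $H(t) \equiv H_0 = -x_0 p_0^2$ by Lemma \ref{H0.lem}; and the "action" $\sum p_\bullet \partial_\bullet H$ can be evaluated from the explicit form (\ref{theHamiltonian}) of $H$. Here is where I expect to exploit the special structure: $H$ in (\ref{theHamiltonian}) is $-xp_x$ times a factor that is affine in $(p_a,p_b,x p_x)$ after grouping, so $\sum_\bullet p_\bullet \partial_\bullet H$ simplifies dramatically. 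In fact I anticipate that $-H + \sum p_\bullet \dot\bullet$ collapses to something like $-x_0 p_0^2$ plus a term equal to $\frac{d}{dt}\log|\lambda(t)|$; this last identity is the crux and will come from examining $\dot a, \dot b$, i.e.\ $\partial H/\partial p_a = x p_x \cdot a$ and $\partial H/\partial p_b = x p_x\cdot b$ (up to sign), which gives $\dot\lambda / \lambda = \pm x p_x$, so that $\frac{d}{dt}\log|\lambda| = \operatorname{Re}(\dot\lambda/\lambda)$ has a clean form. Integrating $\frac{d}{dt}S = -x_0 p_0^2 + \frac{d}{dt}\log|\lambda(t)|$ from $0$ to $t$, and using $S(0,\lambda_0,x_0) = \log(|\lambda_0-1|^2 + x_0)$ from (\ref{SinitialCond}) together with $p_0 = 1/(|\lambda_0-1|^2+x_0)$, yields exactly (\ref{Sformula}).

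The main obstacle is purely computational rather than conceptual: carrying out the algebra to show that $-H + \sum_\bullet p_\bullet \dot\bullet$ reduces to $H_0 + \frac{d}{dt}\log|\lambda(t)|$ with the signs and constants exactly as in (\ref{Sformula}). This requires computing the six partial derivatives of $H$ in (\ref{theHamiltonian}) carefully, substituting into Hamilton's equations, and then assembling the Lagrangian; the conservation of $H$ lets one replace the awkward quadratic-in-momenta terms by the constant $-x_0 p_0^2$, which is what makes the integration elementary. A secondary point requiring care is the well-posedness interval: everything is asserted only on $[0,T)$ where $x(t) > 0$, so I would make sure the chain-rule manipulations, the validity of the PDE, and the ODE uniqueness theorem are all invoked on that open interval, where $S$ and its derivatives are smooth. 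No compactness or global existence claim is needed here — that is deferred to later sections analyzing when such solutions exist and the behavior as $x \to 0^+$.
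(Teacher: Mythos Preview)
Your proposal is correct and follows essentially the same route as the paper: establish the general Hamilton--Jacobi correspondence $\nabla S = \mathbf{p}$ along characteristics via the chain-rule/ODE-uniqueness argument, then compute $\mathbf{p}\cdot\dot{\mathbf{x}} = 2H + xp_x$ and identify $xp_x = \tfrac{d}{dt}\log|\lambda|$ to integrate to (\ref{Sformula}). The only point to tighten is the logical order in your first step: as written, you define $P_\bullet$ along the \emph{given} Hamilton-flow projection but then invoke the cancellation lemma for the curve defined by $\dot{\mathbf{x}} = \partial_{\mathbf{p}}H(\mathbf{x},\nabla S)$; the paper (Proposition \ref{HJgeneral.prop}) is careful to first build this auxiliary curve, show that together with $\nabla S$ it satisfies the full six-dimensional Hamilton system, and \emph{then} invoke uniqueness for that system to identify it with the given flow---make sure your write-up does the same.
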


Note that $S(t,\lambda,\varepsilon)$ is only defined for $\varepsilon>0.$
Thus, (\ref{Sformula}) and (\ref{Sderivatives}) only make sense as long as the
solution to (\ref{theODEs}) exists with $\varepsilon(t)>0.$

Since our objective is to compute $\Delta s_{t}(\lambda)=\partial^{2}%
s_{t}/\partial a^{2}+\partial^{2}s_{t}/\partial^{2}b^{2},$ the formula
(\ref{Sderivatives}) for the derivatives of $S$ will ultimately be of as great
importance as the formula (\ref{Sformula}) for $S$ itself. We emphasize that
we are not using the Hamilton--Jacobi method to \textit{construct} a solution
to (\ref{thePDE}); the function $S(t,\lambda,\varepsilon)$ is already defined
in (\ref{Sdefinition}) in terms of free probability and is known (Theorem
\ref{thePDE.thm}) to satisfy (\ref{thePDE}). Rather, we are using the
Hamilton--Jacobi method to \textit{analyze} a solution that is already known
to exist.

We begin by briefly recapping the general form of the Hamilton--Jacobi method.

\begin{proposition}
\label{HJgeneral.prop}Fix an open set $U\subset\mathbb{R}^{n},$ a
time-interval $[0,T],$ and a function $H(\mathbf{x},\mathbf{p}).$ Consider a
function $S(t,\mathbf{x})$ satisfying%
\begin{equation}
\frac{\partial S}{\partial t}=-H(\mathbf{x},\nabla_{\mathbf{x}}S),\quad
\mathbf{x}\in U,~t\in\lbrack0,T]. \label{HJeqn}%
\end{equation}
Consider a pair $(\mathbf{x}(t),\mathbf{p}(t))$ with $\mathbf{x}(t)\in U,$
$\mathbf{p}(t)\in\mathbb{R}^{n},$ and $t\in\lbrack0,T_{1}]$ with $T_{1}\leq
T.$ Assume this pair satisfies Hamilton's equations:%
\[
\frac{dx_{j}}{dt}=\frac{\partial H}{\partial p_{j}}(\mathbf{x}(t),\mathbf{p}%
(t));\quad\frac{dp_{j}}{dt}=-\frac{\partial H}{\partial x_{j}}(\mathbf{x}%
(t),\mathbf{p}(t))
\]
with initial conditions%
\begin{equation}
\mathbf{x}(0)=\mathbf{x}_{0};\quad\mathbf{p}(0)=(\nabla_{\mathbf{x}%
}S)(0,\mathbf{x}_{0}). \label{HJinit}%
\end{equation}
Then we have%
\begin{equation}
S(t,\mathbf{x}(t))=S(0,\mathbf{x}_{0})-H(\mathbf{x}_{0},\mathbf{p}_{0}%
)~t+\int_{0}^{t}\mathbf{p}(s)\cdot\frac{d\mathbf{x}}{ds}~ds
\label{HJformulaGen}%
\end{equation}
and%
\begin{equation}
(\nabla_{\mathbf{x}}S)(t,\mathbf{x}(t))=\mathbf{p}(t). \label{derivFormulaGen}%
\end{equation}

\end{proposition}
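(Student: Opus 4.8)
The plan is to establish the gradient identity (\ref{derivFormulaGen}) first, after which the formula (\ref{HJformulaGen}) for $S$ follows by a single integration combined with conservation of energy. Throughout I assume $S$ is $C^{2}$ and $H$ is $C^{1}$, so that Hamilton's system has unique solutions for given initial data; in the application $H$ is a polynomial and the requisite regularity of $S$ is established separately.

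The one genuine subtlety is that one cannot simply differentiate $t\mapsto\nabla_{\mathbf{x}}S(t,\mathbf{x}(t))$ along the \emph{given} trajectory $\mathbf{x}(t)$ and expect the Hessian terms to cancel: that cancellation requires already knowing $\mathbf{p}(t)=\nabla_{\mathbf{x}}S(t,\mathbf{x}(t))$, which is precisely what we are trying to prove. The standard remedy, which I will carry out, is a bootstrap through ODE uniqueness. Introduce the curve $\mathbf{y}(t)$ solving
\[
\dot{\mathbf{y}}(t)=(\nabla_{\mathbf{p}}H)\bigl(\mathbf{y}(t),\,\nabla_{\mathbf{x}}S(t,\mathbf{y}(t))\bigr),\qquad\mathbf{y}(0)=\mathbf{x}_{0},
\]
and set $\mathbf{r}(t):=\nabla_{\mathbf{x}}S(t,\mathbf{y}(t))$. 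Differentiating $r_{j}$, I would substitute $\partial_{t}\partial_{x_{j}}S=\partial_{x_{j}}\partial_{t}S=-\partial_{x_{j}}\bigl[H(\mathbf{x},\nabla_{\mathbf{x}}S)\bigr]$ from (\ref{HJeqn}) and use the defining ODE for $\mathbf{y}$; the two sums involving $\partial_{x_{j}}\partial_{x_{k}}S$ then cancel by equality of mixed partials, leaving $\dot{r}_{j}=-(\partial_{x_{j}}H)(\mathbf{y},\mathbf{r})$. Thus $(\mathbf{y},\mathbf{r})$ solves Hamilton's equations, and at $t=0$ it carries the data $(\mathbf{x}_{0},(\nabla_{\mathbf{x}}S)(0,\mathbf{x}_{0}))=(\mathbf{x}_{0},\mathbf{p}_{0})$, which by (\ref{HJinit}) agrees with $(\mathbf{x}(0),\mathbf{p}(0))$. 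Uniqueness forces $(\mathbf{y},\mathbf{r})\equiv(\mathbf{x},\mathbf{p})$ on the common interval of existence, i.e.\ $\mathbf{p}(t)=\nabla_{\mathbf{x}}S(t,\mathbf{x}(t))$, which is (\ref{derivFormulaGen}).

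For (\ref{HJformulaGen}) I would then differentiate along the given trajectory:
\[
\frac{d}{dt}S(t,\mathbf{x}(t))=\frac{\partial S}{\partial t}+\nabla_{\mathbf{x}}S\cdot\dot{\mathbf{x}}=-H\bigl(\mathbf{x}(t),\mathbf{p}(t)\bigr)+\mathbf{p}(t)\cdot\dot{\mathbf{x}}(t),
\]
using (\ref{HJeqn}) together with the identity (\ref{derivFormulaGen}) just proved. Along any solution of Hamilton's equations $\frac{d}{dt}H(\mathbf{x}(t),\mathbf{p}(t))=\nabla_{\mathbf{x}}H\cdot\nabla_{\mathbf{p}}H+\nabla_{\mathbf{p}}H\cdot(-\nabla_{\mathbf{x}}H)=0$, so the energy is constant and equal to $H(\mathbf{x}_{0},\mathbf{p}_{0})$; integrating the displayed identity from $0$ to $t$ yields exactly (\ref{HJformulaGen}).

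I expect the bootstrap to be the only real obstacle: one must recognize that the correct object to track is the characteristic \emph{defined by} $\nabla_{\mathbf{x}}S$, and then invoke uniqueness of solutions to Hamilton's system to identify it with the given trajectory, rather than differentiating blindly along $\mathbf{x}(t)$. Once that is in place, the remaining ingredients---the cancellation of Hessian terms, the chain rule for $\frac{d}{dt}S(t,\mathbf{x}(t))$, and the conservation of energy---are entirely routine and involve no estimates.
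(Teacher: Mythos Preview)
Your proposal is correct and follows essentially the same approach as the paper: both introduce the auxiliary characteristic curve $\mathbf{y}(t)$ defined by $\dot{\mathbf{y}}=\nabla_{\mathbf{p}}H(\mathbf{y},\nabla_{\mathbf{x}}S(t,\mathbf{y}))$, verify via the Hessian cancellation that $(\mathbf{y},\nabla_{\mathbf{x}}S(t,\mathbf{y}))$ solves Hamilton's equations, and then invoke ODE uniqueness to identify it with the given $(\mathbf{x},\mathbf{p})$. The only cosmetic difference is ordering---the paper first derives the expression for $\frac{d}{dt}S(t,\mathbf{x}(t))$ along a general curve before specializing, whereas you establish (\ref{derivFormulaGen}) first and then integrate---but the substance is identical.
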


Again, we are not trying to construct solutions to (\ref{HJeqn}), but rather
to analyze a solution that is already assumed to exist.

\begin{proof}
Take an arbitrary (for the moment) smooth curve $\mathbf{x}(t)$ and note that%
\begin{align}
\frac{d}{dt}S(t,\mathbf{x}(t))  &  =\frac{\partial S}{\partial t}%
(t,\mathbf{x}(t))+\frac{\partial S}{\partial x_{j}}(t,\mathbf{x}%
(t))\frac{dx_{j}}{dt}\nonumber\\
&  =-H(\mathbf{x}(t),(\nabla_{\mathbf{x}}S)(t,\mathbf{x}(t)))+(\nabla
_{\mathbf{x}}S)(t,\mathbf{x}(t))\cdot\frac{d\mathbf{x}}{dt},
\label{totalDeriv1}%
\end{align}
where we use the Einstein summation convention. Let us use the notation%
\[
\mathbf{p}(t)=(\nabla_{\mathbf{x}}S)(t,\mathbf{x}(t));
\]
that is $p_{j}(t)=\partial S/\partial x_{j}(t,\mathbf{x}(t)).$ Then
(\ref{totalDeriv1}) may be rewritten as%
\begin{equation}
\frac{d}{dt}S(t,\mathbf{x}(t))=-H(\mathbf{x}(t),\mathbf{p}(t))+\mathbf{p}%
(t)\cdot\frac{d\mathbf{x}}{dt}. \label{totalDeriv2}%
\end{equation}
If we can choose $\mathbf{x}(t)$ so that $\mathbf{p}(t)$ is somehow
computable, then the right-hand side of (\ref{totalDeriv2}) would be known and
we could integrate to get $S(t,\mathbf{x}(t)).$

To see how we might be able to compute $\mathbf{p}(t),$ we try
differentiating:%
\begin{align}
\frac{dp_{j}}{dt}  &  =\frac{d}{dt}\frac{\partial S}{\partial x_{j}%
}(t,\mathbf{x}(t))\nonumber\\
&  =\frac{\partial^{2}S}{\partial t\partial x_{j}}(t,\mathbf{x}(t))+\frac
{\partial^{2}S}{\partial x_{k}\partial x_{j}}(t,\mathbf{x}(t))\frac{dx_{k}%
}{dt}. \label{pDeriv}%
\end{align}
Now, from (\ref{HJeqn}), we have%
\begin{align*}
\frac{\partial^{2}S}{\partial t\partial x_{j}}  &  =\frac{\partial^{2}%
S}{\partial x_{j}\partial t}\\
&  =-\frac{\partial}{\partial x_{j}}H(\mathbf{x},\nabla_{\mathbf{x}}S)\\
&  =-\frac{\partial H}{\partial x_{j}}(\mathbf{x},\nabla_{\mathbf{x}}%
S)-\frac{\partial H}{\partial p_{k}}(\mathbf{x},\nabla_{\mathbf{x}}%
S)\frac{\partial^{2}S}{\partial x_{j}\partial x_{k}}.
\end{align*}
Thus, (\ref{pDeriv}) becomes (suppressing the dependence on the path)%
\begin{equation}
\frac{dp_{j}}{dt}=-\frac{\partial H}{\partial x_{j}}+\left(  \frac{dx_{k}}%
{dt}-\frac{\partial H}{\partial p_{k}}\right)  \frac{\partial^{2}S}{\partial
x_{k}\partial x_{j}}. \label{pDeriv2}%
\end{equation}

If we now take $\mathbf{x}(t)$ to satisfy%
\begin{equation}
\frac{dx_{j}}{dt}=\frac{\partial H}{\partial p_{j}}, \label{Ham1}%
\end{equation}
the second term on the right-hand side of (\ref{pDeriv2}) vanishes, and we
find that $\mathbf{p}(t)$ satisfies%
\begin{equation}
\frac{dp_{j}}{dt}=-\frac{\partial H}{\partial x_{j}}. \label{Ham2}%
\end{equation}
With this choice of $\mathbf{x}(t),$ (\ref{totalDeriv2}) becomes%
\begin{equation}
\frac{d}{dt}S(t,\mathbf{x}(t))=-H(\mathbf{x}_{0},\mathbf{p}_{0})+\mathbf{p}%
(t)\cdot\frac{d\mathbf{x}}{dt}, \label{dSdtGeneral}%
\end{equation}
because $H$ is constant along the solutions to Hamilton's equations.

Note that not all solutions $(\mathbf{x}(t),\mathbf{p}(t))$ to Hamilton's
equations (\ref{Ham1}) and (\ref{Ham2}) will arise by the above method. After
all, we are assuming that $\mathbf{p}(t)=(\nabla_{\mathbf{x}}S)(t,\mathbf{x}%
(t)),$ from which it follows that the initial conditions $(\mathbf{x}%
_{0},\mathbf{p}_{0})$ will be of the form in (\ref{HJinit}).

On the other hand, suppose we take a pair $(\mathbf{x}_{0},\mathbf{p}_{0})$ as
in (\ref{HJinit}). Let us then take $\mathbf{x}(t)$ to be the solution to%
\begin{equation}
\frac{dx_{j}}{dt}=\frac{\partial H}{\partial p_{j}}(\mathbf{x}(t),(\nabla
_{\mathbf{x}}S)(t,\mathbf{x}(t))),\quad\mathbf{x}(0)=\mathbf{x}_{0},
\label{dxjdt}%
\end{equation}
where since $S$ is a fixed, \textquotedblleft known\textquotedblright%
\ function, this ODE for $\mathbf{x}(t)$ will have unique solutions for as
long as they exist. If we set $\mathbf{p}(t)=(\nabla_{\mathbf{x}%
}S)(t,\mathbf{x}(t)),$ then $\mathbf{p}(0)=\mathbf{p}_{0}$ as in
(\ref{HJinit}) and (\ref{dxjdt}) says that the pair $(\mathbf{x}%
(t),\mathbf{p}(t))$ satisfies the first of Hamilton's equations. Applying
(\ref{pDeriv2}) with this choice of $\mathbf{x}(t)$ shows that the pair
$(\mathbf{x}(t),\mathbf{p}(t))$ also satisfies the second of Hamilton's
equations. Thus, $(\mathbf{x}(t),\mathbf{p}(t))$ must be the \textit{unique}
solution to Hamilton's equations with the given initial condition
$(\mathbf{x}_{0},\mathbf{p}_{0}).$

We conclude that for \textit{any} solution to Hamilton's equations with
initial conditions of the form (\ref{HJinit}), the formula (\ref{totalDeriv2})
holds. Since, also, $H$ is constant along solutions to Hamilton's equations,
we may replace $H(\mathbf{x}(t),\mathbf{p}(t))$ by $H(\mathbf{x}%
_{0},\mathbf{p}_{0})$ in (\ref{totalDeriv2}), at which point, integration with
respect to $t$ gives (\ref{HJformulaGen}). Finally, (\ref{derivFormulaGen})
holds by the definition of $\mathbf{p}(t).$
\end{proof}

We are now ready for the proof of Theorem \ref{HJ.thm}.

\begin{proof}
[Proof of Theorem \ref{HJ.thm}]\label{HJproof}We apply Proposition \ref{HJgeneral.prop} with $n=3$ and the open set $U$ consisting of triples $(a,b,\varepsilon)$ with $\varepsilon>0$. 
The PDE (\ref{thePDE}) is of the
type in (\ref{HJeqn}), with $H$ given by (\ref{theHamiltonian}). The initial
conditions (\ref{initialConditions2}) are obtained by differentiating the
initial condition $S(0,\lambda,\varepsilon)=\log(\left\vert \lambda
-1\right\vert ^{2}+\varepsilon).$

We let $\mathbf{x}(t)=(a(t),b(t),\varepsilon(t))$ and $\mathbf{p}%
(t)=(p_{a}(t),p_{b}(t),p_{\varepsilon}(t)).$ For the case of the Hamiltonian
(\ref{theHamiltonian}), a simple computation shows that%
\begin{align*}
\mathbf{p}\cdot\frac{d\mathbf{x}}{dt}  &  =\mathbf{p}\cdot\nabla_{\mathbf{p}%
}H\\
&  =2H+\varepsilon p_{\varepsilon}\\
&  =2H_{0}+\varepsilon p_{\varepsilon}.
\end{align*}
Thus, the general formula (\ref{HJformulaGen}) becomes, in this case,%
\begin{equation}
S(t,\mathbf{x}(t))=S(0,\mathbf{x}_{0})+H(\mathbf{x}_{0},\mathbf{p}_{0}%
)~t+\int_{0}^{t}\varepsilon(s)p_{\varepsilon}(s)~ds. \label{HJformula}%
\end{equation}
But we also may compute that%
\begin{align*}
\frac{d}{dt}\log\left(  \sqrt{a^{2}+b^{2}}\right)   &  =\frac{1}{2}\frac
{1}{a^{2}+b^{2}}(a\dot{a}+b\dot{b})\\
&  =\frac{1}{2}\frac{1}{a^{2}+b^{2}}\left(  a\frac{\partial H}{\partial p_{a}%
}+b\frac{\partial H}{\partial p_{b}}\right) \\
&  =\varepsilon p_{\varepsilon}.
\end{align*}
Thus,
\begin{equation}
\int_{0}^{t}\varepsilon(s)p_{\varepsilon}(s)~ds=\log\left\vert \lambda
(t)\right\vert -\log\left\vert \lambda_{0}\right\vert .
\label{logLambdaIntegral}%
\end{equation}
If we now plug in the value of $S(0,\mathbf{x}_{0})=S(0,\lambda_{0}%
,\varepsilon_{0})$ and use Lemma \ref{H0.lem} along with the definition
(\ref{p0definition}) of $p_{0},$ we obtain (\ref{Sformula}). Finally,
(\ref{Sderivatives}) is just the general formula (\ref{derivFormulaGen}),
applied to the case at hand.
\end{proof}

\subsection{Constants of motion\label{constantsOfMotion.sec}}

We now identify several constants of motion for the system (\ref{theODEs}),
from which various useful formulas can be derived. Throughout the section, we
assume we have a solution to (\ref{theODEs}) with the initial conditions
(\ref{initialConditions1}) and (\ref{initialConditions2}), defined on a
time-interval of the form $0\leq t<T.$ We continue the notation $\lambda
(t)=a(t)+ib(t).$

\begin{proposition}
\label{constantsOfMotion.prop}Along any solution of (\ref{theODEs}), following
quantities remain constant:

\begin{enumerate}
\item The Hamiltonian $H,$

\item \label{angMomConserved.point}The \textquotedblleft angular
momentum\textquotedblright\ in the $(a,b)$ variables, namely $ap_{b}-bp_{a},$ and

\item \label{argConserved.point}The argument of $\lambda,$ assuming
$\lambda_{0}\neq0.$
\end{enumerate}
\end{proposition}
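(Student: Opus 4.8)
The plan is to verify all three conservation laws by direct computation from Hamilton's equations (\ref{theODEs}) and the explicit Hamiltonian (\ref{theHamiltonian}); nothing here goes beyond elementary calculus, and some of the needed identities already appear in the proof of Theorem \ref{HJ.thm}. For item (1), I would simply note that $H$ in (\ref{theHamiltonian}) has no explicit $t$-dependence, so along any solution of (\ref{theODEs}),
\[
\frac{d}{dt}H=\sum_{j}\left(\frac{\partial H}{\partial x_{j}}\frac{dx_{j}}{dt}+\frac{\partial H}{\partial p_{j}}\frac{dp_{j}}{dt}\right)=\sum_{j}\left(\frac{\partial H}{\partial x_{j}}\frac{\partial H}{\partial p_{j}}-\frac{\partial H}{\partial p_{j}}\frac{\partial H}{\partial x_{j}}\right)=0 ,
\]
which is the usual conservation of energy. (This fact is already used implicitly in Theorem \ref{HJ.thm}, where $H(\mathbf{x}_{0},\mathbf{p}_{0})$ appears in place of $H(\mathbf{x}(t),\mathbf{p}(t))$.)

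For item (3), the key observation is that $p_{a}$ and $p_{b}$ enter (\ref{theHamiltonian}) only through the summand $-xp_{x}(-ap_{a}-bp_{b})$, so the first two of Hamilton's equations read $\dot a=\partial H/\partial p_{a}=axp_{x}$ and $\dot b=\partial H/\partial p_{b}=bxp_{x}$. Hence $\dot\lambda=\dot a+i\dot b=(xp_{x})\lambda$, a scalar linear ODE with the real-valued coefficient $x(t)p_{x}(t)$, and therefore $\lambda(t)=\lambda_{0}\exp\!\big(\int_{0}^{t}x(s)p_{x}(s)\,ds\big)$, consistent with (\ref{logLambdaIntegral}). Since the exponential factor is a strictly positive real number, this realizes $\lambda(t)$ as a positive real multiple of $\lambda_{0}$: in particular $\lambda(t)\neq0$ for all $t\in[0,T)$ whenever $\lambda_{0}\neq0$, and $\arg\lambda(t)=\arg\lambda_{0}$ is constant. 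This is exactly why the hypothesis $\lambda_{0}\neq0$ is imposed.

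For item (2), I would differentiate directly. Using $\dot a=axp_{x}$, $\dot b=bxp_{x}$ from the previous step together with the remaining Hamilton equations $\dot p_{a}=-\partial H/\partial a=xp_{x}(2ap_{x}-p_{a})$ and $\dot p_{b}=-\partial H/\partial b=xp_{x}(2bp_{x}-p_{b})$, one expands
\[
\frac{d}{dt}(ap_{b}-bp_{a})=\dot a\,p_{b}+a\dot p_{b}-\dot b\,p_{a}-b\dot p_{a}
\]
into six terms which cancel in pairs, so the ``angular momentum'' $ap_{b}-bp_{a}$ is constant. Conceptually this is Noether's theorem for the rotational symmetry $(a,b,p_{a},p_{b})\mapsto(R_{\phi}(a,b),R_{\phi}(p_{a},p_{b}))$ of $H$, which holds because $H$ depends on these four variables only through the rotation invariants $a^{2}+b^{2}$ and $ap_{a}+bp_{b}$; equivalently one may record that the Poisson bracket $\{H,ap_{b}-bp_{a}\}$ vanishes. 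Either route works, but the bare computation is the quickest to write out.

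I do not anticipate any genuine obstacle: each of the three computations is short and self-contained, and the only point that needs a word of care, the non-vanishing of $\lambda(t)$ needed to make sense of $\arg\lambda$, is handled by the explicit positive-real exponential factor found in the treatment of item (3).
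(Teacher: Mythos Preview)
Your proposal is correct and follows essentially the same approach as the paper: both verify (1) by the standard cancellation in $dH/dt$, invoke rotational symmetry (or equivalently direct computation) for (2), and use $\dot a=axp_{x}$, $\dot b=bxp_{x}$ for (3). Your treatment of (3) is in fact slightly cleaner than the paper's---you solve $\dot\lambda=(xp_{x})\lambda$ to exhibit $\lambda(t)$ as a positive real multiple of $\lambda_{0}$, whereas the paper differentiates $\tan(\arg\lambda)$ and handles $a=0$ separately---but the underlying idea is identical.
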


\begin{proof}
For any system of the form (\ref{theODEs}), the Hamiltonian $H$ itself is a
constant of motion, as may be verified easily from the equations. The
conservation of the angular momentum is a consequence of the invariance of $H$
under simultaneous rotations of $(a,b)$ and $(p_{a},p_{b})$; see Proposition
2.30 and Conclusion 2.31 in \cite{QMbook}. This result can also be verified by
direct computation from (\ref{theODEs}).

Finally, note from (\ref{logLambdaIntegral}) that if $\lambda_{0}\neq0,$ then
$\log\left\vert \lambda(t)\right\vert $ remains finite as long as the solution
to (\ref{theODEs}) exists, so that $\lambda(t)$ cannot pass through the
origin. We then compute that%
\[
\frac{d}{dt}\tan(\arg\lambda(t))=\frac{d}{dt}\frac{b}{a}=\frac{\dot{b}%
a-b\dot{a}}{a^{2}}=\frac{\varepsilon p_{\varepsilon}ba-b\varepsilon
p_{\varepsilon}a}{a^{2}}=0.
\]
(If $a=0,$ we instead compute the time-derivative of $\cot(\arg\lambda),$
which also equals zero.)
\end{proof}

\begin{proposition}
The Hamiltonian $H$ in (\ref{theHamiltonian}) in invariant under the
one-parameter group of symplectic linear transformations given by%
\begin{equation}
(a,b,\varepsilon,p_{a},p_{b},p_{\varepsilon})\mapsto(e^{\sigma/2}%
a,e^{\sigma/2}b,e^{\sigma}\varepsilon,e^{-\sigma/2}p_{a},e^{-\sigma/2}%
p_{b},e^{-\sigma}p_{\varepsilon}), \label{oneParameter}%
\end{equation}
with $\sigma$ varying over $\mathbb{R}.$ Thus, the generator of this family of
transformations, namely,%
\begin{equation}
\Psi:=\varepsilon p_{\varepsilon}+\frac{1}{2}(ap_{a}+bp_{b})
\label{constantOfMotion}%
\end{equation}
is a constant of motion for the system (\ref{theODEs}). The constant $\Psi$
may be computed in terms of $\varepsilon_{0}$ and $\lambda_{0}$ as%
\begin{equation}
\Psi=p_{0}(a_{0}(a_{0}-1)+b_{0}^{2}+\varepsilon_{0}) \label{PhiAtZero}%
\end{equation}
where $p_{0}$ is as in (\ref{p0definition}).
\end{proposition}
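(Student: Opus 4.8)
The plan is to prove the three assertions of the proposition in order: (i) the Hamiltonian $H$ in (\ref{theHamiltonian}) is invariant under the one-parameter group (\ref{oneParameter}); (ii) consequently its infinitesimal generator $\Psi$ in (\ref{constantOfMotion}) is a constant of motion; and (iii) the value of $\Psi$ along our solution equals the closed-form expression (\ref{PhiAtZero}).

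For (i), I would simply substitute the scaled variables into the formula for $H$. Under (\ref{oneParameter}) the combination $xp_x$ is invariant (it scales by $e^\sigma e^{-\sigma}=1$); the combination $(a^2+b^2)p_x$ is invariant (it scales by $e^\sigma e^{-\sigma}=1$); the combinations $ap_a$ and $bp_b$ are invariant (each scales by $e^{\sigma/2}e^{-\sigma/2}=1$); and the constant $1$ inside the parenthesis is obviously invariant. Since every building block of $H$ is invariant, $H$ itself is invariant. One should also note that the transformation is symplectic: the form $\sum dp_j\wedge dx_j$ is preserved because each conjugate pair $(a,p_a)$, $(b,p_b)$, $(x,p_x)$ is scaled by reciprocal factors.

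For (ii), I would invoke the standard fact (Noether's theorem / Hamiltonian symmetry) that if a one-parameter group of symplectomorphisms preserves $H$, then its generating function Poisson-commutes with $H$ and hence is conserved along the flow; the generator of (\ref{oneParameter}) is read off by differentiating at $\sigma=0$, giving exactly $\Psi=xp_x+\tfrac12(ap_a+bp_b)$. Alternatively — and this is perhaps cleaner for a self-contained argument — I would just verify $d\Psi/dt=0$ directly from Hamilton's equations (\ref{theODEs}): compute $\dot x p_x + x\dot p_x + \tfrac12(\dot a p_a + a\dot p_a + \dot b p_b + b\dot p_b)$ and substitute the partial derivatives of $H$; the terms will cancel because $H$ is (quasi-)homogeneous of the appropriate degrees in the two scaling directions. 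Either route is short.

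For (iii), since $\Psi$ is constant it suffices to evaluate it at $t=0$ using the initial conditions (\ref{initialConditions1}) and (\ref{initialConditions2}). Plugging in $p_x(0)=p_0$, $p_a(0)=2(a_0-1)p_0$, $p_b(0)=2b_0 p_0$, and $x(0)=x_0$, $a(0)=a_0$, $b(0)=b_0$ gives
\[
\Psi = x_0 p_0 + \tfrac12\bigl(a_0\cdot 2(a_0-1)p_0 + b_0\cdot 2b_0 p_0\bigr)
= p_0\bigl(x_0 + a_0(a_0-1) + b_0^2\bigr),
\]
which is precisely (\ref{PhiAtZero}). I expect no real obstacle here — the whole proposition is a routine symmetry/homogeneity observation plus an initial-value substitution, entirely parallel in spirit to the computation of $H_0$ in Lemma \ref{H0.lem}. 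The only point requiring a modicum of care is making sure the exponents in the scaling (\ref{oneParameter}) are the ones that actually leave $H$ invariant, which is why I would do the explicit substitution in step (i) rather than merely assert it.
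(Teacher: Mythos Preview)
Your proposal is correct and follows essentially the same approach as the paper: verify the scaling invariance of $H$ termwise, invoke the Noether-type principle (or direct computation) to conclude $\Psi$ is conserved, and evaluate $\Psi$ at $t=0$ using the initial conditions (\ref{initialConditions2}). The paper's proof is terser but identical in substance, even offering the same two alternatives for step~(ii).
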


\begin{proof}
The claimed invariance of $H$ is easily checked from the formula
(\ref{theHamiltonian}). One can easily check that $\Psi$ is the generator of
this family. That is to say, if we replace $H$ by $\Psi$ in (\ref{theODEs}),
the solution is given by the map in\ (\ref{oneParameter}). Thus, by a simple
general result, $\Psi$ will be a constant of motion; see Conclusion 2.31 in
\cite{QMbook}. Of course, one can also check by direct computation that the
function in (\ref{constantOfMotion}) is constant along solutions to
(\ref{theODEs}). The expression (\ref{PhiAtZero}) then follows easily from the
initial conditions in (\ref{initialConditions2}).
\end{proof}

\begin{proposition}
\label{xp2.prop}For all $t,$ we have%
\begin{equation}
\varepsilon(t)p_{\varepsilon}(t)^{2}=\varepsilon_{0}p_{0}^{2}e^{-Ct},
\label{xp2Formula}%
\end{equation}
where $C=2\Psi-1$ and $\Psi$ is as in (\ref{constantOfMotion}). The constant
$C$ in (\ref{xp2Formula}) may be computed in terms of $\varepsilon_{0}$ and
$\lambda_{0}$ as%
\begin{equation}
C=p_{0}(\left\vert \lambda_{0}\right\vert ^{2}-1+\varepsilon_{0}%
)=\frac{\left\vert \lambda_{0}\right\vert ^{2}-1+\varepsilon_{0}}{\left\vert
\lambda_{0}-1\right\vert ^{2}+\varepsilon_{0}}. \label{CfromX0}%
\end{equation}

\end{proposition}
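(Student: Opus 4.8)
The plan is to verify \eqref{xp2Formula} by showing that the left-hand side, viewed as a function of $t$ along any solution, satisfies a linear ODE of the form $\frac{d}{dt}(xp_x^2) = -C\,xp_x^2$ with the constant $C$ expressed through the already-established constants of motion. First I would compute $\frac{d}{dt}(xp_x^2) = \dot x\,p_x^2 + 2x p_x \dot p_x$, substituting $\dot x = \partial H/\partial p_x$ and $\dot p_x = -\partial H/\partial x$ from Hamilton's equations \eqref{theODEs}. Using the explicit form \eqref{theHamiltonian} of $H$, a direct differentiation gives $\partial H/\partial p_x = -x(1 + 2(a^2+b^2)p_x - 2xp_x - ap_a - bp_b)$ and $\partial H/\partial x = -p_x(1 + 2(a^2+b^2)p_x - 2xp_x - ap_a - bp_b)$, so in fact $x\,\partial H/\partial x = p_x \cdot x(1+\cdots) = -p_x \cdot(\partial H/\partial p_x)\cdot\frac{x}{x}$; more precisely $x\,\partial H/\partial x = p_x\,\partial H/\partial p_x \cdot (x/x)$. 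The key algebraic observation is that $\dot x / x = \dot p_x/p_x \cdot(-1)$ fails in general, but the combination simplifies: one finds $\frac{d}{dt}\log(x p_x^2) = \frac{\dot x}{x} + 2\frac{\dot p_x}{p_x}$, and plugging in the two partials shows this equals a constant multiple of the quantity appearing in \eqref{constantOfMotion}.

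More concretely, I expect the computation to show $\frac{d}{dt}\log(xp_x^2) = -(2xp_x + ap_a + bp_b - 1) = -(2\Psi - 1)$, using the definition $\Psi = xp_x + \tfrac12(ap_a+bp_b)$ from \eqref{constantOfMotion} so that $2\Psi = 2xp_x + ap_a + bp_b$. Since $\Psi$ is a constant of motion (established in the preceding proposition), $C := 2\Psi - 1$ is constant, and integrating $\frac{d}{dt}\log(xp_x^2) = -C$ from $0$ to $t$ with the initial value $x(0)p_x(0)^2 = x_0 p_0^2$ (from \eqref{initialConditions2}) yields \eqref{xp2Formula} immediately.

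For the evaluation \eqref{CfromX0} of $C$ in terms of $x_0$ and $\lambda_0$, I would use the formula \eqref{PhiAtZero} for $\Psi$ at time zero, namely $\Psi = p_0(a_0(a_0-1) + b_0^2 + x_0)$. Then $C = 2\Psi - 1 = 2p_0(a_0^2 - a_0 + b_0^2 + x_0) - 1$. Writing $1 = p_0(|\lambda_0-1|^2 + x_0) = p_0((a_0-1)^2 + b_0^2 + x_0) = p_0(a_0^2 - 2a_0 + 1 + b_0^2 + x_0)$ from the definition \eqref{p0definition} of $p_0$, subtraction gives $C = p_0\big(2(a_0^2 - a_0 + b_0^2 + x_0) - (a_0^2 - 2a_0 + 1 + b_0^2 + x_0)\big) = p_0(a_0^2 + b_0^2 - 1 + x_0) = p_0(|\lambda_0|^2 - 1 + x_0)$, which is \eqref{CfromX0}; the second equality there is just the definition of $p_0$.

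The main obstacle, though it is a modest one, is the bookkeeping in the first step: correctly differentiating the Hamiltonian \eqref{theHamiltonian} with respect to both $p_x$ and $x$ and then verifying that the resulting expression for $\frac{d}{dt}\log(xp_x^2)$ collapses exactly to $-(2\Psi-1)$ rather than to some messier combination. Everything downstream is then a one-line integration of a linear ODE plus the elementary algebra identifying the constant.
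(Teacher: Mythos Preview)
Your proposal is correct and follows essentially the same approach as the paper: compute $\frac{d}{dt}(xp_x^2)$ via Hamilton's equations, recognize the result as $-(2\Psi-1)\,xp_x^2$, integrate, and evaluate the constant at $t=0$. One small slip: your expression for $\partial H/\partial x$ should have coefficient $(a^2+b^2)p_x$, not $2(a^2+b^2)p_x$ (only $x$ appears twice in the Hamiltonian, not $a^2+b^2$); with that correction your logarithmic-derivative computation goes through exactly as you anticipated, and the paper's own calculation---which instead writes $\dot x=H/p_x-xp_x(a^2+b^2-x)$ and $\dot p_x=-H/x-xp_x^2$ by recognizing $H$ inside the derivatives---arrives at the same ODE.
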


\begin{proof}
We compute that%
\begin{align}
\dot{\varepsilon}  &  =\frac{\partial H}{\partial p_{\varepsilon}}=\frac
{H}{p_{\varepsilon}}-\varepsilon p_{\varepsilon}(a^{2}+b^{2}-\varepsilon
)\nonumber\\
\dot{p}_{\varepsilon}  &  =-\frac{\partial H}{\partial\varepsilon}=-\frac
{H}{\varepsilon}-\varepsilon p_{\varepsilon}^{2} \label{pDot}%
\end{align}
and then that%
\begin{align*}
\frac{d}{dt}(\varepsilon p_{\varepsilon}^{2})  &  =\dot{\varepsilon
}p_{\varepsilon}^{2}+2\varepsilon p_{\varepsilon}\dot{p}_{\varepsilon}\\
&  =p_{\varepsilon}H-\varepsilon p_{\varepsilon}^{3}(a^{2}+b^{2}%
-\varepsilon)-2Hp_{\varepsilon}-2\varepsilon^{2}p_{\varepsilon}^{3}%
\end{align*}
This result simplifies to%
\begin{align*}
\frac{d}{dt}(\varepsilon p_{\varepsilon}^{2})  &  =\varepsilon p_{\varepsilon
}^{2}\left[  1-2\left(  \varepsilon p_{\varepsilon}+\frac{1}{2}(ap_{a}%
+bp_{b})\right)  \right] \\
&  =-\varepsilon p_{\varepsilon}^{2}(2\Psi-1).
\end{align*}
The unique solution to this equation is (\ref{xp2Formula}). The expression
(\ref{CfromX0}) is obtained by evaluating $\Psi$ at $t=0,$ using the initial
conditions (\ref{initialConditions2}), and simplifying.
\end{proof}

We now make an important application of preceding results.

\begin{theorem}
\label{atTimeTstar.thm}Suppose a solution to (\ref{theODEs}) exists with
$\varepsilon(t)>0$ for $0\leq t<t_{\ast},$ but that $\lim_{t\rightarrow
t^{\ast}}\varepsilon(t)=0.$ Then%
\begin{equation}
\lim_{t\rightarrow t_{\ast}}\log\left\vert \lambda(t)\right\vert
=\frac{Ct_{\ast}}{2}, \label{logLambda}%
\end{equation}
where $C=2\Psi-1$ is as in Proposition \ref{xp2.prop}. Furthermore, we have%
\begin{equation}
\lim_{t\rightarrow t_{\ast}}(ap_{a}+bp_{b})=\lim_{t\rightarrow t_{\ast}}%
\frac{2\log\left\vert \lambda(t)\right\vert }{t}+1. \label{apa}%
\end{equation}

\end{theorem}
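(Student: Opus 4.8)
The plan is to extract both statements from the conservation laws already established. The ingredients are: $H$ is constant, with value $H_0=-x_0p_0^2$ (Lemma \ref{H0.lem}); $\Psi=xp_x+\tfrac12(ap_a+bp_b)$ is constant, with $2\Psi=C+1$ (Proposition \ref{xp2.prop}); $x(t)p_x(t)^2=x_0p_0^2e^{-Ct}$ (equation (\ref{xp2Formula})); and $\tfrac{d}{dt}\log|\lambda(t)|=x(t)p_x(t)$, which was computed in the proof of Theorem \ref{HJ.thm} (see (\ref{logLambdaIntegral})). First I would observe that, since $x$ is continuous on $[0,t_\ast)$ with $x(t)\to 0$, it is bounded near $t_\ast$; hence $(xp_x)^2=x\cdot(xp_x^2)=x\,x_0p_0^2e^{-Ct}$ is bounded near $t_\ast$ and in fact tends to $0$, so $x(t)p_x(t)\to 0$. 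Since $xp_x$ is also bounded, $t\mapsto\log|\lambda(t)|=\log|\lambda_0|+\int_0^t x(s)p_x(s)\,ds$ extends continuously to $t_\ast$; I write $L:=\lim_{t\to t_\ast}\log|\lambda(t)|$, which is finite, so that $|\lambda(t)|^2\to e^{2L}\in(0,\infty)$.

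Next I would rewrite $H$ in a form adapted to the limit. For $x>0$ put $\mu:=xp_x$; substituting $p_x=\mu/x$ into (\ref{theHamiltonian}), using $ap_a+bp_b=2\Psi-2\mu$ and $C=2\Psi-1$, and then using $\mu^2/x=xp_x^2=x_0p_0^2e^{-Ct}$, a routine simplification gives, along the trajectory,
\[
H=C\mu-x_0p_0^2e^{-Ct}\bigl(x(t)+|\lambda(t)|^2\bigr).
\]
Letting $t\to t_\ast$, the left side stays equal to $-x_0p_0^2$, while on the right $\mu\to 0$, $x\to 0$, and $|\lambda|^2\to e^{2L}$, so $-x_0p_0^2=-x_0p_0^2e^{-Ct_\ast}e^{2L}$. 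Since $p_0>0$, this forces $e^{2L}=e^{Ct_\ast}$, i.e.\ $L=Ct_\ast/2$, which is exactly (\ref{logLambda}).

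Finally, (\ref{apa}) follows at once. From the constancy of $\Psi$ we have $ap_a+bp_b=2\Psi-2\mu=(C+1)-2\mu\to C+1$ as $t\to t_\ast$, again using $\mu\to 0$; and $\tfrac{2\log|\lambda(t)|}{t}+1\to\tfrac{2L}{t_\ast}+1=C+1$, where $t_\ast>0$ since $x(0)=x_0>0$. Hence both limits equal $C+1$. The one point needing genuine care is the boundedness argument in the first paragraph, which simultaneously yields that $\log|\lambda(t)|$ has a finite limit and that $xp_x\to 0$; everything after that is simply evaluation of the conserved quantities $H$ and $\Psi$ as $t\to t_\ast$.
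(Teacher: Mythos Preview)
Your proof is correct and follows essentially the same approach as the paper: both rewrite the conserved Hamiltonian $H$ using the constants $\Psi$ and $xp_x^2=x_0p_0^2e^{-Ct}$, observe that $xp_x\to 0$, and then read off $|\lambda(t_\ast)|^2=e^{Ct_\ast}$ from the surviving term. Your version is slightly more careful in that you first establish the existence of the finite limit $L=\lim\log|\lambda(t)|$ from boundedness of $xp_x$ before computing its value, whereas the paper extracts the limit of $a^2+b^2$ directly from the limiting Hamiltonian identity; but the substance is identical, and the derivation of (\ref{apa}) from $\Psi$ is the same in both.
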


Equation (\ref{apa}) is a key step in the derivation of our main result; see
Section \ref{outline.sec}. We will write (\ref{logLambda}) in a more explicit
way in Proposition \ref{tStar.prop}, after the time $t_{\ast}$ has been
determined. We note also from Proposition \ref{xp2.prop} that since
$\varepsilon(t)$ approaches zero as $t$ approaches $t_{\ast},$ then
\thinspace$p_{\varepsilon}(t)$ must be blowing up, so that $\varepsilon
(t)p_{\varepsilon}(t)^{2}$ can remain positive in this limit.

\begin{proof}
Using the constant of motion $\Psi$ in (\ref{constantOfMotion}), we can
rewrite the Hamiltonian $H$ as%
\begin{equation}
H=-\varepsilon p_{\varepsilon}(1+(a^{2}+b^{2})p_{\varepsilon}-2\Psi
+\varepsilon p_{\varepsilon}). \label{HfromPhi}%
\end{equation}
Now, by assumption, the variable $\varepsilon$ approaches zero as $t$
approaches $t_{\ast}.$ Furthermore, by Proposition \ref{xp2.prop},
$\varepsilon p_{\varepsilon}^{2}$ remains finite in this limit, so that
$\varepsilon p_{\varepsilon}=\sqrt{\varepsilon}\sqrt{\varepsilon
p_{\varepsilon}^{2}}$ tends to zero. Thus, in the $t\rightarrow t_{\ast}$
limit, the $\varepsilon p_{\varepsilon}$ terms in (\ref{HfromPhi}) vanish
while $\varepsilon p_{\varepsilon}^{2}$ remains finite, leaving us with
\[
H=-\lim_{t\rightarrow t\ast}\varepsilon p_{\varepsilon}^{2}(a^{2}+b^{2}).
\]
Since $H$ is a constant of motion, we may write this result as
\begin{equation}
\lim_{t\rightarrow t_{\ast}}(a^{2}+b^{2})=-\lim_{t\rightarrow t_{\ast}}%
\frac{H_{0}}{\varepsilon p_{\varepsilon^{2}}}=\lim_{t\rightarrow t_{\ast}%
}\frac{\varepsilon_{0}p_{0}^{2}}{\varepsilon p_{\varepsilon}^{2}}=e^{Ct_{\ast
}} \label{a2b2}%
\end{equation}
where we have used Lemma \ref{H0.lem} in the second equality and Proposition
\ref{xp2.prop} in the third. The formula (\ref{logLambda}) follows.

Meanwhile, as $t$ approaches $t_{\ast},$ the $\varepsilon p_{\varepsilon}$
term in the formula (\ref{constantOfMotion}) for $\Psi$ vanishes and we find,
using (\ref{logLambda}), that
\[
\lim_{t\rightarrow t^{\ast}}(ap_{a}+bp_{b})=2\Psi=C+1=\lim_{t\rightarrow
t_{\ast}}\frac{2\log\left\vert \lambda(t)\right\vert }{t}+1,
\]
as claimed in (\ref{apa}), where we have used (\ref{logLambda}) in the last equality.
\end{proof}

\subsection{Solving the equations\label{solving.sec}}

We now solve the system (\ref{theODEs}) subject to the initial conditions
(\ref{initialConditions1}) and (\ref{initialConditions2}). The formula in
Proposition \ref{xp2.prop} for $\varepsilon(t)p_{\varepsilon}(t)^{2}$ will be
a key tool. Although we are mainly interested in the case $\varepsilon_{0}>0,$
we will need in Section \ref{outside.sec} to allow $\varepsilon_{0}$ to be
slightly negative.

We begin by with the following elementary lemma.

\begin{lemma}
\label{quadratic.lem}Consider a number $a^{2}\in\mathbb{R}$ and let $a$ be
either of the two square roots of $a^{2}.$ Then the solution to the equation%
\begin{equation}
\dot{y}=y^{2}-a^{2} \label{quadraticEquation}%
\end{equation}
subject to the initial condition $y(0)=y_{0}>0$ is%
\begin{equation}
y(t)=\frac{y_{0}\cosh(at)-a\sinh(at)}{\cosh(at)-y_{0}\frac{\sinh(at)}{a}}
\label{quadraticSolution}%
\end{equation}

If $a^{2}\geq y_{0}^{2},$ the solution exists for all $t>0.$ If $a^{2}%
<y_{0}^{2}$, then $y(t)$ is a strictly increasing function of $t$ until the
first positive time $t_{\ast}$ at which the solution blows up. This time is
given by%
\begin{align}
t_{\ast}  &  =\frac{1}{a}\tanh^{-1}\left(  \frac{a}{y_{0}}\right)
\label{quadTstar1}\\
&  =\frac{1}{2a}\log\left(  \frac{1+a/y_{0}}{1-a/y_{0}}\right)  .
\label{quadTstar}%
\end{align}
Here, we use the principal branch of the inverse hyperbolic tangent, with
branch cuts $(-\infty,-1]$ and $[1,\infty)$ on the real axes, which
corresponds to using the principal branch of the logarithm. When $a=0,$ we
interpret the right-hand side of (\ref{quadTstar1}) or (\ref{quadTstar}) as
having its limiting value as $a$ approaches zero, namely $1/y_{0}.$
\end{lemma}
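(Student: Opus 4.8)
The plan is to treat \eqref{quadraticEquation} as a separable Riccati equation and verify the closed-form solution directly, rather than re-deriving it. First I would observe that since $a$ is constant, the right-hand side $y^2 - a^2$ depends only on $y$, so the equation is autonomous and separable. The standard trick is to write $y(t) = a\coth(\phi(t))$ when $a^2 > 0$ (or use the substitution $y = -a\,\frac{d}{dt}\log(u)/u$-type reduction to a linear second-order ODE $\ddot u = a^2 u$), but for a self-contained paper it is cleaner to simply \emph{differentiate} the proposed formula \eqref{quadraticSolution} and check it satisfies the ODE with the stated initial value. Writing $N(t) = y_0\cosh(at) - a\sinh(at)$ and $D(t) = \cosh(at) - (y_0/a)\sinh(at)$, so that $y = N/D$, one computes $\dot N = a y_0 \sinh(at) - a^2\cosh(at) = -a^2 D$ and $\dot D = a\sinh(at) - y_0\cosh(at) = -N$. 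Then $\dot y = (\dot N D - N\dot D)/D^2 = (-a^2 D^2 + N^2)/D^2 = (N/D)^2 - a^2 = y^2 - a^2$, and $y(0) = N(0)/D(0) = y_0/1 = y_0$. By uniqueness for ODEs with locally Lipschitz right-hand side, \eqref{quadraticSolution} is \emph{the} solution on any interval where $D(t) \neq 0$. A remark is needed that the formula is manifestly even in $a$ (both numerator and denominator are, since $\cosh$ is even and $\sinh(at)/a$ is even in $a$), so it does not matter which square root of $a^2$ is chosen; and the $a \to 0$ limit gives $y(t) = (y_0 - a^2 t + \cdots)/(1 - y_0 t + \cdots) \to y_0/(1 - y_0 t)$, the solution of $\dot y = y^2$.

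Next I would handle the qualitative claims. For the blow-up dichotomy, note the solution fails to exist exactly when $D(t) = 0$, i.e.\ $\cosh(at) = (y_0/a)\sinh(at)$, i.e.\ $\tanh(at) = a/y_0$. When $a^2 \geq y_0^2$ we have $|a/y_0| \geq 1$, and since $|\tanh| < 1$ on the real axis (treating the case $a$ real; if $a$ is imaginary, $\tanh(at) = \tan(|a|t)$ which can hit any real value, but one checks via the phase-line argument below that no blow-up occurs before... — actually here I should be careful: when $a^2 < 0$, $a/y_0$ is purely imaginary and the condition $\tanh(at)=a/y_0$ with $a = i|a|$ becomes $i\tan(|a|t) = i|a|/y_0$, which \emph{does} have solutions; so the honest argument is the phase-line one). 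The robust approach: since $y_0 > 0$ and the flow is autonomous, analyze $\dot y = y^2 - a^2$ on the real line. If $a^2 \leq 0$ then $\dot y = y^2 - a^2 \geq y^2 \geq 0$, hmm that gives blow-up — so actually when $a^2 \le 0$ the solution \emph{does} blow up, consistent with $a^2 < y_0^2$. If $a^2 > 0$: the fixed points are $y = \pm a$ where $a>0$ is a square root. For $0 < y_0 < a$ the interval $(-a,a)$ is invariant and forward-complete (trajectory decreases toward $-a$... wait, $\dot y < 0$ there, so $y$ decreases; but the claim is $y$ increasing when $a^2 < y_0^2$, consistent since then $y_0 > a$). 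For $y_0 > a > 0$: $\dot y = y^2 - a^2 > 0$, so $y$ is strictly increasing, stays $> a$, hence $\dot y \geq y^2 - a^2 \geq$ (bounded below by a growing quadratic once $y > \sqrt{2}a$, say), forcing finite-time blow-up. For $y_0 \ge a$... the boundary case $y_0 = a$ stays at $a$ forever, falling under "exists for all $t$". So the precise statement "$a^2 \geq y_0^2$ $\Rightarrow$ global; $a^2 < y_0^2$ $\Rightarrow$ finite-time blow-up, strictly increasing" follows from this phase-line analysis, where for $a^2 \le 0$ one uses $\dot y \ge y^2$ directly.

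For the blow-up time formula \eqref{quadTstar1}, I would separate variables: $t_\ast = \int_{y_0}^{\infty} \frac{dy}{y^2 - a^2}$. Using the partial fraction $\frac{1}{y^2-a^2} = \frac{1}{2a}\bigl(\frac{1}{y-a} - \frac{1}{y+a}\bigr)$ and evaluating, $t_\ast = \frac{1}{2a}\log\frac{y-a}{y+a}\Big|_{y_0}^{\infty} = \frac{1}{2a}\Bigl(0 - \log\frac{y_0 - a}{y_0 + a}\Bigr) = \frac{1}{2a}\log\frac{y_0+a}{y_0-a} = \frac{1}{2a}\log\frac{1 + a/y_0}{1 - a/y_0}$, which is \eqref{quadTstar}; the identification with $\frac1a\tanh^{-1}(a/y_0)$ is the standard logarithmic formula for $\tanh^{-1}$. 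The branch-cut bookkeeping — that $a/y_0 \notin (-\infty,-1]\cup[1,\infty)$ precisely in the blow-up regime $a^2 < y_0^2$ (for $a$ real), and that when $a$ is imaginary the expression is still well-defined via the principal logarithm — is the one genuinely fiddly point; I would dispatch it by the remark that $a^2 < y_0^2$ with $y_0 > 0$ forces $\operatorname{Re}(a/y_0) \in (-1,1)$ when $a^2 > 0$ and $a/y_0$ purely imaginary when $a^2 < 0$, in both cases landing in the domain of the principal $\tanh^{-1}$. The $a \to 0$ limit $t_\ast \to 1/y_0$ matches $\int_{y_0}^\infty y^{-2}\,dy = 1/y_0$, closing the argument. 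The main obstacle, as flagged, is not the computation but getting the branch and even-in-$a$ conventions stated cleanly enough that \eqref{quadraticSolution}–\eqref{quadTstar} make unambiguous sense for all the sign patterns of $a^2$ that Section \ref{outside.sec} will need.
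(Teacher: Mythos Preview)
Your proposal is correct and essentially equivalent to the paper's proof. The paper derives \eqref{quadraticSolution} by the same partial-fraction separation of variables (for real nonzero $a$, then by ``similar computations'' for $a=0$ and $a$ imaginary), and explicitly mentions your direct-differentiation verification as an alternative; for the blow-up dichotomy and the value of $t_\ast$, the paper analyzes the zeros of the denominator $D(t)$ case-by-case (real, imaginary, zero $a$), whereas you use the phase-line picture together with the integral $t_\ast=\int_{y_0}^\infty (y^2-a^2)^{-1}\,dy$---a cosmetic reorganization of the same computation.
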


In passing from (\ref{quadTstar1}) to (\ref{quadTstar}), we have used the
standard formula for the inverse hyperbolic tangent,
\begin{equation}
\tanh^{-1}(x)=\frac{1}{2}\log\left(  \frac{1+x}{1-x}\right)  .
\label{arcTanhLog}%
\end{equation}
In (\ref{quadraticSolution}), we interpret $\sinh(at)/a$ as having the value
$t$ when $a=0.$ If $a^{2}<0,$ so that $a$ is pure imaginary, one can rewrite
the solution in terms of ordinary trigonometric functions, using the
identities $\cosh(i\alpha)=\cos\alpha$ and $\sinh(i\alpha)=i\sin\alpha.$ For
each fixed $t,$ the solution is an even analytic function of $a$ and therefore
an analytic function of $a^{2}.$

\begin{proof}
If $a$ is nonzero and real, we may integrate (\ref{quadraticEquation}) to
obtain%
\begin{align*}
t  &  =\frac{1}{2a}\int_{0}^{t}\left(  \frac{1}{y(\tau)-a}-\frac{1}{y(\tau
)+a}\right)  \dot{y}(\tau)~d\tau\\
&  =\frac{1}{2a}\left.  \log\left(  \frac{y(\tau)-a}{y(\tau)+a}\right)
\right\vert _{\tau=0}^{t}\\
&  =\frac{1}{2a}\log\left(  \frac{y(t)-a}{y(t)+a}\frac{y_{0}+a}{y_{0}%
-a}\right)  .
\end{align*}
It is then straightforward to solve for $y(t)$ and simplify to obtain
(\ref{quadraticSolution}). Similar computations give the result when $a$ is
zero (recalling that we interpret $\sinh(at)/a$ as equaling $t$ when $a=0$)
and when $a$ is nonzero and pure imaginary. Alternatively, one may check by
direct computation that the function on the right-hand side of
(\ref{quadraticSolution}) satisfies the equation (\ref{quadraticEquation}) for
all $a\in\mathbb{C}.$

Now, if $a^{2}\geq y_{0}^{2}>0,$ the denominator in (\ref{quadraticSolution})
is easily seen to be nonzero for all $t$ and there is no singularity. If
$a^{2}$ is positive but less than $y_{0}^{2},$ the denominator remains
positive until it becomes zero when $\tanh(at)=a/y_{0}.$ If $a^{2}$ is
negative, so that $a=i\alpha$ for some nonzero $\alpha\in\mathbb{R}$, we write
the solution using ordinary trigonometric functions as%
\begin{equation}
y(t)=y_{0}\frac{\cos(\alpha t)+\frac{\alpha}{y_{0}}\sin(\alpha t)}{\cos(\alpha
t)-\frac{y_{0}}{\alpha}\sin(\alpha t)}. \label{yTrig}%
\end{equation}
The denominator in (\ref{yTrig}) becomes zero at $\alpha t=\tan^{-1}%
(\alpha/y_{0})<\pi/2.$ Finally, if $a^{2}=0,$ the solution is $y(t)=y_{0}%
/(1-y_{0}t),$ which blows up at $t=1/y_{0}.$

It is then not hard to check that for all cases with $a^{2}<y_{0}^{2},$ the
blow-up time can be computed as $t_{\ast}=\frac{1}{a}\tanh^{-1}(a/y_{0}),$
where we use the principal branch of the inverse hyperbolic tangent, with
branch cuts $(-\infty,-1]$ and $[1,\infty)$ on the real axis. (At $a=0$ we
have a removable singularity with a value of $1/y_{0}.$) This recipe
corresponds to using the principal branch of the logarithm in the last
expression in (\ref{quadTstar}).
\end{proof}

We now apply Lemma \ref{quadratic.lem} to compute the $p_{\varepsilon}%
$-component of the solution to (\ref{theODEs}). We use the following
notations, some of which have been introduced previously:%
\begin{align}
p_{0}  &  =\frac{1}{\left\vert \lambda_{0}-1\right\vert ^{2}+\varepsilon_{0}%
}\label{p0Notation}\\
\delta &  =\frac{\left\vert \lambda_{0}\right\vert ^{2}+1+\varepsilon_{0}%
}{\left\vert \lambda_{0}\right\vert }\label{deltaNotation}\\
C  &  =2\Psi-1=p_{0}(\left\vert \lambda_{0}\right\vert ^{2}-1+\varepsilon
_{0})\label{CNotation}\\
y_{0}  &  =p_{0}+\frac{C}{2}=\frac{1}{2}p_{0}\left\vert \lambda_{0}\right\vert
\delta\label{y0Notation}\\
a^{2}  &  =C^{2}/4+\varepsilon_{0}p_{0}^{2}. \label{aNotation}%
\end{align}
We now make the following standing assumptions:%
\begin{align}
\lambda_{0}  &  \neq0\nonumber\\
p_{0}  &  >0\label{standingAssumptions}\\
\delta &  >0.\nonumber
\end{align}

We note that under these assumptions, $y_{0}$ is positive. Furthermore, we may
compute that%
\begin{equation}
a=\frac{1}{2}p_{0}\left\vert \lambda_{0}\right\vert \sqrt{\delta^{2}-4}.
\label{aFormula}%
\end{equation}
from which we obtain%
\begin{equation}
\frac{a^{2}}{y_{0}^{2}}=\frac{\delta^{2}-4}{\delta^{2}}<1, \label{aOverY0}%
\end{equation}
so that $a^{2}<y_{0}^{2}.$ Now, the assumptions $p_{0}>0$ and $\delta>0$ can
be written as $\varepsilon_{0}>-\left\vert \lambda_{0}-1\right\vert ^{2}$ and
$\varepsilon_{0}>-(1+\left\vert \lambda_{0}\right\vert ^{2}).$ Thus, for
$\lambda_{0}\neq0,$ the assumptions (\ref{standingAssumptions}) are always
satisfied if $\varepsilon_{0}>0.$ Furthermore, except when $\lambda_{0}=1,$
some negative values of $\varepsilon_{0}$ are allowed.

\begin{proposition}
\label{pxSolution.prop}Under the assumptions (\ref{standingAssumptions}), the
$p_{\varepsilon}$-component of the solution to (\ref{theODEs}) subject to the
initial conditions (\ref{initialConditions1}) and (\ref{initialConditions2})
is given by%
\begin{equation}
p_{\varepsilon}(t)=p_{0}\frac{\cosh(at)+\frac{2\left\vert \lambda
_{0}\right\vert -\delta}{\sqrt{\delta^{2}-4}}\sinh(at)}{\cosh(at)-\frac
{\delta}{\sqrt{\delta^{2}-4}}\sinh(at)}e^{-Ct} \label{pxFormula}%
\end{equation}
for as long as the solution to the system (\ref{theODEs}) exists. Here we
write $a$ as in (\ref{aFormula}) and we use the same choice of $\sqrt
{\delta^{2}-4}$ in the computation of $a$ as in the two times $\sqrt
{\delta^{2}-4}$ appears explicitly in (\ref{pxFormula}). If $\delta=2,$ we
interpret $\sinh(at)/\sqrt{\delta^{2}-4}$ as equaling $\frac{1}{2}%
p_{0}\left\vert \lambda_{0}\right\vert t.$

If $\varepsilon_{0}\geq0,$ the numerator in the fraction on the right-hand
side of (\ref{pxFormula}) is positive for all $t$. Hence when $\varepsilon
_{0}\geq0,$ we see that $p_{\varepsilon}(t)$ is positive for as long as the
solution exists and $1/p_{\varepsilon}(t)$ extends to a real-analytic function
of $t$ defined for all $t\in\mathbb{R}.$

The first time $t_{\ast}(\lambda_{0},\varepsilon_{0})$ at which the expression
on the right-hand side of (\ref{pxFormula}) blows up is%
\begin{align}
t_{\ast}(\lambda_{0},\varepsilon_{0})  &  =\frac{2(\delta-2\cos\theta_{0}%
)}{\sqrt{\delta^{2}-4}}\tanh^{-1}\left(  \frac{\sqrt{\delta^{2}-4}}{\delta
}\right) \label{tStar1}\\
&  =\frac{\delta-2\cos\theta_{0}}{\sqrt{\delta^{2}-4}}\log\left(  \frac
{\delta+\sqrt{\delta^{2}-4}}{\delta-\sqrt{\delta^{2}-4}}\right)  ,
\label{tStar2}%
\end{align}
where $\theta_{0}=\arg\lambda_{0}$ and $\sqrt{\delta^{2}-4}$ is either of the
two square roots of $\delta^{2}-4.$ The principal branch of the inverse
hyperbolic tangent should be used in (\ref{tStar1}), with branch cuts
$(-\infty,-1]$ and $[1,\infty)$ on the real axis, which corresponds to using
the principal branch of the logarithm in (\ref{tStar2}). When $\delta=2,$ we
interpret $t_{\ast}(\lambda_{0},\varepsilon_{0})$ as having its limiting value
as $\delta$ approaches 2, namely $\delta-2\cos\theta_{0}.$
\end{proposition}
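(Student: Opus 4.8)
The plan is to extract from the six-dimensional system (\ref{theODEs}) a single scalar ODE for $p_x$ alone, reduce it to the Riccati form treated in Lemma \ref{quadratic.lem}, and then simplify the resulting closed form. From (\ref{pDot}) we have $\dot p_x = -H/x - x p_x^2$. Since $H$ is constant along solutions (Proposition \ref{constantsOfMotion.prop}) with value $H_0 = -x_0 p_0^2$ (Lemma \ref{H0.lem}), and since $x(t) p_x(t)^2 = x_0 p_0^2 e^{-Ct}$ by Proposition \ref{xp2.prop}, we may --- when $x_0>0$, so that $x p_x^2>0$ and hence $p_x(t)\neq 0$ for as long as the solution exists --- substitute $x = x_0 p_0^2 e^{-Ct}/p_x^2$ into the equation for $\dot p_x$ to obtain the closed, albeit non-autonomous, equation
\[
\dot p_x = e^{Ct} p_x^2 - x_0 p_0^2 e^{-Ct}.
\]
For the cases $x_0\le 0$ needed in Section \ref{outside.sec}, where $p_x$ could in principle vanish, I would instead simply verify at the end that the explicit function (\ref{pxFormula}) solves this equation with the correct initial value $p_x(0)=p_0$, and invoke uniqueness of solutions of ODEs.

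The \emph{key observation} is that the change of variable $y(t) = e^{Ct}p_x(t) + C/2$ removes the time dependence: a one-line computation gives $\dot y = y^2 - (C^2/4 + x_0 p_0^2) = y^2 - a^2$ with $a^2$ as in (\ref{aNotation}), and $y(0) = p_0 + C/2 = y_0$ as in (\ref{y0Notation}). By (\ref{aOverY0}) we have $a^2 < y_0^2$, so Lemma \ref{quadratic.lem} applies directly, giving $y(t)$ in the form (\ref{quadraticSolution}) and hence $p_x(t) = e^{-Ct}(y(t) - C/2)$. It then remains to recognize this as (\ref{pxFormula}). I would combine the two fractions in $y(t) - C/2$ and read off that the coefficient of $\cosh(at)$ collapses to $y_0 - C/2 = p_0$; that the denominator's coefficient of $\sinh(at)$ is $y_0/a = \delta/\sqrt{\delta^2-4}$ by (\ref{aFormula}) and (\ref{y0Notation}); and that the numerator's coefficient of $\sinh(at)$ is $(C - 2x_0 p_0)/(2a)$, which upon inserting $C = p_0(|\lambda_0|^2 - 1 + x_0)$ and $2a = p_0|\lambda_0|\sqrt{\delta^2-4}$ becomes $(|\lambda_0|^2 - 1 - x_0)/(|\lambda_0|\sqrt{\delta^2-4})$, and this equals $(2|\lambda_0| - \delta)/\sqrt{\delta^2-4}$ by the identity $\delta = (|\lambda_0|^2 + 1 + x_0)/|\lambda_0|$. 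Since the right-hand side of (\ref{quadraticSolution}) is even in $a$, the choice of square root of $\delta^2-4$ is immaterial, and the $\delta=2$ (i.e.\ $a=0$) interpretations are inherited from those in Lemma \ref{quadratic.lem}.

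For the positivity claim when $x_0\ge 0$: write $\beta = (2|\lambda_0| - \delta)/\sqrt{\delta^2-4}$. When $\delta>2$ the numerator of (\ref{pxFormula}) equals $\frac{1+\beta}{2}e^{at} + \frac{1-\beta}{2}e^{-at}$, which is positive for all real $t$ exactly when $|\beta|\le 1$; squaring, this reduces to $|\lambda_0|^2 - |\lambda_0|\delta + 1 \le 0$, i.e.\ $\delta \ge |\lambda_0| + 1/|\lambda_0|$, which holds iff $x_0\ge 0$ (and for $x_0\ge 0$ the only remaining possibility $\delta\le 2$ reduces to $\delta=2$, where the numerator is identically $1$). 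Thus $p_x(t)$ is positive wherever it is defined, and since the denominator of (\ref{pxFormula}) is entire in $t$, the reciprocal of the right-hand side of (\ref{pxFormula}) --- which equals $p_0^{-1}e^{Ct}$ times that denominator over that numerator --- is real-analytic on all of $\mathbb{R}$. Finally, $p_x$ blows up precisely when $y$ does, so by (\ref{quadTstar1})--(\ref{quadTstar}) the first blow-up occurs at $t_\ast = \frac{1}{a}\tanh^{-1}(a/y_0)$; here $a/y_0 = \sqrt{\delta^2-4}/\delta$ and $1/a = 2/(p_0|\lambda_0|\sqrt{\delta^2-4})$, while expanding $1/p_0 = |\lambda_0-1|^2 + x_0 = |\lambda_0|^2 - 2|\lambda_0|\cos\theta_0 + 1 + x_0$ gives $1/(p_0|\lambda_0|) = \delta - 2\cos\theta_0$. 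Substituting yields (\ref{tStar1})--(\ref{tStar2}), and the $\delta=2$ value $\delta - 2\cos\theta_0$ is the limit $\lim_{a\to 0}\frac{1}{a}\tanh^{-1}(a/y_0) = 1/y_0$ with $y_0\to p_0|\lambda_0|$.

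I expect no genuine conceptual obstacle: once the substitution $y = e^{Ct}p_x + C/2$ is found, everything follows from Lemma \ref{quadratic.lem} together with bookkeeping in the notations (\ref{p0Notation})--(\ref{aNotation}). The only points requiring care are the justification for dividing by $p_x$ (handled by $xp_x^2>0$ when $x_0>0$, and replaced by direct verification plus ODE uniqueness when $x_0\le 0$), and organizing the several algebraic identities relating $p_0$, $C$, $\delta$, $a$, and $y_0$ --- in particular the identity $1/(p_0|\lambda_0|) = \delta - 2\cos\theta_0$, which is what ultimately produces the clean form of $t_\ast(\lambda_0,x_0)$.
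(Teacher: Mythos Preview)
Your proposal is correct and follows essentially the same route as the paper's proof: derive the scalar equation $\dot p_x = e^{Ct}p_x^2 - x_0 p_0^2 e^{-Ct}$ from (\ref{pDot}) together with the conservation laws, make the substitution $y = e^{Ct}p_x + C/2$ to reduce to Lemma~\ref{quadratic.lem}, simplify to (\ref{pxFormula}), and then read off the positivity and blow-up claims using the identities among $p_0$, $C$, $\delta$, $a$, $y_0$. The only cosmetic difference is that the paper handles $x_0=0$ by continuous dependence on initial conditions rather than by direct verification, and phrases the positivity step via the identity $(\delta^2-4)-(2|\lambda_0|-\delta)^2 = 4x_0$ instead of your exponential rewriting; note also that for $x_0<0$ the relation $x p_x^2 = x_0 p_0^2 e^{-Ct}<0$ already forces $p_x\neq 0$, so the division is justified there as well.
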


Note that the expression
\[
\frac{1}{a}\tanh^{-1}\left(  \frac{a}{b}\right)
\]
is an even function of $a$ with $b$ fixed, with a removable singularity at
$a=0.$ This expression is therefore an analytic function of $a^{2}$ near the
origin. In particular, the value of $t_{\ast}(\lambda_{0},\varepsilon_{0})$
does not depend on the choice of square root of $\delta^{2}-4.$

\begin{proof}
[Proof of Proposition \ref{pxSolution.prop}]We assume at first that
$\varepsilon_{0}\neq0.$ We recall from Proposition \ref{xp2.prop} that
$\varepsilon(t)p_{\varepsilon}(t)^{2}$ is equal to $\varepsilon_{0}p_{0}%
^{2}e^{-Ct},$ which is never zero, since we assume $\varepsilon_{0}$ is
nonzero and $p_{0}$ is positive. Thus, as long as the solution to the system
(\ref{theODEs}) exists, both $\varepsilon(t)$ and $p_{\varepsilon}(t)$ must be
nonzero---and must have the same signs they had at $t=0.$ Using (\ref{pDot})
and the fact that $H$ is a constant of motion, we obtain
\[
\dot{p}_{\varepsilon}(t)=\frac{\varepsilon_{0}p_{0}^{2}}{\varepsilon
(t)}-\varepsilon_{0}p_{0}^{2}e^{-Ct}%
\]
But $\varepsilon_{0}p_{0}^{2}/\varepsilon(t)=p_{\varepsilon}(t)^{2}e^{Ct}$ and
we obtain%
\[
\dot{p}_{\varepsilon}(t)=p_{\varepsilon}(t)^{2}e^{Ct}-\varepsilon_{0}p_{0}%
^{2}e^{-Ct}.
\]
Then if $y(t)=e^{Ct}p_{\varepsilon}(t)+C/2,$ we find that $y$ satisfies
(\ref{quadraticEquation}). Thus, we obtain $p_{\varepsilon}%
(t)=(y(t)-C/2)e^{-Ct},$ where $y(t)$ is as in (\ref{quadraticSolution}), which
simplifies to the claimed formula for $p_{\varepsilon}.$ The same formula
holds for $\varepsilon_{0}=0$, by the continuous dependence of the solutions
on initial conditions. (It is also possible to solve the system (\ref{theODEs}%
) with $\varepsilon_{0}=0$ by postulating that $\varepsilon(t)$ is identically
zero and working out the equations for the other variables.)

In this paragraph only, we assume $\varepsilon_{0}\geq0.$ Then $a^{2}\geq0,$
with $a=0$ occurring only if $\varepsilon_{0}=0$ and $\left\vert \lambda
_{0}\right\vert =1,$ so that $\delta=2.$ In that case, the numerator on the
right-hand side of (\ref{pxFormula}) is identically equal to 1. If $a^{2}>0,$
then the numerator will always be positive provided that
\[
\left(  \frac{2\left\vert \lambda_{0}\right\vert -\delta}{\sqrt{\delta^{2}-4}%
}\right)  ^{2}\leq1,
\]
which is equivalent to%
\[
(\delta^{2}-4)-(2\left\vert \lambda_{0}\right\vert -\delta)^{2}\geq0.
\]
But a computation shows that
\begin{equation}
(\delta^{2}-4)-(2\left\vert \lambda_{0}\right\vert -\delta)^{2}=4\varepsilon
_{0}, \label{deltaX0identity}%
\end{equation}
and we are assuming $\varepsilon_{0}\geq0.$ Now, since the numerator in
(\ref{pxFormula}) is always positive, we conclude that $p_{\varepsilon}$
remains positive until it blows up.

For any value of $\varepsilon_{0},$ the blow-up time for the function on the
right-hand side of (\ref{pxFormula}) is computed by plugging the expression
(\ref{aOverY0}) for $a/y_{0}$ into the formula (\ref{quadTstar}), giving%
\begin{align*}
t_{\ast}(\lambda_{0},\varepsilon_{0})  &  =\frac{1}{y_{0}}\frac{1}{a/y_{0}%
}\tanh^{-1}\left(  \frac{a}{y_{0}}\right) \\
&  =\frac{2}{p_{0}\left\vert \lambda_{0}\right\vert \delta}\frac{\delta}%
{\sqrt{\delta^{2}-4}}\tanh^{-1}\left(  \frac{\sqrt{\delta^{2}-4}}{\delta
}\right)  .
\end{align*}
After computing that
\[
\frac{1}{p_{0}\left\vert \lambda_{0}\right\vert }=\frac{\left\vert \lambda
_{0}-1\right\vert ^{2}+\varepsilon_{0}}{\left\vert \lambda_{0}\right\vert
}=\delta-2\cos\theta_{0},
\]
we obtain the claimed formula (\ref{tStar1}) for $t_{\ast}(\lambda
_{0},\varepsilon_{0}).$
\end{proof}

\begin{remark}
\label{negativeX0.remark}If $\varepsilon_{0}<0,$ then numerator on the
right-hand side of (\ref{pxFormula}) can become zero. The time $\sigma$ at
which this happens is computed using (\ref{aFormula}) and
(\ref{deltaX0identity}) as%
\[
\sigma=\frac{2}{p_{0}\left\vert \lambda_{0}\right\vert \sqrt{\delta^{2}-4}%
}\tanh^{-1}\left(  -\left(  1+\frac{4\varepsilon_{0}}{(2\left\vert \lambda
_{0}\right\vert -\delta)^{2}}\right)  ^{1/2}\right)  .
\]
By considering separately the cases $\left\vert \lambda_{0}\right\vert \neq1$
and $\left\vert \lambda_{0}\right\vert =1,$ we can verify that $\sigma$ tends
to infinity, locally uniformly in $\lambda_{0},$ as $\varepsilon_{0}$ tends to
zero from below. Thus, for small negative values of $\varepsilon_{0},$ the
function on the right-hand side of (\ref{pxFormula}) will remain positive
until the time $t_{\ast}(\lambda_{0},\varepsilon_{0})$ at which it blows up.
\end{remark}

We now show that the whole system (\ref{theODEs}) has a solution up to the
time at which the function on the right-hand side of (\ref{pxFormula}) blows up.

\begin{proposition}
\label{existence.prop}Assume that $\varepsilon_{0}$ and $\lambda_{0}$ satisfy
the assumptions (\ref{standingAssumptions}). Assume further that if
$\varepsilon_{0}<0,$ then $\left\vert \varepsilon_{0}\right\vert $ is
sufficiently small that $p_{\varepsilon}$ remains positive until it blows up,
as in Remark \ref{negativeX0.remark}. Then the solution to the system
(\ref{theODEs}) exists up to the time $t_{\ast}(\lambda_{0},\varepsilon_{0})$
in Proposition \ref{pxSolution.prop}.

For any $\varepsilon_{0},$ we have
\begin{equation}
\lim_{t\rightarrow t_{\ast}(\lambda_{0},\varepsilon_{0})}\varepsilon(t)=0.
\label{xBecomesZero}%
\end{equation}
If $\varepsilon_{0}=0,$ the solution has $\varepsilon(t)\equiv0$ and
$\lambda(t)\equiv\lambda_{0}.$
\end{proposition}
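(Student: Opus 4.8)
The plan is to combine the explicit formula for $p_{x}(t)$ from Proposition \ref{pxSolution.prop} with the constants of motion of Section \ref{constantsOfMotion.sec}. Under the stated hypotheses, these will show that every coordinate of the solution of (\ref{theODEs}) stays bounded on compact subintervals of $[0,t_{\ast}(\lambda_{0},x_{0}))$, so that the standard continuation criterion for ODEs forces the maximal forward existence time to be exactly $t_{\ast}(\lambda_{0},x_{0})$.

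Since the Hamiltonian (\ref{theHamiltonian}) is a polynomial, the vector field in (\ref{theODEs}) is smooth on all of $\mathbb{R}^{6}$, so there is a unique solution on a maximal forward interval $[0,T_{\max})$, and if $T_{\max}<\infty$ the solution must leave every compact subset of $\mathbb{R}^{6}$ as $t\uparrow T_{\max}$. First suppose $x_{0}\neq 0$. On $[0,T_{\max})$ the computation in the proof of Proposition \ref{pxSolution.prop} shows that $p_{x}$ solves the scalar Riccati equation $\dot{p}_{x}=p_{x}^{2}e^{Ct}-x_{0}p_{0}^{2}e^{-Ct}$ with $p_{x}(0)=p_{0}$, so by uniqueness $p_{x}(t)$ coincides throughout $[0,T_{\max})$ with the explicit expression (\ref{pxFormula}); under the hypotheses (either $x_{0}\geq 0$, or $x_{0}<0$ with $\left\vert x_{0}\right\vert$ small as in Remark \ref{negativeX0.remark}) this expression is real analytic and strictly positive on $[0,t_{\ast})$ and blows up as $t\uparrow t_{\ast}$. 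If $T_{\max}>t_{\ast}$ then $p_{x}(t_{\ast})$ would be finite, which is impossible; hence $T_{\max}\leq t_{\ast}$.

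To rule out $T_{\max}<t_{\ast}$, I would show the trajectory is bounded on $[0,T_{\max})$. On the compact interval $[0,T_{\max}]\subset[0,t_{\ast})$ the function $p_{x}$ is continuous and strictly positive, hence bounded above and below by positive constants; from $x(t)p_{x}(t)^{2}=x_{0}p_{0}^{2}e^{-Ct}$ (Proposition \ref{xp2.prop}) it follows that $x$ and $xp_{x}$ are bounded on $[0,T_{\max})$. Since $\frac{d}{dt}(a^{2}+b^{2})=2xp_{x}(a^{2}+b^{2})$, as one reads off from (\ref{theODEs}) (cf.\ the proof of Theorem \ref{HJ.thm}), we get $a^{2}+b^{2}=\left\vert \lambda_{0}\right\vert ^{2}\exp(2\int_{0}^{t}xp_{x}\,ds)$, which is bounded above and bounded away from $0$; in particular $a,b$ are bounded and $\lambda(t)$ never reaches the origin. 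Finally, the conserved quantities $ap_{b}-bp_{a}$ and $\Psi=xp_{x}+\frac{1}{2}(ap_{a}+bp_{b})$ (Proposition \ref{constantsOfMotion.prop} and (\ref{constantOfMotion})) give a linear system for $(p_{a},p_{b})$ with determinant $-(a^{2}+b^{2})\neq 0$ and bounded right-hand side, so $p_{a},p_{b}$ are bounded too. Thus the solution stays in a compact set on $[0,T_{\max})$, contradicting maximality; therefore $T_{\max}=t_{\ast}$. The limit (\ref{xBecomesZero}) is then immediate: $x(t)=x_{0}p_{0}^{2}e^{-Ct}/p_{x}(t)^{2}\to 0$ as $t\uparrow t_{\ast}$, since $p_{x}(t)\to+\infty$ while $e^{-Ct}$ stays finite.

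It remains to treat $x_{0}=0$. Here I would postulate $x(t)\equiv 0$: because each of $\partial H/\partial p_{a}$, $\partial H/\partial p_{b}$, $\partial H/\partial a$, $\partial H/\partial b$, and $\partial H/\partial p_{x}$ carries a factor of $x$, the system (\ref{theODEs}) forces $\dot{a}=\dot{b}=\dot{x}=\dot{p}_{a}=\dot{p}_{b}=0$, so $a,b,p_{a},p_{b}$ remain at their initial values, $x\equiv 0$ is self-consistent, and $p_{x}$ solves a scalar Bernoulli equation (namely $\dot p_{x}=\left\vert \lambda_{0}\right\vert^{2}p_{x}^{2}-Cp_{x}$, after simplifying with $2\Psi=C+1$). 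This is a genuine solution of (\ref{theODEs}) with the prescribed initial data, hence \emph{the} solution by uniqueness, so $x(t)\equiv 0$ and $\lambda(t)\equiv \lambda_{0}$; the scalar equation for $p_{x}$ exists up to its blow-up time, which equals $t_{\ast}(\lambda_{0},0)$ by continuity of (\ref{tStar1}) in $x_{0}$. The step requiring the most care is the middle one: the explicit formula controls $x$ and $p_{x}$, but one needs conservation of $\arg\lambda$ — equivalently the linear ODE for $a^{2}+b^{2}$, powered by the positivity of $p_{x}$ that makes $\int_{0}^{t}xp_{x}\,ds$ finite — to keep $\lambda$ bounded and away from the origin, and then the two conserved quantities to recover $p_{a},p_{b}$.
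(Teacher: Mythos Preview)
Your argument is correct and follows essentially the same strategy as the paper: show that if the maximal existence time $T_{\max}$ were strictly less than $t_{\ast}$, then all six coordinates would remain bounded on $[0,T_{\max})$, contradicting maximality. The one genuine difference is in how you bound $p_{a}$ and $p_{b}$. The paper writes out the first-order linear ODE $\dot{p}_{a}=-2axp_{x}^{2}+xp_{x}p_{a}$ (and similarly for $p_{b}$) and appeals to the integrating-factor solution; you instead invert the $2\times 2$ algebraic system coming from the conserved quantities $ap_{b}-bp_{a}$ and $\Psi=xp_{x}+\tfrac{1}{2}(ap_{a}+bp_{b})$, using that its determinant $-(a^{2}+b^{2})$ is bounded away from zero. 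Your route is a bit slicker since it avoids solving any ODE for the momenta. For the $x_{0}=0$ case, the paper reads off $x(t)\equiv 0$ and $\lambda(t)\equiv\lambda_{0}$ directly from the formulas (\ref{xOfT}) and (\ref{logLambdOfT}), whereas you verify by hand that the ansatz $x\equiv 0$ kills all the equations except a scalar Bernoulli equation for $p_{x}$; both are fine, and your observation that the resulting blow-up time agrees with $t_{\ast}(\lambda_{0},0)$ follows, as you say, from continuity (which is exactly how the paper extends (\ref{pxFormula}) to $x_{0}=0$).
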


\begin{proof}
Let $T$ be the maximum time such that the solution to (\ref{theODEs}) exists
on $[0,T)$. We now compute formulas for the solution on this interval. Recall
from Proposition \ref{pxSolution.prop} that if $\varepsilon_{0}\geq0,$ then
$p_{\varepsilon}(t)$ remains positive for as long as the solution exists; by
Remark \ref{negativeX0.remark}, the same assertion holds if $\varepsilon_{0}$
is small and negative.

Now, since $\varepsilon p_{\varepsilon}^{2}=\varepsilon_{0}p_{0}^{2}e^{-Ct}$,
we see that%
\begin{equation}
\varepsilon(t)=\frac{1}{p_{\varepsilon}(t)^{2}}\varepsilon_{0}p_{0}^{2}%
e^{-Ct}. \label{xOfT}%
\end{equation}
Since $p_{\varepsilon}(t)$ remains positive until it blows up, $\varepsilon
(t)$ remains bounded until time $t_{\ast}(\lambda_{0},\varepsilon_{0}),$ at
which time $\varepsilon(t)$ approaches zero, as claimed in (\ref{xBecomesZero}%
). We recall from Proposition \ref{constantsOfMotion.prop} that the argument
of $\lambda(t)$ remains constant. Then as in shown in (\ref{logLambdaIntegral}%
), we have%
\begin{equation}
\log\left\vert \lambda(t)\right\vert =\log\left\vert \lambda_{0}\right\vert
+\int_{0}^{t}\varepsilon(s)p_{\varepsilon}(s)~ds. \label{logLambdOfT}%
\end{equation}
Finally,
\begin{equation}
\frac{dp_{a}}{dt}=-\frac{\partial H}{\partial a}=-2a\varepsilon p_{\varepsilon
}^{2}+\varepsilon p_{\varepsilon}p_{a} \label{paOfT}%
\end{equation}
which is a first-order, linear equation for $p_{a},$ which can be solved using
an integrating factor. A similar calculation applies to $p_{b}.$

Suppose now that the existence time $T$ of the whole system were smaller than
the time $t_{\ast}(\lambda_{0},\varepsilon_{0})$ at which the right-hand side
of (\ref{pxFormula}) blows up. Then from the formulas (\ref{xOfT}),
(\ref{logLambdOfT}), and (\ref{paOfT}), we see that all functions involved
would remain bounded up to time $T$. But then by a standard result, $T$ could
not actually be the maximal time. The solution to the system (\ref{theODEs})
must therefore exist all the way up to time $t_{\ast}(\lambda_{0}%
,\varepsilon_{0}).$

Finally, we note that when $\varepsilon_{0}=0,$ (\ref{xOfT}) gives
$\varepsilon(t)\equiv0$ and (\ref{logLambdOfT}) gives $\left\vert
\lambda(t)\right\vert \equiv\left\vert \lambda_{0}\right\vert .$ Since also
the argument of $\lambda(t)$ is constant, we see that $\lambda(t)\equiv
\lambda_{0}.$
\end{proof}

\subsection{More about the lifetime of the solution\label{computingTstar.sec}}

In light of Propositions \ref{pxSolution.prop} and \ref{existence.prop}, the
lifetime of the solution to the system (\ref{theODEs}) is $t_{\ast}%
(\lambda_{0},\varepsilon_{0}),$ as computed in (\ref{tStar1}) or
(\ref{tStar2}). In this subsection, we (1) analyze the behavior of
$\log\left\vert \lambda(t)\right\vert $ as $t$ approaches $t_{\ast}%
(\lambda_{0},\varepsilon_{0}),$ (2) analyze the behavior of $t_{\ast}%
(\lambda_{0},\varepsilon_{0})$ as $\varepsilon_{0}$ approaches zero, and (3)
show that $t_{\ast}(\lambda_{0},\varepsilon_{0})$ is an increasing function of
$\varepsilon_{0}$ with $\lambda_{0}$ fixed.

\begin{proposition}
\label{tStar.prop}Assume that $\varepsilon_{0}$ and $\lambda_{0}$ satisfy the
assumptions (\ref{standingAssumptions}). Then
\begin{equation}
\lim_{t\rightarrow t_{\ast}(\lambda_{0},\varepsilon_{0})}\log\left\vert
\lambda(t)\right\vert =\frac{\delta-2/\left\vert \lambda_{0}\right\vert
}{2\sqrt{\delta^{2}-4}}\log\left(  \frac{\delta+\sqrt{\delta^{2}-4}}%
{\delta-\sqrt{\delta^{2}-4}}\right)  , \label{logLambdaFormula}%
\end{equation}
where $\delta$ is as in (\ref{deltaNotation}).
\end{proposition}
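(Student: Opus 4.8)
The plan is to combine Theorem \ref{atTimeTstar.thm} with the explicit formula for $t_{\ast}(\lambda_0,x_0)$ obtained in Proposition \ref{pxSolution.prop}, and then to simplify the resulting expression. First I would invoke Proposition \ref{existence.prop} (together with Remark \ref{negativeX0.remark} in the case $x_0<0$) to guarantee that the solution of (\ref{theODEs}) with initial conditions (\ref{initialConditions1})--(\ref{initialConditions2}) exists on $[0,t_{\ast}(\lambda_0,x_0))$ and that $x(t)\to 0$ as $t\to t_{\ast}(\lambda_0,x_0)$. That places us precisely in the hypotheses of Theorem \ref{atTimeTstar.thm}, which yields
\[
\lim_{t\to t_{\ast}(\lambda_0,x_0)}\log\left\vert\lambda(t)\right\vert=\frac{C\,t_{\ast}(\lambda_0,x_0)}{2},
\]
with $C=2\Psi-1=p_0(\left\vert\lambda_0\right\vert^2-1+x_0)$ as in Proposition \ref{xp2.prop}.

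Next I would substitute the expression (\ref{tStar2}) for $t_{\ast}(\lambda_0,x_0)$, so that the limit becomes
\[
\frac{C(\delta-2\cos\theta_0)}{2\sqrt{\delta^2-4}}\log\left(\frac{\delta+\sqrt{\delta^2-4}}{\delta-\sqrt{\delta^2-4}}\right).
\]
Comparing with the claimed formula (\ref{logLambdaFormula}), it then suffices to verify the algebraic identity $C(\delta-2\cos\theta_0)=\delta-2/\left\vert\lambda_0\right\vert$. For this I would use two facts already recorded above: from the computation inside the proof of Proposition \ref{pxSolution.prop}, $1/(p_0\left\vert\lambda_0\right\vert)=\delta-2\cos\theta_0$, and from (\ref{CNotation}), $C/p_0=\left\vert\lambda_0\right\vert^2-1+x_0$. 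Multiplying these gives $C(\delta-2\cos\theta_0)=(\left\vert\lambda_0\right\vert^2-1+x_0)/\left\vert\lambda_0\right\vert$, which is exactly $\delta-2/\left\vert\lambda_0\right\vert$ by the definition (\ref{deltaNotation}) of $\delta$. Plugging this back in produces (\ref{logLambdaFormula}).

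There is essentially no genuine obstacle: the analytic content has already been extracted in Theorem \ref{atTimeTstar.thm} and Proposition \ref{pxSolution.prop}, and what remains is the one-line simplification just described. The only points requiring a little care are (i) checking that the hypotheses of Theorem \ref{atTimeTstar.thm}---existence of the solution up to $t_{\ast}(\lambda_0,x_0)$ together with $x(t)\to 0$---do hold under the standing assumptions (\ref{standingAssumptions}), which is where one leans on Proposition \ref{existence.prop} and, for small negative $x_0$, on Remark \ref{negativeX0.remark}; and (ii) observing, consistently with the remark following Proposition \ref{pxSolution.prop}, that the right-hand side of (\ref{logLambdaFormula}) is independent of the choice of square root of $\delta^2-4$, since $(1/\sqrt{\delta^2-4})\log((\delta+\sqrt{\delta^2-4})/(\delta-\sqrt{\delta^2-4}))=(2/\sqrt{\delta^2-4})\tanh^{-1}(\sqrt{\delta^2-4}/\delta)$ is an even function of $\sqrt{\delta^2-4}$.
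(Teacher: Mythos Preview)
Your proposal is correct and follows essentially the same route as the paper: invoke (\ref{logLambda}) from Theorem \ref{atTimeTstar.thm} to get $\lim\log|\lambda(t)|=Ct_\ast/2$, substitute (\ref{tStar2}) for $t_\ast$, and reduce to the identity $C(\delta-2\cos\theta_0)=\delta-2/|\lambda_0|$, which the paper records equivalently as $C=(\delta-2/|\lambda_0|)/(\delta-2\cos\theta_0)$. Your additional remarks on checking the hypotheses via Proposition \ref{existence.prop} and on the sign-independence of the square root are accurate and add clarity.
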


Notice that there is a strong similarity between the formula (\ref{tStar2})
for $t_{\ast}(\lambda_{0},\varepsilon_{0})$ and the expression on the
right-hand side of (\ref{logLambdaFormula}).

\begin{proof}
By (\ref{logLambda}) in Theorem \ref{atTimeTstar.thm}, we have $\lim
_{t\rightarrow t_{\ast}(\lambda_{0},\varepsilon_{0})}\log\left\vert
\lambda(t)\right\vert =Ct_{\ast}(\lambda_{0},\varepsilon_{0})/2,$ where by
(\ref{CfromX0}),
\[
C=\frac{(\left\vert \lambda_{0}\right\vert ^{2}-1+\varepsilon_{0})/\left\vert
\lambda_{0}\right\vert }{(\left\vert \lambda_{0}-1\right\vert ^{2}%
+\varepsilon_{0})/\left\vert \lambda_{0}\right\vert }=\frac{\delta
-2/\left\vert \lambda_{0}\right\vert }{\delta-2\cos\theta_{0}}.
\]
From this result and the second expression (\ref{tStar2}) for $t_{\ast
}(\lambda_{0},\varepsilon_{0})$, (\ref{logLambdaFormula}) follows easily.
\end{proof}

\begin{proposition}
\label{smallx0.prop}If $t_{\ast}(\lambda_{0},\varepsilon_{0})$ is defined by
(\ref{tStar2}), then for all nonzero $\lambda_{0}$ we have%
\begin{equation}
t_{\ast}(\lambda_{0},0)=T(\lambda_{0}), \label{tStarZero}%
\end{equation}
where the function $T$ is defined in (\ref{Tlambda}). Furthermore, when
$\varepsilon_{0}=0,$ we have
\begin{equation}
\lim_{t\rightarrow t_{\ast}(\lambda_{0},\varepsilon_{0})}\log\left\vert
\lambda(t)\right\vert =\log\left\vert \lambda_{0}\right\vert . \label{rhoZero}%
\end{equation}

\end{proposition}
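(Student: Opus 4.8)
The plan is to derive both assertions by specializing the closed-form expressions already obtained for $t_{\ast}(\lambda_{0},x_{0})$ (Proposition \ref{pxSolution.prop}) and for $\lim_{t\to t_{\ast}}\log\left\vert\lambda(t)\right\vert$ (Proposition \ref{tStar.prop}). Write $r=\left\vert\lambda_{0}\right\vert$ and $\theta_{0}=\arg\lambda_{0}$. Setting $x_{0}=0$ in (\ref{deltaNotation}) gives $\delta=r+1/r$, hence $\delta^{2}-4=(r-1/r)^{2}$, and we may take $\sqrt{\delta^{2}-4}=\left\vert r-1/r\right\vert=\left\vert r^{2}-1\right\vert/r$; by the remark following Proposition \ref{pxSolution.prop} the quantities of interest do not depend on this choice of root. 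The two elementary identities I will use are $\delta-2\cos\theta_{0}=\left\vert\lambda_{0}-1\right\vert^{2}/r$, obtained by expanding $\left\vert\lambda_{0}-1\right\vert^{2}=r^{2}-2r\cos\theta_{0}+1$, and, substituting $\delta=r+1/r$ and $\sqrt{\delta^{2}-4}=\pm(r-1/r)$ and simplifying numerator and denominator to $2r$ and $2/r$,
\[
\frac{\delta+\sqrt{\delta^{2}-4}}{\delta-\sqrt{\delta^{2}-4}}=\left\vert\lambda_{0}\right\vert^{2}\quad(r>1),\qquad\frac{\delta+\sqrt{\delta^{2}-4}}{\delta-\sqrt{\delta^{2}-4}}=\left\vert\lambda_{0}\right\vert^{-2}\quad(r<1).
\]

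For (\ref{tStarZero}) I substitute these into the formula (\ref{tStar2}) for $t_{\ast}$. In the case $r>1$ one finds
\[
t_{\ast}(\lambda_{0},0)=\frac{\left\vert\lambda_{0}-1\right\vert^{2}/r}{(r^{2}-1)/r}\,\log\!\left(\left\vert\lambda_{0}\right\vert^{2}\right)=\left\vert\lambda_{0}-1\right\vert^{2}\,\frac{\log\!\left(\left\vert\lambda_{0}\right\vert^{2}\right)}{\left\vert\lambda_{0}\right\vert^{2}-1},
\]
which is exactly $T(\lambda_{0})$ by (\ref{Tlambda}). For $r<1$ the same computation, now with $\sqrt{\delta^{2}-4}=(1-r^{2})/r$ and log-argument $\left\vert\lambda_{0}\right\vert^{-2}$, again produces $\left\vert\lambda_{0}-1\right\vert^{2}\log(r^{2})/(r^{2}-1)$ (using that $\log(y)/(y-1)>0$ in both regimes), so equals $T(\lambda_{0})$. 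For $r=1$ I use the stated limiting value of $t_{\ast}$ as $\delta\to2$, namely $\delta-2\cos\theta_{0}=2-2\cos\theta_{0}=\left\vert\lambda_{0}-1\right\vert^{2}$, which is precisely the $\left\vert\lambda_{0}\right\vert=1$ branch of (\ref{Tlambda}). This establishes (\ref{tStarZero}).

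For (\ref{rhoZero}) the quickest argument is to invoke the last sentence of Proposition \ref{existence.prop}: when $x_{0}=0$ the solution to (\ref{theODEs}) satisfies $\lambda(t)\equiv\lambda_{0}$, so $\log\left\vert\lambda(t)\right\vert$ is identically $\log\left\vert\lambda_{0}\right\vert$ and (\ref{rhoZero}) is immediate. As an internal consistency check one may instead put $x_{0}=0$ into (\ref{logLambdaFormula}): there $\delta-2/\left\vert\lambda_{0}\right\vert=r+1/r-2/r=r-1/r$, which for $r>1$ equals $\sqrt{\delta^{2}-4}$, so the prefactor $\tfrac12(\delta-2/\left\vert\lambda_{0}\right\vert)/\sqrt{\delta^{2}-4}$ collapses to $\tfrac12$ while the logarithm equals $\log(r^{2})$, giving $\log r=\log\left\vert\lambda_{0}\right\vert$; the cases $r<1$ (both factors flip sign) and $r=1$ (a removable singularity evaluating to $0=\log 1$) work the same way.

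I do not anticipate any genuine obstacle here: the content is essentially a one-line specialization once one notices $\delta=\left\vert\lambda_{0}\right\vert+\left\vert\lambda_{0}\right\vert^{-1}$ at $x_{0}=0$. The only points requiring a little care are keeping the sign of $\sqrt{\delta^{2}-4}$ consistent between its appearances (legitimate by the evenness remark after Proposition \ref{pxSolution.prop}) and separately checking the degenerate boundary case $\left\vert\lambda_{0}\right\vert=1$, where both $t_{\ast}$ and the right-hand side of (\ref{logLambdaFormula}) must be read as their $\delta\to2$ limiting values.
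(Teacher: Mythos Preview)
Your proposal is correct and follows essentially the same approach as the paper's proof: direct specialization of the closed-form expressions (\ref{tStar2}) and (\ref{logLambdaFormula}) at $x_{0}=0$, handling $\left\vert\lambda_{0}\right\vert\neq1$ and $\left\vert\lambda_{0}\right\vert=1$ separately (the latter via the $\delta\to2$ limit). Your additional observation that (\ref{rhoZero}) follows immediately from the last sentence of Proposition \ref{existence.prop} is a nice shortcut the paper does not make explicit; the paper simply says a calculation analogous to the one for (\ref{tStarZero}), starting from (\ref{logLambdaFormula}), gives (\ref{rhoZero}), which is exactly your consistency check.
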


Recall that the formula for $t_{\ast}(\lambda_{0},\varepsilon_{0})$ is defined
under the standing assumptions in (\ref{standingAssumptions}). Note that for
all $\lambda_{0}\neq0,$ the value $\varepsilon_{0}=0$ satisfies these assumptions.

Since $\log(x)/(x-1)\rightarrow1$ as $x\rightarrow1,$ we see that $t_{\ast
}(\lambda_{0},0)$ is a continuous function of $\lambda_{0}\in\mathbb{C}^{\ast
}.$ Comparing the formula for $t_{\ast}(\lambda_{0},0)$ to Theorem
\ref{domainGobbles.thm}, we have the following consequence.

\begin{corollary}
\label{smallx0.cor}For $\lambda_{0}\in\Sigma_{t},$ we have $t_{\ast}%
(\lambda_{0},0)<t,$ while for $\lambda_{0}\in\partial\Sigma_{t},$ we have
$t_{\ast}(\lambda_{0},0)=t$, and for $\lambda_{0}\notin\overline{\Sigma}_{t},$
we have $t_{\ast}(\lambda_{0},0)>t.$
\end{corollary}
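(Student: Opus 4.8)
The plan is to obtain the corollary by simply combining the two immediately preceding results, so that no genuinely new work is required. The substantive input is Proposition \ref{smallx0.prop}, which asserts that $t_{\ast}(\lambda_{0},0)=T(\lambda_{0})$ for every nonzero $\lambda_{0}$, with $T$ the function in (\ref{Tlambda}); this identity rests in turn on the explicit hyperbolic-function formula (\ref{tStar2}) for $t_{\ast}$ together with the removable-singularity fact $\log(x)/(x-1)\to 1$ as $x\to 1$ that makes $T$ real analytic across $|\lambda|=1$. The second input is Theorem \ref{domainGobbles.thm}, which identifies $\Sigma_{t}$ with the open sublevel set $\{\lambda:T(\lambda)<t\}$ and $\partial\Sigma_{t}$ with the level set $\{\lambda:T(\lambda)=t\}$.

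Granting these, the argument is pure substitution. For $\lambda_{0}\in\Sigma_{t}$ we get $t_{\ast}(\lambda_{0},0)=T(\lambda_{0})<t$; for $\lambda_{0}\in\partial\Sigma_{t}$ we get $t_{\ast}(\lambda_{0},0)=T(\lambda_{0})=t$; and for a nonzero $\lambda_{0}$ lying outside $\overline{\Sigma}_{t}$ we have both $T(\lambda_{0})\neq t$ (since $\{T=t\}=\partial\Sigma_{t}\subset\overline{\Sigma}_{t}$) and $T(\lambda_{0})\not<t$ (since $\{T<t\}=\Sigma_{t}$), hence $T(\lambda_{0})>t$ and therefore $t_{\ast}(\lambda_{0},0)>t$. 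The only point needing a word of care is $\lambda_{0}=0$: the quantity $t_{\ast}(\lambda_{0},x_{0})$ is only defined under the standing assumptions (\ref{standingAssumptions}), which force $\lambda_{0}\neq 0$, so the third case is understood to be stated for nonzero $\lambda_{0}$ outside $\overline{\Sigma}_{t}$; alternatively one observes that $T(\lambda)\to +\infty$ as $\lambda\to 0$, which is exactly consistent with the fact (used in the proof of Theorem \ref{domainGobbles.thm}) that $0\notin\overline{\Sigma}_{t}$ for every $t$.

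I do not expect any real obstacle here: the corollary is merely a translation of Proposition \ref{smallx0.prop} into the geometric language of Theorem \ref{domainGobbles.thm}. All of the genuine difficulty lies upstream — in verifying via the explicit solution of the Hamiltonian flow that the blow-up time $t_{\ast}(\lambda_{0},0)$ of the $p_{x}$-component equals the ``gobbling time'' $T(\lambda_{0})$, and in showing that the sublevel sets of $T$ are precisely the regions $\Sigma_{t}$. Once those are in hand, only the trichotomy above remains, and it is immediate.
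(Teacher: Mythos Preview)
Your proposal is correct and matches the paper's treatment exactly: the corollary is stated there as an immediate consequence of Proposition~\ref{smallx0.prop} (giving $t_{\ast}(\lambda_{0},0)=T(\lambda_{0})$) together with Theorem~\ref{domainGobbles.thm} (characterizing $\Sigma_{t}$ and $\partial\Sigma_{t}$ as the sublevel and level sets of $T$), with no separate proof given. Your remark about $\lambda_{0}=0$ is also appropriate and consistent with how the paper handles that point elsewhere.
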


\begin{proof}
[Proof of Proposition \ref{smallx0.prop}]In the limit as $\varepsilon
_{0}\rightarrow0,$ we have
\[
\delta=\frac{\left\vert \lambda_{0}\right\vert ^{2}+1}{\left\vert \lambda
_{0}\right\vert },
\]
and%
\[
\delta^{2}-4=\left(  \frac{\left\vert \lambda_{0}\right\vert ^{2}%
-1}{\left\vert \lambda_{0}\right\vert }\right)  ^{2},
\]
so that%
\begin{equation}
\sqrt{\delta^{2}-4}=\pm\frac{\left\vert \lambda_{0}\right\vert ^{2}%
-1}{\left\vert \lambda_{0}\right\vert }. \label{SqrtD2minus4}%
\end{equation}

In the case $\left\vert \lambda_{0}\right\vert =1,$ the limiting value of
$\delta$ is 2. We then make use of the elementary limit%
\begin{equation}
\lim_{\delta\rightarrow2^{+}}\frac{1}{\sqrt{\delta^{2}-4}}\log\left(
\frac{\delta+\sqrt{\delta^{2}-4}}{\delta-\sqrt{\delta^{2}-4}}\right)  =1.
\label{gLim}%
\end{equation}
Thus, using (\ref{tStar2}), we obtain in this case,
\[
\lim_{\varepsilon_{0}\rightarrow0}t_{\ast}(\lambda_{0},\varepsilon
_{0})=2-2\cos\theta_{0}=\left\vert \lambda_{0}-1\right\vert ^{2}%
,\quad\left\vert \lambda_{0}\right\vert =1,
\]
which agrees with the value of $T(\lambda_{0})$ when $\left\vert \lambda
_{0}\right\vert =1.$

In the case $\left\vert \lambda_{0}\right\vert \neq1,$ we note that the
quantity $(1/b)\log((a+b)/(a-b))$ is an even function of $b$ with $a$ fixed.
We may therefore choose the plus sign on the right-hand side of
(\ref{SqrtD2minus4}), regardless of the sign of $\left\vert \lambda
_{0}\right\vert ^{2}-1.$ We then obtain, using (\ref{tStar2}),
\begin{align}
\lim_{\varepsilon_{0}\rightarrow0}t_{\ast}(\lambda_{0},\varepsilon_{0})  &
=\frac{(\left\vert \lambda_{0}\right\vert ^{2}+1)/\left\vert \,\lambda
_{0}\right\vert -2\cos\theta_{0}}{(\left\vert \lambda_{0}\right\vert
^{2}-1)/\left\vert \lambda_{0}\right\vert }\log\left(  \frac{2\left\vert
\lambda_{0}\right\vert ^{2}/\left\vert \lambda_{0}\right\vert }{2/\left\vert
\lambda_{0}\right\vert }\right) \nonumber\\
&  =\frac{\left\vert \lambda_{0}\right\vert ^{2}+1-2\left\vert \lambda
_{0}\right\vert \cos\theta_{0}}{\left\vert \lambda_{0}\right\vert ^{2}-1}%
\log(\left\vert \lambda_{0}\right\vert ^{2})\nonumber\\
&  =T(\lambda_{0}). \label{tstarLim}%
\end{align}

A similar calculation, beginning from (\ref{logLambdaFormula}), establishes
(\ref{rhoZero}).
\end{proof}

\begin{remark}
If we began with (\ref{tStar1}) instead of (\ref{tStar2}), we would obtain by
similar reasoning%
\[
t_{\ast}(\lambda_{0},0)=\frac{2\left\vert \lambda_{0}-1\right\vert ^{2}%
}{\left\vert \lambda_{0}\right\vert ^{2}-1}\tanh^{-1}\left(  \frac{\left\vert
\lambda_{0}\right\vert ^{2}-1}{\left\vert \lambda_{0}\right\vert ^{2}%
+1}\right)  .
\]
Using (\ref{arcTanhLog}), this expression is easily seen to agree with
$T(\lambda_{0})$ but is more transparent in its behavior at $\left\vert
\lambda_{0}\right\vert =1.$
\end{remark}

\begin{proposition}
\label{monotoneTstar.prop}For each $\lambda_{0},$ the function $t_{\ast
}(\lambda_{0},\varepsilon_{0})$ is a strictly increasing function of
$\varepsilon_{0}$ for $\varepsilon_{0}\geq0,$ and
\[
\lim_{\varepsilon_{0}\rightarrow+\infty}t_{\ast}(\lambda_{0},\varepsilon
_{0})=+\infty.
\]

\end{proposition}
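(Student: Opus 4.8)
The plan is to rewrite $t_{\ast}(\lambda_{0},x_{0})$ as the function $T$ of Lemma~\ref{Tmin.lem} evaluated at a radius that moves monotonically outward as $x_{0}$ grows, and then to prove the radial monotonicity of $T$ that was deferred in the proof of Lemma~\ref{Tmin.lem}.

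First I would record three elementary facts about $\delta$ as in~(\ref{deltaNotation}): it is a strictly increasing affine function of $x_{0}$ with slope $1/|\lambda_{0}|>0$; it satisfies $\delta\to+\infty$ as $x_{0}\to+\infty$; and $\delta\geq2$ with equality only when $x_{0}=0$ and $|\lambda_{0}|=1$ (arithmetic--geometric mean: $|\lambda_{0}|^{2}+1\geq2|\lambda_{0}|$). Since $s\mapsto s+s^{-1}$ is a strictly increasing bijection of $[1,\infty)$ onto $[2,\infty)$, there is a unique $r=r(x_{0})\geq1$ with $r+1/r=\delta$, and $r(x_{0})$ is a strictly increasing function of $x_{0}$ on $[0,\infty)$ with $r(x_{0})\to\infty$. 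Substituting $\delta=r+1/r$ into~(\ref{tStar2})---the same manipulation made at $x_{0}=0$ in the proof of Proposition~\ref{smallx0.prop}, now for general $x_{0}$---gives $\sqrt{\delta^{2}-4}=r-1/r$, $(\delta+\sqrt{\delta^{2}-4})/(\delta-\sqrt{\delta^{2}-4})=r^{2}$, and $\delta-2\cos\theta_{0}=(r^{2}+1-2r\cos\theta_{0})/r$, so that
\[
t_{\ast}(\lambda_{0},x_{0})=(r^{2}+1-2r\cos\theta_{0})\,\frac{\log(r^{2})}{r^{2}-1}=T\big(r(x_{0}),\theta_{0}\big),
\]
with $T$ written in polar coordinates as in Lemma~\ref{Tmin.lem}. (When $\delta=2$, i.e.\ $x_{0}=0$ and $|\lambda_{0}|=1$, this reads as the limiting value $2-2\cos\theta_{0}=T(1,\theta_{0})$.)

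It then remains to prove that, for each fixed $\theta$, the function $r\mapsto T(r,\theta)$ is strictly increasing on $[1,\infty)$. Granting this, $t_{\ast}(\lambda_{0},x_{0})=T(r(x_{0}),\theta_{0})$ is a composition of strictly increasing functions of $x_{0}$, hence strictly increasing; and since $T(r,\theta_{0})=\frac{r^{2}+1-2r\cos\theta_{0}}{r^{2}-1}\log(r^{2})\to+\infty$ as $r\to\infty$, the stated limit follows. To establish the monotonicity I would differentiate to obtain~(\ref{dtdr}) and observe, as in~(\ref{dtdrMin}), that $\partial T/\partial r(r,\theta)$ is affine in $\cos\theta$, so for fixed $r$ it is at least $\min\!\big(\partial T/\partial r(r,0),\,\partial T/\partial r(r,\pi)\big)$; it therefore suffices to check positivity for $\theta=0$ and $\theta=\pi$ (and at $r=1$ one has $\partial T/\partial r=0$, since $r=1$ is the minimum, so strict increase on $[1,\infty)$ follows from positivity on $(1,\infty)$ together with continuity). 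Using $r^{2}+1-2r=(r-1)^{2}$ and $(1+r^{2})-2r=(r-1)^{2}$ one gets
\[
\frac{\partial T}{\partial r}(r,0)=\frac{2\log(r^{2})}{(r+1)^{2}}+\frac{2(r-1)}{r(r+1)},\qquad \frac{\partial T}{\partial r}(r,\pi)=\frac{2(r+1)}{r(r-1)}-\frac{2\log(r^{2})}{(r-1)^{2}},
\]
the first manifestly positive for $r>1$, and the second positive precisely when $\frac{r^{2}-1}{r}>\log(r^{2})$.

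The main obstacle---the only non-formal point---is this inequality $\frac{r^{2}-1}{r}>\log(r^{2})$ for $r>1$. I would prove it by setting $u=r^{2}>1$ and considering $\phi(u)=\sqrt{u}-u^{-1/2}-\log u$, which vanishes at $u=1$ and has $\phi'(u)=\tfrac12 u^{-1/2}+\tfrac12 u^{-3/2}-u^{-1}=\tfrac12 u^{-3/2}(u+1-2\sqrt{u})=\tfrac12 u^{-3/2}(\sqrt{u}-1)^{2}>0$ for $u>1$; hence $\phi(u)>0$ for $u>1$. This gives $\partial T/\partial r>0$ for all $\theta$ and all $r>1$, completing the argument and simultaneously discharging the step postponed in Lemma~\ref{Tmin.lem}.
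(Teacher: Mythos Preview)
Your proof is correct and essentially the same as the paper's: both reduce to showing that the function $g_{\theta_0}$ in~(\ref{gDelta}) (equivalently $T(r,\theta_0)$ via the substitution $\delta=r+1/r$, which the paper itself notes in the proof of Lemma~\ref{Tmin.lem}) is strictly increasing, by exploiting the affine dependence on $\cos\theta$ to reduce to the extreme cases and then verifying the single nontrivial inequality, which in your variables is $\tfrac{r^{2}-1}{r}>\log(r^{2})$ and in the paper's is $\gamma>\log\!\big(\tfrac{\delta+\gamma}{\delta-\gamma}\big)$. Your choice to work in the $r$-variable has the small bonus of simultaneously discharging the radial monotonicity of $T$ that the paper defers from Lemma~\ref{Tmin.lem} to this proposition.
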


\begin{proof}
We note that the quantity $\delta$ in (\ref{deltaNotation}) is an increasing
function of $\varepsilon_{0}$ with $\lambda_{0}$ fixed, with $\delta$ tending
to infinity as $\varepsilon_{0}$ tends to infinity. We note also that if
$\varepsilon_{0}\geq0,$ then
\[
\delta\geq\left\vert \lambda_{0}\right\vert +\frac{1}{\left\vert \lambda
_{0}\right\vert }\geq2.
\]
It therefore suffices to show that for each angle $\theta_{0},$ the function
\begin{equation}
g_{\theta_{0}}(\delta):=\frac{\delta-2\cos\theta_{0}}{\sqrt{\delta^{2}-4}}%
\log\left(  \frac{\delta+\sqrt{\delta^{2}-4}}{\delta-\sqrt{\delta^{2}-4}%
}\right)  , \label{gDelta}%
\end{equation}
is strictly increasing, non-negative, continuous function of $\delta$ for
$\delta\geq2$ that tends to $+\infty$ as $\delta$ tends to infinity. Here when
$\delta=2,$ we interpret $g_{\theta_{0}}(\delta)$ as having the value
$2-2\cos\theta_{0},$ in accordance with the limit (\ref{gLim}).

Throughout the proof, we use the notation
\[
\gamma=\sqrt{\delta^{2}-4}.
\]
We note that%
\[
\lim_{\delta\rightarrow\infty}\frac{\delta-2\cos\theta_{0}}{\gamma}=1.
\]
Meanwhile, for large $\delta,$ we have
\begin{align*}
\delta-\gamma &  =\delta\left(  1-\sqrt{1-4/\delta^{2}}\right) \\
&  =\delta\left(  1-\left(  1-\frac{1}{2}\frac{4}{\delta^{2}}+O\left(
\frac{1}{\delta^{3}}\right)  \right)  \right) \\
&  =\frac{2}{\delta}+O\left(  \frac{1}{\delta^{3}}\right)  ,
\end{align*}
whereas%
\[
\delta+\gamma=2\delta+O\left(  \frac{1}{\delta}\right)  .
\]
Thus, $g_{\theta_{0}}(\delta)$ grows like $\log(\delta^{2})$ as $\delta
\rightarrow\infty$.

Our definition of $g_{\theta_{0}}(\delta)$ for $\delta=2,$ together with
(\ref{gLim}), shows that $g_{\theta_{0}}$ is non-negative and continuous
there. To show that $g_{\theta_{0}}$ is an increasing function of $\delta,$ we
show that $\partial g_{\theta_{0}}/\partial\delta$ is positive for $\delta>2.$
The derivative is computed, after simplification, as%
\[
\frac{\partial g_{\theta_{0}}}{\partial\delta}=\frac{2}{\gamma^{3}}\left(
(\delta-2\cos\theta_{0})\gamma+(\delta\cos\theta_{0}-2)\log\left(
\frac{\delta+\gamma}{\delta-\gamma}\right)  \right)  .
\]
Since this expression depends linearly on $\cos\theta_{0}$ with $\delta$
fixed, if it is positive when $\cos\theta_{0}=1$ and also when $\cos\theta
_{0}=-1,$ it will be positive always. Thus, it suffices to verify the
positivity of the functions%
\begin{equation}
(\delta-2)\left(  \gamma+\log\left(  \frac{\delta+\gamma}{\delta-\gamma
}\right)  \right)  \label{Derivative1}%
\end{equation}
and%
\begin{equation}
(\delta+2)\left(  \gamma-\log\left(  \frac{\delta+\gamma}{\delta-\gamma
}\right)  \right)  . \label{Derivative2}%
\end{equation}

Now, (\ref{Derivative1}) is clearly positive for all $\delta>2.$ Meanwhile, a
computation shows that
\[
\frac{d}{d\delta}\left(  \gamma-\log\left(  \frac{\delta+\gamma}{\delta
-\gamma}\right)  \right)  =\frac{\delta-2}{\gamma}>0
\]
and%
\[
\lim_{\delta\rightarrow2^{+}}\left(  \gamma-\log\left(  \frac{\delta+\gamma
}{\delta-\gamma}\right)  \right)  =0,
\]
from which we conclude that (\ref{Derivative2}) is also positive for all
$\delta>2.$
\end{proof}

\subsection{Surjectivity}

In Section \ref{inside.sec}, we will compute $s_{t}(\lambda):=\lim
_{\varepsilon\rightarrow0^{+}}S(t,\lambda,\varepsilon)$ for $\lambda$ in
$\Sigma_{t}.$ We will do so by evaluating $S$ (and its derivatives) along
curves of the form $(t,\lambda(t),\varepsilon(t))$ and then the taking the
limit as we approach the time $t_{\ast}$ when $\varepsilon(t)$ becomes zero.
For this method to be successful, we need the following result, whose proof
appears on p. \pageref{surjectivityProof}.

\begin{theorem}
\label{surjectivity.thm}Fix $t>0.$ Then for all $\lambda\in\Sigma_{t},$ there
exists a unique $\lambda_{0}\in\mathbb{C}$ and $\varepsilon_{0}>0$ such that
the solution to (\ref{theODEs}) with these initial conditions exists on
$[0,t)$ with $\lim_{u\rightarrow t^{-}}\varepsilon(u)=0$ and $\lim
_{u\rightarrow t^{-}}\lambda(u)=\lambda.$ For all $\lambda\in\Sigma_{t},$ the
corresponding $\lambda_{0}$ also belongs to $\Sigma_{t}.$

Define functions $\Lambda_{0}^{t}:\Sigma_{t}\rightarrow\Sigma_{t}$ and
$E_{0}^{t}:\Sigma_{t}\rightarrow(0,\infty)$ by letting $\Lambda_{0}%
^{t}(\lambda)$ and $E_{0}^{t}(\lambda)$ be the corresponding values of
$\lambda_{0}$ and $\varepsilon_{0},$ respectively. Then $\Lambda_{0}^{t}$ and
$E_{0}^{t}$ extend to continuous maps of $\overline{\Sigma}_{t}$ into
$\overline{\Sigma}_{t}$ and $[0,\infty)$, respectively, with the continuous
extensions satisfying $\Lambda_{t}(\lambda)=\lambda$ and $E_{0}^{t}%
(\lambda)=0$ for $\lambda\in\partial\Sigma_{t}.$
\end{theorem}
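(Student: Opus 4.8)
The plan is to prove Theorem~\ref{surjectivity.thm} by analyzing the flow of the system (\ref{theODEs}) as a map from initial conditions $(\lambda_0,x_0)$ to the terminal data $(\lambda, t_*)$. The key organizing observation is that, along any solution, the argument $\theta_0 = \arg\lambda_0$ is a constant of motion (Proposition~\ref{constantsOfMotion.prop}), so the flow decouples according to $\theta$. Thus for each fixed angle $\theta$, I will study the two-parameter family of solutions starting at $\lambda_0 = s e^{i\theta}$ with $s>0$ and $x_0>0$, and show that the terminal data $(|\lambda(t_*)|, t_*)$ sweeps out exactly the right set. Concretely, using Proposition~\ref{pxSolution.prop} and Proposition~\ref{existence.prop}, the lifetime is $t_* = t_*(\lambda_0, x_0)$, given by the explicit formula (\ref{tStar2}), and the terminal radius is $\rho_* := \lim_{t\to t_*}\log|\lambda(t)|$, given explicitly by (\ref{logLambdaFormula}). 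Both of these depend on $(\lambda_0, x_0)$ only through $|\lambda_0|$, $\theta_0$, and $x_0$ — indeed only through $\delta$ (as in (\ref{deltaNotation})) and $\theta_0$. So the core task reduces to a two-variable change-of-variables problem: show that the map $(\rho_0, x_0)\mapsto(t_*, \rho_*)$, where $\rho_0 = \log|\lambda_0|$, is a diffeomorphism onto an appropriate region.

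The key steps, in order, would be: (1) Fix $\theta$ with $|\theta| < \cos^{-1}(1-t/2)$ if $t\le 4$, and reparametrize by $\delta$; note from (\ref{deltaNotation}) that for fixed $\theta_0$ the pair $(s, x_0)$ with $s>0$, $x_0>0$ corresponds to $\delta > s + 1/s \ge 2$ together with a choice of $s$ on the correct side, so the relevant parameter domain is clean. (2) Use Proposition~\ref{smallx0.prop} and Corollary~\ref{smallx0.cor}: at $x_0 = 0$ one has $t_*(\lambda_0, 0) = T(\lambda_0)$ and $\rho_* = \log|\lambda_0|$, so the boundary behavior matches $\Lambda_0^t(\lambda) = \lambda$, $X_0^t(\lambda) = 0$ on $\partial\Sigma_t$. (3) Use Proposition~\ref{monotoneTstar.prop}: $t_*$ is strictly increasing in $x_0$ with $\lambda_0$ fixed, running from $T(\lambda_0)$ up to $+\infty$. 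This gives, for each fixed $\lambda_0 \in \Sigma_t$ (so $T(\lambda_0) < t$), a unique $x_0 > 0$ with $t_*(\lambda_0, x_0) = t$; but I still must show the resulting terminal point $\lambda(t_*)$ hits every $\lambda \in \Sigma_t$ exactly once. (4) For the surjectivity/injectivity onto $\Sigma_t$ at fixed angle $\theta$: the level set $\{t_* = t\}$ in the $(\rho_0, x_0)$ half-plane is a curve; I would show the restriction of $\rho_*$ to this curve is a homeomorphism onto the open interval $\{\rho : (e^\rho e^{i\theta}) \in \Sigma_t\} = (-\log r_t(\theta), \log r_t(\theta))$, using monotonicity of $\rho_*$ along the level curve and the endpoint limits from step (2) (namely $\rho_* \to \pm\log r_t(\theta)$ as $x_0 \to 0$ along the two ends of the level curve, corresponding to $\lambda_0$ approaching the two boundary points on the ray). (5) Assemble: continuity of $\Lambda_0^t$ and $X_0^t$ on $\Sigma_t$ follows from smooth dependence of ODE solutions on initial data together with the implicit function theorem applied to $t_*(\lambda_0, x_0) = t$ (using $\partial t_*/\partial x_0 > 0$ from Proposition~\ref{monotoneTstar.prop}); continuity up to $\overline\Sigma_t$ with the stated boundary values follows from the $x_0 \to 0$ asymptotics.

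The main obstacle I anticipate is step (4): proving that along the level curve $\{t_* = t\}$ the terminal radius $\rho_*$ is strictly monotone (hence injective) and surjective onto the full interval. The two explicit expressions (\ref{tStar2}) and (\ref{logLambdaFormula}) are structurally similar — both are of the form (something)$\cdot g_{\theta_0}(\delta)/(\delta - 2\cos\theta_0)$ times a factor — so I expect one can write $\rho_* = \tfrac12 C \, t_*$ with $C = C(\lambda_0, x_0)$ as in (\ref{CfromX0}), which already appears in Theorem~\ref{atTimeTstar.thm}. On the level set $t_* = t$ this gives $\rho_* = \tfrac12 C t$, so monotonicity of $\rho_*$ along the curve is equivalent to monotonicity of $C$, and $C = p_0(|\lambda_0|^2 - 1 + x_0)$ is manifestly increasing in both $|\lambda_0|^2$ and $x_0$ for $|\lambda_0|$ fixed... but along the level curve $x_0$ and $|\lambda_0|$ vary together, so the sign of $dC$ along the curve requires a genuine (if elementary) computation combining $\partial t_*/\partial x_0$, $\partial t_*/\partial \rho_0$, and the corresponding derivatives of $C$. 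I would handle this by implicit differentiation of $t_* = t$ and then a somewhat messy but finite estimate, likely again splitting into the cases $\cos\theta_0 = \pm 1$ as in the proof of Proposition~\ref{monotoneTstar.prop}. The surjectivity onto the full interval $(-\log r_t(\theta), \log r_t(\theta))$ is then immediate from the boundary limits and the intermediate value theorem, once monotonicity is in hand. Everything else — existence and uniqueness of $x_0$ for given $\lambda_0$, continuity, and the boundary identification — should follow cleanly from the results already established in Sections~\ref{regionPropeties.sec} and~\ref{HJ.sec}.
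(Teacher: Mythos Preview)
Your approach is essentially the paper's: fix the angle $\theta_0$, use Proposition~\ref{monotoneTstar.prop} to get a unique $x_0^t(\lambda_0)>0$ for each $\lambda_0\in\Sigma_t$, and then show that the resulting terminal map $\lambda_0\mapsto\lambda_t(\lambda_0)$ is a monotone bijection of the radial segment $\mathrm{Ray}(\theta_0)\cap\overline{\Sigma}_t$ onto itself, fixing the endpoints. The structure of your argument and the paper's Lemma~\ref{lambda_t.lem} are the same.

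Where you diverge is in step~(4), and here you are making the argument harder than it needs to be. You correctly observe that on the level set $\{t_*=t\}$ one has $\rho_*=\tfrac12 C t$, so monotonicity of $\rho_*$ reduces to monotonicity of $C$ along the level curve; you then anticipate a ``messy but finite estimate'' via implicit differentiation of $t_*=t$ with respect to $(\rho_0,x_0)$. The paper bypasses this entirely with one observation you have almost made but not quite: since $t_*=g_{\theta_0}(\delta)$ depends on $(\lambda_0,x_0)$ \emph{only through} $\theta_0$ and $\delta$, and since $g_{\theta_0}$ is strictly increasing (Proposition~\ref{monotoneTstar.prop}), the level set $\{t_*=t\}$ at fixed $\theta_0$ is exactly $\{\delta=\text{const}\}$. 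With $\delta$ held fixed, the constraint $x_0=|\lambda_0|\delta-|\lambda_0|^2-1$ gives
\[
C=\frac{|\lambda_0|^2-1+x_0}{|\lambda_0-1|^2+x_0}
=\frac{|\lambda_0|\delta-2}{|\lambda_0|\delta-2|\lambda_0|\cos\theta_0}
=\frac{\delta-2/|\lambda_0|}{\delta-2\cos\theta_0},
\]
which is manifestly strictly increasing in $|\lambda_0|$. Equivalently, formula~(\ref{lambdaTformula2}) shows directly that $|\lambda_t(\lambda_0)|$ is strictly increasing in $|\lambda_0|$ with $\delta$ fixed. So the monotonicity you need is immediate once you recognize $\delta$ as the natural coordinate on the level curve; no implicit differentiation or case-splitting on $\cos\theta_0=\pm1$ is required. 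Everything else in your outline---the boundary identifications via Proposition~\ref{smallx0.prop} and Theorem~\ref{domainGobbles.thm}, and the continuity via the implicit function theorem---matches the paper's argument.
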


We first recall that we have shown (Proposition \ref{monotoneTstar.prop}) that
the lifetime of the path to be a strictly increasing function of
$\varepsilon_{0}\geq0$ with $\lambda_{0}$ fixed. If $\lambda_{0}$ is outside
$\Sigma_{t},$ then by Theorem \ref{domainGobbles.thm} and Proposition
\ref{smallx0.prop}, the lifetime is at least $t,$ even at $\varepsilon_{0}=0.$
(That is to say, $T(\lambda_{0})=t_{\ast}(\lambda_{0},0)\geq t$ for
$\lambda_{0}$ outside $\Sigma_{t}.$) Thus, for $\lambda_{0}$ outside
$\Sigma_{t}$, the lifetime cannot equal $t$ for $\varepsilon_{0}>0.$ On the
other hand, if $\lambda_{0}\in\Sigma_{t},$ then $t_{\ast}(\lambda_{0},0)<t$
and Proposition \ref{monotoneTstar.prop} tells us that there is a unique
$\varepsilon_{0}>0$ with $t_{\ast}(\lambda_{0},\varepsilon_{0})=t.$

\begin{lemma}
\label{lambda_t.lem}Fix $t>0.$ Define maps
\begin{align*}
\varepsilon_{0}^{t}  &  :\Sigma_{t}\rightarrow\lbrack0,\infty)\\
\lambda_{t}  &  :\Sigma_{t}\rightarrow\mathbb{C}\setminus\{0\}
\end{align*}
as follows. For $\lambda_{0}\in\Sigma_{t},$ we let $\varepsilon_{0}%
^{t}(\lambda_{0})$ denote the unique positive value of $\varepsilon_{0}$ for
which $t_{\ast}(\lambda_{0},\varepsilon_{0})=t.$ Then we set%
\[
\lambda_{t}(\lambda_{0})=\lim_{u\rightarrow t^{-}}\lambda(u),
\]
where $\lambda(\cdot)$ is computed with initial conditions $\lambda
(0)=\lambda_{0}$ and $\varepsilon(0)=\varepsilon_{0}^{t}(\lambda_{0}).$ Then
both $\varepsilon_{0}^{t}$ and $\lambda_{t}$ extend continuously from
$\Sigma_{t}$ to $\overline{\Sigma}_{t},$ with the extended maps satisfying
$\varepsilon_{0}^{t}(\lambda_{0})=0$ and $\lambda_{t}(\lambda_{0})=\lambda
_{0}$ for $\lambda_{0}\in\partial\Sigma_{t}.$ The extended map $\lambda_{t}$
is a homeomorphism of $\overline{\Sigma}_{t}$ to itself.
\end{lemma}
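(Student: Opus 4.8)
The strategy is to solve for $x_0^t$ and $\lambda_t$ essentially explicitly, using the formula (\ref{tStar2}) for the lifetime $t_\ast$ and the formula (\ref{logLambdaFormula}) for $\lim_{u\to t^-}\log|\lambda(u)|$, and to make the monotonicity needed for the homeomorphism assertion transparent via the substitution $\delta = 2\cosh\phi$ with $\phi\ge 0$, under which $\sqrt{\delta^2-4} = 2\sinh\phi$ and the logarithm appearing in both of those formulas becomes $2\phi$. First I would check well-definedness on $\Sigma_t$. For $\lambda_0\in\Sigma_t$ we have $t_\ast(\lambda_0,0) = T(\lambda_0) < t$ by Proposition \ref{smallx0.prop} and Corollary \ref{smallx0.cor}, while $x_0\mapsto t_\ast(\lambda_0,x_0)$ is continuous, strictly increasing and tends to $+\infty$ by Proposition \ref{monotoneTstar.prop}; hence there is a unique $x_0^t(\lambda_0)>0$ with $t_\ast(\lambda_0,x_0^t(\lambda_0))=t$. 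With this $x_0$ the standing assumptions (\ref{standingAssumptions}) hold, so Proposition \ref{existence.prop} gives a solution of (\ref{theODEs}) on $[0,t)$ with $x(u)\to0$; by Proposition \ref{constantsOfMotion.prop} its argument is the constant $\theta_0:=\arg\lambda_0$, and by Proposition \ref{tStar.prop} $\log|\lambda(u)|$ has the finite limit (\ref{logLambdaFormula}), so $\lambda_t(\lambda_0):=\lim_{u\to t^-}\lambda(u)$ exists and is nonzero.

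Next I would rewrite everything in terms of $\phi$. With $r_0:=|\lambda_0|$ and $\delta:=(r_0^2+1+x_0^t(\lambda_0))/r_0 = 2\cosh\phi$, the defining equation $t_\ast(\lambda_0,x_0^t(\lambda_0))=t$ becomes, by (\ref{tStar2}),
\[
\frac{2\phi\,(\cosh\phi - \cos\theta_0)}{\sinh\phi}=t,
\]
an equation for $\phi$ not involving $r_0$. Its left-hand side is the function $g_{\theta_0}$ of (\ref{gDelta}), which Proposition \ref{monotoneTstar.prop} shows to be continuous and strictly increasing in $\delta\ge2$ (hence in $\phi\ge0$) from $2-2\cos\theta_0$ to $+\infty$; since $2-2\cos\theta_0\le t$ for every angle $\theta_0$ occurring in $\overline{\Sigma}_t$, this equation has a unique solution $\phi=\phi_t(\theta_0)\ge0$, equal to $0$ only at the extreme angles, and $\phi_t$ is continuous in $\theta_0$. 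Consequently $x_0^t(\lambda_0)=r_0\bigl(2\cosh\phi_t(\theta_0)-r_0-1/r_0\bigr)$ is continuous on $\Sigma_t$, and (\ref{logLambdaFormula}) collapses to
\[
\log|\lambda_t(\lambda_0)| = \frac{\phi_t(\theta_0)}{\sinh\phi_t(\theta_0)}\Bigl(\cosh\phi_t(\theta_0)-\tfrac{1}{r_0}\Bigr),
\]
so $\lambda_t$ is continuous on $\Sigma_t$ as well. For the boundary, by Theorem \ref{regionProperties.thm} the ray at angle $\theta_0$ meets $\overline{\Sigma}_t$ in $\{r_0e^{i\theta_0}: 1/r_t(\theta_0)\le r_0\le r_t(\theta_0)\}$ with $r_t(\theta_0)\ge1$; since this ray meets $\partial\Sigma_t$ where $T=t$ and $T(re^{i\theta_0}) = t_\ast(re^{i\theta_0},0) = g_{\theta_0}(r+1/r)$ by Proposition \ref{smallx0.prop} and (\ref{tStar2}), injectivity of $g_{\theta_0}$ gives $r_t(\theta_0)=e^{\phi_t(\theta_0)}$. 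Hence as $\lambda_0\to\partial\Sigma_t$ within $\Sigma_t$ we have $r_0\to e^{\pm\phi_t(\theta_0)}$, so $2\cosh\phi_t(\theta_0)-r_0-1/r_0\to0$ and $x_0^t(\lambda_0)\to0$, and using $\cosh\phi - e^{\mp\phi}=\pm\sinh\phi$ one gets $\log|\lambda_t(\lambda_0)|\to\pm\phi_t(\theta_0)=\log r_0$, i.e. $\lambda_t(\lambda_0)\to\lambda_0$. Thus $x_0^t$ and $\lambda_t$ extend continuously to $\overline{\Sigma}_t$ with $x_0^t\equiv0$ and $\lambda_t=\mathrm{id}$ on $\partial\Sigma_t$; the degenerate extreme angles for $t\le4$ (where $\phi_t(\theta_0)=0$ and the radial slice is a single point) are read off directly.

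Finally the homeomorphism. The set $\overline{\Sigma}_t=\{T\le t\}$ is compact, being closed and bounded (Theorem \ref{domainGobbles.thm} together with $T\to+\infty$ at $0$ and at $\infty$), and it is the disjoint union, over the admissible angles $\theta_0$, of the radial segments $J_{\theta_0}=\{r_0e^{i\theta_0}: e^{-\phi_t(\theta_0)}\le r_0\le e^{\phi_t(\theta_0)}\}$. Since $\lambda_t$ preserves $\arg$, it carries each $J_{\theta_0}$ into the ray at angle $\theta_0$ by $r_0\mapsto\exp\!\bigl(\tfrac{\phi_t(\theta_0)}{\sinh\phi_t(\theta_0)}(\cosh\phi_t(\theta_0)-1/r_0)\bigr)$; the exponent is an affine function of $1/r_0$ with negative leading coefficient, hence strictly increasing in $r_0$, and by the endpoint computation above it maps $[e^{-\phi_t(\theta_0)},e^{\phi_t(\theta_0)}]$ onto itself fixing both endpoints. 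So $\lambda_t$ restricts to a homeomorphism of each $J_{\theta_0}$ onto itself and is therefore a continuous bijection of $\overline{\Sigma}_t$ onto itself; a continuous bijection from a compact space to a Hausdorff space is a homeomorphism, and $\lambda_t(\Sigma_t)=\Sigma_t$ follows since $\lambda_t$ fixes $\partial\Sigma_t$ pointwise. I do not expect a genuine obstacle: the only delicate points are the bookkeeping for $t\le4$ (where not all angles occur and the extreme angles degenerate) and the identification, via Theorems \ref{domainGobbles.thm} and \ref{regionProperties.thm}, of the radial slices of $\overline{\Sigma}_t$ with the intervals $[1/r_t(\theta_0),r_t(\theta_0)]$; the heart of the matter — that in the variable $\phi$ the equation $t_\ast=t$ pins down $\phi$ from $(t,\theta_0)$ alone, making $\log|\lambda_t|$ a manifestly increasing function of $r_0$ — is a one-line computation.
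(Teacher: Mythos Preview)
Your proof is correct and follows essentially the same approach as the paper: both arguments exploit that the value of $\delta$ solving $g_{\theta_0}(\delta)=t$ depends only on $\theta_0$, derive explicit formulas for $x_0^t$ and $\lambda_t$, check the boundary limits, and then prove the homeomorphism by showing $|\lambda_t|$ is strictly increasing along each radial segment and invoking compactness. Your substitution $\delta=2\cosh\phi$ is a pleasant cosmetic improvement over the paper's presentation---it turns the identification $r_t(\theta_0)+1/r_t(\theta_0)=\delta$ into the clean statement $r_t(\theta_0)=e^{\phi_t(\theta_0)}$ and makes the monotonicity of $\log|\lambda_t|$ in $r_0$ visibly affine in $1/r_0$---but the underlying argument is the same.
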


We note that the desired function $\Lambda_{0}^{t}$ in Theorem
\ref{surjectivity.thm} is the inverse function to $\lambda_{t}$ and that
$E_{0}^{t}(\lambda)=\varepsilon_{0}^{t}(\lambda_{t}^{-1}(\lambda)).$

Recall from Proposition \ref{constantsOfMotion.prop} that the argument of
$\lambda(t)$ is constant. By the formula (\ref{logLambda}) in Theorem
\ref{atTimeTstar.thm} together with the expression (\ref{CfromX0}) for the
constant $C,$ we can write%
\begin{equation}
\lambda_{t}(\lambda_{0})=\frac{\lambda_{0}}{\left\vert \lambda_{0}\right\vert
}e^{Ct/2}=\frac{\lambda_{0}}{\left\vert \lambda_{0}\right\vert }\exp\left(
\frac{t}{2}\frac{\left\vert \lambda_{0}\right\vert ^{2}-1+\varepsilon_{0}%
^{t}(\lambda_{0})}{\left\vert \lambda_{0}-1\right\vert ^{2}+\varepsilon
_{0}^{t}(\lambda_{0})}\right)  , \label{lambdaTformula1}%
\end{equation}
where we have used that $t_{\ast}(\lambda_{0},\varepsilon_{0}^{t}(\lambda
_{0}))=t.$ As noted in the proof of Proposition \ref{tStar.prop}, this formula
can also be written as%
\begin{equation}
\lambda_{t}(\lambda_{0})=\frac{\lambda_{0}}{\left\vert \lambda_{0}\right\vert
}\exp\left(  \frac{\delta-2/\left\vert \lambda_{0}\right\vert }{2\sqrt
{\delta^{2}-4}}\log\left(  \frac{\delta+\sqrt{\delta^{2}-4}}{\delta
-\sqrt{\delta^{2}-4}}\right)  \right)  , \label{lambdaTformula2}%
\end{equation}
where $\delta=(\left\vert \lambda_{0}\right\vert ^{2}+1+\varepsilon_{0}%
^{t}(\lambda_{0}))/\left\vert \lambda_{0}\right\vert .$

\begin{proof}
We start by trying to compute the function $\varepsilon_{0}^{t},$ which we
will do by finding the correct value of $\delta$ and then solving for
$\varepsilon_{0}^{t}.$ Recall that the lifetime $t_{\ast}(\lambda
_{0},\varepsilon_{0})$ is computed as $g_{\theta_{0}}(\delta),$ where $\delta$
is as in (\ref{deltaNotation}) and $g_{\theta_{0}}$ is as in (\ref{gDelta}).
As we have computed in (\ref{tstarLim}), we have
\[
g_{\theta_{0}}\left(  \frac{r_{0}^{2}+1}{r_{0}}\right)  =T(r_{0}e^{i\theta
_{0}}).
\]
Assume, then, that the ray with angle $\theta_{0}$ intersects $\Sigma_{t}$ and
let $r_{t}(\theta_{0})$ be the outer (for definiteness) radius at which this
ray intersects the boundary of $\Sigma_{t}.$ Then Theorem
\ref{domainGobbles.thm} tells us that $T(r_{t}(\theta_{0})e^{i\theta_{0}})=t,$
and we conclude that
\begin{equation}
g_{\theta_{0}}\left(  \frac{r_{t}(\theta_{0})^{2}+1}{r_{t}(\theta_{0}%
)}\right)  =t. \label{gDeltaIsT}%
\end{equation}

Consider, then, some $\lambda_{0}\in\Sigma_{t}$ with $\arg(\lambda_{0}%
)=\theta_{0}.$ By the formula (\ref{tStar2}), to find $\varepsilon_{0}$ with
$t_{\ast}(\lambda_{0},\varepsilon_{0})=t,$ we first find $\delta$ so that
$g_{\theta_{0}}(\delta)=t.$ (Note that the value of $\delta$ depends only on
the argument of $\lambda_{0}.$) We then adjust $\varepsilon_{0}$ so that
$(\left\vert \lambda_{0}\right\vert ^{2}+\varepsilon_{0}+1)/\left\vert
\lambda_{0}\right\vert =\delta.$ Since the correct value of $\delta$ is given
in (\ref{gDeltaIsT}), this means that we should choose $\varepsilon_{0}$ so
that%
\[
\frac{\left\vert \lambda_{0}\right\vert ^{2}+\varepsilon_{0}+1}{\left\vert
\lambda_{0}\right\vert }=\frac{r_{t}(\theta_{0})^{2}+1}{r_{t}(\theta_{0})}.
\]
We can solve this relation for $\varepsilon_{0}$ to obtain%
\begin{equation}
\varepsilon_{0}^{t}(\lambda_{0})=\left\vert \lambda_{0}\right\vert \left(
\frac{r_{t}(\arg\lambda_{0})^{2}+1}{r_{t}(\arg\lambda_{0})}-\frac{\left\vert
\lambda_{0}\right\vert ^{2}+1}{\left\vert \lambda_{0}\right\vert }\right)  .
\label{X0OfLambda0}%
\end{equation}
Now, we have shown that $r_{t}(\theta)$ is continuous for the full range of
angles $\theta$ occurring in $\overline{\Sigma}_{t}.$ Since $0$ is not in
$\overline{\Sigma}_{t}$, we can then see that the formula (\ref{X0OfLambda0})
is well defined and continuous on all of $\overline{\Sigma}_{t}.$ For
$\lambda_{0}\in\partial\Sigma_{t},$ we have that $\left\vert \lambda
_{0}\right\vert $ equals $r_{t}(\arg\lambda_{0})$ or $1/r_{t}(\arg\lambda
_{0}),$ so that $\varepsilon_{0}^{t}(\lambda_{0})$ equals zero.

Now, the point 0 is always outside $\overline{\Sigma}_{t},$ while the point
$1$ is always in $\Sigma_{t}$ and therefore not on the boundary of $\Sigma
_{t}.$ Thus, since $\varepsilon_{0}^{t}$ is continuous on $\overline{\Sigma
}_{t}$ and zero precisely on the boundary, we see from (\ref{lambdaTformula1})
that $\lambda_{t}$ is continuous on $\overline{\Sigma}_{t}.$ Furthermore, on
$\partial\Sigma_{t},$ we compute $\lambda_{t}(\lambda_{0})$ by putting
$\varepsilon_{0}^{t}(\lambda_{0})=0$ in (\ref{lambdaTformula1}). Suppose now
that $\lambda_{0}$ is in $\partial\Sigma_{t}.$ Then $\varepsilon_{0}%
^{t}(\lambda_{0})=0$ and, by Theorem \ref{domainGobbles.thm}, the function
$T(\lambda_{0})$ in (\ref{Tlambda}) has the value $t,$ so that
\[
\frac{t}{2}\frac{\left\vert \lambda_{0}\right\vert ^{2}-1}{\left\vert
\lambda_{0}-1\right\vert ^{2}}=\log(\left\vert \lambda_{0}\right\vert ).
\]
Thus, from (\ref{lambdaTformula1}), we see that $\lambda_{t}(\lambda
_{0})=\lambda_{0}.$

Consider an angle $\theta_{0}$ for which the ray $\mathrm{Ray}(\theta_{0})$
with angle $\theta_{0}$ intersects $\overline{\Sigma}_{t}$ and let $\delta$ be
chosen so that $g_{\theta_{0}}(\delta)=t,$ noting again that the value of
$\delta$ depends only on $\theta_{0}=\arg\lambda_{0}.$ We now observe from
(\ref{lambdaTformula2}) that $\left\vert \lambda_{t}(\lambda_{0})\right\vert $
is a strictly increasing function of $\left\vert \lambda_{0}\right\vert $ with
$\delta$ fixed. Thus, $\lambda_{t}$ is a strictly increasing function of the
interval $\mathrm{Ray}(\theta_{0})\cap\overline{\Sigma}_{t}$ into
$\mathrm{Ray}(\theta_{0})$ that fixes the endpoints. Thus, actually,
$\lambda_{t}$ maps this interval bijectively into itself. Since this holds for
all $\theta_{0},$ we conclude that $\lambda_{t}$ maps $\overline{\Sigma}_{t}$
bijectively into itself. The continuity of the inverse then holds because
$\lambda_{t}$ is continuous and $\overline{\Sigma}_{t}$ is compact.
\end{proof}

\begin{proof}
[Proof of Theorem \ref{surjectivity.thm}]\label{surjectivityProof}We have
noted before the statement of Lemma \ref{lambda_t.lem} that if the desired
pair $(\lambda_{0},\varepsilon_{0})$ exists, $\lambda_{0}$ must be in
$\Sigma_{t}.$ The lemma then tells us that a unique pair $(\lambda
_{0},\varepsilon_{0})$ exists with $\lambda_{0}\in\Sigma_{t}.$ We compute
$\Lambda_{0}^{t}(\lambda)$ as $\lambda_{t}^{-1}(\lambda)$ and $E_{0}%
^{t}(\lambda)$ as $\varepsilon_{0}^{t}(\lambda_{t}^{-1}(\lambda)),$ both of
which extend continuously to $\overline{\Sigma}_{t}$. For $\lambda\in
\partial\Sigma_{t},$ we have $\lambda_{t}^{-1}(\lambda)=\lambda$ and
$\varepsilon_{0}^{t}(\lambda_{t}^{-1}(\lambda))=\varepsilon_{0}^{t}%
(\lambda)=0.$
\end{proof}

\section{Letting $\varepsilon$ tend to zero}

\subsection{Outline\label{outline.sec}}

Our goal is to compute the Laplacian with respect to $\lambda$ of the function
$s_{t}(\lambda):=\lim_{\varepsilon\rightarrow0^{+}}S(t,\lambda,\varepsilon),$
using the Hamilton--Jacobi method of Theorem \ref{HJ.thm}. We want the curve
$\varepsilon(\cdot)$ occurring in (\ref{Sformula}) and (\ref{Sderivatives}) to
approach zero at time $t$; a simple way we might try to accomplish this is to
let the initial condition $\varepsilon_{0}$ approach zero. Suppose, then, that
$\varepsilon_{0}$ is very small. Using various formulas from Section
\ref{solving.sec}, we then find that \textit{for as long as the solution to
the system (\ref{theODEs}) exists}, the whole curve $\varepsilon(\cdot)$ will
be small and the whole curve $\lambda(\cdot)$ will be approximately constant.
Thus, by taking $\varepsilon_{0}\approx0$ and $\lambda_{0}\approx\lambda,$ we
obtain a curve with $\varepsilon(t)\approx0$ and $\lambda(t)\approx\lambda.$
We may then hope to compute $s_{t}(\lambda)$ by letting $\lambda_{0}$ and
$\lambda(t)$ approach $\lambda$ and $\varepsilon_{0}$ approach zero in the
Hamilton--Jacobi formula (\ref{Sformula}), with the result that%
\begin{equation}
s_{t}(\lambda)=\log(\left\vert \lambda-1\right\vert ^{2}). \label{stOutside}%
\end{equation}

It is essential to note, however, that this approach is only valid if the
solution to system (\ref{theODEs}) exists up to time $t.$ Corollary
\ref{smallx0.cor} tells us that for $\varepsilon_{0}\approx0,$ the solution
will exist beyond time $t$ provided $\lambda$ is outside $\overline{\Sigma
}_{t}.$ Thus, we expect that for $\lambda$ outside $\overline{\Sigma}_{t},$
the function $s_{t}$ will be given by (\ref{stOutside}) and therefore that
$\Delta s_{t}$ will be zero. (The function $\log(\left\vert \lambda
-1\right\vert ^{2})$ is harmonic except at the point $\lambda=1,$ which is
always inside $\Sigma_{t}.$)

To analyze $s_{t}(\lambda)$ for $\lambda$ inside $\Sigma_{t},$ we first make
use of the surjectivity result in Theorem \ref{surjectivity.thm}. The theorem
says that for each $t>0$ and $\lambda\in\Sigma_{t},$ there exist
$\varepsilon_{0}>0$ and $\lambda_{0}\in\Sigma_{t}$ such that $\varepsilon(u)$
approaches 0 and $\lambda(u)$ approaches $\lambda$ as $u$ approaches $t.$ We
then use the formula (\ref{apa}) in Theorem \ref{atTimeTstar.thm}. In light of
the second Hamilton--Jacobi formula (\ref{Sderivatives}), we can write
(\ref{apa}) as%
\begin{align}
\lim_{u\rightarrow t}\left(  a\frac{\partial S}{\partial a}+b\frac{\partial
S}{\partial b}\right)  (u,\lambda(u),\varepsilon(u))  &  =\lim_{u\rightarrow
t}\frac{2\log\left\vert \lambda(t)\right\vert }{t}+1\nonumber\\
&  =\frac{2\log\left\vert \lambda\right\vert }{t}+1. \label{sasbInside}%
\end{align}
Once we have established enough regularity in the function $S(t,\lambda
,\varepsilon)$ near $\varepsilon=0,$ we will be able to identify the left-hand
side of (\ref{sasbInside}) with the corresponding derivative of $s_{t}$,
giving the following explicit formula for one of the derivatives of $s_{t}$:%
\begin{equation}
\left(  a\frac{\partial s_{t}}{\partial a}+b\frac{\partial s_{t}}{\partial
b}\right)  (\lambda)=\frac{2\log\left\vert \lambda\right\vert }{t}+1.
\label{sasbInside2}%
\end{equation}

We now compute in logarithmic polar coordinates, with $\rho=\log\left\vert
\lambda\right\vert $ and $\theta=\arg\lambda.$ We may recognize the left-hand
side of (\ref{sasbInside2}) as the derivative of $s_{t}$ with respect to
$\rho,$ giving%
\begin{equation}
\frac{\partial s_{t}}{\partial\rho}=\frac{2\rho}{t}+1 \label{dstDrhoInside}%
\end{equation}
for points inside $\Sigma_{t}.$ Remarkably, $\partial s_{t}/\partial\rho$ is
independent of $\theta$! Thus,%
\[
\frac{\partial}{\partial\rho}\frac{\partial s_{t}}{\partial\theta}%
=\frac{\partial}{\partial\theta}\frac{\partial s_{t}}{\partial\rho}=0,
\]
meaning that $\partial s_{t}/\partial\theta$ is independent of $\rho.$

Now, we will show in Section \ref{boundary.sec} that the first derivatives of
$s_{t}$ have the same value as we approach a point $\lambda\in\partial
\Sigma_{t}$ from the inside as when we approach $\lambda$ from the outside. We
can therefore give a complete description of the function $\partial
s_{t}/\partial\theta$ on $\Sigma_{t}$ as follows. It is the unique function on
$\Sigma_{t}$ that is independent of $\rho$ (or, equivalently, independent of
$r=\left\vert \lambda\right\vert $) and whose boundary values agree
\begin{equation}
\frac{\partial}{\partial\theta}\log(\left\vert \lambda-1\right\vert
^{2})=\frac{2b}{\left\vert \lambda-1\right\vert ^{2}}=\frac{2r\sin\theta
}{r^{2}+1-2r\cos\theta}. \label{angularDeriv}%
\end{equation}
Since the points on the outer boundary of $\Sigma_{t}$ have the polar form
$(r_{t}(\theta),\theta),$ we conclude that%
\[
\frac{\partial s_{t}}{\partial\theta}=\frac{2r_{t}(\theta)\sin\theta}%
{r_{t}(\theta)^{2}+1-2r_{t}(\theta)\cos\theta}.
\]
From this result, the expression (\ref{dstDrhoInside}), and the formula for
the Laplacian in logarithmic polar coordinates, we obtain%
\begin{align*}
\Delta s_{t}(\lambda)  &  =\frac{1}{\left\vert \lambda\right\vert ^{2}}\left(
\frac{\partial^{2}s_{t}}{\partial\rho^{2}}+\frac{\partial^{2}s_{t}}%
{\partial\theta^{2}}\right) \\
&  =\frac{1}{\left\vert \lambda\right\vert ^{2}}\left(  \frac{2}{t}%
+\frac{\partial}{\partial\theta}\frac{2r_{t}(\theta)\sin\theta}{r_{t}%
(\theta)^{2}+1-2r_{t}(\theta)\cos\theta}\right)
\end{align*}
for points inside $\Sigma_{t},$ accounting for the formula in Theorem
\ref{main.thm}.

We now briefly discuss what is needed to make the preceding arguments
rigorous. If $\lambda$ is outside $\overline{\Sigma}_{t}$ and $\varepsilon$ is
small and positive, we need to know that we can find a $\lambda_{0}$ close to
$\lambda$ and a small, positive $\varepsilon_{0}$ such that with these initial
conditions, $\varepsilon(t)=\varepsilon$ and $\lambda(t)=\lambda.$ To show
this, we apply the inverse function theorem to the map $U_{t}(\lambda
_{0},\varepsilon_{0}):=(\lambda(t),\varepsilon(t))$ in a neighborhood of the
point $(\lambda_{0},\varepsilon_{0})=(\lambda,0).$

For $\lambda$ inside $\Sigma_{t},$ we need to know first that $S(t,\lambda
,\varepsilon)$ is continuous---in all three variables---up to $\varepsilon=0.$
After all, $s_{t}(\lambda)$ is defined letting $\varepsilon$ tend to zero in
the expression $S(t,\lambda,\varepsilon),$ \textit{with }$t$\textit{ and
}$\lambda$\textit{ fixed}. But the Hamilton--Jacobi formula (\ref{Sformula})
gives a formula for $S(u,\lambda(u),\varepsilon(u))$, in which the first two
variables in $S$ are not remaining constant. Furthermore, we want to apply
also the Hamilton--Jacobi formula (\ref{Sderivatives}) for the derivatives of
$S,$ which means we need also continuity of the derivatives of $S$ with
respect to $\lambda$ up to $\varepsilon=0.$ Using another inverse function
theorem argument, we will show that after making the change of variable
$z=\sqrt{\varepsilon},$ the function $S$ will extend smoothly up to
$\varepsilon=z=0,$ from which the needed regularity will follow.

We use the following notation throughout the section.

\begin{notation}
We will let%
\begin{equation}
\varepsilon(t;\lambda_{0},\varepsilon_{0})\nonumber
\end{equation}
denote the $\varepsilon$-component of the solution to (\ref{theODEs}) with
$\lambda(0)=\lambda_{0}$ and $\varepsilon(0)=\varepsilon_{0}$ (and with
initial values of the momenta given by (\ref{initialConditions2})), and
similarly for the other components of the solution.
\end{notation}

\subsection{Outside $\overline{\Sigma}_{t}$\label{outside.sec}}

The goal of this subsection is to prove the following result.

\begin{theorem}
\label{outside.thm}Fix a pair $(t,\lambda)$ with $\lambda$ outside
$\overline{\Sigma}_{t}.$ Then
\begin{equation}
s_{t}(\lambda):=\lim_{\varepsilon\rightarrow0^{+}}S(t,\lambda,\varepsilon
)=\log(\left\vert \lambda-1\right\vert ^{2}). \label{outsideLimit}%
\end{equation}
Thus,%
\[
\Delta s_{t}(\lambda)=0
\]
whenever $\lambda$ is outside $\overline{\Sigma}_{t}.$
\end{theorem}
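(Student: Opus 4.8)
The plan is to use the Hamilton--Jacobi formula (\ref{Sformula}) together with the fact, established in Corollary \ref{smallx0.cor}, that for $\lambda$ outside $\overline{\Sigma}_t$ the lifetime $t_\ast(\lambda,0) = T(\lambda)$ is strictly greater than $t$. The key idea is that the Hamilton--Jacobi formula computes $S$ along a curve $(u,\lambda(u),x(u))$ whose endpoints are \emph{not} the fixed point $(t,\lambda)$ we care about; so I first need to arrange initial data $(\lambda_0,x_0)$, with $\lambda_0$ near $\lambda$ and $x_0$ small, so that the curve actually lands at $\lambda(t)=\lambda$ and $x(t)=x$ for a prescribed small $x>0$. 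Then I can take $x\to0^+$ in the resulting formula.

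First I would set up the relevant map. Define $U_t(\lambda_0,x_0) := (\lambda(t;\lambda_0,x_0), x(t;\lambda_0,x_0))$ for initial data near $(\lambda,0)$. By Proposition \ref{existence.prop}, when $x_0=0$ the solution is stationary, $\lambda(t)\equiv\lambda_0$ and $x(t)\equiv 0$, provided the lifetime exceeds $t$ --- which holds here since $T(\lambda)>t$ and lifetimes depend continuously on initial data, so the solution with $(\lambda_0,x_0)$ near $(\lambda,0)$ still exists on $[0,t]$. Thus $U_t(\lambda,0) = (\lambda,0)$. Next I would compute the differential of $U_t$ at $(\lambda,0)$ and check it is invertible; the natural change of variable is $z=\sqrt{x}$ (as flagged in the outline of Section \ref{outline.sec}), since $x(t)$ vanishes to second order in $x_0^{1/2}$-type scaling near $x_0=0$. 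Granting invertibility, the inverse function theorem gives, for every sufficiently small $x>0$, a choice of initial data $(\lambda_0(x),x_0(x))$ with $\lambda_0(x)\to\lambda$ and $x_0(x)\to 0$ as $x\to 0^+$, such that the Hamilton--Jacobi curve satisfies $\lambda(t)=\lambda$, $x(t)=x$.

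Now I would plug this into (\ref{Sformula}):
\[
S(t,\lambda,x) = \log(|\lambda_0(x)-1|^2 + x_0(x)) - \frac{x_0(x)\,t}{(|\lambda_0(x)-1|^2 + x_0(x))^2} + \log|\lambda| - \log|\lambda_0(x)|.
\]
Letting $x\to 0^+$: the first term tends to $\log(|\lambda-1|^2)$ (note $\lambda\neq 1$ since $1\in\Sigma_t$, so there is no singularity), the second term tends to $0$ because $x_0(x)\to 0$ and the denominator stays bounded away from $0$, and the last two terms cancel in the limit since $\lambda_0(x)\to\lambda$. This yields $s_t(\lambda) = \lim_{x\to 0^+} S(t,\lambda,x) = \log(|\lambda-1|^2)$, which is (\ref{outsideLimit}). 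Since $\log(|\lambda-1|^2)$ is harmonic on $\mathbb{C}\setminus\{1\}$ and $1\in\Sigma_t$ is not in the region under consideration, $\Delta s_t(\lambda) = 0$ for $\lambda$ outside $\overline{\Sigma}_t$, as claimed.

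The main obstacle I expect is the inverse function theorem step: verifying that the map $U_t$ (or its $z=\sqrt{x}$-regularized version) is a genuine local diffeomorphism near $(\lambda,0)$, which requires computing the linearization of the Hamiltonian flow at the degenerate initial point $x_0=0$ and checking its Jacobian is nonzero. One must be careful that at $x_0=0$ the momentum $p_{x}$ does not blow up within $[0,t]$ (it would blow up only at $t_\ast(\lambda,0)=T(\lambda)>t$, so we are safe), and that the $\sqrt{x}$ substitution genuinely smooths out the square-root behavior near the caustic. A secondary technical point is justifying uniform control of $x_0(x)$ and $\lambda_0(x)$ as $x\to 0^+$ so that the limit in the displayed formula may be taken term by term; this follows from continuity of the inverse map produced by the inverse function theorem. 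All remaining manipulations are the routine limit computation shown above.
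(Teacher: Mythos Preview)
Your approach is essentially the paper's: define $U_t(\lambda_0,x_0)=(\lambda(t),x(t))$, invoke the inverse function theorem at $(\lambda,0)$, and then pass to the limit in the Hamilton--Jacobi formula. Two points are worth flagging. First, the change of variable $z=\sqrt{x}$ is unnecessary here and is in fact reserved in the paper for the \emph{inside} analysis (Section~\ref{inside.sec}). For $\lambda\notin\overline{\Sigma}_t$ the solution exists past time $t$ with $p_x(t)$ finite and positive, and from $x(t)=x_0p_0^2e^{-Ct}/p_x(t)^2$ one reads off $\partial x(t)/\partial x_0\big|_{x_0=0}=p_0^2e^{-Ct}/p_x(t)^2>0$; together with $U_t(\lambda_0,0)=(\lambda_0,0)$ this gives an upper-triangular Jacobian with nonzero diagonal, so the inverse function theorem applies directly in the $(\lambda_0,x_0)$ variables. (To have $U_t$ defined on a full neighborhood of $(\lambda,0)$ one must allow slightly negative $x_0$, which is why Section~\ref{solving.sec} and Remark~\ref{negativeX0.remark} are set up to permit this.) Second, the paper treats the special point $\lambda=0$ separately, since the Hamilton--Jacobi formula (\ref{Sformula}) contains $\log|\lambda_0|$ and the argument of $\lambda(t)$ is undefined there; one instead uses the unreduced formula (\ref{HJformula}) with $\int_0^t x(s)p_x(s)\,ds$ and checks that this integral tends to zero as $x_0\to0$. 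With these adjustments your argument matches the paper's proof.
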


As we have discussed in Section \ref{outline.sec}, the idea is that for
$\lambda$ outside $\overline{\Sigma}_{t}$ and $\varepsilon$ small and
positive, we should try to find a $\lambda_{0}$ close to $\lambda$ and a
small, positive $\varepsilon_{0}$ such that $\varepsilon(u)$ and $\lambda(u)$
will approach $0$ and $\lambda,$ respectively, as $u$ approaches $t.$ To that
end, we define, for each $t>0,$ a map $U_{t}$ from an open subset of
$\mathbb{R}\times\mathbb{C}$ into $\mathbb{R}\times\mathbb{C}$ by%
\[
U_{t}(\lambda_{0},\varepsilon_{0})=(\lambda(t;\lambda_{0},\varepsilon
_{0}),\varepsilon(t;\lambda_{0},\varepsilon_{0})).
\]
We wish to evaluate the derivative of this map at the point $(\lambda
_{0},\varepsilon_{0})=(\lambda,0).$ For this idea to make sense,
$\lambda(t;\lambda_{0},\varepsilon_{0})$ and $\varepsilon(t;\lambda
_{0},\varepsilon_{0})$ must be defined in a neighborhood of $(\lambda,0)$; it
is for this reason that we have allowed $\varepsilon_{0}$ to be negative in
Section \ref{solving.sec}.

The domain of $U_{t}$ consists of pairs $(\lambda_{0},\varepsilon_{0})$ such
that (1) the assumptions (\ref{standingAssumptions}) are satisfied; (2) the
function $p_{\varepsilon}(\cdot)$ remains positive until it blows up, as in
Remark \ref{negativeX0.remark}; and (3) we have $t_{\ast}(\lambda
_{0},\varepsilon_{0})>t.$ We note that these conditions allow $\varepsilon
_{0}$ to be slightly negative and that all the results of Section
\ref{solving.sec} hold under these conditions. We also note that by
Proposition \ref{existence.prop}, if $\varepsilon_{0}=0$, then $\varepsilon
(t)\equiv0$ and $\lambda(t)\equiv\lambda_{0}$; thus,%
\begin{equation}
U_{t}(\lambda_{0},0)=(\lambda_{0},0). \label{UzeroLambda0}%
\end{equation}

We now fix a pair $(t,\lambda)$ with $\lambda$ outside of $\overline{\Sigma
}_{t}$ (so that $\lambda\neq1$). By Corollary \ref{smallx0.cor}, we then have
$t_{\ast}(\lambda,0)>t.$

\begin{lemma}
The Jacobian of $U_{t}$ at $(\lambda,0)$ has the form
\begin{equation}
U_{t}^{\prime}(\lambda,0)=\left(
\begin{array}
[c]{cc}%
I_{2\times2} & \frac{\partial\lambda}{\partial\varepsilon_{0}}(t;\lambda,0)\\
0 & \frac{\partial\varepsilon}{\partial\varepsilon_{0}}(t;\lambda,0)
\end{array}
\right)  \label{Uprime}%
\end{equation}
with $\partial\varepsilon/\partial\varepsilon_{0}(t;\lambda,0)>0.$ In
particular, the inverse function theorem applies at $(\lambda,0).$
\end{lemma}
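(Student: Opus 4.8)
The plan is to read off three of the four blocks of the Jacobian directly from the identity $U_t(\lambda_0,0)=(\lambda_0,0)$ in (\ref{UzeroLambda0}), and to compute the remaining block from the explicit formula (\ref{xOfT}) for $x(t)$. Before differentiating anything, I would first check that $U_t$ is smooth (at least $C^1$) in a neighbourhood of $(\lambda,0)$. Since $\lambda\notin\overline{\Sigma}_t$, Corollary \ref{smallx0.cor} gives $t_\ast(\lambda,0)>t$; and because the lifetime $t_\ast(\lambda_0,x_0)$ is continuous in $(\lambda_0,x_0)$ near $(\lambda,0)$ — it is given by the explicit formula (\ref{tStar2}), and for slightly negative $x_0$ the conditions of Remark \ref{negativeX0.remark} persist near $(\lambda,0)$ — there is a neighbourhood of $(\lambda,0)$ on which $t_\ast(\lambda_0,x_0)>t$. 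On that neighbourhood the solution of (\ref{theODEs}) exists on all of $[0,t]$, and standard smooth dependence of ODE solutions on initial conditions makes $U_t$ smooth there. The point is that Section \ref{solving.sec} was set up to allow $x_0$ to be slightly negative precisely so that $(\lambda,0)$ is an \emph{interior} point of the domain of $U_t$.

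Next I would differentiate (\ref{UzeroLambda0}) in the $\lambda_0$ directions. Writing $U_t(\lambda_0,x_0)=(\lambda(t;\lambda_0,x_0),x(t;\lambda_0,x_0))$ and observing that $\lambda_0\mapsto U_t(\lambda_0,0)=(\lambda_0,0)$ is the identity inclusion, we get $\partial\lambda/\partial\lambda_0(t;\lambda,0)=I_{2\times2}$ and $\partial x/\partial\lambda_0(t;\lambda,0)=0$. This is exactly the left-hand column of (\ref{Uprime}); the top-right entry is just the (for us irrelevant) vector $\partial\lambda/\partial x_0(t;\lambda,0)$.

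It then remains to evaluate $\partial x/\partial x_0(t;\lambda,0)$ and show it is positive. Using (\ref{xOfT}), $x(t;\lambda_0,x_0)=x_0\,p_0^2 e^{-Ct}/p_x(t)^2$, and since $x(t;\lambda,0)=0$ the derivative equals $\lim_{h\to0^{}}x(t;\lambda,h)/h=\lim_{h\to0}p_0^2 e^{-Ct}/p_x(t)^2$, where $p_0$, $C$, and $p_x(t)$ are computed with $x_0=h$. Each of these converges as $h\to0$ to its value at $x_0=0$ (the last by smooth dependence, using that the solution survives to time $t$), and at $x_0=0$ the quantity $p_x(t)$ is finite and strictly positive: when $x_0=0$ the $p_x$-equation in (\ref{pDot}) reduces to $\dot p_x=p_x^2 e^{Ct}$, so $p_x$ stays positive and increasing from $p_x(0)=p_0>0$. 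Hence $\partial x/\partial x_0(t;\lambda,0)=\big(p_0^2 e^{-Ct}/p_x(t)^2\big)\big|_{x_0=0}>0$, so $\det U_t'(\lambda,0)=\partial x/\partial x_0(t;\lambda,0)>0$ and the inverse function theorem applies at $(\lambda,0)$.

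The one genuinely delicate point — really bookkeeping rather than a new idea — is the smoothness of $U_t$ right up to $x_0=0$: one has to remember that $x_0$ was permitted to be slightly negative in Section \ref{solving.sec} so that $(\lambda,0)$ lies in the interior of the domain, and that the lifetime bound $t_\ast(\lambda,0)>t$ propagates to a whole neighbourhood by the continuity of $t_\ast$. Once that is in hand, the rest is a direct substitution into formulas already established in Section \ref{solving.sec}.
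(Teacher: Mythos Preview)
Your proof is correct and follows essentially the same route as the paper: the left block column of the Jacobian comes from differentiating the identity $U_t(\lambda_0,0)=(\lambda_0,0)$, and the bottom-right entry is computed by differentiating the formula (\ref{xOfT}) for $x(t)$ at $x_0=0$, yielding $p_0^2 e^{-Ct}/p_x(t)^2>0$. You are more explicit than the paper about why $U_t$ is smooth in a full neighbourhood of $(\lambda,0)$ (invoking the continuity of $t_\ast$ and the slightly-negative-$x_0$ setup of Section~\ref{solving.sec}), which is a welcome clarification; note only that the reduction ``$\dot p_x=p_x^2 e^{Ct}$ at $x_0=0$'' does not come directly from (\ref{pDot}) (which is $0/0$ there) but from the rewritten equation $\dot p_x=p_x^2 e^{Ct}-x_0p_0^2 e^{-Ct}$ derived just after (\ref{pDot}), or more simply from the positivity of $p_x$ established in Proposition~\ref{pxSolution.prop}.
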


\begin{proof}
The claimed form of the second column of $U_{t}^{\prime}(\lambda,0)$ follows
immediately from (\ref{UzeroLambda0}). We then compute from (\ref{xOfT}) that%
\begin{align}
\frac{\partial\varepsilon(t;\lambda_{0},\varepsilon_{0})}{\partial
\varepsilon_{0}}(0,\lambda_{0})  &  =\left.  \frac{1}{p_{\varepsilon}(t)^{2}%
}p_{0}^{2}e^{-Ct}+\varepsilon_{0}\frac{\partial}{\partial\varepsilon_{0}%
}\left[  \frac{1}{p_{\varepsilon}(t)^{2}}p_{0}^{2}e^{-Ct}\right]  \right\vert
_{\varepsilon_{0}=0}\nonumber\\
&  =\frac{1}{p_{\varepsilon}(t)^{2}}p_{0}^{2}e^{-Ct}, \label{dxdx0}%
\end{align}
which is positive.
\end{proof}

\begin{proof}
[Proof of Theorem \ref{outside.thm}]We note that the inverse of the matrix in
(\ref{Uprime}) will have a positive entry in the bottom right corner, meaning
that $U_{t}^{-1}$ has the property that $\partial\varepsilon_{0}%
/\partial\varepsilon>0.$ It follows that the $\varepsilon_{0}$-component of
$U_{t}^{-1}(\lambda,\varepsilon)$ will be positive for $\varepsilon$ small and
positive. In that case, the solution to the system (\ref{theODEs}) will have
$\varepsilon(u)>0$ up to the blow-up time. The blow-up time, in turn, exceeds
$t$ for all points in the domain of $U_{t}.$

We may, therefore, apply the Hamilton--Jacobi formula (\ref{Sformula}), which
we write as follows. We let $\mathrm{HJ}$ denote the right-hand side of the
Hamilton--Jacobi formula (\ref{Sformula}):%
\begin{align}
\mathrm{HJ}(t,\lambda_{0},\varepsilon_{0})  &  =\log(\left\vert \lambda
_{0}-1\right\vert ^{2}+\varepsilon_{0})-\frac{\varepsilon_{0}t}{(\left\vert
\lambda_{0}-1\right\vert ^{2}+\varepsilon_{0})^{2}}\nonumber\\
&  +\log\left\vert \lambda(t;\lambda_{0},\varepsilon_{0})\right\vert
-\log\left\vert \lambda_{0}\right\vert \label{scriptS}%
\end{align}
and we then have%
\begin{equation}
S(t,\lambda(t;\lambda_{0},\varepsilon_{0}),\varepsilon(t;\lambda
_{0},\varepsilon_{0}))=\mathrm{HJ}(t,\lambda_{0},\varepsilon_{0}).
\label{Sformula2}%
\end{equation}
If $\varepsilon$ is small and positive, we therefore obtain%
\[
S(t,\lambda,\varepsilon)=\mathrm{HJ}(t,U_{t}^{-1}(\lambda,\varepsilon)),
\]
where we note that by definition $\lambda(t;U_{t}^{-1}(\lambda,\varepsilon
))=\lambda.$

Now, in the limit $\varepsilon\rightarrow0^{+}$ with $\lambda$ fixed, the
inverse function theorem tells us that $U_{t}^{-1}(\varepsilon,\lambda
)\rightarrow(0,\lambda).$ Thus, the limit (\ref{outsideLimit}) may be computed
by putting $\lambda(t;\lambda_{0},\varepsilon_{0})=\lambda$ in
(\ref{Sformula2}) and letting $\varepsilon_{0}$ tend to zero and $\lambda_{0}$
tend to $\lambda.$ This process gives (\ref{outsideLimit}).

Finally, when $\lambda_{0}=0,$ we can use continuous dependence of the
solutions on the initial conditions. The formula for $p_{\varepsilon}(t)$ in
Proposition \ref{pxSolution.prop} has a limit as $\left\vert \lambda
_{0}\right\vert $ tends to zero, so that $\delta$ tends to $+\infty.$ From
(\ref{aOverY0}), we find that $a^{2}=y_{0}^{2},$ so that from
(\ref{quadraticSolution}), $y(t)\equiv y_{0}.$ We then obtain%
\[
p_{\varepsilon}(t)=e^{-Ct}p_{0},
\]
which remains nonsingular for all $t.$ We can then continue to use the formula
(\ref{xOfT}) for $\varepsilon(t).$ Furthermore, by exponentiating
(\ref{logLambdOfT}) and letting $\left\vert \lambda_{0}\right\vert $ tend to
zero, we find that $\lambda(t)\equiv0.$ We then continue to use the remaining
formulas in the proof of Proposition \ref{existence.prop} and find that the
solution to the system exists for all time.

When $\lambda_{0}=0,$ we apply the Hamilton--Jacobi formula in the form
(\ref{HJformula}), which is to say that we replace the last two terms in
(\ref{Sformula}) by $\int_{0}^{t}\varepsilon(s)p_{\varepsilon}(s)~ds.$ We then
compute as in (\ref{dxdx0}) that the derivative of $\varepsilon
(t;0,\varepsilon_{0})$ with respect to $\varepsilon_{0}$ is positive at
$\varepsilon_{0}=0.$ Thus, by the inverse function theorem, for small positive
$\varepsilon,$ we can find a small positive $\varepsilon_{0}$ that gives
$\varepsilon(t;0,\varepsilon_{0})=\varepsilon.$ We then apply (\ref{HJformula}%
) with $\lambda_{0}=0$ and $\lambda(t)=0,$ and let $\varepsilon$ tend to zero,
which means that $\varepsilon_{0}$ also tends to zero. As $\varepsilon_{0}$
tends to zero, the function
\[
\varepsilon(s)p_{\varepsilon}(s)=\frac{\varepsilon(s)p_{\varepsilon}(s)^{2}%
}{p_{\varepsilon}(s)}=\frac{\varepsilon_{0}p_{0}^{2}e^{-Cs}}{p_{\varepsilon
}(s)}%
\]
tends to zero uniformly and we obtain (\ref{outsideLimit}).
\end{proof}

\subsection{Inside $\Sigma_{t}$\label{inside.sec}}

In this subsection, we establish the needed regularity of $S(t,\lambda
,\varepsilon)$ as $\varepsilon$ tends to zero, for $\lambda$ in $\Sigma_{t}.$
This result, whose proof is on p. \pageref{stildeProof}, together with Theorem
\ref{atTimeTstar.thm}, will allow us to understand the structure of $s_{t}$
and its derivatives on $\Sigma_{t}.$

\begin{theorem}
\label{StildeExtension.thm}Define%
\[
\tilde{S}(t,\lambda,z)=S(t,\lambda,z^{2}),\quad z>0.
\]
Fix a pair $(\sigma,\mu)$ with $\mu$ in $\Sigma_{\sigma}.$ Then $\tilde
{S}(t,\lambda,z),$ initially defined for $z>0,$ extends to a real-analytic
function in a neighborhood of $(\sigma,\mu,0)$ inside $\mathbb{R}%
\times\mathbb{C}\times\mathbb{R}.$
\end{theorem}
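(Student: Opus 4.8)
The plan is to prove Theorem \ref{StildeExtension.thm} by an inverse function theorem argument, running the Hamilton--Jacobi flow \emph{backwards} in the variable $z=\sqrt{x}$. Fix $(\sigma,\mu)$ with $\mu\in\Sigma_\sigma$. By Theorem \ref{surjectivity.thm} there is a unique pair $(\lambda_0,x_0)=(\Lambda_0^\sigma(\mu),X_0^\sigma(\mu))$ with $x_0>0$ such that the solution of (\ref{theODEs}) with these initial data reaches $\lambda(u)\to\mu$, $x(u)\to 0$ as $u\to\sigma$. The idea is that for a given target time $t$ near $\sigma$ and a small $z>0$, we want to solve for initial data $(\lambda_0,x_0)$ so that the flow reaches $\lambda(t)=\lambda$ and $x(t)=z^2$; once this is done, the Hamilton--Jacobi formula (\ref{Sformula}) expresses $S(t,\lambda,z^2)=\tilde S(t,\lambda,z)$ as an explicit real-analytic function of the initial data, and composing with the (real-analytic) inverse of the flow map gives the desired extension. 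The delicate point is that at $z=0$ the solution has $x(\sigma)=0$, where $p_x$ blows up, so the flow map in the coordinates $(\lambda_0,x_0)$ is singular; the remedy, as the statement of the theorem suggests, is to parametrize instead by $z$, i.e.\ to use $\sqrt{x}$-type coordinates in which the flow near the terminal time becomes regular.

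Concretely, I would proceed as follows. First, reparametrize the initial data: instead of $x_0$ use $z_0=\sqrt{x_0}$, and keep $\lambda_0$ (equivalently $(a_0,b_0)$) and now also the \emph{terminal} time $t$ as parameters. From Proposition \ref{pxSolution.prop} and equation (\ref{xOfT}), $x(t;\lambda_0,z_0^2)=x_0p_0^2 e^{-Ct}/p_x(t)^2$, and near the blow-up time $t_\ast(\lambda_0,x_0)$ the denominator $p_x(t)^2$ has a simple zero in $t$ (the hyperbolic-cosine-minus-hyperbolic-sine denominator in (\ref{pxFormula}) vanishes simply), so $x(t)$ has a simple zero there as well; hence $\sqrt{x(t)}$ is a real-analytic function of $t$ near $t_\ast$ that vanishes to first order, and its $t$-derivative at $t_\ast$ is nonzero. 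This is what makes the map
\[
(\lambda_0,z_0)\ \longmapsto\ \bigl(\lambda(t_\ast(\lambda_0,z_0^2);\lambda_0,z_0^2),\ \text{(sign)}\sqrt{x}\text{-data}\bigr)
\]
and its time-shifted variants real-analytic up to the terminal slice. I would package this into a single map
\[
\Theta:\ (\lambda_0,z_0,t)\ \longmapsto\ \bigl(\lambda(t;\lambda_0,z_0^2),\,\varepsilon(t;\lambda_0,z_0^2),\,t\bigr),
\]
where $\varepsilon=\pm\sqrt{x(t)}$ with the sign chosen so that $\varepsilon$ is real-analytic across the terminal time (it changes sign as $t$ crosses $t_\ast$, matching the simple zero of $x(t)$). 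Then show $\Theta$ is real-analytic in a full neighborhood of the point $(\lambda_0,z_0,t)=(\Lambda_0^\sigma(\mu),0,\sigma)$ in $\mathbb{C}\times\mathbb{R}\times\mathbb{R}$, and compute its Jacobian there.

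The main obstacle, and the heart of the argument, is showing that the Jacobian of $\Theta$ at this base point is invertible. The $t$-component is trivial, so one needs the $3\times 3$ block $\partial(\lambda,\varepsilon)/\partial(\lambda_0,z_0)$ at $z_0=0$, $t=\sigma$ to be nonsingular. For the $\lambda$-part one can use that the argument $\arg\lambda$ is conserved (Proposition \ref{constantsOfMotion.prop}) and that, along $\partial\Sigma_t$, $\lambda_t(\lambda_0)=\lambda_0$ with $|\lambda_t(\lambda_0)|$ strictly increasing in $|\lambda_0|$ (Lemma \ref{lambda_t.lem} and formula (\ref{lambdaTformula2})); differentiating these relations controls the dependence of the terminal $\lambda$ on $\lambda_0$. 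For the $\varepsilon$-part one uses the simple-zero structure of $x(t)$ in $t$ established above, which gives $\partial\varepsilon/\partial z_0\neq 0$ at the base point (a small positive $z_0$ pushes the zero of $x$ past $\sigma$, making $\varepsilon(\sigma)$ genuinely nonzero and proportional to $z_0$ to leading order). Once $\Theta$ is invertible there, its local inverse is real-analytic, so $(\lambda_0,z_0)$ become real-analytic functions of $(\lambda,\varepsilon,t)$ near $(\mu,0,\sigma)$; substituting into the Hamilton--Jacobi formula (\ref{Sformula}),
\[
S(t,\lambda,\varepsilon^2)=\log(|\lambda_0-1|^2+z_0^2)-\frac{z_0^2 t}{(|\lambda_0-1|^2+z_0^2)^2}+\log|\lambda|-\log|\lambda_0|,
\]
which is manifestly real-analytic in $(\lambda_0,z_0,t,\lambda)$, exhibits $\tilde S(t,\lambda,z)$ (take $\varepsilon=z$) as a composition of real-analytic maps, hence real-analytic in a neighborhood of $(\sigma,\mu,0)$. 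The one remaining technical check is that for $z>0$ small this construction genuinely reproduces the original $S(t,\lambda,z^2)$ rather than some other branch — this follows from the uniqueness clause in Theorem \ref{surjectivity.thm} together with continuity in $z$, since for $z>0$ the solution is the honest forward flow with positive $x_0$.
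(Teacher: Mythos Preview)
Your overall strategy---defining a map from initial data to terminal $(\lambda,\text{signed }\sqrt{x})$, applying the inverse function theorem, and composing the local inverse with the Hamilton--Jacobi formula (\ref{Sformula})---is exactly the paper's approach. However, you have a genuine confusion about the base point that, as written, breaks the argument. You correctly note in your first paragraph that the relevant initial data from Theorem \ref{surjectivity.thm} satisfy $x_0=X_0^\sigma(\mu)>0$, but you then state the base point of $\Theta$ as $(\Lambda_0^\sigma(\mu),0,\sigma)$, i.e.\ $z_0=0$, and your Jacobian discussion (``a small positive $z_0$ pushes the zero of $x$ past $\sigma$'') is written as if $z_0=0$ there. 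That cannot be right: with $x_0=0$ and $\lambda_0=\Lambda_0^\sigma(\mu)\in\Sigma_\sigma$ the lifetime is $t_\ast(\lambda_0,0)=T(\lambda_0)<\sigma$ (Corollary \ref{smallx0.cor}), so the flow does not even reach time $\sigma$. The correct base point has $z_0=\sqrt{X_0^\sigma(\mu)}>0$; since $x_0>0$ there, the square-root reparametrization of the \emph{initial} data is unnecessary---the paper simply uses $x_0$---and the only square root needed is on the \emph{terminal} side (your $\varepsilon$, the paper's $z(t;\lambda_0,x_0)=\sqrt{x_0}\,p_0e^{-Ct/2}/p_x(t)$). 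Once the base point is fixed, the Jacobian computation goes through; the paper carries it out in the variables $(t,\theta_0,r_0,\delta)$, noting that the lifetime depends only on $(\theta_0,\delta)$ so $\partial z/\partial r_0=0$ at the base, checking $\partial\rho/\partial r_0>0$ from (\ref{lambdaTformula2}), and verifying $\partial z/\partial x_0\neq 0$ by implicitly differentiating $z(t_\ast(\lambda_0,x_0);\lambda_0,x_0)=0$ and using $\partial t_\ast/\partial x_0>0$ together with a direct sign check of $\partial z/\partial t$ at $t_\ast$.

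A smaller slip: you write that ``$x(t)$ has a simple zero'' at $t_\ast$. In fact $p_x$ has a simple pole there, so $x=x_0p_0^2e^{-Ct}/p_x^2$ has a \emph{double} zero in $t$; this is precisely why $\sqrt{x(t)}$ (your $\varepsilon$) is real-analytic with a simple zero and changes sign across $t_\ast$.
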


We emphasize that the analytically extended $\tilde{S}$ \textit{does not}
satisfy the identity $\tilde{S}(t,\lambda,z)=S(t,\lambda,z^{2}).$ Indeed,
since $\sqrt{\varepsilon(t)}p_{\varepsilon}(t)$ is always bounded away from
zero (Proposition \ref{xp2.prop}), the second Hamilton--Jacobi formula
(\ref{Sderivatives}) tells us that $\partial\tilde{S}/\partial z(t,\lambda
,z)=2\sqrt{\varepsilon}\partial S/\partial\varepsilon(t,\lambda,z^{2})$ has a
nonzero limit as $z$ tends to zero, ruling out a smooth extension that is even
in $z.$

\begin{corollary}
\label{StildeExtension.cor}Fix a pair $(\sigma,\mu)$ with $\mu$ in
$\Sigma_{\sigma}.$ Then the functions%
\begin{equation}
S(t,\lambda,\varepsilon),\quad\frac{\partial S}{\partial a}(t,\lambda
,\varepsilon),\quad\frac{\partial S}{\partial b}(t,\lambda,\varepsilon
),\quad\sqrt{\varepsilon}\frac{\partial S}{\partial\varepsilon}(t,\lambda
,\varepsilon) \label{fourFunctions}%
\end{equation}
all have extensions that are continuous in all three variables to the set of
$(t,\lambda,\varepsilon)$ with $\lambda\in\Sigma_{t}$ and $\varepsilon\geq0.$
Furthermore, for each $t>0,$ the function $s_{t}$ is infinitely differentiable
on $\Sigma_{t},$ and its derivatives with respect to $a$ and $b$ agree with
the $\varepsilon\rightarrow0^{+}$ limit of $\partial S/\partial a$ and
$\partial S/\partial b.$ If we let $t_{\ast}$ be short for $t_{\ast}%
(\lambda_{0},\varepsilon_{0}),$ then for all $\lambda_{0}$ and $\varepsilon
_{0}>0,$ we have
\begin{align*}
s_{t}(t_{\ast},\lambda(t_{\ast};\lambda_{0},\varepsilon_{0}))  &
=\log(\left\vert \lambda_{0}-1\right\vert ^{2}+\varepsilon_{0})-\frac
{\varepsilon_{0}t_{\ast}}{(\left\vert \lambda_{0}-1\right\vert ^{2}%
+\varepsilon_{0})^{2}}\\
&  +\log\left\vert \lambda(t_{\ast};\lambda_{0},\varepsilon_{0})\right\vert
-\log\left\vert \lambda_{0}\right\vert
\end{align*}
and%
\begin{align}
\frac{\partial s_{t}}{\partial a}(t_{\ast},\lambda(t_{\ast};\lambda
_{0},\varepsilon_{0}))  &  =\lim_{t\rightarrow t_{\ast}}p_{a}(t)\nonumber\\
\frac{\partial s_{t}}{\partial b}(t_{\ast},\lambda(t_{\ast};\lambda
_{0},\varepsilon_{0}))  &  =\lim_{t\rightarrow t_{\ast}}p_{b}(t).
\label{dsdaLim}%
\end{align}

\end{corollary}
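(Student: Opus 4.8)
The plan is to deduce the Corollary from Theorem \ref{StildeExtension.thm} (the real-analytic extension of $\tilde S$ past $z=0$) together with the Hamilton--Jacobi identities of Theorem \ref{HJ.thm}. Throughout, note first that for $x>0$ the function $S(t,\lambda,x)$ and its $a$-, $b$-, and $x$-derivatives are jointly real-analytic in $(t,\lambda,x)$: this follows from the norm-continuity of $t\mapsto b_t$ (\cite[Proposition 3.2.3]{BS1}), analyticity of the resolvent, and the holomorphy in $x$ on the right half-plane established in the proof of Lemma \ref{tDerivative.lem}. Hence the only content in the first assertion concerns points with $x=0$ and $\lambda\in\Sigma_t$.

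So I would fix a base point $(\sigma,\mu,0)$ with $\mu\in\Sigma_\sigma$ and let $\tilde S(t,\lambda,z)$ denote the real-analytic extension, to a neighborhood $U$ of $(\sigma,\mu,0)$ in $\mathbb R\times\mathbb C\times\mathbb R$, of the function $S(t,\lambda,z^2)$ ($z>0$) provided by Theorem \ref{StildeExtension.thm}. For $x\ge 0$ with $(t,\lambda,\sqrt x)\in U$ we have $S(t,\lambda,x)=\tilde S(t,\lambda,\sqrt x)$, and since $x\mapsto\sqrt x$ is continuous on $[0,\infty)$ this exhibits a continuous extension of $S$ up to $x=0$, with $S(t,\lambda,0)=\tilde S(t,\lambda,0)$. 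By the chain rule, for $x>0$,
\[
\frac{\partial S}{\partial a}(t,\lambda,x)=\frac{\partial\tilde S}{\partial a}(t,\lambda,\sqrt x),\qquad
\frac{\partial S}{\partial b}(t,\lambda,x)=\frac{\partial\tilde S}{\partial b}(t,\lambda,\sqrt x),\qquad
\sqrt x\,\frac{\partial S}{\partial x}(t,\lambda,x)=\tfrac12\,\frac{\partial\tilde S}{\partial z}(t,\lambda,\sqrt x),
\]
and the right-hand sides are continuous up to $z=0$; patching over all base points $(\sigma,\mu,0)$ gives the continuous extensions of the four functions in (\ref{fourFunctions}) to $\{(t,\lambda,x):\lambda\in\Sigma_t,\ x\ge 0\}$. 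In particular $s_t(\lambda)=\lim_{x\to 0^+}S(t,\lambda,x)=\tilde S(t,\lambda,0)$, which is real-analytic, hence $C^\infty$, in $\lambda\in\Sigma_t$; differentiating the identity $s_t(\lambda)=\tilde S(t,\lambda,0)$ in $a$ and $b$ identifies $\partial s_t/\partial a$ and $\partial s_t/\partial b$ with the $x\to 0^+$ limits of $\partial S/\partial a$ and $\partial S/\partial b$, respectively.

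For the Hamilton--Jacobi formulas, fix $\lambda_0\ne 0$ and $x_0>0$ and write $t_\ast=t_\ast(\lambda_0,x_0)$; by Proposition \ref{existence.prop} the characteristic exists on $[0,t_\ast)$ with $x(t)>0$ there and $x(t)\to 0$, while $\lambda(t)\to\lambda(t_\ast):=\lambda(t_\ast;\lambda_0,x_0)$. I would first check $\lambda(t_\ast)\in\Sigma_{t_\ast}$: by Propositions \ref{monotoneTstar.prop} and \ref{smallx0.prop}, $T(\lambda_0)=t_\ast(\lambda_0,0)<t_\ast$, so $\lambda_0\in\Sigma_{t_\ast}$ by Theorem \ref{domainGobbles.thm}; since $x_0$ is then the unique positive value with $t_\ast(\lambda_0,\cdot)=t_\ast$, we have $\lambda(t_\ast)=\lambda_{t_\ast}(\lambda_0)$ in the notation of Lemma \ref{lambda_t.lem}, which maps $\Sigma_{t_\ast}$ into itself. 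Hence $(t_\ast,\lambda(t_\ast),0)$ is a base point of the type treated above, so for $t$ near $t_\ast$ the triple $(t,\lambda(t),\sqrt{x(t)})$ lies in the corresponding neighborhood $U$ and
\[
S(t,\lambda(t),x(t))=\tilde S(t,\lambda(t),\sqrt{x(t)})\ \longrightarrow\ \tilde S(t_\ast,\lambda(t_\ast),0)=s_{t_\ast}(\lambda(t_\ast)),
\]
and likewise $\partial S/\partial a(t,\lambda(t),x(t))\to\partial s_{t_\ast}/\partial a(\lambda(t_\ast))$ and similarly for $b$, all as $t\to t_\ast^-$. Passing to the limit $t\to t_\ast^-$ in (\ref{Sformula}) and (\ref{Sderivatives}) — valid for $0\le t<t_\ast$ since $x(t)>0$ — and using that the explicit right-hand side of (\ref{Sformula}) converges (with $\log|\lambda(t)|\to\log|\lambda(t_\ast)|$) then yields the displayed formula for $s_{t_\ast}(\lambda(t_\ast;\lambda_0,x_0))$ and the identities (\ref{dsdaLim}).

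The substantive ingredient is Theorem \ref{StildeExtension.thm}; granting it, the only point requiring real care is ensuring that the characteristic arrives at a point of the \emph{open} region $\Sigma_{t_\ast}$ at time $t_\ast$, so that the analytic extension of $\tilde S$ is available along the terminal portion of the curve — and that is exactly the monotonicity-and-surjectivity package of Propositions \ref{monotoneTstar.prop}, \ref{smallx0.prop} and Lemma \ref{lambda_t.lem}. I anticipate no other obstacle.
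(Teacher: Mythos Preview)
Your proposal is correct and follows essentially the same approach as the paper: express the four functions in (\ref{fourFunctions}) as $\tilde S$, $\partial\tilde S/\partial a$, $\partial\tilde S/\partial b$, and $\tfrac12\,\partial\tilde S/\partial z$ evaluated at $(t,\lambda,\sqrt x)$, invoke Theorem \ref{StildeExtension.thm} for continuity up to $x=0$, and then pass to the limit $t\to t_\ast^-$ in the Hamilton--Jacobi formulas (\ref{Sformula}) and (\ref{Sderivatives}). Your explicit verification that $\lambda(t_\ast)\in\Sigma_{t_\ast}$ (via Propositions \ref{monotoneTstar.prop}, \ref{smallx0.prop} and Lemma \ref{lambda_t.lem}) is a point the paper's terse proof leaves implicit, so your version is in fact more complete.
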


\begin{proof}
We note that the four functions in (\ref{fourFunctions}) may be computed as%
\[
\tilde{S}(t,\lambda,\sqrt{\varepsilon}),\quad\frac{\partial\tilde{S}}{\partial
a}(t,\lambda,\sqrt{\varepsilon}),\quad\frac{\partial\tilde{S}}{\partial
b}(t,\lambda,\sqrt{\varepsilon}),\quad\frac{1}{2}\frac{\partial\tilde{S}%
}{\partial z}(t,\lambda,\sqrt{\varepsilon}),
\]
respectively, and that $S(t,\lambda,0)=\tilde{S}(t,\lambda,0).$ The first
claim then follows from Theorem \ref{StildeExtension.thm}. Now that the
continuity of $S$ and its derivatives has been established, we may let $t$
approach $t_{\ast}(\lambda_{0},\varepsilon_{0})$ in the Hamilton--Jacobi
formulas (\ref{Sformula}) and (\ref{Sderivatives}) to obtain the second claim.
\end{proof}

\begin{corollary}
\label{sLaplacian.cor}Let us write $\lambda\in\Sigma_{t}$ in logarithmic polar
coordinates, with $\rho=\log\left\vert \lambda\right\vert $ and $\theta
=\arg\lambda.$ Then for each pair $(t,\lambda)$ with $\lambda\in\Sigma_{t},$
we have%
\begin{equation}
\frac{\partial s_{t}}{\partial\rho}(t,\lambda)=\frac{2\rho}{t}+1.
\label{dsdRho}%
\end{equation}
Furthermore, $\partial s_{t}/\partial\theta$ is independent of $\rho$; that
is,%
\[
\frac{\partial s_{t}}{\partial\theta}=m_{t}(\theta),
\]
for some smooth function $m_{t}.$ Thus,
\begin{equation}
\frac{\partial^{2}s_{t}}{\partial\rho^{2}}+\frac{\partial^{2}s_{t}}%
{\partial\theta^{2}}=\frac{2}{t}+\frac{\partial}{\partial\theta}m_{t}(\theta)
\label{sTwoDerivs}%
\end{equation}
for some smooth function $m_{t}$, and
\begin{equation}
\left(  \frac{\partial^{2}}{\partial a^{2}}+\frac{\partial^{2}}{\partial
b^{2}}\right)  s_{t}(\lambda)=\frac{1}{\left\vert \lambda\right\vert ^{2}%
}\left(  \frac{2}{t}+\frac{\partial}{\partial\theta}m_{t}(\theta)\right)  .
\label{sLaplacian}%
\end{equation}

\end{corollary}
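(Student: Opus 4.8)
The plan is to obtain Corollary~\ref{sLaplacian.cor} by combining the surjectivity result (Theorem~\ref{surjectivity.thm}), the regularity of $s_{t}$ up to $x=0$ together with the Hamilton--Jacobi identifications in Corollary~\ref{StildeExtension.cor}, and the constant-of-motion identity (\ref{apa}) of Theorem~\ref{atTimeTstar.thm}. Fix $t>0$ and $\lambda\in\Sigma_{t}$, and set $\lambda_{0}=\Lambda_{0}^{t}(\lambda)\in\Sigma_{t}$ and $x_{0}=X_{0}^{t}(\lambda)>0$ as in Theorem~\ref{surjectivity.thm}, so that the solution of (\ref{theODEs}) with these initial conditions exists on $[0,t)$ with $x(u)\to0$ and $\lambda(u)\to\lambda$ as $u\to t^{-}$, and $t_{\ast}(\lambda_{0},x_{0})=t$. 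I will repeatedly use the elementary operator identity $a\,\partial/\partial a+b\,\partial/\partial b=r\,\partial/\partial r=\partial/\partial\rho$, valid for functions on the plane written in polar or logarithmic-polar coordinates $\rho=\log r$, $\theta=\arg\lambda$.

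First I would prove (\ref{dsdRho}). By Corollary~\ref{StildeExtension.cor}, $s_{t}$ is $C^{\infty}$ on $\Sigma_{t}$, and by (\ref{dsdaLim}) applied with $t_{\ast}=t$ and $\lambda(t_{\ast};\lambda_{0},x_{0})=\lambda$ we have $\partial s_{t}/\partial a(\lambda)=\lim_{u\to t}p_{a}(u)$ and $\partial s_{t}/\partial b(\lambda)=\lim_{u\to t}p_{b}(u)$. Since also $a(u)\to a$ and $b(u)\to b$, it follows that
\[
\left(a\frac{\partial s_{t}}{\partial a}+b\frac{\partial s_{t}}{\partial b}\right)(\lambda)=\lim_{u\to t}\bigl(a(u)p_{a}(u)+b(u)p_{b}(u)\bigr).
\]
By (\ref{apa}) in Theorem~\ref{atTimeTstar.thm}, the right-hand side equals $\lim_{u\to t}\tfrac{2\log|\lambda(u)|}{u}+1=\tfrac{2\log|\lambda|}{t}+1$, using $\lambda(u)\to\lambda$ and $t_{\ast}=t$. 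Rewriting the left-hand side as $\partial s_{t}/\partial\rho$ and the right-hand side in terms of $\rho=\log|\lambda|$ gives exactly $\partial s_{t}/\partial\rho=2\rho/t+1$, which is (\ref{dsdRho}).

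Next I would deduce the $\rho$-independence of $\partial s_{t}/\partial\theta$ and the two displayed Laplacian formulas. Since $s_{t}\in C^{2}(\Sigma_{t})$ and the right-hand side of (\ref{dsdRho}) does not involve $\theta$, equality of mixed partials gives $\partial/\partial\rho\bigl(\partial s_{t}/\partial\theta\bigr)=\partial/\partial\theta\bigl(\partial s_{t}/\partial\rho\bigr)=0$ on $\Sigma_{t}$. By Point~\ref{radialInterval.point} of Theorem~\ref{regionProperties.thm}, for each admissible angle $\theta$ the intersection of $\Sigma_{t}$ with the ray of argument $\theta$ is the single interval $1/r_{t}(\theta)<r<r_{t}(\theta)$; since $\partial s_{t}/\partial\theta$ has vanishing $\rho$-derivative on this connected set, it is constant there, so $\partial s_{t}/\partial\theta=m_{t}(\theta)$ for a function of $\theta$ alone. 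For every admissible $\theta$ one has $2-2\cos\theta<t$, hence $T(e^{i\theta})<t$ and $e^{i\theta}\in\Sigma_{t}$ by Theorem~\ref{domainGobbles.thm}; therefore $m_{t}(\theta)=(\partial s_{t}/\partial\theta)(e^{i\theta})$ is a smooth function of $\theta$ because $s_{t}$ is smooth. Differentiating (\ref{dsdRho}) gives $\partial^{2}s_{t}/\partial\rho^{2}=2/t$, together with $\partial^{2}s_{t}/\partial\theta^{2}=\partial m_{t}/\partial\theta$, which is (\ref{sTwoDerivs}). Finally, since $r\,\partial/\partial r=\partial/\partial\rho$, the polar-coordinate formula (\ref{lapPolar}) for the Laplacian reads $\Delta=\tfrac{1}{|\lambda|^{2}}\bigl(\partial^{2}/\partial\rho^{2}+\partial^{2}/\partial\theta^{2}\bigr)$, and combining this with (\ref{sTwoDerivs}) yields (\ref{sLaplacian}).

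The substantive analytic input---that $S$ and its $a$- and $b$-derivatives extend continuously to $x=0$ on $\Sigma_{t}$, and that the $x\to0^{+}$ limits of $p_{a},p_{b}$ are the corresponding derivatives of $s_{t}$---is precisely Corollary~\ref{StildeExtension.cor} (resting on Theorem~\ref{StildeExtension.thm}), and the key identity (\ref{apa}) is already established in Theorem~\ref{atTimeTstar.thm}. Given these, the present corollary is essentially bookkeeping; the only steps requiring any care are passing the fixed limiting values $a,b$ inside the limit in $u$ (immediate from continuity of $u\mapsto\lambda(u)$) and invoking connectedness of the radial slices of $\Sigma_{t}$ to upgrade the identity $\partial_{\rho}\partial_{\theta}s_{t}=0$ to genuine $\rho$-independence of $\partial_{\theta}s_{t}$. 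I therefore do not anticipate a real obstacle here beyond correctly assembling the earlier results.
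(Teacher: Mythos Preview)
Your proof is correct and follows essentially the same route as the paper's own proof: use Theorem~\ref{surjectivity.thm} to pick $(\lambda_0,x_0)$, invoke Corollary~\ref{StildeExtension.cor} to identify $\partial s_t/\partial a$ and $\partial s_t/\partial b$ with $\lim p_a$ and $\lim p_b$, apply the identity (\ref{apa}) to obtain (\ref{dsdRho}), and then use equality of mixed partials together with the polar Laplacian formula. You add a couple of small clarifications the paper leaves implicit---the connectedness of the radial slices (via Theorem~\ref{regionProperties.thm}) and the smoothness of $m_t$ by evaluating along the unit circle---but the argument is otherwise the same.
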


In Section \ref{boundary.sec}, we will obtain a formula for the function
$m_{t}(\theta)$ appearing in Corollary \ref{sLaplacian.cor}.

\begin{proof}
The derivative $\partial/\partial\rho$ may be computed in ordinary polar
coordinates as $r\partial/\partial r$ or in rectangular coordinates as
$a\partial/\partial a+b\partial/\partial b.$ It then follows from the
Hamilton--Jacobi formula (\ref{Sderivatives}) that%
\[
\left(  a\frac{\partial S}{\partial a}+b\frac{\partial S}{\partial b}\right)
(t,\lambda(t),\varepsilon(t))=a(t)p_{a}(t)+b(t)p_{b}(t).
\]
Now, for each pair $(t,\lambda)$ with $\lambda\in\Sigma_{t},$ Theorem
\ref{surjectivity.thm} tells us that we can find $(\lambda_{0},\varepsilon
_{0})$ so that
\[
\lim_{u\rightarrow t}\varepsilon(u)=0;\quad\lim_{u\rightarrow t}%
\lambda(u)=\lambda.
\]
In light of (\ref{dsdaLim}), the formula (\ref{dsdRho}) then follows from the
formula (\ref{apa}) in Theorem \ref{atTimeTstar.thm}.

Now, $\partial s_{t}/\partial\rho$ is manifestly independent of $\theta.$
Since, by Corollary \ref{StildeExtension.cor}, $s_{t}$ is an analytic, hence
$C^{2},$ function on $\Sigma_{t},$ we conclude that%
\[
\frac{\partial}{\partial\rho}\frac{\partial s_{t}}{\partial\theta}%
=\frac{\partial}{\partial\theta}\frac{\partial s_{t}}{\partial\rho}=0,
\]
showing that $\partial s_{t}/\partial\theta$ is independent of $\rho.$ The
formula (\ref{sTwoDerivs}) then follows by differentiating (\ref{dsdRho}) with
respect to $\rho.$ Finally, if we use the standard formula for the Laplacian
in polar coordinates,%
\[
\Delta=\frac{1}{r^{2}}\left(  \left(  r\frac{\partial}{\partial r}\right)
^{2}+\frac{\partial^{2}}{\partial\theta^{2}}\right)  =\frac{1}{r^{2}}\left(
\frac{\partial^{2}}{\partial\rho^{2}}+\frac{\partial^{2}}{\partial\theta^{2}%
}\right)  .
\]
we obtain (\ref{sLaplacian}) from (\ref{sTwoDerivs}).
\end{proof}

We now begin preparations for the proof of Theorem \ref{StildeExtension.thm}.
Recall from Proposition \ref{xp2.prop} that $\varepsilon(t)p_{\varepsilon
}(t)^{2}=\varepsilon_{0}p_{0}^{2}e^{-Ct},$ where $C=2\Psi-1$ is a constant
computed from $\varepsilon_{0}$ and $\lambda_{0}$ as in (\ref{CfromX0}).
Recall also from Proposition \ref{pxSolution.prop} that for $\varepsilon
_{0}>0,$ the function $1/p_{\varepsilon}(t)$ extends to real analytic function
of $t$ defined for all $t\in\mathbb{R}.$ We then define, for $\varepsilon
_{0}>0,$%
\begin{equation}
z(t;\lambda_{0},\varepsilon_{0})=\sqrt{\varepsilon_{0}}p_{0}e^{-Ct/2}\frac
{1}{p_{\varepsilon}(t;\lambda_{0},\varepsilon_{0})} \label{zFormula}%
\end{equation}
for all $t\in\mathbb{R}.$ For $t<t_{\ast}(\lambda_{0},\varepsilon_{0}),$ the
function $z(t;\lambda_{0},\varepsilon_{0})$ is positive and satisfies
\[
z(t;\lambda_{0},\varepsilon_{0})^{2}=\varepsilon(t;\lambda_{0},\varepsilon
_{0}),
\]
while for $t=t_{\ast}(\lambda_{0},\varepsilon_{0}),$ we have $z(t;\lambda
_{0},\varepsilon_{0})=0$ and for $t>t_{\ast}(\lambda_{0},\varepsilon_{0}),$
the function $z(t;\lambda_{0},\varepsilon_{0})$ is negative.

Furthermore, using (\ref{logLambdOfT}) and Point \ref{argConserved.point} of
Proposition \ref{constantsOfMotion.prop}, we see that%
\[
\lambda(t;\lambda_{0},\varepsilon_{0})=\lambda_{0}e^{\int_{0}^{t}%
\varepsilon(s)p_{\varepsilon}(s)~ds},
\]
where by Proposition \ref{xp2.prop}, we have
\[
\varepsilon(s)p_{\varepsilon}(s)=\frac{\varepsilon(s)p_{\varepsilon}(s)^{2}%
}{p_{\varepsilon}(s)}=\frac{\varepsilon_{0}p_{0}^{2}e^{-Cs}}{p_{\varepsilon
}(s)}.
\]
Since $1/p_{\varepsilon}(s)$ extends to an analytic function of $s\in
\mathbb{R}$, we see that $\lambda(t)$ extends to an analytic function of
$t\in\mathbb{R}.$ We may therefore define a map
\[
V(t,\lambda_{0},\varepsilon_{0}):=(t,\lambda(t;\lambda_{0},\varepsilon
_{0}),z(t;\lambda_{0},\varepsilon_{0})),
\]
for all $t\in\mathbb{R},$ $\lambda_{0}\in\mathbb{C},$ and $\varepsilon_{0}>0.$

\begin{proposition}
Suppose $(t,\lambda_{0},\varepsilon_{0})$ has the property that $\lambda
_{0}\neq0$ and $t_{\ast}(\lambda_{0},\varepsilon_{0})=t,$ so that
$z(t;\lambda_{0},\varepsilon_{0})=0.$ Then the Jacobian matrix of $V$ at
$(t,\lambda_{0},\varepsilon_{0})$ is invertible.
\end{proposition}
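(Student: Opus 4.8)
The plan is to exploit the block structure of $V$ together with the conserved quantities of the Hamiltonian flow, so that the $4\times 4$ Jacobian determinant collapses to a product of two explicit nonzero numbers. Recall that $V$ is only defined for $x_{0}>0$, so $x_{0}>0$ here. Since the first component of $V$ is simply $t$, the first row of its Jacobian is $(1,0,0,0)$; hence the Jacobian is invertible if and only if the $3\times 3$ matrix $\partial(\lambda,z)/\partial(\lambda_{0},x_{0})$ — the derivative of the time-$t$ map $(\lambda_{0},x_{0})\mapsto(\lambda(t;\lambda_{0},x_{0}),z(t;\lambda_{0},x_{0}))$ at fixed $t$ — is invertible at the given point, where at the base point $t=t_{\ast}(\lambda_{0},x_{0})$. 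By Point \ref{argConserved.point} of Proposition \ref{constantsOfMotion.prop}, $\arg\lambda(t;\lambda_{0},x_{0})=\arg\lambda_{0}$, so I pass to polar-type coordinates, writing $\lambda_{0}=r_{0}e^{i\theta_{0}}$ and $\lambda=Re^{i\theta_{0}}$; these changes of variables are diffeomorphisms away from the origin and so do not affect invertibility. Finally I trade $x_{0}$ for $\delta=(|\lambda_{0}|^{2}+1+x_{0})/|\lambda_{0}|$ (an affine change at fixed $r_{0}$, with nonzero Jacobian). In the coordinates $(r_{0},\theta_{0},\delta)$ the angle $\theta_{0}$ is carried to itself, so the determinant in question equals
\[
\frac{\partial R}{\partial r_{0}}\,\frac{\partial z}{\partial\delta}-\frac{\partial R}{\partial\delta}\,\frac{\partial z}{\partial r_{0}},
\]
with all derivatives taken at the base point, $t$ and $\theta_{0}$ held fixed.

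The key step is that $\partial z/\partial r_{0}=0$ there, which makes the matrix triangular. Indeed $z(t;\lambda_{0},x_{0})=0$ exactly when $t=t_{\ast}(\lambda_{0},x_{0})$, and (as computed in the proof of Lemma \ref{lambda_t.lem}) $t_{\ast}(\lambda_{0},x_{0})=g_{\theta_{0}}(\delta)$ with $g_{\theta_{0}}$ as in (\ref{gDelta}). Thus $z$ vanishes identically on the smooth hypersurface $\{\,t=g_{\theta_{0}}(\delta)\,\}$, whose defining function $t-g_{\theta_{0}}(\delta)$ does not involve $r_{0}$; since this function has everywhere nonzero differential (the $dt$-coefficient is $1$), at a point of the hypersurface $dz$ must be a scalar multiple of $d\bigl(t-g_{\theta_{0}}(\delta)\bigr)$. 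Comparing the $dr_{0}$-coefficients gives $\partial z/\partial r_{0}=0$, and comparing the $dt$- and $d\delta$-coefficients gives $\partial z/\partial\delta=-\dot z(t_{\ast})\,g_{\theta_{0}}'(\delta)$, where $\dot z(t_{\ast})$ denotes $\tfrac{d}{dt}z(t;\lambda_{0},x_{0})$ evaluated at $t=t_{\ast}$. Hence the determinant equals $\frac{\partial R}{\partial r_{0}}\,\frac{\partial z}{\partial\delta}$, and it remains to show this product is nonzero.

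For the factor $\partial z/\partial\delta$: at the base point $x_{0}>0$ forces $\delta>|\lambda_{0}|+|\lambda_{0}|^{-1}\ge 2$, so the derivative computation in the proof of Proposition \ref{monotoneTstar.prop} gives $g_{\theta_{0}}'(\delta)>0$; and a short computation from (\ref{zFormula}), using (\ref{pDot}), Lemma \ref{H0.lem}, and Proposition \ref{xp2.prop} (which together yield $\tfrac{d}{dt}(1/p_{x})=x(t)-e^{Ct}$, equal to $-e^{Ct_{\ast}}$ at $t_{\ast}$), gives $\dot z(t_{\ast})=-\sqrt{x_{0}}\,p_{0}\,e^{Ct_{\ast}/2}\neq 0$. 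So $\partial z/\partial\delta\neq 0$. For the factor $\partial R/\partial r_{0}$: moving $r_{0}$ while fixing $t$, $\theta_{0}$, $\delta$ keeps us on the hypersurface $\{t=g_{\theta_{0}}(\delta)\}$, on which $\delta$ is determined by $(t,\theta_{0})$ alone and is therefore constant along the motion; meanwhile $R=|\lambda(t_{\ast})|$, which by (\ref{logLambdaFormula}) (equivalently, by (\ref{logLambda}) and (\ref{CfromX0})) satisfies
\[
\log R=\frac{\delta-2/|\lambda_{0}|}{2\sqrt{\delta^{2}-4}}\,\log\!\left(\frac{\delta+\sqrt{\delta^{2}-4}}{\delta-\sqrt{\delta^{2}-4}}\right).
\]
Differentiating in $r_{0}$ with $\delta$ constant gives $\partial(\log R)/\partial r_{0}=\dfrac{1}{r_{0}^{2}\sqrt{\delta^{2}-4}}\,\log\!\left(\dfrac{\delta+\sqrt{\delta^{2}-4}}{\delta-\sqrt{\delta^{2}-4}}\right)>0$ since $\delta>2$, so $\partial R/\partial r_{0}\neq 0$. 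Therefore $\frac{\partial R}{\partial r_{0}}\,\frac{\partial z}{\partial\delta}\neq 0$, and the Jacobian of $V$ is invertible.

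I expect the main obstacle to be locating the coordinates in which the argument works: in the raw variables $(t,\lambda_{0},x_{0})$ none of the entries of the relevant $3\times 3$ Jacobian is transparently controlled, whereas after using conservation of $\arg\lambda$ and replacing $x_{0}$ by $\delta$ the matrix becomes triangular and its two surviving pivots reduce to facts already in hand — the strict monotonicity of $t_{\ast}=g_{\theta_{0}}(\delta)$ in $\delta$ (Proposition \ref{monotoneTstar.prop}) and the explicit $r_{0}$-dependence in (\ref{logLambdaFormula}).
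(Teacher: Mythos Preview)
Your argument is correct and follows essentially the same route as the paper: after passing to the coordinates $(t,\theta_{0},r_{0},\delta)$ on the domain and $(t,\theta,\rho\ \text{or}\ R,z)$ on the target, the Jacobian becomes block-triangular because the lifetime $t_{\ast}=g_{\theta_{0}}(\delta)$ is independent of $r_{0}$, and the two remaining pivots are handled exactly as you do, via (\ref{logLambdaFormula}) and the monotonicity in Proposition~\ref{monotoneTstar.prop}. Your computation of $\dot z(t_{\ast})$ via the ODE identity $\tfrac{d}{dt}(1/p_{x})=x-e^{Ct}$ is a slightly cleaner alternative to the paper's direct differentiation of the explicit formula (\ref{pxFormula}), but the overall structure is the same.
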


\begin{proof}
We make some convenient changes of variables. First, we replace $(t,\lambda
_{0},\varepsilon_{0})$ with $(t,\lambda_{0},\delta),$ where $\delta$ is as in
(\ref{deltaNotation}). This change has a smooth inverse, since we can recover
$\varepsilon_{0}$ from $\delta$ as%
\[
\varepsilon_{0}=\left\vert \lambda_{0}\right\vert \delta-\left\vert
\lambda_{0}\right\vert ^{2}-1.
\]
Then we write $\lambda_{0}$ in terms of its polar coordinates, $(r_{0}%
,\theta_{0}).$ Finally, we write $\lambda(t;\lambda_{0},\varepsilon_{0})$ in
logarithmic polar coordinates,%
\[
\rho(t;\lambda_{0},\varepsilon_{0}):=\log\left\vert \lambda_{0}(t;\lambda
_{0},\varepsilon_{0})\right\vert ;\quad\theta(t;\lambda_{0},\varepsilon
_{0}):=\arg(\lambda(t;\varepsilon_{0},\lambda_{0})),
\]
where by Point \ref{argConserved.point} of Proposition
\ref{constantsOfMotion.prop}, $\theta(t;\lambda_{0},\varepsilon_{0}%
)=\theta_{0}.$

Thus, to prove the proposition, it suffices to verify that the Jacobian matrix
of the map%
\[
W(t,\theta_{0},r_{0},\delta):=(t,\theta_{0},\rho(t;\lambda_{0},\varepsilon
_{0}),z(t;\lambda_{0},\varepsilon_{0}))
\]
is invertible. We observe that this Jacobian has the form
\[
W^{\prime}=\left(
\begin{array}
[c]{cc}%
I_{2\times2} & 0\\
\ast & K
\end{array}
\right)  ,
\]
where
\[
K=\left(
\begin{array}
[c]{cc}%
\frac{\partial\rho}{\partial r_{0}} & \frac{\partial\rho}{\partial\delta}\\
\frac{\partial z}{\partial r_{0}} & \frac{\partial z}{\partial\delta}%
\end{array}
\right)  .
\]
Now, by Proposition \ref{pxSolution.prop}, the lifetime is independent of
$r_{0}$ with $\delta$ and $\theta_{0}$ fixed. Thus, if we start at a point
with $t_{\ast}(\lambda_{0},\varepsilon_{0})=t$ and vary $r_{0},$ the lifetime
will remain equal to $t$ and $z(t;\varepsilon_{0},\lambda_{0})$ will remain
equal to 0. Thus, at the point in question, $\partial z/\partial r_{0}=0.$
Meanwhile, Proposition \ref{tStar.prop} gives a formula for the value of
$\rho(t;\lambda_{0},\varepsilon_{0})$ at $t=t_{\ast}(\lambda_{0}%
,\varepsilon_{0}),$ from which we can easily see that $\partial\rho/\partial
r_{0}>0.$ It therefore remains only to verify that $\partial z/\partial\delta$
is nonzero.

Now, $z(t_{\ast}(\lambda_{0},\varepsilon_{0});\lambda_{0},\varepsilon_{0})=0.$
If we differentiate this relation with respect to $\varepsilon_{0}$ with
$\lambda_{0}$ fixed, we find that%
\begin{equation}
\frac{\partial z}{\partial\varepsilon_{0}}=-\frac{\partial z}{\partial t}%
\frac{\partial t_{\ast}(\lambda_{0},\varepsilon_{0})}{\partial\varepsilon_{0}%
}. \label{dZdX0}%
\end{equation}
The derivative $\partial t_{\ast}/\partial\varepsilon_{0}$ may be computed as
\[
\frac{\partial t_{\ast}(\lambda_{0},\varepsilon_{0})}{\partial\varepsilon_{0}%
}=\frac{\partial g_{\theta_{0}}(\delta)}{\partial\delta}\frac{\partial\delta
}{\partial\varepsilon_{0}},
\]
where $g_{\theta_{0}}$ is as in (\ref{gDelta}). But the proof of Proposition
\ref{monotoneTstar.prop} shows that $\partial g_{\theta}/\partial\delta>0$ for
all $\delta>2,$ while from the formula (\ref{deltaNotation}) for $\delta,$ we
see that $\partial\delta/\partial\varepsilon_{0}>0.$ (Note also that
$\delta>2$ whenever $\varepsilon_{0}>0.$) Thus, $\partial t_{\ast}(\lambda
_{0},\varepsilon_{0})/\partial\varepsilon_{0}>0.$

Meanwhile, from (\ref{zFormula}) and (\ref{pxFormula}), we have%
\[
z(t)=\frac{\sqrt{\varepsilon_{0}}e^{Ct/2}}{\cosh(at)+\frac{2\left\vert
\lambda_{0}\right\vert -\delta}{\sqrt{\delta^{2}-4}}\sinh(at)}\left(
\cosh(at)-\frac{\delta}{\sqrt{\delta^{2}-4}}\sinh(at)\right)  .
\]
If we differentiate with respect to $t$ and evaluate at the time $t_{\ast}$
when the last factor is zero, the product rule gives%
\begin{align*}
&  \left.  \frac{\partial z(t;\lambda_{0},\varepsilon_{0})}{\partial
t}\right\vert _{t=t_{\ast}(\lambda_{0},\varepsilon_{0})}\\
&  =0+\frac{\sqrt{\varepsilon_{0}}e^{Ct/2}}{\cosh(at)+\frac{2\left\vert
\lambda_{0}\right\vert -\delta}{\sqrt{\delta^{2}-4}}\sinh(at)}a\left(
\sinh(at)-\frac{\delta}{\sqrt{\delta^{2}-4}}\cosh(at)\right)  ,
\end{align*}
which is negative because $\delta/\sqrt{\delta^{2}-4}>1$ and the denominator
is positive (Proposition \ref{pxSolution.prop}). Thus, from (\ref{dZdX0}), we
conclude that $\partial z/\partial\varepsilon_{0}>0.$
\end{proof}

We are now ready for the proof of the main result of this section.

\begin{proof}
[Proof of Theorem \ref{StildeExtension.thm}]\label{stildeProof}By
(\ref{Sformula2}), we have%
\begin{align}
\tilde{S}(t;\lambda(t;\lambda_{0},\varepsilon_{0}),z(t;\lambda_{0}%
,\varepsilon_{0}))  &  =S(t;\lambda(t;\lambda_{0},\varepsilon_{0}%
),\varepsilon(t;\lambda_{0},\varepsilon_{0}))\nonumber\\
&  =\mathrm{HJ}(t,\lambda_{0},\varepsilon_{0}), \label{StildeHJ}%
\end{align}
where $\mathrm{HJ}$ is as in (\ref{scriptS}), whenever $t_{\ast}(\lambda
_{0},\varepsilon_{0})>\sigma.$ Fix a point $(\sigma,\mu)$ with $\mu\in
\Sigma_{\sigma}.$ Then by Theorem \ref{surjectivity.thm}, we can find a pair
$(\lambda_{0},\varepsilon_{0})$ with $t_{\ast}(\lambda_{0},\varepsilon_{0}%
)=t$---so that $z(t;\lambda_{0},\varepsilon_{0})=0$---and $\lambda
(t;\lambda_{0},\varepsilon_{0})=\lambda.$ We now construct a local inverse
$V^{-1}$ to $V$ around the point $V(t,\lambda_{0},\varepsilon_{0}%
)=(t,\lambda,0).$

For any triple $(t,\lambda,z)$ in the domain of $V^{-1},$ we write
$V^{-1}(t,\,\lambda,z)$ as $(t,\lambda_{0},\varepsilon_{0}).$ We note that if
$z>0$ then $t_{\ast}(\lambda_{0},\varepsilon_{0})$ must be greater than $t,$
because if we had $t_{\ast}(\lambda_{0},\varepsilon_{0})\leq t,$ then
$z(t;\lambda_{0},\varepsilon_{0})=z$ would be zero or negative. Thus, we may
apply (\ref{StildeHJ}) at $(t,\lambda_{0},\varepsilon_{0})=V^{-1}%
(t,\lambda,z)$ to obtain%
\begin{equation}
\tilde{S}(t,\lambda,z)=\mathrm{HJ}(V^{-1}(t,\lambda,z)), \label{StildeInverse}%
\end{equation}
whenever $(t,\lambda,z)$ is in the domain of $V^{-1}$ and $z>0.$

Recall now that $\lambda(t;\lambda_{0},\varepsilon_{0})$ extends to an
analytic function of $t\in\mathbb{R}.$ Thus, the function $\mathrm{HJ}$ in
(\ref{scriptS}) extends to a smooth function of $t\in\mathbb{R},$ $\lambda
_{0}\in\mathbb{C}\setminus\{0\},$ and $\varepsilon_{0}>0,$ defined even if
$t_{\ast}(\lambda_{0},\varepsilon_{0})<t.$ Therefore, the right-hand side of
(\ref{StildeInverse}) provides the claimed smooth extension of $\tilde{S}.$
\end{proof}

\subsection{Near the boundary of $\Sigma_{t}$\label{boundary.sec}}

We start by considering what is happening right on the boundary of $\Sigma
_{t}.$

\begin{remark}
Neither the method of Section \ref{outside.sec} nor the method of Section
\ref{inside.sec} allows us to compute the value of $s_{t}(\lambda)$ for
$\lambda$ in the boundary of $\Sigma_{t}.$ Although we expect that this value
will be $\log(\left\vert \lambda-1\right\vert ^{2}),$ the question is
irrelevant to the computation of the Brown measure. After all, we are supposed
to consider $\Delta s_{t}$ \emph{computed in the distribution sense}, that is,
the distribution whose value on a test function $\psi$ is
\begin{equation}
\int_{\mathbb{C}}s_{t}(\lambda)\Delta\psi(\lambda)~d^{2}\lambda. \label{LapS}%
\end{equation}
The value of (\ref{LapS}) is unaffected by the value of $s_{t}(\lambda)$ for
$\lambda$ in $\partial\Sigma_{t},$ which is a set of measure zero in
$\mathbb{C}.$
\end{remark}

It is nevertheless essential to understand the behavior of $s_{t}(\lambda)$ as
$\lambda$ \textit{approaches} the boundary of $\Sigma_{t}.$

\begin{definition}
We say that a function $f:\mathbb{C}\rightarrow\mathbb{R}$ is \textbf{analytic
up to the boundary from inside }$\Sigma_{t}$ if the following conditions hold.
First, $f$ is real analytic on $\Sigma_{t}$. Second, for each $\lambda
\in\partial\Sigma_{t},$ we can find an open set $U$ containing $\lambda$ and a
real analytic function $g$ on $U$ such that $g$ agrees with $f$ on
$U\cap\Sigma_{t}.$ We may similarly define what it means for $f$ to be
\textbf{analytic up to the boundary from outside }$\Sigma_{t}.$
\end{definition}

\begin{proposition}
\label{smoothFromInOut.prop}For each $t>0,$ the function $s_{t}$ is analytic
up to the boundary from inside $\Sigma_{t}$ and analytic up to the boundary
from outside $\Sigma_{t}.$
\end{proposition}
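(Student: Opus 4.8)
The plan is to treat the "outside" and "inside" cases separately, since they rest on the two different analyses already carried out. For analyticity up to the boundary from outside $\Sigma_{t}$: by Theorem \ref{outside.thm}, $s_{t}(\lambda)=\log(\left\vert \lambda-1\right\vert ^{2})$ for all $\lambda$ strictly outside $\overline{\Sigma}_{t}$, and this is a real-analytic function on all of $\mathbb{C}\setminus\{1\}$, hence in particular on any open neighborhood of a boundary point $\lambda\in\partial\Sigma_{t}$ (recalling $1\in\Sigma_{t}$, so $1$ is never on the boundary). Thus the function $g:=\log(\left\vert \cdot-1\right\vert^{2})$ serves as the required real-analytic extension at every boundary point, and analyticity up to the boundary from outside is immediate. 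The only subtlety here is that Theorem \ref{outside.thm} gives the formula strictly outside $\overline{\Sigma}_{t}$; but that is exactly the set $U\cap(\overline{\Sigma}_{t})^{c}$ for $U$ a small neighborhood of a boundary point, which is enough to match $g$ against $s_{t}$.

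For analyticity up to the boundary from inside $\Sigma_{t}$: the idea is to promote the local inverse of the map $V(t,\lambda_{0},x_{0})=(t,\lambda(t;\lambda_{0},x_{0}),z(t;\lambda_{0},x_{0}))$ from the interior to the boundary. Fix $\sigma>0$ and $\mu\in\partial\Sigma_{\sigma}$. By Theorem \ref{surjectivity.thm} (and Lemma \ref{lambda_t.lem}), the extended maps $\Lambda_{0}^{\sigma}$ and $X_{0}^{\sigma}$ satisfy $\Lambda_{0}^{\sigma}(\mu)=\mu$ and $X_{0}^{\sigma}(\mu)=0$; but note that $x_{0}=0$ does not satisfy the hypothesis $x_{0}>0$ under which $V$ and its Jacobian were studied. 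The fix is to use the fact (established in Proposition \ref{existence.prop} and the surrounding discussion, together with Corollary \ref{smallx0.cor}) that for $\lambda_{0}$ just outside $\Sigma_{\sigma}$ one may take $x_{0}$ slightly negative, and the formulas from Section \ref{solving.sec} — in particular $z(t;\lambda_{0},x_{0})$ in (\ref{zFormula}), $\lambda(t;\lambda_{0},x_{0})$, and $\mathrm{HJ}$ in (\ref{scriptS}) — all extend real-analytically across $x_{0}=0$ to a neighborhood of $(\sigma,\mu,0)$ in $\mathbb{R}\times\mathbb{C}\times\mathbb{R}$. The key computation is that the Jacobian of $V$ at $(\sigma,\mu,0)$ is still invertible: the argument in the proof of the proposition preceding Theorem \ref{StildeExtension.thm} uses only $\partial z/\partial x_{0}>0$, $\partial\rho/\partial r_{0}>0$, and $\partial z/\partial r_{0}=0$, and each of these was derived from formulas (the monotonicity in Proposition \ref{monotoneTstar.prop}, the formula in Proposition \ref{tStar.prop}, and the $r_{0}$-independence of the lifetime in Proposition \ref{pxSolution.prop}) that remain valid on an open set of $(\lambda_{0},x_{0})$ with $x_{0}$ near (possibly slightly below) zero and $\delta>2$. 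One should check that $\delta>2$ persists for $\lambda_{0}$ near $\mu\in\partial\Sigma_{\sigma}$ with $\mu$ not on the unit circle, and handle the measure-zero set of boundary points on the unit circle (where $\delta\to 2$) by a separate limiting argument or by noting those are isolated for $t\ne 4$ — this case-checking is the main technical obstacle.

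Granting the invertibility of $V'$ at $(\sigma,\mu,0)$, the inverse function theorem provides a real-analytic local inverse $V^{-1}$ on a neighborhood of $(\sigma,\mu,0)$, and then — exactly as in the proof of Theorem \ref{StildeExtension.thm} — the identity $\tilde S(t,\lambda,z)=\mathrm{HJ}(V^{-1}(t,\lambda,z))$ holds for $z>0$, with the right-hand side real-analytic in a full neighborhood of $(\sigma,\mu,0)$. Setting $g(\lambda):=\mathrm{HJ}(V^{-1}(\sigma,\lambda,0))$ gives a real-analytic function on an open set $U\ni\mu$ with $g=s_{\sigma}$ on $U\cap\Sigma_{\sigma}$ (since on that set $z=\sqrt{x}\to$ positive values and $s_{\sigma}(\lambda)=\lim_{x\to 0^{+}}S(\sigma,\lambda,x)=\tilde S(\sigma,\lambda,0)$ by Corollary \ref{StildeExtension.cor}). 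Combined with the interior real-analyticity of $s_{\sigma}$ on $\Sigma_{\sigma}$ from Corollary \ref{StildeExtension.cor}, this is precisely the assertion that $s_{\sigma}$ is analytic up to the boundary from inside $\Sigma_{\sigma}$, completing the proof.
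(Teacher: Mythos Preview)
Your outside argument is fine and matches the paper's. The inside argument has a genuine gap.

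The claim that $z(t;\lambda_0,x_0)$ in (\ref{zFormula}) extends real-analytically across $x_0=0$ is false: the formula carries an explicit factor of $\sqrt{x_0}$, so $z$ is not even differentiable in $x_0$ at $x_0=0$. You might try to repair this by reparametrizing with $z_0=\sqrt{x_0}$, but then the Jacobian of $V$ degenerates at a boundary point $\mu\in\partial\Sigma_\sigma$. Indeed, when $z_0=0$ one has $z(t;\lambda_0,0)\equiv 0$ for all $t$ and $\lambda_0$, so $\partial z/\partial t=\partial z/\partial\lambda_0=0$ there; and $\partial z/\partial z_0$ at $z_0=0$ equals $p_0 e^{-Ct/2}/p_x(t;\mu,0)$, which vanishes at $t=\sigma$ because $\sigma=t_\ast(\mu,0)$ is exactly the blow-up time of $p_x$, so $1/p_x(\sigma;\mu,0)=0$. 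Thus the entire $z$-row of $V'$ vanishes and the inverse function theorem does not apply. (The same obstruction arises if you use $x$ in place of $z$: the $x$-row of the Jacobian of $(t,\lambda_0,x_0)\mapsto(t,\lambda(t),x(t))$ is identically zero at such a boundary point.) The Jacobian computation you cite from the proposition before Theorem~\ref{StildeExtension.thm} was carried out at interior points where $x_0>0$; it does not survive the passage to $x_0=0$.

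The paper avoids this degeneracy by not inverting $V$ at all near the boundary. Instead it works one dimension lower: it uses the representation $s_t(\lambda)=\mathrm{HJ}\bigl(t,\lambda_t^{-1}(\lambda),x_0^t(\lambda_t^{-1}(\lambda))\bigr)$, valid on $\Sigma_t$, and shows separately that (i) the function $x_0^t(\lambda_0)$, defined by solving $t_\ast(\lambda_0,x_0)=t$, extends analytically to a neighborhood of $\overline{\Sigma}_t$ (via the implicit function theorem applied to $g_{\theta_0}(\delta)=t$, checking $\partial g_{\theta_0}/\partial\delta>0$ even at $\delta=2$ to handle the unit-circle boundary points), and (ii) the map $\lambda_t$ extends analytically with invertible derivative, so $\lambda_t^{-1}$ does too. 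Since $\mathrm{HJ}$ is analytic in $(\lambda_0,x_0)$ (no square roots appear there), the composite gives the required analytic extension of $s_t$ past $\partial\Sigma_t$. The point is that building the constraint $t_\ast=t$ into the map eliminates the degenerate direction that kills the three-variable inverse-function-theorem approach.
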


Note that the proposition is not claiming that $s_{t}$ is an analytic function
on all of $\mathbb{C}.$ Indeed, our main results tell us that $\frac{1}{4\pi
}\Delta s_{t}(\lambda)$ is identically zero for $\lambda$ outside $\Sigma_{t}$
but approaches a typically nonzero value as $\lambda$ approaches a boundary
point from the inside. As we approach from the inside a boundary point with
polar coordinates $(r,\theta),$ the limiting value of $\frac{1}{4\pi}\Delta
s_{t}(\lambda)$ is $w_{t}(\theta)/r^{2}.$ This quantity certainly cannot
always be zero, or the Brown measure of $b_{t}$ would be identically zero.
Actually, we will see in Section \ref{OmegaFormula.sec} that $w_{t}(\theta)$
is strictly positive except when $t=4$ and $\theta=\,\pi.$

\begin{proof}
We have shown that $s_{t}(\lambda)=\log(\left\vert \lambda-1\right\vert ^{2})$
for $\lambda$ in $(\overline{\Sigma}_{t})^{c}.$ Since $1\in\Sigma_{t},$ we see
that $s_{t}$ is analytic from the outside of $\Sigma_{t}.$

To address the analyticity from the inside, first note that by applying
(\ref{StildeInverse}) with $z=0,$ we have%
\[
s_{t}(\lambda)=S(t,\lambda,0)=\tilde{S}(t,\lambda,0)=\mathrm{HJ}%
(V^{-1}(t,\lambda,0)),
\]
where $\mathrm{HJ}$ is as in (\ref{scriptS}). But if $\varepsilon_{0}%
^{t}:\Sigma_{t}\rightarrow\mathbb{R}$ and $\lambda_{t}:\Sigma_{t}%
\rightarrow\mathbb{C}$ are as in Lemma \ref{lambda_t.lem}, then we can see
that
\[
V^{-1}(t,\lambda,0)=(t,\lambda_{t}^{-1}(\lambda),\varepsilon_{0}^{t}%
(\lambda_{t}^{-1}(\lambda)))
\]
and we conclude that%
\begin{equation}
s_{t}(\lambda)=\mathrm{HJ}(t,\lambda_{t}^{-1}(\lambda),\varepsilon_{0}%
^{t}(\lambda_{t}^{-1}(\lambda))). \label{stFromInside}%
\end{equation}

We now claim that the function $\varepsilon_{0}^{t}(\lambda_{0}),$ initially
defined for $\lambda_{0}\in\overline{\Sigma}_{t},$ extends to an analytic
function in a neighborhood of $\overline{\Sigma}_{t}.$ For $t\geq4,$ we can
simply use the formula (\ref{X0OfLambda0}) for all nonzero $\lambda_{0}.$ For
$t<4,$ however, the formula (\ref{X0OfLambda0}) becomes undefined in a
neighborhood of a point where $\partial\Sigma_{t}$ intersects the unit circle.

Nevertheless, we can make a general argument as follows. To compute
$\varepsilon_{0}^{t}(\lambda_{0}),$ we solve the equation $t_{\ast}%
(\lambda_{0},\varepsilon_{0})=t$ for $\varepsilon_{0}$ as a function of
$\lambda_{0}.$ To do this, we first solve the equation $g_{\theta_{0}}%
(\delta)=t$ for $\delta_{\theta_{0},t}$ and then solve for $\varepsilon_{0}$
in terms of $\delta$ as $\varepsilon_{0}=\left\vert \lambda_{0}\right\vert
\delta-\left\vert \lambda_{0}\right\vert ^{2}-1.$ Now, we know from the proof
of Proposition \ref{monotoneTstar.prop} that $g_{\theta_{0}}(\delta)=t$ has a
solution when $\left\vert \theta_{0}\right\vert \leq\theta_{\max}(t)=\cos
^{-1}(1-t/2),$ with the solution being $\delta=2$ when $\theta_{0}=\pm
\theta_{\max}(t).$ We can also verify that $\partial g_{\theta_{0}}%
/\partial\delta>0$ for all $\delta\geq2.$ This was verified for $\delta>2$ in
the proof of Proposition \ref{monotoneTstar.prop}. To see that the result
holds even when $\delta=2,$ it suffices to verify that the expressions in
(\ref{Derivative1}) and (\ref{Derivative2}) have positive limits as
$\delta\rightarrow2^{+}$. We omit this verification and simply note that the
limits have the values 1 and $1/3,$ respectively. It then follows from the
implicit function theorem that (1) the solution $\delta_{\theta_{0},t}$
continues to exist (with $\delta<2$) for $\left\vert \theta_{0}\right\vert $
slightly larger than $\theta_{\max}(t),$ and (2) the solution $\delta
_{\theta_{0},t}$ depends analytically on $\theta_{0}.$ Then, the expression
\[
\varepsilon_{0}^{t}(\lambda_{0})=\left\vert \lambda_{0}\right\vert
\delta_{\theta_{0},t}-\left\vert \lambda_{0}\right\vert ^{2}-1
\]
makes sense and is analytic for all nonzero $\lambda_{0}$ with $\left\vert
\arg\lambda_{0}\right\vert <\theta_{\max}(t)+\alpha_{t},$ for some positive
quantity $\alpha_{t}.$ We note that in this expression, $\varepsilon_{0}%
^{t}(\lambda_{0})$ can be negative---for example if $\left\vert \lambda
_{0}\right\vert =1$ and $\arg\lambda_{0}>\theta_{\max}(t).$

We now consider the function $\lambda_{t},$ defined as%
\[
\lambda_{t}(\lambda_{0})=\lambda(t;\lambda_{0},\varepsilon_{0}^{t}(\lambda
_{0})),
\]
and we recall that $\lambda_{t}(\lambda_{0})=\lambda_{0}$ for $\lambda_{0}%
\in\partial\Sigma_{t}.$ Although $\lambda_{t}$ was initially defined for
$\lambda_{0}$ in $\overline{\Sigma}_{t},$ it has an analytic extension to a
neighborhood of $\overline{\Sigma}_{t}$, namely the set of $\lambda_{0}$ in
the domain of the extended function $\varepsilon_{0}^{t}$ for which the pair
$(\lambda_{0},\varepsilon_{0})$ satisfy the assumptions in
(\ref{standingAssumptions}). We now claim that the derivative of $\lambda
_{t}(\lambda_{0})$ is invertible at each point in its domain. We use polar
coordinates in both domain and range. Since $\arg(\lambda_{t}(\lambda
_{0}))=\arg\lambda_{0},$ the derivative will have the form%
\[
\lambda_{t}^{\prime}(\lambda_{0})=\left(
\begin{array}
[c]{cc}%
\frac{\partial\left\vert \lambda_{t}\right\vert }{\partial r} & \frac
{\partial\arg\lambda_{t}}{\partial\theta}\\
0 & 1
\end{array}
\right)  ,
\]
and it therefore suffices to check that $\partial\left\vert \lambda
_{t}\right\vert /\partial r$ is nonzero. To see this, we use the formula
(\ref{lambdaTformula2}), where $\delta=\delta_{\theta_{0},t}$ as in the
previous paragraph. Since $\delta$ is independent of $\left\vert \lambda
_{0}\right\vert $ with $t$ and $\arg\lambda_{0}$ fixed, we can easily verify
from (\ref{lambdaTformula2}) that $\partial\left\vert \lambda_{t}\right\vert
/\partial r>0.$

Now, we have already established that $s_{t}$ is analytic in the interior of
$\Sigma_{t}.$ Consider, then, a point $\lambda$ in $\partial\Sigma_{t},$ so
that $\lambda_{t}(\lambda)=\lambda.$ Since $\lambda_{t}^{\prime}(\lambda)$ is
invertible, it has a analytic local inverse $\lambda_{t}^{-1}$ defined near
$\lambda.$ Then the formula (\ref{stFromInside}) gives an analytic extension
of $s_{t}$ to a neighborhood of $\lambda.$
\end{proof}

\begin{proposition}
\label{InEqualsOut.prop}Fix a point $\mu$ on the boundary of $\Sigma_{t}.$
Then the functions
\[
s_{t}(\lambda),\quad\frac{\partial s_{t}}{\partial a}(\lambda),\quad
\frac{\partial s_{t}}{\partial b}(\lambda)
\]
all approach the same value when $\lambda$ approaches $\mu$ from inside
$\Sigma_{t}$ as when $\lambda$ approaches $\mu$ from outside $\overline
{\Sigma}_{t}.$
\end{proposition}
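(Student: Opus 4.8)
The plan is to split the statement into the claim about the value of $s_t$ and the claim about its first derivatives, using in both cases the analytic extension of $s_t|_{\Sigma_t}$ across $\partial\Sigma_t$ furnished by Proposition \ref{smoothFromInOut.prop}, together with the formula $s_t(\lambda)=\log(|\lambda-1|^2)$ on $(\overline{\Sigma}_t)^c$ from Theorem \ref{outside.thm}.

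For the value, fix $\mu\in\partial\Sigma_t$ and let $\hat s_t$ be the analytic function on a neighborhood of $\mu$ given by (\ref{stFromInside}), namely $\hat s_t(\lambda)=\mathrm{HJ}(t,\lambda_t^{-1}(\lambda),x_0^t(\lambda_t^{-1}(\lambda)))$ with $\mathrm{HJ}$ as in (\ref{scriptS}); thus $s_t=\hat s_t$ on $\Sigma_t$ near $\mu$, so the inner limit of $s_t$ at $\mu$ is $\hat s_t(\mu)$. By Lemma \ref{lambda_t.lem}, $\lambda_t^{-1}(\mu)=\mu$ and $x_0^t(\mu)=0$, and by Proposition \ref{existence.prop} the path with data $(\lambda_0,x_0)=(\mu,0)$ is constant, so $\lambda(t;\mu,0)=\mu$. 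Substituting $x_0=0$, $\lambda_0=\lambda(t)=\mu$ into (\ref{scriptS}) kills every term but $\log(|\mu-1|^2)$, so $\hat s_t(\mu)=\log(|\mu-1|^2)$, which is also the outer limit. Since the same computation applies at every boundary point, the smooth function $d:=\hat s_t-\log(|\lambda-1|^2)$, defined on a neighborhood of $\mu$, vanishes on $\partial\Sigma_t$ near $\mu$; hence $\nabla d(\mu)$ is orthogonal to the tangent line $T_\mu\partial\Sigma_t$.

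For the derivatives it then suffices to show $\nabla d(\mu)=0$, and since $\nabla d(\mu)$ is normal to $\partial\Sigma_t$, it is enough to exhibit one transverse direction along which the derivative of $d$ vanishes. I take the radial direction $\partial/\partial\rho$ with $\rho=\log|\lambda|$. By Corollary \ref{sLaplacian.cor}, $\partial s_t/\partial\rho=2\rho/t+1$ on $\Sigma_t$, hence, by $C^1$-continuity of $\hat s_t$ up to the boundary, $\partial\hat s_t/\partial\rho(\mu)=2\log|\mu|/t+1$; while $(\partial/\partial\rho)\log(|\lambda-1|^2)=2(|\lambda|^2-\mathrm{Re}\,\lambda)/|\lambda-1|^2$, and the defining relation $T(\mu)=t$ of $\partial\Sigma_t$ (Theorem \ref{domainGobbles.thm}) reduces in one line to $2\log|\mu|/t+1=2(|\mu|^2-\mathrm{Re}\,\mu)/|\mu-1|^2$. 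Thus $\partial d/\partial\rho(\mu)=0$. The radial direction is transverse to $\partial\Sigma_t$ at $\mu$ precisely when $\partial T/\partial r(\mu)\neq0$, which by Lemma \ref{Tmin.lem} holds for every $\mu$ with $|\mu|\neq1$; for such $\mu$, a normal vector $\nabla d(\mu)$ with vanishing radial component must be zero, so $\partial s_t/\partial a$ and $\partial s_t/\partial b$ have equal inner and outer limits at $\mu$.

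What remains are the boundary points with $|\mu|=1$, which occur only for $t\le4$ and are, by Corollary \ref{setFt.cor}, exactly $e^{\pm i\theta_{\max}(t)}$ (a single point $\mu=-1$ when $t=4$). For $t=4$ and $\mu=-1$, $\partial\Sigma_4$ is the union of two transverse smooth arcs through $-1$ (Theorem \ref{regionProperties.thm}) on each of which $d$ vanishes, so the two independent tangent directions already force $\nabla d(-1)=0$. For $t<4$ and $\mu=e^{\pm i\theta_{\max}(t)}$, every nearby point of $\partial\Sigma_t$ has modulus $\neq1$ and is therefore covered by the previous paragraph; letting those points tend to $\mu$ and using continuity of $\nabla\hat s_t$ up to the boundary (Proposition \ref{smoothFromInOut.prop}) and of $\nabla\log(|\lambda-1|^2)$ near $\mu$, the equality of inner and outer limits passes to $\mu$. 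I expect the degeneration of the radial/tangential frame at these unit-circle points to be the only genuine obstacle; away from them the statement falls out of the already-constructed analytic extension of $s_t$ and the explicit formula for $\partial s_t/\partial\rho$.
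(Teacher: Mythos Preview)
Your proof is correct and takes a genuinely different route from the paper's. Both arguments handle $s_t$ itself identically, via (\ref{stFromInside}) and Lemma \ref{lambda_t.lem}. For the first derivatives, however, the paper computes the radial and angular derivatives separately, each from a conserved quantity of the Hamiltonian system: the radial derivative via the constant $\Psi$ (exactly as you do), and the angular derivative $a\,\partial s_t/\partial b-b\,\partial s_t/\partial a$ via the conserved angular momentum $ap_b-bp_a$ (Point \ref{angMomConserved.point} of Proposition \ref{constantsOfMotion.prop}), whose value along the flow is $2b_0/(|\lambda_0-1|^2+x_0)$ and tends to $2\operatorname{Im}\mu/|\mu-1|^2$ as $\lambda\to\mu$. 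This yields the matching of both derivatives at every boundary point uniformly, with no case analysis.

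Your argument replaces the angular-momentum computation by the geometric observation that $d:=\hat s_t-\log(|\lambda-1|^2)$ vanishes on $\partial\Sigma_t$, so $\nabla d(\mu)$ is normal; once the radial component vanishes and the radial direction is transverse, $\nabla d(\mu)=0$ follows. This is clean and avoids a second dynamical computation, at the cost of the extra case analysis for the unit-circle boundary points where the radial direction becomes tangent (and, at $t=4$, $\mu=-1$, where smoothness of $\partial\Sigma_t$ fails). Your handling of those cases---using the two transverse arcs at $\mu=-1$ and a continuity argument from nearby boundary points for $t<4$---is valid, since Proposition \ref{smoothFromInOut.prop} provides a single analytic $\hat s_t$ on a neighborhood of any boundary point (the local extensions patch by analyticity). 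One minor remark: Lemma \ref{Tmin.lem} gives strict monotonicity, which does not literally imply $\partial T/\partial r\neq 0$; the stronger statement you need is established in its proof (or in the proof of Point \ref{domainSmooth.point} of Theorem \ref{regionProperties.thm}).
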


\begin{proof}
We begin by considering $s_{t}$ itself. The limit as $\lambda$ approaches
$\mu$ from the inside may be computed by using (\ref{stFromInside}). By Lemma
\ref{lambda_t.lem}, as $\lambda$ approaches $\mu$ from the inside,
$\lambda_{t}^{-1}(\lambda)$ approaches $\lambda_{t}^{-1}(\mu)=\mu,$ and
$\varepsilon_{0}^{t}(\lambda_{t}^{-1}(\lambda))$ approaches 0. Thus, the
limiting value of $s_{t}$ from the inside is%
\[
\mathrm{HJ}(t,\mu,0)=\log(\left\vert \mu-1\right\vert ^{2}),
\]
where $\mathrm{HJ}$ is given by (\ref{scriptS}) and were we have used that
$\lambda(t;\mu,0)=\mu.$ (See the last part of Proposition \ref{existence.prop}%
.) Since $s_{t}(\lambda)=\log(\left\vert \lambda-1\right\vert ^{2})$ outside
$\overline{\Sigma}_{t},$ the limit of $s_{t}$ from the outside agrees with the
limit from the inside.

Next we consider the derivatives, which we compute in logarithmic polar
coordinates $\rho=\log\left\vert \lambda\right\vert $ and $\theta=\arg\lambda
$. By (\ref{dsdRho}), we have%
\[
\frac{\partial s_{t}}{\partial\rho}(\lambda)=\left(  a\frac{\partial s_{t}%
}{\partial a}+b\frac{\partial s_{t}}{\partial b}\right)  (\lambda)=\frac
{\log(\left\vert \lambda\right\vert ^{2})}{t}+1
\]
for $\lambda\in\Sigma_{t}.$ Letting $\lambda$ approach $\mu$ from the inside
gives the value $\log(\left\vert \mu\right\vert ^{2})/t+1.$ Since $\mu$ is on
the boundary of $\Sigma_{t},$ Theorem \ref{domainGobbles.thm} says that
$T(\mu)=t,$ so that%
\begin{align*}
\frac{\log\left\vert \mu\right\vert ^{2}}{t}+1  &  =\frac{\left\vert
\mu\right\vert ^{2}-1}{\left\vert \mu-1\right\vert ^{2}}+1\\
&  =\frac{2(\left\vert \mu\right\vert ^{2}-\operatorname{Re}\mu)}{\left\vert
\mu-1\right\vert ^{2}}.
\end{align*}
Taking the corresponding derivative of the \textquotedblleft
outside\textquotedblright\ function \thinspace$\log(\left\vert \lambda
-1\right\vert ^{2})$ and letting $\lambda$ tend to $\mu$ from the outside
gives the same result.

Finally, we recall from Proposition \ref{constantsOfMotion.prop} that
$ap_{b}-bp_{a}$ is a constant of motion. Thus, by the second Hamilton--Jacobi
formula (\ref{Sderivatives}) and the initial conditions
(\ref{initialConditions2}), we have%
\begin{align*}
a\frac{\partial s_{t}}{\partial b}(u,\lambda(u),\varepsilon(u))-b\frac
{\partial s_{t}}{\partial a}(u,\lambda(u),\varepsilon(u))  &  =a_{0}%
p_{b,0}-b_{0}p_{a,0}\\
&  =(2a_{0}b_{0}-2b_{0}(a_{0}-1))p_{0}\\
&  =\frac{2b_{0}}{\left\vert \lambda_{0}-1\right\vert ^{2}+\varepsilon_{0}}.
\end{align*}
If we choose $\varepsilon_{0}$ and $\lambda_{0}$ so that $t_{\ast}(\lambda
_{0},\varepsilon_{0})=t$ we can use the regularity result in Corollary
\ref{StildeExtension.cor} to let $u$ tend to $t.$ This gives%
\[
a\frac{\partial s_{t}}{\partial b}(\lambda)-b\frac{\partial s_{t}}{\partial
a}(\lambda)=\frac{2b_{0}}{\left\vert \lambda_{0}-1\right\vert ^{2}%
+\varepsilon_{0}},
\]
where now $\lambda_{0}=\lambda_{t}^{-1}(\lambda)$ and $\varepsilon
_{0}=\varepsilon_{0}^{t}(\lambda_{t}^{-1}(\lambda)).$ As $\lambda$ approaches
$\mu$, Theorem \ref{surjectivity.thm} says that the value of $\lambda_{0}$
approaches $\mu$ and $\varepsilon_{0}$ approaches 0, so we get%
\[
\lim_{\lambda\rightarrow\mu^{\mathrm{inside}}}\left(  a\frac{\partial s_{t}%
}{\partial b}(\lambda)-b\frac{\partial s_{t}}{\partial a}(\lambda)\right)
=\frac{2\operatorname{Im}\mu}{\left\vert \mu-1\right\vert ^{2}}.
\]
Taking the corresponding derivative of \thinspace$\log(\left\vert
\lambda-1\right\vert ^{2})$ and letting $\lambda$ tend to $\mu$ from the
outside gives the same result.
\end{proof}

\subsection{Proof of the main result}

In this subsection, we prove our first main result, Theorem \ref{main.thm}.
Proposition \ref{omega.prop} will then be proved in Section
\ref{OmegaFormula.sec}, while Propositions \ref{connectToBiane.prop} and
\ref{connectToBiane2.prop} will be proved in Section \ref{bianeConnection.sec}.

\begin{proposition}
\label{dsdTheta.prop}For each fixed $t,$ the restriction to $\Sigma_{t}$ of
the function
\[
\frac{\partial s_{t}}{\partial\theta}(t,\lambda)
\]
is the unique function that on $\Sigma_{t}$ that (1) extends continuously to
the boundary, (2) agrees with the $\theta$-derivative of $\log(\left\vert
\lambda-1\right\vert ^{2})$ on the boundary, and (3) is independent of
$r=\left\vert \lambda\right\vert .$ Thus, the function $m_{t}$ in Corollary
\ref{sLaplacian.cor} is given by
\begin{equation}
m_{t}(\theta)=\frac{2r_{t}(\theta)\sin\theta}{r_{t}(\theta)^{2}+1-2r_{t}%
(\theta)\cos\theta}, \label{dsdthetaProved}%
\end{equation}
where $r_{t}(\theta)$ is the outer radius of the domain $\Sigma_{t}$ (Figure
\ref{r1r2.fig}).
\end{proposition}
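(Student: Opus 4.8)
The plan is to obtain this proposition by assembling three structural facts already in hand: Corollary \ref{sLaplacian.cor} supplies property (3), Proposition \ref{smoothFromInOut.prop} supplies property (1), and Proposition \ref{InEqualsOut.prop} supplies property (2); the explicit formula and uniqueness are then essentially bookkeeping.

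First I would record that by Corollary \ref{sLaplacian.cor} the function $\partial s_t/\partial\theta$ on $\Sigma_t$ depends only on $\theta$, which is property (3), and write $m_t(\theta)$ for its value. Since $s_t$ is analytic up to the boundary from inside $\Sigma_t$ by Proposition \ref{smoothFromInOut.prop}, its first partials extend continuously to $\partial\Sigma_t$; and because $a=r\cos\theta$, $b=r\sin\theta$ give the chain-rule identity $\partial/\partial\theta = a\,\partial/\partial b - b\,\partial/\partial a$, the derivative $\partial s_t/\partial\theta = a\,\partial s_t/\partial b - b\,\partial s_t/\partial a$ also extends continuously to the boundary, giving property (1). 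For property (2), I would invoke Proposition \ref{InEqualsOut.prop}: at each boundary point $\partial s_t/\partial a$ and $\partial s_t/\partial b$ approach the corresponding derivatives of $\log(|\lambda-1|^2)$, so the same linear combination $a\,(\cdot)_b - b\,(\cdot)_a$ shows that $\partial s_t/\partial\theta$ agrees on $\partial\Sigma_t$ with the $\theta$-derivative of $\log(|\lambda-1|^2)$.

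Next I would handle uniqueness. Suppose $f$ on $\Sigma_t$ satisfies (1)--(3). By (3), $f$ is constant along each radial segment of $\Sigma_t$. Fixing an admissible angle $\theta$ — meaning, if $t\le 4$, that $|\theta| < \cos^{-1}(1-t/2)$ by Theorem \ref{regionProperties.thm} — this segment is the open interval $(1/r_t(\theta), r_t(\theta))$ by Proposition \ref{setFt.prop}, with endpoints $r_t(\theta)e^{i\theta}$ and $(1/r_t(\theta))e^{i\theta}$ on $\partial\Sigma_t$. Properties (1) and (2) force the limiting value of $f$ at the outer endpoint to equal the $\theta$-derivative of $\log(|\lambda-1|^2)$ there, and constancy along the segment then pins down $f(\theta)$. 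I would also note that the value demanded at the inner endpoint is the same, since by (\ref{dsdtheta}) the expression $\frac{2r\sin\theta}{r^2+1-2r\cos\theta}$ is invariant under $r\mapsto 1/r$; hence (1)--(3) are consistent rather than overdetermined, and $f$ is unique. Since $\partial s_t/\partial\theta$ has properties (1)--(3), it is this unique function.

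Finally, computing $\tfrac{\partial}{\partial\theta}\log(|\lambda-1|^2) = 2\,\mathrm{Im}\,\lambda/|\lambda-1|^2 = 2r\sin\theta/(r^2+1-2r\cos\theta)$ as in (\ref{angularDeriv}) and evaluating at $r=r_t(\theta)$ yields the formula (\ref{dsdthetaProved}) for $m_t$. There is no genuinely hard step here — the analytic work lives entirely in Corollary \ref{sLaplacian.cor} and Propositions \ref{smoothFromInOut.prop} and \ref{InEqualsOut.prop} — so the only points requiring care are the restriction of the angular range when $t\le 4$ and the observation that the inner- and outer-endpoint boundary values coincide.
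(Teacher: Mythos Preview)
Your proposal is correct and follows essentially the same route as the paper: invoke Corollary \ref{sLaplacian.cor} for radial independence, Propositions \ref{smoothFromInOut.prop} and \ref{InEqualsOut.prop} for continuity up to the boundary and agreement there with $\partial_\theta\log(|\lambda-1|^2)$, and then read off $m_t(\theta)$ by evaluating at $r=r_t(\theta)$. Your write-up is in fact slightly more thorough than the paper's, since you spell out the chain-rule identity $\partial/\partial\theta = a\,\partial/\partial b - b\,\partial/\partial a$ and explicitly check the consistency of the boundary values at the inner and outer endpoints via the $r\mapsto 1/r$ invariance of (\ref{dsdtheta}).
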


\begin{proof}
We have already established in Corollary \ref{sLaplacian.cor} that $\partial
s_{t}/\partial\theta$ is independent of $\rho$ (or equivalently, of $r$) in
$\Sigma_{t}.$ Then Propositions \ref{smoothFromInOut.prop} and
\ref{InEqualsOut.prop} tell us that $\partial s_{t}/\partial\theta$ is
continuous up to the boundary and agrees there with the angular derivative of
$\log(\left\vert \lambda-1\right\vert ^{2}).$ Thus, to compute $\partial
s_{t}/\partial\theta$ at a point in $\Sigma_{t},$ we travel along a radial
segment (in either direction) until we hit the boundary at radius
$r_{t}(\theta)$ or $1/r_{t}(\theta)$. We then evaluate the angular derivative
of $\log(\left\vert \lambda-1\right\vert ^{2}),$ as in (\ref{angularDeriv}),
giving the claimed expression for $\partial s_{t}/\partial\theta=m_{t}%
(\theta).$
\end{proof}

\begin{proposition}
\label{distributionalLap.prop}For each $t>0,$ the distributional Laplacian of
$s_{t}(\lambda)$ with respect to $\lambda$ may be computed as follows. Take
the pointwise Laplacian of $s_{t}$ outside $\overline{\Sigma}_{t}$ (giving
zero), take the pointwise Laplacian of $s_{t}$ inside $\Sigma_{t}$ (giving the
expression (\ref{sLaplacian}) in Corollary \ref{sLaplacian.cor}) and ignore
the boundary of $\Sigma_{t}.$
\end{proposition}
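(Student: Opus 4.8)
The plan is to prove Proposition \ref{distributionalLap.prop} by the standard technique for computing a distributional Laplacian of a function that is piecewise smooth across a hypersurface: integrate by parts twice and show that the boundary terms cancel because $s_t$ is $C^1$ across $\partial\Sigma_t$. Concretely, fix a test function $\psi\in C_c^\infty(\mathbb{C})$ and write the distributional Laplacian as $\int_{\mathbb{C}} s_t(\lambda)\,\Delta\psi(\lambda)\,d^2\lambda$. Split the integral as $\int_{\Sigma_t} + \int_{(\overline{\Sigma}_t)^c}$, since $\partial\Sigma_t$ has Lebesgue measure zero. On each of these two open regions $s_t$ is real analytic (Theorem \ref{outside.thm} outside, Corollary \ref{StildeExtension.cor} inside), and moreover by Proposition \ref{smoothFromInOut.prop} it extends real-analytically up to the boundary from each side, so Green's second identity applies on each piece with well-defined boundary traces.

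The key steps, in order: First, apply Green's identity on $(\overline{\Sigma}_t)^c$. Since $s_t = \log(|\lambda-1|^2)$ there and this is harmonic away from $\lambda=1$ (which lies in $\Sigma_t$, not in $(\overline{\Sigma}_t)^c$), the interior term vanishes and we are left with a boundary integral over $\partial\Sigma_t$ involving $s_t^{\mathrm{out}}\,\partial_n\psi - \psi\,\partial_n s_t^{\mathrm{out}}$, where $\partial_n$ is the normal derivative and the superscript denotes the trace from outside. Second, apply Green's identity on $\Sigma_t$: the interior term produces $\int_{\Sigma_t}\psi\,\Delta s_t\,d^2\lambda$ with $\Delta s_t$ given pointwise by (\ref{sLaplacian}), plus a boundary integral over $\partial\Sigma_t$ (with the opposite orientation) involving the traces $s_t^{\mathrm{in}}$ and $\partial_n s_t^{\mathrm{in}}$. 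Third, add the two contributions. By Proposition \ref{InEqualsOut.prop}, the traces of $s_t$ and of all its first partial derivatives agree from inside and outside: $s_t^{\mathrm{in}} = s_t^{\mathrm{out}}$ and $\partial_n s_t^{\mathrm{in}} = \partial_n s_t^{\mathrm{out}}$ on $\partial\Sigma_t$. Because the two boundary integrals carry opposite orientations of $\partial\Sigma_t$, they cancel exactly, leaving only $\int_{\Sigma_t}\psi\,\Delta s_t\,d^2\lambda$. This says precisely that the distributional Laplacian equals the pointwise Laplacian computed separately on the two open regions, with the boundary contributing nothing — which is the assertion of the proposition, and in particular shows the Brown measure is absolutely continuous with the stated density.

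One technical point to handle carefully: to invoke Green's identity on $\Sigma_t$ and on a bounded piece of $(\overline{\Sigma}_t)^c$ (restricting to the support of $\psi$), one needs the boundary $\partial\Sigma_t$ to be sufficiently regular. By Point \ref{domainSmooth.point} of Theorem \ref{regionProperties.thm}, $\partial\Sigma_t$ is smooth for $t\neq4$, so Green's identity applies directly. For $t=4$ the boundary has a single mild (transverse double-point) singularity at $\lambda=-1$; one handles this by excising a small disk of radius $\varepsilon$ around $-1$, applying Green's identity on the resulting Lipschitz domains, and letting $\varepsilon\to0$. The excised contribution vanishes in the limit because $s_t$ is bounded near $-1$ (indeed $s_t$ is bounded on all of $\mathbb{C}$, being a limit of the bounded-below subharmonic family $S(t,\lambda,x)$ and manifestly bounded above) while its gradient has at worst an integrable singularity there — in fact $\nabla s_t$ is bounded up to the boundary from each side by Proposition \ref{smoothFromInOut.prop}, so the boundary integrand over the small arc is $O(\varepsilon)$.

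I expect the main obstacle to be not any single computation but rather making the two-sided Green's-identity bookkeeping airtight: one must be scrupulous about orientation conventions so that the inside and outside boundary integrals genuinely appear with opposite signs, and one must confirm that the matching of first derivatives in Proposition \ref{InEqualsOut.prop} is exactly what is needed to kill \emph{both} the $\psi\,\partial_n s_t$ term and (trivially, since $s_t$ itself matches) the $s_t\,\partial_n\psi$ term. A secondary subtlety is ensuring the formula (\ref{sLaplacian}) for the pointwise Laplacian inside $\Sigma_t$ is locally integrable up to the boundary so that $\int_{\Sigma_t}\psi\,\Delta s_t\,d^2\lambda$ is a bona fide absolutely continuous measure; this follows because $r_t(\theta)$ is analytic in $\theta$ (Proposition \ref{setFt.prop}) and bounded away from $0$ and $\infty$ on the compact support of $\psi$, so $m_t$ and $m_t'$ are bounded there and the density $\tfrac{1}{4\pi}\Delta s_t = \tfrac{1}{r^2}w_t(\theta)$ is locally bounded on $\overline{\Sigma}_t\setminus\{0\}$, hence integrable.
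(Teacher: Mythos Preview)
Your proposal is correct and follows essentially the same approach as the paper's own proof: split the integral over $\Sigma_t$ and $(\overline{\Sigma}_t)^c$, apply Green's second identity on each piece using the analyticity-up-to-the-boundary of Proposition~\ref{smoothFromInOut.prop}, and cancel the boundary terms via Proposition~\ref{InEqualsOut.prop}. The paper's version is more terse---it simply notes that the boundary is smooth for $t\neq4$ and piecewise smooth for $t=4$ without spelling out the excision argument---but your added detail on the $t=4$ case and the local integrability check is sound and only makes the argument more complete.
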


\begin{proof}
Since, by Proposition \ref{smoothFromInOut.prop}, $s_{t}$ is analytic up to
the boundary of $\Sigma_{t}$ from the inside, Green's second identity says
that%
\begin{align*}
\int_{\Sigma_{t}}s_{t}(\lambda)\Delta\psi(\lambda)~d^{2}\lambda &
=\int_{\Sigma_{t}}(\Delta s_{t}(\lambda))\psi(\lambda)~d^{2}\lambda\\
&  +\int_{\partial\Sigma_{t}}(s_{t}(\lambda)\nabla\psi(\lambda)-\psi
(\lambda)\nabla s_{t}(\lambda))\cdot\hat{n}~dS,
\end{align*}
for any test function $\psi,$ where in the last integral, the limiting value
of $\nabla s_{t}$ from the inside should be used. This identity holds because
the boundary of $\Sigma_{t}$ is smooth for $t\neq4$ and piecewise smooth when
$t=4$ (Point \ref{domainSmooth.point} of Theorem \ref{regionProperties.thm}).
We also have similar formula for the integral over the complement of
$\overline{\Sigma}_{t},$ provided that $\psi$ is compactly supported, but with
the direction of the unit normal reversed. Proposition \ref{InEqualsOut.prop}
then tells us that the boundary terms in the two integrals cancel, giving
\begin{equation}
\int_{\mathbb{C}}s_{t}(\lambda)\Delta\chi(\lambda)~d^{2}\lambda=\int%
_{(\overline{\Sigma}_{t})^{c}}(\Delta s_{t}(\lambda))\chi(\lambda
)~d^{2}\lambda+\int_{\Sigma_{t}}(\Delta s_{t}(\lambda))\chi(\lambda
)~d^{2}\lambda, \label{distributionalLap}%
\end{equation}
where the integral over $(\overline{\Sigma}_{t})^{c}$ is actually zero, since
$\Delta s_{t}(\lambda)=0$ there. The formula (\ref{distributionalLap}) says
that the distributional Laplacian of $s_{t}$ may be computed by taking the
ordinary, pointwise Laplacian in $\overline{\Sigma}_{t}$ and in $\Sigma_{t}$
and ignoring the boundary of $\Sigma_{t}.$
\end{proof}

We now have all the ingredients for a proof of Theorem \ref{main.thm}.

\begin{proof}
[Proof of Theorem \ref{main.thm}]Proposition \ref{distributionalLap.prop}
tells us that we can compute the distributional Laplacian of $s_{t}$
separately inside $\Sigma_{t}$ and outside $\overline{\Sigma}_{t},$ ignoring
the boundary. Theorem \ref{outside.thm} tells us that the Laplacian outside
$\overline{\Sigma}_{t}$ is zero. Corollary \ref{sLaplacian.cor} gives us the
form of $\Delta s_{t}$ inside $\Sigma_{t},$ while Proposition \ref{dsdtheta}
identifies the function $m_{t}$ appearing in Corollary \ref{sLaplacian.cor}.
The claimed formula for the Brown measure therefore holds.
\end{proof}

\section{Further properties of the Brown measure\label{omega.sec}}

\subsection{The formula for $\omega$\label{OmegaFormula.sec}}

In this subsection, we derive the formula for $w_{t}$ given in Proposition
\ref{omega.prop} in terms of the density $\omega.$ Throughout, we will write
the function $T$ in (\ref{Tlambda}) in polar coordinates as%
\begin{equation}
T(r,\theta)=(r^{2}+1-2r\cos\theta)\frac{\log(r^{2})}{r^{2}-1}. \label{Tpolar2}%
\end{equation}
We start with a simple rewriting of the expression for $w_{t}$ in Theorem
\ref{main.thm}.

\begin{lemma}
The density $w_{t}(\theta)$ in Theorem \ref{main.thm} may also be written as%
\begin{equation}
w_{t}(\theta)=\frac{1}{2\pi t}\left(  1+\frac{\partial}{\partial\theta
}[h(r_{t}(\theta))\sin\theta]\right)  , \label{wtWithHr}%
\end{equation}
where%
\[
h(r)=r\frac{\log(r^{2})}{r^{2}-1}.
\]

\end{lemma}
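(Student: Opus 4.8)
The plan is to start from the formula for $w_t$ given in Theorem~\ref{main.thm}, namely
\[
w_{t}(\theta)=\frac{1}{4\pi}\left(\frac{2}{t}+\frac{\partial}{\partial\theta}\frac{2r_{t}(\theta)\sin\theta}{r_{t}(\theta)^{2}+1-2r_{t}(\theta)\cos\theta}\right),
\]
and simply recognize the fraction appearing inside the $\theta$-derivative as $h(r_t(\theta))\sin\theta$. First I would observe that
\[
\frac{2r\sin\theta}{r^{2}+1-2r\cos\theta}
=\frac{2r\sin\theta}{r^{2}-1}\cdot\frac{r^{2}-1}{r^{2}+1-2r\cos\theta},
\]
which does not immediately match; instead the right manipulation is to use the defining relation of the boundary. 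Since $\lambda=r_t(\theta)e^{i\theta}$ lies on $\partial\Sigma_t$, Theorem~\ref{domainGobbles.thm} gives $T(r_t(\theta),\theta)=t$, i.e.\ by (\ref{Tpolar2}),
\[
(r_t(\theta)^{2}+1-2r_t(\theta)\cos\theta)\,\frac{\log(r_t(\theta)^{2})}{r_t(\theta)^{2}-1}=t.
\]
Equivalently, $r_t(\theta)^{2}+1-2r_t(\theta)\cos\theta = t\,(r_t(\theta)^{2}-1)/\log(r_t(\theta)^{2})$. Substituting this into the denominator of the fraction in the Theorem~\ref{main.thm} formula, the fraction becomes
\[
\frac{2r_t(\theta)\sin\theta}{r_t(\theta)^{2}+1-2r_t(\theta)\cos\theta}
=\frac{2r_t(\theta)\sin\theta\,\log(r_t(\theta)^{2})}{t\,(r_t(\theta)^{2}-1)}
=\frac{2}{t}\,h(r_t(\theta))\sin\theta,
\]
using the definition $h(r)=r\log(r^{2})/(r^{2}-1)$.

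Next I would pull the constant $2/t$ out of the $\theta$-derivative — this is legitimate because $t$ is fixed — and combine with the prefactor: $\tfrac{1}{4\pi}\cdot\tfrac{2}{t}=\tfrac{1}{2\pi t}$, so
\[
w_{t}(\theta)=\frac{1}{4\pi}\cdot\frac{2}{t}+\frac{1}{4\pi}\cdot\frac{2}{t}\,\frac{\partial}{\partial\theta}\big[h(r_t(\theta))\sin\theta\big]
=\frac{1}{2\pi t}\left(1+\frac{\partial}{\partial\theta}\big[h(r_t(\theta))\sin\theta\big]\right),
\]
which is exactly (\ref{wtWithHr}). This is essentially a one-line computation once the boundary relation $T(r_t(\theta),\theta)=t$ is invoked.

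The only point requiring a word of care — and the one I would expect to be the sole ``obstacle,'' though a minor one — is the case $r_t(\theta)=1$, which occurs when $t\le 4$ and $\theta\to\pm\cos^{-1}(1-t/2)$ (Proposition~\ref{setFt.prop}). There $\log(r^{2})/(r^{2}-1)$ has a removable singularity with value $1$, so $h(1)=1$ and both the original fraction and $h(r_t(\theta))\sin\theta$ extend continuously; the substitution above is valid by continuity since $T(r,\theta)$ is real-analytic on all of $\mathbb{C}\setminus\{0\}$ and the relation $T(r_t(\theta),\theta)=t$ holds throughout the relevant range of $\theta$. One should also note that $r_t(\theta)$ is analytic in $\theta$ (Proposition~\ref{setFt.prop}), so the $\theta$-derivative in (\ref{wtWithHr}) makes sense. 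With these remarks the proof of the lemma is complete.
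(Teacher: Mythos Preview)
Your proof is correct and follows essentially the same route as the paper: both use the boundary relation $T(r_t(\theta),\theta)=t$ from Theorem~\ref{domainGobbles.thm} to rewrite the denominator $r_t(\theta)^2+1-2r_t(\theta)\cos\theta$ in terms of $t$ and $\log(r_t(\theta)^2)/(r_t(\theta)^2-1)$, yielding the factor $h(r_t(\theta))$, and then pull out the constant $2/t$. Your additional remark on the removable singularity at $r_t(\theta)=1$ is a welcome clarification that the paper leaves implicit.
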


\begin{proof}
We start by noting that the point with polar coordinates $(r_{t}%
(\theta),\theta)$ is on the boundary of $\Sigma_{t}.$ Thus, by Theorem
\ref{domainGobbles.thm}, we have $T(r_{t}(\theta),\theta)=t,$ from which we
obtain%
\[
\frac{1}{r_{t}(\theta)^{2}+1-2r_{t}(\theta)\cos\theta}=\frac{1}{t}\frac
{\log(r_{t}(\theta))}{r_{t}(\theta)^{2}-1}.
\]
Thus, we may write%
\[
\frac{2r_{t}(\theta)\sin\theta}{r_{t}(\theta)^{2}+1-2r_{t}(\theta)\cos\theta
}=\frac{2}{t}h(r_{t}(\theta))\sin\theta,
\]
from which the claimed formula follows easily from the expression in Theorem
\ref{main.thm}.
\end{proof}

We now formulate the main result of this subsection, whose proof is on p.
\pageref{densityWithoutProof}.

\begin{theorem}
\label{densityWithoutDeriv.thm}Consider the function $\omega(r,\theta)$
defined in (\ref{omegaFormula}). Although the right-hand side of
(\ref{omegaFormula}) is indeterminate at $r=1,$ the function $\omega$ has a
smooth extension to all $r>0$ and all $\theta.$ The function $w_{t}(\theta)$
in Theorem \ref{main.thm} can then be expressed as%
\[
w_{t}(\theta)=\frac{1}{2\pi t}\omega(r_{t}(\theta),\theta).
\]

The function $\omega$ has the following properties.

\begin{enumerate}
\item \label{omegaInversion.point}We have $\omega(1/r,\theta)=\omega
(r,\theta)$ for all $r>0$ and all $\theta.$

\item \label{omegaOnCircle.point}When $r=1,$ we have%
\[
\omega(1,\theta)=3\frac{1+\cos\theta}{2+\cos\theta}.
\]
In particular, $\omega(1,0)=2$ and $\omega(1,\pi)=0.$

\item \label{omegaPos.point}The density $\omega(r,\theta)$ is strictly
positive except when $r=1$ and $\theta=\pm\pi.$ Furthermore, $\omega
(r,\theta)\leq2$ with equality precisely when $r=1$ and $\theta=0.$

\item \label{omegaAtZero.point}We have%
\[
\lim_{r\rightarrow0}\omega(r,\theta)=1,
\]
where the limit is uniform in $\theta.$
\end{enumerate}
\end{theorem}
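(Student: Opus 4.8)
The plan is to deduce everything from the preceding Lemma, which already gives
\[
w_{t}(\theta)=\frac{1}{2\pi t}\left(1+\frac{\partial}{\partial\theta}\bigl[h(r_{t}(\theta))\sin\theta\bigr]\right),
\]
together with a few elementary identities for $h,\alpha,\beta$. Since every boundary radius $r>0$ is realized as $r=r_{t}(\theta)$ for $t=T(r,\theta)$ (Theorem \ref{domainGobbles.thm}), the asserted formula $w_{t}(\theta)=\frac{1}{2\pi t}\omega(r_{t}(\theta),\theta)$ is equivalent to the pointwise identity $1+\partial_{\theta}[h(r_{t}(\theta))\sin\theta]=\omega(r,\theta)$ for all $r>0$, $\theta$. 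Expanding the left side with the chain rule gives $h(r)\cos\theta+h'(r)\,r_{t}'(\theta)\sin\theta$, and implicit differentiation of $T(r_{t}(\theta),\theta)=t$ gives $r_{t}'(\theta)=-(\partial_{\theta}T)/(\partial_{r}T)$. Using the polar formula (\ref{Tpolar2}) — in the computation of $\partial_{r}T$ the $\cos\theta/r$ terms cancel — one finds $\partial_{\theta}T=2h(r)\sin\theta$ and $r^{2}\,\partial_{r}T=h(r)(r^{2}-1)+h'(r)r(r^{2}+1)-2h'(r)r^{2}\cos\theta$. Substituting these, clearing denominators, and cancelling the $\cos\theta$ terms, the desired identity collapses to the two purely algebraic relations
\[
h'(r)=-\frac{\beta(r)}{r(r^{2}-1)},\qquad 2r\,\alpha(r)+(r^{2}+1)\,\beta(r)=h(r)(r^{2}-1)^{2}.
\]
The first is a direct one-line differentiation of $h(r)=\frac{2r\log r}{r^{2}-1}$; the second follows from $(r^{2}+1)^{2}-4r^{2}=(r^{2}-1)^{2}$ after inserting the definitions of $\alpha$ and $\beta$ (it is an identity in $h$ and $r$, not using the specific form of $h$). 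This establishes the formula for $w_{t}$.

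For the smooth extension across $r=1$: there $h(1)=1$ while $\alpha(1)=\beta(1)=0$, so (\ref{omegaFormula}) reads $0/0$. Writing $s=r^{2}$ and using $\tfrac{\log s}{s-1}=1-\tfrac{s-1}{2}+\tfrac{(s-1)^{2}}{3}-\cdots$, a Taylor expansion gives $\alpha(r)=\tfrac{1}{3}(r^{2}-1)^{2}+O((r^{2}-1)^{3})$ and $\beta(r)=\tfrac{1}{6}(r^{2}-1)^{2}+O((r^{2}-1)^{3})$ near $r=1$. Since $h$ — hence $\alpha$ and $\beta$ — is real analytic at $r=1$, we may write $\alpha=(r^{2}-1)^{2}\tilde\alpha$, $\beta=(r^{2}-1)^{2}\tilde\beta$ with $\tilde\alpha,\tilde\beta$ real analytic, $\tilde\alpha(1)=\tfrac13$, $\tilde\beta(1)=\tfrac16$, and then $\omega=1+h\,\dfrac{\tilde\alpha\cos\theta+\tilde\beta}{\tilde\beta\cos\theta+\tilde\alpha}$, whose denominator equals $\tfrac{1}{6}(\cos\theta+2)>0$ at $r=1$ (uniformly in $\theta$). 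Thus $\omega$ extends real-analytically through $r=1$, and evaluating there gives $\omega(1,\theta)=1+\dfrac{2\cos\theta+1}{\cos\theta+2}=3\dfrac{1+\cos\theta}{2+\cos\theta}$, which is Point \ref{omegaOnCircle.point}.

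The inversion symmetry (Point \ref{omegaInversion.point}) is immediate once one checks $h(1/r)=h(r)$ and $\alpha(1/r)=\alpha(r)/r^{2}$, $\beta(1/r)=\beta(r)/r^{2}$, which substitute into (\ref{omegaFormula}) unchanged. To complete the positivity and smoothness claims on all of $r>0$, I will record three inequalities, each of which — thanks to the inversion symmetry — need only be verified for $r>1$, and each of which reduces to a standard elementary estimate ($\log x<x-1$; $\tanh y<y<\sinh y$ for $y>0$): namely $0<h(r)<1$, $\alpha(r)>0$, and $\beta(r)>0$. Combined with the algebraic identity $\alpha(r)-\beta(r)=(1-h(r))(r+1)^{2}$, these give $\beta(r)\cos\theta+\alpha(r)\ge\alpha(r)-\beta(r)>0$ for $r\neq1$; with the positivity of the denominator at $r=1$ from the previous paragraph, this proves $\omega$ is real analytic on $\{r>0\}\times\mathbb{R}$. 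Next, rewriting $\omega=\dfrac{(\beta+h\alpha)\cos\theta+(\alpha+h\beta)}{\beta\cos\theta+\alpha}$ and $2-\omega=\dfrac{(\beta-h\alpha)\cos\theta+(\alpha-h\beta)}{\beta\cos\theta+\alpha}$, and noting each numerator is affine in $\cos\theta$, it suffices to check positivity at $\cos\theta=\pm1$: the resulting values are $(\alpha+\beta)(1+h)$, $(\alpha-\beta)(1-h)$, $(\alpha+\beta)(1-h)$, $(\alpha-\beta)(1+h)$, all strictly positive for $r\neq1$. Hence $0<\omega<2$ for $r\neq1$; at $r=1$ the explicit formula gives $\omega\ge0$ with equality iff $\theta=\pm\pi$ and $\omega\le2$ with equality iff $\theta=0$, which is Point \ref{omegaPos.point}. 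Finally, for Point \ref{omegaAtZero.point}, as $r\to0$ one has $h(r)=\frac{2r\log r}{r^{2}-1}\to0$, $\alpha(r)\to1$, $\beta(r)\to0$, all uniformly in $\theta$, so the fraction in (\ref{omegaFormula}) stays uniformly bounded and $\omega(r,\theta)-1=h(r)\cdot O(1)\to0$ uniformly. The main obstacle I anticipate is the first paragraph: carrying out the implicit differentiation cleanly and spotting the two algebraic identities that make the boundary-geometry expression collapse to the closed form (\ref{omegaFormula}); once those are in hand, the remaining parts are routine one-variable estimates.
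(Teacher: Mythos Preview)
Your proof is correct and follows the same route as the paper's: implicit differentiation of $T(r_{t}(\theta),\theta)=t$ combined with the chain rule to obtain the closed form for $\omega$, a factorization (you use $(r^{2}-1)^{2}$, the paper uses $(r-1)^{2}$ via $h(r)=1-c(r)(r-1)^{2}$) to resolve the $0/0$ indeterminacy at $r=1$, and elementary one-variable estimates on $h$ for the positivity and limit claims. Your packaging of the main simplification into the two identities $h'=-\beta/(r(r^{2}-1))$ and $2r\alpha+(r^{2}+1)\beta=h(r^{2}-1)^{2}$, together with the substitution $r=e^{y}$ (so that $h=y/\sinh y$ and $\alpha,\beta>0$ become $\sinh 2y>2y$ and $\tanh y<y$), is a bit tidier than the paper's ``straightforward but tedious'' computation, but the argument is the same in substance.
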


We now derive consequences for $w_{t}.$ For $t\leq4,$ the density
$w_{t}(\theta)$ is only defined for $-\theta_{\max}(t)<\theta<\theta_{\max
}(t),$ where $\theta_{\max}(t)=\cos^{-1}(1-t/2),$ while for $t>4,$ the density
$w_{t}(\theta)$ is defined for all $\theta.$ (Recall Theorem
\ref{regionProperties.thm}.)

\begin{corollary}
[Positivity]If $t>4,$ then $w_{t}(\theta)$ is strictly positive for all
$\theta.$ If $t<4,$ then $w_{t}(\theta)$ is strictly positive for
$-\theta_{\max}(t)<\theta<\theta_{\max}(t)$ and the limit as $\theta$
approaches $\pm\theta_{\max}(t)$ of $w_{t}(\theta)$ is strictly positive.
Finally, if $t=4,$ then $w_{t}(\theta)$ is strictly positive for $-\pi
<\theta<\pi,$ but $\lim_{\theta\rightarrow\pm\pi}w_{t}(\theta)=0.$
\end{corollary}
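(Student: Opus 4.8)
The plan is to read the corollary off directly from Theorem~\ref{densityWithoutDeriv.thm} together with the behavior of the outer radius $r_t(\theta)$ recorded in Proposition~\ref{setFt.prop}. Recall from Theorem~\ref{densityWithoutDeriv.thm} that $w_t(\theta)=\frac{1}{2\pi t}\,\omega(r_t(\theta),\theta)$, that $\omega(r,\theta)$ is continuous and strictly positive for all $(r,\theta)$ \emph{except} at $(r,\theta)=(1,\pm\pi)$, where it vanishes, and that on the unit circle $\omega(1,\theta)=3\frac{1+\cos\theta}{2+\cos\theta}$. Thus strict positivity of $w_t$ amounts to checking that the point $(r_t(\theta),\theta)$ never equals $(1,\pm\pi)$, and the edge behavior for $t\le 4$ will come from the limiting value $r_t(\theta)\to 1$ combined with the explicit formula for $\omega(1,\cdot)$.

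For the interior positivity, in all three regimes Proposition~\ref{setFt.prop} gives $r_t(\theta)>1$ strictly: for $t>4$ this holds for every $\theta\in(-\pi,\pi]$; for $t<4$ it holds whenever $|\theta|<\theta_{\max}(t)=\cos^{-1}(1-t/2)$; and for $t=4$, where $\theta_{\max}(4)=\cos^{-1}(-1)=\pi$, it holds whenever $|\theta|<\pi$. In each case $r_t(\theta)\neq 1$, so $(r_t(\theta),\theta)$ lies off the zero locus of $\omega$, whence $\omega(r_t(\theta),\theta)>0$ and therefore $w_t(\theta)=\frac{1}{2\pi t}\omega(r_t(\theta),\theta)>0$. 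This yields the strict positivity assertions for $t>4$ (all $\theta$), for $t<4$ (on $-\theta_{\max}(t)<\theta<\theta_{\max}(t)$), and for $t=4$ (on $-\pi<\theta<\pi$).

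It remains to compute the limits at the endpoints, which occur only for $t\le 4$. By the last assertion of Proposition~\ref{setFt.prop}, $r_t(\theta)\to 1$ as $\theta\to\pm\theta_{\max}(t)$; by continuity of $\omega$ and the formula $\omega(1,\theta)=3\frac{1+\cos\theta}{2+\cos\theta}$, together with $\cos\theta_{\max}(t)=1-t/2$, we get $\lim_{\theta\to\pm\theta_{\max}(t)}w_t(\theta)=\frac{1}{2\pi t}\,\omega(1,\theta_{\max}(t))=\frac{3(4-t)}{2\pi t(6-t)}$. For $0<t<4$ the right-hand side is strictly positive, while for $t=4$ we have $\theta_{\max}(4)=\pi$, so the limiting value is $\frac{1}{8\pi}\omega(1,\pm\pi)=0$, as claimed.

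I do not expect a serious obstacle: this corollary is essentially a repackaging of Theorem~\ref{densityWithoutDeriv.thm}. The only point deserving care is recognizing that the zero locus $(1,\pm\pi)$ of $\omega$ is attained by $(r_t(\theta),\theta)$ precisely in the limit $\theta\to\pm\pi$ at the critical value $t=4$ — which is exactly why positivity degenerates at the boundary only in that case — and then extracting the edge values of $w_t$ from the explicit expression for $\omega$ on the unit circle.
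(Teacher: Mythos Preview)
Your proof is correct and follows essentially the same approach as the paper: both deduce the corollary from Theorem~\ref{densityWithoutDeriv.thm} (positivity of $\omega$ away from $(1,\pm\pi)$ and the formula for $\omega(1,\theta)$) together with Proposition~\ref{setFt.prop} (that $r_t(\theta)>1$ on the relevant range and $r_t(\theta)\to 1$ at the endpoints when $t\le 4$). Your version adds the explicit computation $\lim_{\theta\to\pm\theta_{\max}(t)}w_t(\theta)=\frac{3(4-t)}{2\pi t(6-t)}$, which is a nice refinement not spelled out in the paper.
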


\begin{proof}
The only time $\omega(r,\theta)$ equals zero is when $r=1$ and $\theta=\pm
\pi.$ When~$t>4,$ the function $r_{t}(\theta)$ is continuous and and greater
than $1$ for all $\theta,$ so that $w_{t}(\theta)$ is strictly positive in
this case. When $t\leq4,$ we know from Proposition \ref{setFt.prop} that
$r_{t}(\theta)$ is greater than 1 for $\left\vert \theta\right\vert
<\theta_{\max}(t)$ and approaches 1 when $\theta$ approaches $\pm\theta_{\max
}(t).$ Thus, $w_{t}(\theta)=\omega(r_{t}(\theta),\theta)$ is strictly positive
for $\left\vert \theta\right\vert <\theta_{\max}(t).$ When $t<4,$ we have
$\theta_{\max}(t)=\cos^{-1}(1-t/2)<\pi$ and the limiting value of
$w_{t}(\theta)$---namely $\omega(1,\theta_{\max})$---will be positive.
Finally, when $t=4,$ we have $\theta_{\max}(t)=\pi$ and the limiting value of
$w_{t}(\theta)$ is $\omega(1,\pi)=0.$
\end{proof}

%

\begin{figure}[ptb]%
\centering
\includegraphics[
height=1.6034in,
width=4.8282in
]%
{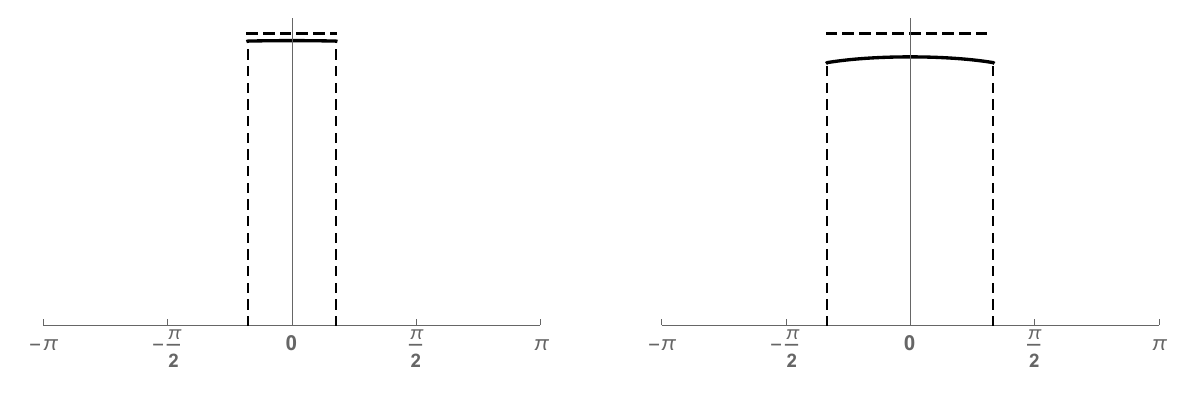}%
\caption{Plots of $w_{t}(\theta)$ (black) and $1/(\pi t)$ (dashed) for $t=0.3$
and 1}%
\label{asymptotics1.fig}%
\end{figure}

\begin{corollary}
[Asymptotics]\label{asymptotics.cor}The density $w_{t}(\theta)$ has the
property that%
\[
w_{t}(\theta)\sim\frac{1}{\pi t}%
\]
for small $t$. More precisely, for all sufficiently small $t$ and all
$\theta\in(-\theta_{\max}(t),\theta_{\max}(t)),$ the quantity $\pi
tw_{t}(\theta)$ is close to 1. Furthermore,%
\[
w_{t}(\theta)\sim\frac{1}{2\pi t}%
\]
for large $t$. More precisely, for all sufficiently large $t$ and all
$\theta,$ the quantity $2\pi tw_{t}(\theta)$ is close to 1.
\end{corollary}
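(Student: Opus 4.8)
The plan is to read off everything from the representation $w_t(\theta)=\frac{1}{2\pi t}\,\omega(r_t(\theta),\theta)$ in Theorem \ref{densityWithoutDeriv.thm}, combined with the behavior of $\omega$ near $r=1$ (Point \ref{omegaOnCircle.point}) and near $r=0$, $r=\infty$ (Points \ref{omegaInversion.point} and \ref{omegaAtZero.point}). Since $\pi t\,w_t(\theta)=\tfrac12\,\omega(r_t(\theta),\theta)$ and $2\pi t\,w_t(\theta)=\omega(r_t(\theta),\theta)$, the two assertions amount to showing that $\omega(r_t(\theta),\theta)\to 2$ as $t\to 0^+$ and $\omega(r_t(\theta),\theta)\to 1$ as $t\to\infty$, in each case \emph{uniformly} over the relevant range of $\theta$.

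For small $t$ I would argue as follows. By Theorem \ref{domainGobbles.thm}, $\overline{\Sigma}_t=\{\lambda:T(\lambda)\le t\}$, and since $T$ is continuous, nonnegative, and vanishes only at $\lambda=1$ (as noted after (\ref{Tlambda})), the sets $\overline{\Sigma}_t$ shrink to $\{1\}$ as $t\to 0^+$. Consequently $r_t(\theta)\to 1$ uniformly in $\theta$, while the aperture $\theta_{\max}(t)=\cos^{-1}(1-t/2)\to 0$, so $\cos\theta\to 1$ uniformly over $|\theta|<\theta_{\max}(t)$. As $\omega$ is continuous (in both variables) with $\omega(1,0)=2$ by Point \ref{omegaOnCircle.point}, this forces $\omega(r_t(\theta),\theta)\to 2$ uniformly, hence $\pi t\,w_t(\theta)\to 1$ uniformly, which is the asserted small-$t$ statement.

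For large $t$ I would first show $r_t(\theta)\to\infty$ uniformly in $\theta$ as $t\to\infty$. By Lemma \ref{Tmin.lem}, for fixed $\theta$ the map $r\mapsto T(r,\theta)$ is strictly increasing on $(1,\infty)$, so $r_t(\theta)$ is the unique $r>1$ with $T(r,\theta)=t$. Writing $T(r,\theta)$ in polar form (\ref{Tpolar2}) exhibits it as a function of $\cos\theta$ that is linear with a negative coefficient for $r>1$, whence $T(r,0)\le T(r,\theta)\le T(r,\pi)$; and $T(r,0)\to\infty$ as $r\to\infty$. Thus, given $R>1$, set $t_0=T(R,\pi)$: for $t>t_0$ and any $\theta$ one has $T(r_t(\theta),\theta)=t>T(R,\pi)\ge T(R,\theta)$, so monotonicity gives $r_t(\theta)>R$. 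Hence $r_t(\theta)\to\infty$ uniformly. Now Point \ref{omegaInversion.point} gives $\omega(r,\theta)=\omega(1/r,\theta)$, and Point \ref{omegaAtZero.point} gives $\omega(s,\theta)\to 1$ as $s\to 0$ uniformly in $\theta$; combining, $\omega(r,\theta)\to 1$ as $r\to\infty$ uniformly in $\theta$, so $\omega(r_t(\theta),\theta)\to 1$ uniformly, i.e.\ $2\pi t\,w_t(\theta)\to 1$ uniformly.

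These arguments are short because the substantive work is already contained in Theorem \ref{densityWithoutDeriv.thm}. The only point demanding care — and hence the main (mild) obstacle — is the \emph{uniformity in $\theta$}: one must confirm that $r_t(\theta)$ reaches its limiting value uniformly over the $\theta$-range (which for small $t$ also requires controlling the collapse of $\theta_{\max}(t)$) and then transport this through the uniform continuity/limit statements for $\omega$. Both ingredients follow from the monotonicity of $r\mapsto T(r,\theta)$ (Lemma \ref{Tmin.lem}) and the $\cos\theta$-linearity device already used repeatedly in Section \ref{regionPropeties.sec}.
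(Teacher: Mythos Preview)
Your proposal is correct and follows essentially the same approach as the paper: both arguments use the representation $w_t(\theta)=\frac{1}{2\pi t}\,\omega(r_t(\theta),\theta)$, then for small $t$ invoke the collapse of $\overline{\Sigma}_t$ to $\{1\}$ together with $\omega(1,0)=2$, and for large $t$ use that $1/r_t(\theta)\to 0$ uniformly together with the inversion symmetry and $\lim_{r\to 0}\omega(r,\theta)=1$. Your write-up supplies a bit more detail on the uniformity in $\theta$ for large $t$ (via the $\cos\theta$-linearity of $T$), but the substance is the same.
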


See Figures \ref{asymptotics1.fig} and \ref{asymptotics2.fig}. The small- and
large-$t$ behavior of the region $\Sigma_{t}$ can also be determined using the
behavior of the function $T(\lambda)$ near $\lambda=1$ (small $t$) and near
$\lambda=0$ (large $t$), together with the invariance of the region under
$\lambda\mapsto1/\lambda.$ For small $t,$ the region resembles a disk of
radius $\sqrt{t}$ around 1, while for large $t,$ the region resembles an
annulus with inner radius $e^{-t/2}$ and outer radius $e^{t/2}.$ In
particular, the expected behavior of the Brown measure for small $t$ can be
observed: it resembles the uniform probability measure on a disk of radius
$\sqrt{t}$ centered at 1.%

\begin{figure}[ptb]%
\centering
\includegraphics[
height=1.6034in,
width=4.8282in
]%
{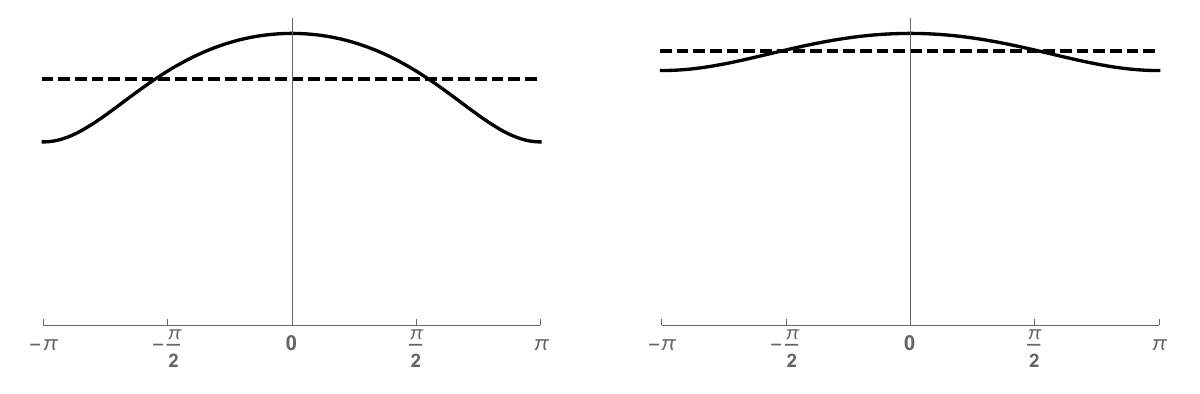}%
\caption{Plots of $w_{t}(\theta)$ (black) and $1/(2\pi t)$ (dashed) for $t=7$
and 10}%
\label{asymptotics2.fig}%
\end{figure}

\begin{proof}
When $t$ is small, the entire boundary of $\Sigma_{t}$ will be close to
$\lambda=1,$ since this is the only point where $T(\lambda)=0.$ Furthermore,
when $t$ is small, $\theta_{\max}(t)=\cos^{-1}(1-t/2)$ is close to zero. When
$t$ is small, therefore, the quantity
\[
\pi tw_{t}(\theta)=\frac{1}{2}\omega(r_{t}(\theta),\theta)
\]
will be close to $\omega(1,0)/2=1$ for all $\theta\in(-\theta_{\max}%
(t),\theta_{\max}(t)),$ by Point \ref{omegaOnCircle.point} of Theorem
\ref{densityWithoutDeriv.thm}.

When $t$ is large (in particular, greater than 4), the inner boundary of the
domain will be close to $\lambda=0,$ since this is the only point in the unit
disk where $T(\lambda)$ is large. Thus, for large $t,$ the inner radius
$1/r_{t}(\theta)$ of the domain will be uniformly small, and therefore%
\[
2\pi tw_{t}(\theta)=\omega(r_{t}(\theta),\theta)=\omega(1/r_{t}(\theta
),\theta)
\]
will be uniformly close to 1, by Point \ref{omegaAtZero.point} of Theorem
\ref{densityWithoutDeriv.thm}.
\end{proof}

\begin{proof}
[Proof of Theorem \ref{densityWithoutDeriv.thm}]\label{densityWithoutProof}We
note that the function $T$ in (\ref{Tpolar2}) can be written as%
\[
T(r,\theta)=\left(  r+\frac{1}{r}-2\cos\theta\right)  h(r),
\]
so that%
\begin{align*}
\frac{\partial T}{\partial r}  &  =\left(  1-\frac{1}{r^{2}}\right)
h(r)+\left(  r+\frac{1}{r}-2\cos\theta\right)  h^{\prime}(r);\\
\frac{\partial T}{\partial\theta}  &  =2\sin\theta~h(r).
\end{align*}
Applying implicit differentiation to the identity $T(r_{t}(\theta),\theta)=t$
then gives%
\begin{equation}
\frac{dr_{t}(\theta)}{d\theta}=-\frac{\partial T/\partial\theta}{\partial
T/\partial r}. \label{implicit}%
\end{equation}

By the chain rule and (\ref{implicit}), $\frac{d}{d\theta}[h(r_{t}%
(\theta))\sin\theta]=q(r_{t}(\theta),\theta),$ where
\begin{align}
q(r,\theta)  &  =h(r)\cos\theta-h^{\prime}(r)\sin\theta\frac{\partial
T/\partial\theta}{\partial T/\partial r}\nonumber\\
&  =h(r)\cos\theta-~\frac{2h^{\prime}(r)\sin^{2}\theta~h(r)}{\left(
1-\frac{1}{r^{2}}\right)  h(r)+\left(  r+\frac{1}{r}-2\cos\theta\right)
h^{\prime}(r)}. \label{uForm}%
\end{align}
After computing that%
\[
h^{\prime}(r)=\frac{2}{r^{2}-1}-\frac{r^{2}+1}{r(r^{2}-1)}h(r),
\]
it is a straightforward but tedious exercise to simplify (\ref{uForm}) and
obtain the claimed formula (\ref{omegaFormula}).

Since $h(1/r)=h(r),$ we may readily verify Point (\ref{omegaInversion.point});
both numerator and denominator in the fraction on the right-hand side of
(\ref{omegaFormula}) change by a factor of $1/r^{2}$ when $r$ is replaced by
$1/r$.

To understand the behavior of $\omega$ at $r=1,$ we need to understand the
function $h$ better. We may easily calculate that $h$ has a removable
singularity at $r=1$ with $h(1)=1$, $h^{\prime}(1)=0,$ and $h^{\prime\prime
}(1)=-1/3.$ We also claim that $h$ satisfies $0<h(r)\leq1,$ with $h(r)=1$ only
at $r=1$. To verify the claim, we first compute that $\lim_{r\rightarrow
0}h(r)=0$ and that%
\[
h^{\prime}(r)=\frac{2(r^{2}-1)+(r^{2}+1)\log(1/r^{2})}{(r^{2}-1)^{2}}.
\]
Using the Taylor expansion of logarithm, we may then compute that
\[
h^{\prime}(r)=\frac{1}{(r^{2}-1)^{2}}\sum_{k=3}^{\infty}\left(  \frac{2}%
{k}-\frac{1}{k+1}\right)  (1-r^{2})^{k}>0
\]
for $0<r<1.$ Thus, $h(r)$ increases from $0$ to $1$ on $[0,1].$

We now write $h$ in the form%
\begin{equation}
h(r)=1-c(r)(r-1)^{2} \label{hAndC}%
\end{equation}
for some analytic function $c(r),$ with $c(1)=1/6.$ The minus sign in
(\ref{hAndC}) is convenient because $h$ has a strict global maximum at $1,$
which means $c(r)$ is strictly positive everywhere.

Now, since $h(1)=1,$ the fraction on the right-hand side of
(\ref{omegaFormula}) is of $0/0$ form when $r=1.$ To rectify this situation,
we observe that $\alpha$ and $\beta$ may be written as%
\[
\alpha(r)=(r-1)^{2}[1+2rc(r)];\quad\beta(r)=(r-1)^{2}[1-(r^{2}+1)c(r)].
\]
Thus, we can take a factor of $(r-1)^{2}$ out of numerator and denominator to
obtain
\begin{equation}
\omega(r,\theta)=1+h(r)\frac{\tilde{\alpha}(r)\cos\theta+\tilde{\beta}%
(r)}{\tilde{\beta}(r)\cos\theta+\tilde{\alpha}(r)}, \label{omegaFormula2}%
\end{equation}
where $\tilde{\alpha}(r)=1+2rc(r)$ and $\tilde{\beta}(r)=1-(r^{2}+1)c(r).$
This expression is no longer of $0/0$ form at $r=1.$ Indeed, since $h(1)=1$
and $c(1)=1/6,$ we may easily verify the claimed formula for $\omega
(1,\theta)$ in Point \ref{omegaOnCircle.point} of the theorem. We will shortly
verify that the denominator in the fraction on the right-hand side of
(\ref{omegaFormula2}) is positive for all $r>0$ and all $\theta,$ from which
the claimed smooth extension of $\omega$ follows.

To verify the claimed positivity of $\omega,$ we first observe that
$\tilde{\beta}(r)z+\tilde{\alpha}(r)$ is positive when $z=1$ (with a value of
$2-(r-1)^{2}c(r)=1+h(r)$) and also positive when $z=-1$ (with a value of
$(r+1)^{2}c(r)$), and hence positive for all $-1\leq z\leq1.$ Thus, the
denominator in the fraction on the right-hand side of (\ref{omegaFormula2}) is
never zero. We then compute that%
\[
\frac{d}{dz}\frac{\tilde{\alpha}(r)z+\tilde{\beta}(r)}{\tilde{\beta
}(r)z+\tilde{\alpha}(r)}=\frac{\tilde{\alpha}(r)^{2}-\tilde{\beta}(r)^{2}%
}{(\tilde{\beta}(r)z+\tilde{\alpha}(r))^{2}}=\frac{(r+1)^{2}h(r)}%
{(\tilde{\beta}(r)z+\tilde{\alpha}(r))^{2}}>0
\]
for all $r$ and $\theta.$ Thus, $(\tilde{\alpha}(r)z+\tilde{\beta}%
(r))/(\tilde{\beta}(r)z+\tilde{\alpha}(r))$ increases from $-1$ to $1$ as $z$
increases from~$-1$ to $1.$ Since $h(r)$ is positive, we conclude that%
\[
1-h(r)\leq\omega(r,\theta)\leq1+h(r)
\]
for all $r$ and $\theta,$ with equality when $\cos\theta=-1$ in the first case
and when $\cos\theta=1$ in the second case. Since $h(r)\leq1$ with equality
only at $r=1,$ we see that $\omega(r,\theta)$ is positive except when $r=1$
and $\cos\theta=-1.$ Similarly, $\omega(r,\theta)\leq2$ with equality only if
$r=1$ and $\cos\theta=1.$

Finally, from the definition (\ref{hAndC}) and the fact that $\lim
_{r\rightarrow0}h(r)=0,$ we find that $\lim_{r\rightarrow0}c(r)=1.$ Thus, as
$r\rightarrow0,$ we have $\tilde{\alpha}(r)\rightarrow1$ and $\tilde{\beta
}(r)\rightarrow0.$ In this limit, the fraction on the right-hand side of
(\ref{omegaFormula2}) converges uniformly to $\cos\theta,$ while $h(r)$ tends
to zero, giving Point \ref{omegaAtZero.point}.
\end{proof}

\subsection{The connection to free unitary Brownian
motion\label{bianeConnection.sec}}

Recall from Theorem \ref{t.Biane.ut} that the spectral measure $\nu_{t}$ of
the free unitary Brownian motion $u_{t}$ was computed by Biane. In this
subsection, we prove Proposition \ref{connectToBiane.prop}, which connects the
Brown measure of $b_{t}$ to Biane's measure $\nu_{t}$. The support of $\nu
_{t}$ is a proper subset of the unit circle for $t<4$ and the entire unit
circle for $t\geq4.$ For $t<4,$ the support of $\nu_{t}$ consists of points
with angles $\phi$ satisfying $\left\vert \phi\right\vert \leq\phi_{\max}(t),$
where%
\[
\phi_{\max}(t)=\frac{1}{2}\sqrt{t(4-t)}+\cos^{-1}(1-t/2).
\]

Recall the definition in (\ref{ftDef}) of the function $f_{t}$. Then $f_{t}$
maps the boundary of $\Sigma_{t}$ into the unit circle. (This is true by the
definition (\ref{FtDef}) for points in $\partial\Sigma_{t}$ outside the unit
circle and follows by continuity for points in $\partial\Sigma_{t}$ in the
unit circle.) Indeed, let the \textit{outer boundary} of $\Sigma_{t},$ denoted
$\partial\Sigma_{t}^{\mathrm{out}},$ be the portion of $\partial\Sigma_{t}$
outside the open unit disk. Then $f_{t}$ is a homeomorphism of $\partial
\Sigma_{t}^{\mathrm{out}}$ to the support of $\nu_{t}$:%
\begin{equation}
f_{t}:\partial\Sigma_{t}^{\mathrm{out}}\leftrightarrow\mathrm{supp}(\nu_{t}).
\label{ftBoundary}%
\end{equation}
In particular, for $t<4,$ let us define
\[
\theta_{\max}(t)=\cos^{-1}(1-t/2),
\]
so that the two points in $\partial\Sigma_{t}\cap S^{1}$ have angles
$\pm\theta_{\max}(t)$ (Theorem \ref{regionProperties.thm}). Then%
\[
f_{t}(e^{i\theta_{\max}(t)})=e^{i\phi_{\max}(t)},
\]
as may be verified by direct computation from the definition of $f_{t}.$ (Use
the formula (\ref{argFt}) below with $r=1$ and $\cos\theta=1-t/2.$)

We now describe the map (\ref{ftBoundary}) more concretely. We denote by
$\lambda_{t}(\theta)$ the point at angle $\theta$ in $\partial\Sigma
_{t}^{\mathrm{out}}$:%
\[
\lambda_{t}(\theta)=r_{t}(\theta)e^{i\theta},
\]
where for $t<4,$ we require $\left\vert \theta\right\vert \leq\theta_{\max
}(t).$ Then the map in (\ref{ftBoundary}) can be thought of as a map of
$\theta$ to $\phi$ determined by the relation%
\begin{equation}
f_{t}\left(  \lambda_{t}(\theta)\right)  =e^{i\phi}. \label{thetaPhi}%
\end{equation}

We now observe a close relationship between the density $w_{t}(\theta)$ in
Theorem \ref{main.thm} and the map in (\ref{thetaPhi}).

\begin{proposition}
\label{dPhiDtheta.prop}Let $\phi$ and $\theta$ be related as in
(\ref{thetaPhi}), where if $t<4,$ we require $\left\vert \phi\right\vert
\leq\phi_{\max}(t)$ and $\left\vert \theta\right\vert \leq\theta_{\max}(t).$
Then the density $w_{t}$ in Theorem \ref{main.thm} may be computed as%
\begin{equation}
w_{t}(\theta)=\frac{1}{2\pi t}\frac{d\phi}{d\theta}. \label{wtFromPhi}%
\end{equation}
We may also write this formula as a logarithmic derivative of $f_{t}$ along
the outer boundary of $\Sigma_{t}$:%
\begin{equation}
w_{t}(\theta)=\frac{1}{2\pi t}\frac{1}{i}\frac{\frac{d}{d\theta}f_{t}%
(\lambda_{t}(\theta))}{f_{t}(\lambda_{t}(\theta))}. \label{wtFromFt}%
\end{equation}

\end{proposition}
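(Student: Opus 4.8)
The plan is to compute $\arg f_t$ along the outer boundary $\partial\Sigma_t^{\mathrm{out}}$ in closed form and recognize the answer as $2\pi t\,w_t(\theta)$. From the definition (\ref{ftDef}) we have $\log f_t(\lambda)=\log\lambda+\tfrac t2\tfrac{1+\lambda}{1-\lambda}$, and a direct computation (the imaginary-part analogue of (\ref{logFt})) gives $\operatorname{Im}\tfrac{1+\lambda}{1-\lambda}=\tfrac{2\operatorname{Im}\lambda}{|1-\lambda|^2}$. Taking imaginary parts and writing $\lambda=re^{i\theta}$ yields the formula
\begin{equation}
\arg f_t(re^{i\theta})=\theta+\frac t2\,\frac{2r\sin\theta}{r^2+1-2r\cos\theta},\label{argFt}
\end{equation}
valid modulo $2\pi$. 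On $\partial\Sigma_t^{\mathrm{out}}$ we have $\lambda=\lambda_t(\theta)=r_t(\theta)e^{i\theta}$ with $|f_t(\lambda_t(\theta))|=1$, so (\ref{thetaPhi}) reads $e^{i\phi}=f_t(\lambda_t(\theta))$ and $\phi\equiv\arg f_t(\lambda_t(\theta))\pmod{2\pi}$; hence, by (\ref{argFt}),
\[
\phi=\theta+\frac t2\,m_t(\theta),\qquad m_t(\theta)=\frac{2r_t(\theta)\sin\theta}{r_t(\theta)^2+1-2r_t(\theta)\cos\theta},
\]
where $m_t$ is precisely the function of Proposition \ref{dsdTheta.prop} (equivalently, $\tfrac t2\,\partial s_t/\partial\theta$ evaluated on $\partial\Sigma_t^{\mathrm{out}}$).

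Differentiating in $\theta$ gives $\tfrac{d\phi}{d\theta}=1+\tfrac t2\,m_t'(\theta)$, so that
\[
\frac{1}{2\pi t}\frac{d\phi}{d\theta}=\frac{1}{2\pi t}+\frac{1}{4\pi}m_t'(\theta)=\frac{1}{4\pi}\left(\frac{2}{t}+\frac{\partial}{\partial\theta}\frac{2r_t(\theta)\sin\theta}{r_t(\theta)^2+1-2r_t(\theta)\cos\theta}\right),
\]
which is exactly the right-hand side of (\ref{wtAnswer}), i.e.\ $w_t(\theta)$; this proves (\ref{wtFromPhi}). For (\ref{wtFromFt}) I would simply differentiate $f_t(\lambda_t(\theta))=e^{i\phi}$ directly: $\tfrac{d}{d\theta}f_t(\lambda_t(\theta))=i\,\tfrac{d\phi}{d\theta}\,f_t(\lambda_t(\theta))$, whence $\tfrac1i\,\dfrac{\frac{d}{d\theta}f_t(\lambda_t(\theta))}{f_t(\lambda_t(\theta))}=\tfrac{d\phi}{d\theta}$, and (\ref{wtFromFt}) follows from (\ref{wtFromPhi}).

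The one point that requires care, and the main obstacle, is the branch of the argument: a priori $\phi$ and $\arg f_t(\lambda_t(\theta))$ differ by a locally constant multiple of $2\pi$, and the identity $\phi=\theta+\tfrac t2 m_t(\theta)$ must hold with no additive constant. I would pin this down by evaluating at $\theta=0$, where $\lambda_t(0)=r_t(0)>1$ is real positive, $f_t(r_t(0))=1=e^{i0}$, and the right-hand side of (\ref{argFt}) vanishes; since $r_t(\cdot)$ is analytic (Proposition \ref{setFt.prop}), both sides are continuous in $\theta$, so the continuous branch with $\phi(0)=0$ satisfies the identity throughout its interval of definition. For $t<4$ one checks that this branch remains in $(-\phi_{\max}(t),\phi_{\max}(t))$ and attains $\pm\phi_{\max}(t)$ at $\theta=\pm\theta_{\max}(t)$, consistent with $f_t(e^{\pm i\theta_{\max}(t)})=e^{\pm i\phi_{\max}(t)}$; for $t\ge 4$ the homeomorphism (\ref{ftBoundary}) shows that $\theta\mapsto\phi$ is a degree-one self-map of the circle, so the continuous lift is unambiguous. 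Apart from this bookkeeping the argument is the routine identification of the two displayed formulas for $w_t$.
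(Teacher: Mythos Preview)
Your proof is correct and follows essentially the same route as the paper: compute $\arg f_t(re^{i\theta})$ via the imaginary part of $\tfrac{1+\lambda}{1-\lambda}$, evaluate on the outer boundary to obtain $\phi=\theta+\tfrac t2\,m_t(\theta)$, differentiate, and match against (\ref{wtAnswer}); the second formula then follows from $\phi=\tfrac1i\log f_t(\lambda_t(\theta))$. The only difference is that you spend a paragraph pinning down the branch of $\phi$ at $\theta=0$, whereas the paper simply remarks that for any local continuous version of $\theta$ the formula gives a local continuous version of $\phi$; since the proposition concerns only $d\phi/d\theta$, any locally constant $2\pi$-ambiguity disappears upon differentiation, so your extra care, while not wrong, is not needed for the result as stated.
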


\begin{proof}
We compute that%
\[
\operatorname{Im}\left(  \frac{1+\lambda}{1-\lambda}\right)  =\frac
{2\operatorname{Im}\lambda}{\left\vert \lambda-1\right\vert ^{2}}=\frac
{2r\sin\theta}{r^{2}+1-2r\cos\theta}.
\]
Thus, using the definition (\ref{ftDef}) of $f_{t},$ we find that%
\begin{equation}
\arg(f_{t}(\lambda))=\arg\lambda+\arg e^{\frac{t}{2}\frac{1+\lambda}%
{1-\lambda}}=\theta+t\frac{r\sin\theta}{r^{2}+1-2r\cos\theta}. \label{argFt}%
\end{equation}
Evaluating this expression at the point $\lambda_{t}(\theta)$ gives%
\begin{align}
\phi &  =\arg(f_{t}(\lambda_{t}(\theta)))\nonumber\\
&  =\theta+t\frac{r_{t}(\theta)\sin\theta}{r_{t}(\theta)^{2}+1-2r_{t}%
(\theta)\cos\theta}. \label{phiOfTheta}%
\end{align}
(Strictly speaking, $\phi$ and $\theta$ are only defined \textquotedblleft mod
$2\pi,$\textquotedblright\ but for any local continuous version of $\theta,$
the last expression in (\ref{phiOfTheta}) gives a local continuous version of
$\phi.$) Thus,%
\[
d\phi=\left(  1+t\frac{d}{d\theta}\frac{r_{t}(\theta)\sin\theta}{r_{t}%
(\theta)^{2}+1-2r_{t}(\theta)\cos\theta}\right)  ~d\theta
\]
and the formula (\ref{wtFromPhi}) follows easily by recalling the definition
(\ref{wtAnswer}) of $w_{t}.$ The expression (\ref{wtFromFt}) is then obtained
by noting that $\phi=\frac{1}{i}\log f_{t}(\lambda_{t}(\theta)).$
\end{proof}

\begin{proposition}
\label{bianeMeasure.prop}Biane's measure $\nu_{t}$ may be computed as%
\begin{equation}
d\nu_{t}(\phi)=\frac{r_{t}(\theta)^{2}-1}{r_{t}(\theta)^{2}+1-2r_{t}%
(\theta)\cos\theta}\frac{d\phi}{2\pi} \label{nutRho1}%
\end{equation}
or as%
\begin{equation}
d\nu_{t}(\phi)=\frac{\log(r_{t}(\theta))}{\pi t}d\phi. \label{nutRho2}%
\end{equation}
Here, as usual, $r_{t}(\theta)$ is the outer radius of the domain $\Sigma_{t}$
and $\theta$ is viewed as a function of $\phi$ by inverting the relationship
(\ref{thetaPhi}). When $t<4,$ the formula should be used only for $\left\vert
\phi\right\vert \leq\phi_{\max}(t).$
\end{proposition}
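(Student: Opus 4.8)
The plan is to start from Biane's formula for $\nu_t$ in Theorem \ref{t.Biane.ut}, namely the density $\kappa_t(e^{i\phi}) = \frac{1}{2\pi}\frac{1-|\chi_t(e^{i\phi})|^2}{|1-\chi_t(e^{i\phi})|^2}$, and to identify $\chi_t(e^{i\phi})$ explicitly as a point on the outer boundary of $\Sigma_t$. The key observation is that $\chi_t$ is the holomorphic inverse of $f_t$ on the closed unit disk; so if $\phi$ and $\theta$ are related by (\ref{thetaPhi}), i.e.\ $f_t(\lambda_t(\theta)) = e^{i\phi}$ with $\lambda_t(\theta) = r_t(\theta)e^{i\theta}$ the point of $\partial\Sigma_t^{\mathrm{out}}$ at angle $\theta$, then I want to say $\chi_t(e^{i\phi})$ equals the \emph{inner} boundary point $\tfrac{1}{r_t(\theta)}e^{i\theta}$. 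Indeed, by the $\lambda\mapsto1/\lambda$ symmetry of $\Sigma_t$ and Lemma \ref{ftOne.lem} (equivalently the fact that $|f_t|=1$ on both boundary points of a radial segment, used already in the definition of $\Phi_t$), the inner point $\tfrac1{r_t(\theta)}e^{i\theta}$ also lies on $\partial\Sigma_t$, has $|f_t|=1$ there, and in fact $f_t$ takes the \emph{same} value $e^{i\phi}$ at both boundary points of the segment — this is exactly the compatibility noted in the discussion of $\Phi_t$. Since $\chi_t$ maps into the open unit disk (plus its boundary), $\chi_t(e^{i\phi})$ must be the one of the two preimages lying in $\overline{\mathbb D}$, which is the inner point $\tfrac1{r_t(\theta)}e^{i\theta}$ (for $t<4$ one should double-check the degenerate case where both points are on the unit circle, i.e.\ $r_t(\theta)=1$; there $\chi_t(e^{i\phi})=e^{i\theta}$ and the formulas still hold by continuity).

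Granting this identification, write $\rho = r_t(\theta)$ for brevity. Then $\chi_t(e^{i\phi}) = \tfrac1\rho e^{i\theta}$, so $|\chi_t(e^{i\phi})|^2 = 1/\rho^2$ and $|1-\chi_t(e^{i\phi})|^2 = |1 - \tfrac1\rho e^{i\theta}|^2 = \tfrac1{\rho^2}(\rho^2 + 1 - 2\rho\cos\theta)$. Substituting into Biane's density gives
\[
\kappa_t(e^{i\phi}) = \frac{1}{2\pi}\,\frac{1 - 1/\rho^2}{\tfrac1{\rho^2}(\rho^2+1-2\rho\cos\theta)} = \frac{1}{2\pi}\,\frac{\rho^2-1}{\rho^2+1-2\rho\cos\theta},
\]
which is exactly (\ref{nutRho1}). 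To pass to (\ref{nutRho2}), use that the boundary point $\lambda_t(\theta)=\rho e^{i\theta}$ lies on $\partial\Sigma_t$, so by Theorem \ref{domainGobbles.thm} we have $T(\rho e^{i\theta}) = t$, i.e.\ $(\rho^2 + 1 - 2\rho\cos\theta)\tfrac{\log(\rho^2)}{\rho^2-1} = t$. Solving this for $\tfrac{\rho^2-1}{\rho^2+1-2\rho\cos\theta} = \tfrac{\log(\rho^2)}{t} = \tfrac{2\log\rho}{t}$ and plugging into (\ref{nutRho1}) yields $d\nu_t(\phi) = \tfrac{2\log\rho}{t}\cdot\tfrac{d\phi}{2\pi} = \tfrac{\log(r_t(\theta))}{\pi t}\,d\phi$, which is (\ref{nutRho2}). (As a consistency check, note that combining (\ref{nutRho2}) with the formula $w_t(\theta) = \tfrac1{2\pi t}\tfrac{d\phi}{d\theta}$ from Proposition \ref{dPhiDtheta.prop} recovers $a_t(\theta) = 2\log[r_t(\theta)]w_t(\theta)$ of (\ref{aTtheta}), so this proposition is the computational heart behind Proposition \ref{connectToBiane.prop}.)

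The main obstacle I anticipate is making the identification $\chi_t(e^{i\phi}) = \tfrac1{r_t(\theta)}e^{i\theta}$ fully rigorous, i.e.\ confirming that the inverse branch $\chi_t$ of $f_t$ — which by Biane's construction is characterized by mapping $\overline{\mathbb D}$ into itself and satisfying $f_t(\chi_t(z)) = z$ — indeed selects the inner boundary point and not some spurious other preimage of $e^{i\phi}$ under $f_t$. This requires knowing that $f_t$ restricted to $\overline\Sigma_t$ (or at least to a neighborhood of the inner boundary arc inside the closed disk) is injective onto the unit circle, which should follow from the analysis of $\Sigma_t$ already carried out (Theorem \ref{regionProperties.thm}, Corollary \ref{setFt.cor}, and the bijectivity statements there), together with Biane's description on pp.\ 273--274 of \cite{BianeJFA} — exactly the references invoked at the end of the proof of Theorem \ref{regionProperties.thm}. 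Once that is in hand, everything else is the short algebraic substitution above.
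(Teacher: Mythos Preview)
Your proposal is correct and follows essentially the same approach as the paper: identify $\chi_t(e^{i\phi})$ as the inner boundary point $\tfrac{1}{r_t(\theta)}e^{i\theta}$, substitute into Biane's density from Theorem \ref{t.Biane.ut}, and then use $T(\lambda)=t$ on $\partial\Sigma_t$ to pass from (\ref{nutRho1}) to (\ref{nutRho2}). The paper dispatches your anticipated obstacle in one line, simply observing that since $\chi_t$ is an inverse to $f_t$ mapping into the open unit disk, $\chi_t(e^{i\phi})$ must be the unique preimage of $e^{i\phi}$ under $f_t$ with $|\lambda|<1$, namely the inner boundary point.
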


\begin{proof}
We make use of the expression for $\nu_{t}$ in Theorem \ref{t.Biane.ut}. If
$\phi$ is in the interior of the support of $\nu_{t},$ then $\chi_{t}%
(e^{i\phi})$ is in the open unit disk, so that the density of $\nu_{t}$ is
nonzero at this point. Now, since $\chi_{t}$ is an inverse function to $f_{t}$
we see that $\chi_{t}(e^{i\phi})$ is (for $\phi$ in the interior of the
support of $\nu_{t}$) the unique point $\lambda$ with $\left\vert
\lambda\right\vert <1$ for which $f_{t}(\lambda)=e^{i\phi}.$ Thus,%
\[
d\nu_{t}(\phi)=\frac{1-1/r_{t}(\theta)^{2}}{1+1/r_{t}(\theta)^{2}-2\cos
\theta/r_{t}(\theta)}\frac{d\phi}{2\pi},
\]
which reduces to (\ref{nutRho1}). Finally, since $T(\lambda)=t$ on
$\partial\Sigma_{t}$ (Theorem \ref{domainGobbles.thm}), we have%
\begin{equation}
(r_{t}(\theta)^{2}+1-2r_{t}(\theta)\cos\theta)\frac{\log(r_{t}(\theta)^{2}%
)}{r_{t}(\theta)^{2}-1}=t, \label{TonBoundary}%
\end{equation}
which allows us to obtain (\ref{nutRho2}) from (\ref{nutRho1}).
\end{proof}

We are now ready for the proof of Proposition \ref{connectToBiane.prop}.

\begin{proof}
[Proof of Proposition \ref{connectToBiane.prop}]The distribution of
$\arg\lambda$ with respect to the Brown measure of $b_{t}$ is given in
(\ref{aTtheta}) as $2\log(r_{t}(\theta))w_{t}(\theta)~d\theta,$ which we write
using Proposition \ref{dPhiDtheta.prop} and Proposition
\ref{bianeMeasure.prop} as%
\begin{align*}
2\log(r_{t}(\theta))w_{t}(\theta)~d\theta &  =2\log(r_{t}(\theta))\frac
{1}{2\pi t}\frac{d\phi}{d\theta}~d\theta\\
&  =\log(r_{t}(\theta))\frac{d\phi}{\pi t}\\
&  =d\nu_{t}(\phi),
\end{align*}
as claimed.
\end{proof}

\begin{proof}
[Proof of Proposition \ref{connectToBiane2.prop}]The value $\Phi_{t}(\lambda)$
is computed by first taking the argument of $\lambda$ to obtain $\theta$ and
then applying the map in (\ref{thetaPhi}) to obtain $\phi.$ Thus, the first
result is just a restatement of Proposition \ref{connectToBiane.prop}. For the
uniqueness claim, suppose a measure $\mu$ on $\Sigma_{t}$ has the form
\[
d\mu(\lambda)=\frac{1}{r^{2}}g(\theta)~r~dr~d\theta.
\]
Then the distribution of the argument $\theta$ of $\lambda$ will be, by
integrating out the radial variable, $2\log(r_{t}(\theta))g(\theta)~d\theta.$
The distribution of $\phi$ will then be%
\[
2\log(r_{t}(\theta)g(\theta)\frac{d\theta}{d\phi}~d\phi=2\log(r_{t}%
(\theta)g(\theta)\frac{1}{2\pi tw_{t}(\theta)}~d\phi.
\]
The only way this can reduce to Biane's measure as computed in (\ref{nutRho2})
is if $g$ coincides with $w_{t}.$
\end{proof}

\subsection*{Acknowledgments}

The authors thank Ching-Wei Ho and Maciej Nowak for useful discussions.


\begin{thebibliography}{99}                                                                                               %


\bibitem {AS}M. Anshelevich and A. N. Sengupta, Quantum free Yang-Mills on the
plane, \textit{J. Geom. Phys.} \textbf{62} (2012), 330--343.

\bibitem {Bai}Z. D. Bai, Circular law, \textit{Ann. Probab.} \textbf{25}
(1997), 494--529.

\bibitem {BianeConvolution}P. Biane, On the free convolution with a
semi-circular distribution, \textit{Indiana Univ. Math. J.} \textbf{46}
(1997), 705--718.

\bibitem {BianeFields}P. Biane, \textquotedblleft Free Brownian motion, free
stochastic calculus and random matrices.\textquotedblright\ \textit{In}\ Free
Probability Theory (Waterloo, ON, 1995), 1--19. Fields Institute
Communications 12. Providence, RI: American Mathematical Society, 1997.

\bibitem {BianeJFA}P. Biane, Segal--Bargmann transform, functional calculus on
matrix spaces and the theory of semi-circular and circular systems, \textit{J.
Funct. Anal.}, \textbf{144} (1997), 232--286.

\bibitem {BS1}P. Biane and R. Speicher, Stochastic calculus with respect to
free Brownian motion and analysis on Wigner space, \textit{Probab. Theory
Related Fields} \textbf{112} (1998), 373--409.

\bibitem {BS2}P. Biane and R. Speicher, Free diffusions, free entropy and free
Fisher information, \textit{Ann. Inst. H. Poincar\'{e} Probab. Statist.}
\textbf{37} (2001), 581--606.

\bibitem {Br}Brown, L. G. Lidski\u{\i}'s theorem in the type II case.
\textit{In} Geometric methods in operator algebras (Kyoto, 1983), 1--35,
Pitman Res. Notes Math. Ser., 123, Longman Sci. Tech., Harlow, 1986.

\bibitem {BGNTW}Z. Burda, J. Grela, M. A. Nowak, W. Tarnowski, and P.
Warcho\l , Dysonian dynamics of the Ginibre ensemble, \textit{Phys. Rev.
Letters} \textbf{113} (2014), article 104102.

\bibitem {Ceb}G. C\'{e}bron, Free convolution operators and free Hall
transform, \textit{J. Funct. Anal.} \textbf{265} (2013), 2645--2708.

\bibitem {DN}A. Dahlqvist and J. Norris, Yang-Mills measure and the master
field on the sphere, \textit{Comm. Math. Phys.} (2020). https://doi.org/10.1007/s00220-020-03773-6

\bibitem {DH}N. Demni and T. Hamdi, Support of the Brown measure of the
product of a free unitary Brownian motion by a free self-adjoint projection,
preprint arXiv:2002.04585 [math.SP]

\bibitem {Driver89}B. K. Driver, YM$_{2}$: continuum expectations, lattice
convergence, and lassos, \textit{Comm. Math. Phys.} \textbf{123} (1989), 575--616.

\bibitem {DHKLargeN}B. K. Driver, B. C. Hall, and T. Kemp, The large-$N$ limit
of the Segal--Bargmann transform on $\mathbb{U}_{N}$, \textit{J. Funct. Anal.}
\textbf{265} (2013), 2585--2644.

\bibitem {DHK-MM1}B. K. Driver, B. C. Hall, and T. Kemp, Three proofs of the
Makeenko--Migdal equation for Yang--Mills theory on the plane, \textit{Commun.
Math. Phys.} 351 (2017), \textbf{2} (2017), 741--74.

\bibitem {DGHK-MM2}B. K. Driver, F. Gabriel, B. C. Hall, and T. Kemp, The
Makeenko--Migdal equation for Yang--Mills theory on compact surfaces,
\textit{Commun. Math. Phys.} 352 (2017), \textbf{3} (2017), 967--978.

\bibitem {Evans}L. C. Evans, Partial differential equations. Second edition.
Graduate Studies in Mathematics, 19. American Mathematical Society,
Providence, RI, 2010. xxii+749 pp.

\bibitem {FK1}B. Fuglede and R. V. Kadison, On determinants and a property of
the trace in finite factors, \textit{Proc. Nat. Acad. Sci. U. S. A.}
\textbf{37} (1951), 425--431.

\bibitem {FK2}B. Fuglede and R. V. Kadison, Determinant theory in finite
factors, \textit{Ann. of Math. (2)} \textbf{55} (1952), 520--530.

\bibitem {Gin}J. Ginibre, Statistical ensembles of complex, quaternion, and
real matrices, \textit{J. Math. Phys.} \textbf{6} (1965), 440--449.

\bibitem {Girko}V. L. Girko, The circular law. (Russian) \textit{Teor.
Veroyatnost. i Primenen.} \textbf{29} (1984), 669--679.

\bibitem {GT}F. G\"{o}tze and A. Tikhomirov, The circular law for random
matrices, \textit{Ann. Probab.} \textbf{38} (2010), 1444--1491.

\bibitem {DGross}D. Gross, Two dimensional QCD as a string theory,
\textit{Nucl. Phys. B} \textbf{400} (1993), 181--208.

\bibitem {GrossTaylor}D. Gross and W. Taylor, Two-dimensional QCD is a string
theory, \textit{Nucl. Phys. B} \textbf{400} (1993), 161--180.

\bibitem {GKS}L. Gross, C. King, and A. N. Sengupta, Two-dimensional
Yang-Mills theory via stochastic differential equations, \textit{Ann. Physics}
\textbf{194} (1989), 65--112.

\bibitem {GM}L. Gross and P. Malliavin, Hall's transform and the
Segal--Bargmann map. \textit{In} It\^{o}'s stochastic calculus and probability
theory (N. Ikeda, S. Watanabe, M. Fukushima and H. Kunita, Eds.), 73--116,
Springer, 1996.

\bibitem {Nowak}E. Gudowska-Nowak, R. A. Janik, J. Jurkiewicz, and M. A.
Nowak, Infinite products of large random matrices and matrix-valued diffusion,
\textit{Nuclear Phys. B} \textbf{670} (2003), 479--507.

\bibitem {FPZ}O. Feldheim, E. Paquette, and O. Zeitouni, Regularization of
non-normal matrices by Gaussian noise, \textit{Int. Math. Res. Not. IMRN}
\textbf{18} (2015), 8724--8751.

\bibitem {Ha1994}B. C. Hall, The Segal--Bargmann \textquotedblleft coherent
state\textquotedblright\ transform for compact Lie groups. \textit{J. Funct.
Anal.} \textbf{122} (1994), 103--151.

\bibitem {Hall2001}B. C. Hall, Harmonic analysis with respect to heat kernel
measure, \textit{Bull. Amer. Math. Soc. (N.S.)} \textbf{38} (2001), 43--78.

\bibitem {QMbook}B. C. Hall, Quantum theory for mathematicians. Graduate Texts
in Mathematics, \textbf{267}. Springer, New York, 2013. xvi+554 pp.

\bibitem {HallS2}B. C. Hall, The large-$N$ limit for two-dimensional
Yang-Mills theory, \textit{Comm. Math. Phys.} \textbf{363} (2018), 789--828.

\bibitem {PDEmethods}B. C. Hall, PDE methods in random matrix theory, preprint
arXiv:1910.09274 [math.PR].

\bibitem {HH}B. C. Hall and C.-W. Ho, The Brown measure of the sum of a
self-adjoint element and an imaginary multiple of a semicircular element,
preprint arXiv:2006.07168 [math.PR]

\bibitem {HK}B. C. Hall and T. Kemp, Brown measure support and the free
multiplicative Brownian motion, \textit{Adv. Math.} \textbf{355} (2019),
106771, 36 pp.

\bibitem {Ho}C.-W. Ho, The two-parameter free unitary Segal-Bargmann transform
and its Biane-Gross-Malliavin identification, \textit{J. Funct. Anal.}
\textbf{271} (2016), 3765--3817.

\bibitem {HZ}C.-W. Ho and P. Zhong, Brown measures of free circular and
multiplicative Brownian motions with self-adjoint and unitary initial
conditions, preprint arXiv:1908.08150 [math.OA]

\bibitem {KempLargeN}T. Kemp, The large-$N$ limits of Brownian motions on
$\mathbb{GL}_{N}$, \textit{Int. Math. Res. Not.}, (2016), 4012--4057.

\bibitem {KempHeatKernel}T. Kemp, Heat kernel empirical laws on $\mathbb{U}%
_{N}$ and $\mathbb{GL}_{N}$. \textit{J. Theoret. Probab.} \textbf{2} (2017), 397--451.

\bibitem {KNPS}T. Kemp, I. Nourdin, G. Peccati, and R. Speicher, Wigner chaos
and the fourth moment. \textit{Ann. Probab.} \textbf{40} (2012), 1577--1635.

\bibitem {Levy}T. L\'{e}vy, The master field on the plane,
\textit{Ast\'{e}risque} No. 388 (2017), ix+201 pp.

\bibitem {Lohmayer}R. Lohmayer, H. Neuberger, and T. Wettig, Possible
large-\textit{N} transitions for complex Wilson loop matrices, \textit{J. High
Energy Phys.} 2008, no. 11, 053, 44 pp.

\bibitem {MS}J. A. Mingo and R. Speicher, Free probability and random
matrices. Fields Institute Monographs, 35. Springer, New York; Fields
Institute for Research in Mathematical Sciences, Toronto, ON, 2017.

\bibitem {blueRudin}W. Rudin, Principles of mathematical analysis. Third
edition. International Series in Pure and Applied Mathematics. McGraw-Hill
Book Co., New York-Auckland-D\"{u}sseldorf, 1976.

\bibitem {Singer}I. M. Singer, On the master field in two dimensions, In:
Functional analysis on the eve of the 21st century, Vol. 1 (New Brunswick, NJ,
1993), 263--281, Progr. Math., 131, Birkhauser Boston, Boston, MA, 1995.

\bibitem {Sniady}P. \'{S}niady, Random regularization of Brown spectral
measure, \textit{J. Funct. Anal.} \textbf{193} (2002), 291--313.

\bibitem {TV}T. Tao and V. Vu, Random matrices: universality of ESDs and the
circular law. With an appendix by Manjunath Krishnapur. \textit{Ann. Probab.}
\textbf{38} (2010), 2023--2065.

\bibitem {Voiculescu}D. V. Voiculescu, Limit laws for random matrices and free
products. \textit{Invent. Math.} \textbf{104} (1991), 201--220.

\bibitem {Wigner}E. Wigner, Characteristic vectors of bordered matrices with
infinite dimensions. \textit{Ann. of Math. (2)} \textbf{62} (1955), 548--564.
\end{thebibliography}
\end{document}